  \newtheorem{thm}{Theorem}[section]
   \newtheorem{lemma}{Lemma}[section]
   \newtheorem{prop}{Proposition}[section]
   \newtheorem{cor}{Corollary}[section]
   \newtheorem{remark}{Remark}[section]
   \newtheorem{cond}{Condition}[section]
\begin{document}

\begin{frontmatter}
\title{
Bootstrapping Two-phase Sampling}
\runtitle{Bootstrapping Two-phase Sampling}

\author{\fnms{Takumi } \snm{Saegusa}\ead[label=e1]{tsaegusa@uw.edu}}
\address{Department of Biostatistics\\ University of Washington\\ Seattle, WA  98195-7232 \\ \printead{e1}\\}
%\and
%\author{\fnms{Jon A. } \snm{Wellner}\thanksref{t2}\ead[label=e2]{jaw@stat.washington.edu}}
%\address{Department of Statistics\\ University of Washington\\Seattle, WA  98195-4322,\\ \printead{e2}}
%\thankstext{t1}{Supported by }
%\thankstext{t2}{Supported by }
\affiliation{University of Washington}

\runauthor{Saegusa}

\begin{abstract}
We propose a nonparametric bootstrap procedure for two-phase stratified sampling without replacement.
In this design, a weighted likelihood estimator is known to have smaller asymptotic variance than under the convenient assumption of independence often made in practice.
Variance estimation, however, has not been well studied for semiparametric models where variance may not have a closed form.
Motivated by semiparametric inference, we establish conditional weak convergence of bootstrap inverse probability weighted empirical processes with several variants of calibration.
Two main obstacles to applying existing bootstrap empirical process theory are the dependent and biased sample due to sampling design, and the complex limiting processes of the linear combinations of Brownian bridge processes.
To address these issues, the proposed bootstrap weights take the form of the product of two weights corresponding to randomness from each phase and stratum.
We apply our bootstrap to weighted likelihood estimation and establish two $Z$-theorems for a general semiparametric model where a nuisance parameter can be estimated either at a regular or a non-regular rate.
We show different bootstrap calibration methods proposed in the survey sampling literature yield different bootstrap asymptotic distributions.
\end{abstract}

\begin{keyword}[class=AMS]
\kwd[Primary ]{62E20}
\kwd[; secondary ]{62G20}
\kwd{62D99}
\kwd{62N01}
\end{keyword}

\begin{keyword}
\kwd{bootstrap}
\kwd{calibration}
\kwd{non-regular}
\kwd{regular}
\kwd{sampling without replacement}
\kwd{semiparametric model}
\kwd{stratified sampling}
\kwd{weighted likelihood}
\end{keyword}

\end{frontmatter}

\section{Introduction}
\label{sec:1}
Two-phase sampling was originally proposed in \cite{Neyman:38} for estimation of finite population parameters in sample surveys.
The design has become extensively used in epidemiological studies where a parameter of interest is defined via a probability distribution for an infinite population.
At the first phase, a large sample is obtained from a population.
Some variables whose information is easier to collect are measured for stratification.
At the second phase, a subsample is drawn without replacement from each stratum to obtain other variables that are costly or difficult to measure.
Careful choice of stratifying variables improves the efficiency of the design by collecting important variables with less cost.
Examples of this design include the exposure stratified case control study \cite{White:1982} and the stratified case cohort study \cite{MR1767493}.

Various estimation procedures have been proposed
(see \cite{MR1450191,MR1680310,MR1965682,MR2125853,MR2066254,MR2090916,Breslow:2009a}
among many), but until recently dependence due to sampling without replacement has been largely ignored in the biostatistical literature for mathematical convenience.
This dependence of observations is a critical factor to differentiate our statistical problem from simply treating two-phase sampling as a special case of missing data problems by assuming Bernoulli sampling.
The question of dependence was solved when \cite{MR2325244} successfully established weak convergence of the Inverse Probability Weighted (IPW) empirical process based on the exchangeably weighted bootstrap empirical process theory \cite{MR1245301}.
The limiting process is a linear combination of independent Brownian bridge processes reflecting randomness from the sampling from a population at the first phase and stratified sampling without replacement at the second phase.
With further developments of empirical process tools \cite{MR3059418},
\cite{MR2325244,Breslow:2009a, Breslow:2009b,MR3059418} studied asymptotic properties of the Weighted Likelihood Estimator (WLE) and its improvement in efficiency by estimated weights \cite{MR1294730} and various calibrations \cite{MR1173804,MR3059418} in a general semiparametric model.
These results found that the asymptotic variances of the WLEs are generally smaller than under Bernoulli sampling which assumes independence.

In this paper, we propose and study a bootstrap procedure for two-phase stratified sampling without replacement.
As in Efron's original bootstrap paper \cite{MR515681}, our primary motivation is the variance estimation problem, raised in \cite{MR3059418} with an emphasis on a general semiparametric model.
A difficulty in this model is that asymptotic variances of the WLEs may contain unknown functions or may not have a closed form.
A similar problem for the MLE with complete data was treated in \cite{MR1693616} using a numerical derivative of the log likelihood.
However, its extension to our problem only estimates part of variance \cite{Saegusa,Saegusa-Var:2014}, and its application is limited to WLEs.
An alternative approach is nonparametric bootstrap inference \cite{MR2860827}, but variance is overestimated if sampling is without replacement instead of Bernoulli sampling.  
Our proposed bootstrap procedure overcomes these difficulties, and  yields the correct variance of  more general estimators such as IPW $M$- and $Z$-estimators.

The main goal of this paper is to establish conditional weak convergence of our bootstrap IPW empirical processes indexed by a class of functions.
%application of any bootstrap method to
This generality beyond bootstrapping random variables is required for bootstrap inference for a general semiparametric model \cite{WellnerZhan:96,MR2722459}.
%bootstrap inference for a general semiparametric model requires more than bootstrapping random variables \cite{WellnerZhan:96,MR2722459}.
%The weak convergence was proved for Efron's bootstrap by \cite{MR1055437} and for exchangeably weighted bootstrap by \cite{MR1245301}.
For complete data, \cite{MR1245301} established weak convergence of exchangeably weighted bootstrap empirical processes including Efron's bootstrap.
Important differences from our case are (1) the limiting process is  a single Brownian bridge process, and (2) data are required to be i.i.d. from a population.
For dependent data due to sampling, various bootstrap procedures have been proposed for complex survey designs (see
\cite{gross,MR740906,MR941020,MR1183077,MR1185197,MR1310222,MR1351010}
to name a few).
Because these methods primarily concern variables in a finite population and randomness only from sampling design, asymptotic theory is formulated differently (see e.g. \cite{MR648029}), and hence extensions to weak convergence is usually not straightforward.
%Our theoretical treatment provides the first step toward the extension of the bootstrap empirical process theory to dependent data from more general complex sampling.
Our aim is to provide theory and tools for extending the bootstrap empirical process theory to a dependent and biased sample from the two-phase sampling design.

The main contributions of our paper are three-fold.
First, we propose a novel bootstrap procedure for two-phase sampling, and adopt a bottom-up approach to proving conditional weak convergence of the bootstrap IPW empirical processes.
To obtain the complex limiting processes, our proposed bootstrap weights take the form of the product of the i.i.d. weights and the weights proposed in \cite{MR740906,gross} for stratified sampling in a finite population.
These weights yield randomness from different phases of sampling and different strata.
The main theoretical difficulty is non-i.i.d. observations and non-exchangeable bootstrap weights (see Remark 3.1 below for details), which violate assumptions in the bootstrap empirical process theory \cite{MR1245301,MR1385671}.
%This issue cannot be addressed by the simple application of the existing bootstrap empirical process theory \cite{MR1245301} because it requires i.i.d. data and exchangeable bootstrap weights.
To address these issues, our proof takes three steps: (1) decompose the bootstrap IPW empirical process into the phase I and II bootstrap IPW empirical processes, (2)
establish weak convergence of the phase II bootstrap IPW empirical process conditional on the phase I bootstrap weights, (3) establish weak convergence of the phase I bootstrap IPW empirical process, and (4) compute the entire covariance functions of these two processes.
This method of proof allows for separate analysis of the different phases and hence would have applications to other complex designs.
%We also provide a rigorous justification of the weighted bootstrap \cite{MR2202406} along the way.

The second contribution is application to IPW $Z$-estimation in a general semiparametric model where an infinite-dimensional nuisance parameter can be estimated either at a regular or a non-regular rate.
%Our theorems cover two cases where an infinite-dimensional nuisance parameter can be estimated either at a regular or a non-regular rate.
With complete data, $Z$-estimation was studied for the former case by \cite{STAN:STAN9,MR1385671} and for the latter case by \cite{MR1394975}.
Bootstrap $Z$-estimation was treated for the former and  the latter cases by \cite{WellnerZhan:96} and \cite{MR2202406,MR2722459} respectively.
With two-phase sampling data, IPW $Z$-estimation was studied with emphasis on WLEs by \cite{MR2325244,MR2391566,MR3059418} for both cases.
Our results here cover bootstrap $Z$-estimation for both regular and non-regular cases. 
Conditions in our theorems are almost identical to those for the MLE with complete data  \cite{MR1385671,MR1394975}, and are exactly the same as those in the non-bootstrap case of \cite{MR3059418} under two-phase sampling.
Because these conditions are formulated in terms of complete data, some of them have been already established for a specific complete data model or empirical process theory helps to verify them in a straightforward way.
We also prove a general theorem for the rate of convergence of the bootstrap IPW $M$-estimators of a nuisance parameter under weak conditions.

The third contribution is the comparison of various calibration methods under bootstrap.
Calibration \cite{MR1173804} and its variants \cite{MR2988409,MR3059418} are statistical techniques that aim at improving the efficiency of IPW $Z$-estimators.
%Two methods have been proposed in the survey sampling literature (see e.g. \cite{MR2990349,Preston:2009}) to carry out calibration in different bootstrap procedures.
%We study these bootstrap calibration methods and their extension to centered calibration \cite{MR3059418}.
We study two bootstrap calibration methods proposed in survey sampling (see e.g. \cite{MR2990349,Preston:2009}) and their extension to centered calibration \cite{MR3059418}.
Our results show different bootstrap calibration techniques lead to different bootstrap asymptotic distributions of WLEs in  a general semiparametric model.
This difference plays an important role in bias correction illustrated in our numerical study and data analysis.

The rest of the paper is organized as follows.
In Section \ref{sec:2}, we introduce basic notations and review previous results of two-phase sampling.
We describe our bootstrap procedures with several variants of calibration in Section \ref{sec:3}.
Conditional weak convergence and other asymptotic results are presented in Section \ref{sec:4}.
Section \ref{sec:5} concerns application to weighted likelihood estimation in a general semiparametric model.
Performance of our method is illustrated in simulation and a real data example in Section \ref{sec:6}.
All proofs are collected in the Appendix.
%Some lemmas in the supplementary document regarding empirical process theory may be of independent interest.

\section{Sampling, Calibrations, and IPW Empirical Processes}
\label{sec:2}
We introduce basic notations and review previous results in \cite{MR2325244,MR3059418}.
\subsection{Sampling}
Let $W=(X,U)\in\mathcal{W}=\mathcal{X}\times\mathcal{U}$ with distribution $\tilde{P}_0$.
Here $X$ is a vector of variables of interest with distribution $P_0$ and $U$ is a vector of auxiliary variables.
At the first phase under two-phase sampling,  we only observe a coarsening $\tilde{X}=\tilde{X}(X)$ (e.g. screening test) of $X$ (e.g. gold standard) in addition to the auxiliary variables $U$ (e.g. mailing address).
Auxiliary variables $U$ are useful for stratification and improving efficiency of estimation involving $X$.
We call $V=(\tilde{X},U)\in \mathcal{V}=\tilde{\mathcal{X}}\times \mathcal{U}$ the phase I variables.
The phase I sample space $\mathcal{V}$ for $V$ is partitioned into the $J$ sampling strata $\mathcal{V}_j$ with $\sum_{j=1}^J \mathcal{V}_j = \mathcal{V}$ for stratified sampling at the second phase.
We denote the stratum probability for the $j$th stratum by $\nu_j\equiv \tilde{P}_0(V\in\mathcal{V}_j)$,
and the conditional expectation given membership in the $j$th stratum by $P_{0|j}(\cdot) \equiv \tilde{P}_0(\cdot |V\in\mathcal{V}_j)$.

With complete data, we would observe $W_1,\ldots,W_N$ i.i.d. as $W$.
Under two-phase sampling, the observed data at the first phase is $V_1,\ldots,V_N$ i.i.d. as $V$.
At the second phase, a subsample is drawn without replacement from each stratum by which $n_j$ items out of $N_j = \{i\leq N:V_i\in\mathcal{V}_j\}$ are selected in the $j$th stratum.
We observe $X_i$ for the sampled item at the second phase.
We denote the sampling indicator by $\xi_i\in\{0,1\}$ with $1$ if sampled at the second phase and $0$ otherwise.
The sampling probability is $\tilde{P}_0(\xi_i=1|V_i\in\mathcal{V}_j) = n_j/N_j \equiv \pi_0(V_i)$.
The observed data in the entire process are $(V_i,X_i\xi_i,\xi_i),i=1,\ldots,N$.

Throughout, we use a doubly subscripted notation:  for example, $V_{j,i}$ denotes $V$ for the $i$th subject in the $j$th stratum.
We assume that there is a constant $\sigma>0$ such that $0<\sigma\leq \pi_0(v)\leq 1$ for every $v\in \mathcal{V}$ and that $n_j/N_j\rightarrow p_j>0$ for $j=1,\ldots, J$ as $N\rightarrow \infty$.
Let $\pi_{0,\infty}(V_i) = \sum_{i=1}^J p_j 1_{\mathcal{V}_j}(V_i)$ be the limiting sampling probability.
Note that phase II sample sizes $n_j$ are at the disposal of a designer of the two-phase study.
We denote $ a \lesssim b$ to mean $a \le K b$ for some constant $K\in (0,\infty)$.
We write $|\cdot|$ for a Euclidean distance.
For a function $z:T\mapsto \mathbb{R}$, we write $\lVert z\rVert_T \equiv \sup_{t\in T}|z(t)|$, and $\ell^\infty(\mathcal{F})$ for the set of bounded functionals on $\mathcal{F}$.

\subsection{IPW empirical process}
Our main result concerns weak convergence of the bootstrapped IPW empirical process.
Define the IPW empirical measure by
\begin{equation*}
\mathbb{P}_N^\pi
=\frac{1}{N}\sum_{i=1}^N\frac{\xi_i}{\pi_0(V_i)}\delta_{X_i}
=\frac{1}{N}\sum_{j=1}^J\sum_{i=1}^{N_j}\frac{\xi_{j,i}}{n_j/N_j}\delta_{X_{j,i}},
\end{equation*}
and let $\mathbb{G}_N^\pi=\sqrt{N}(\mathbb{P}_{N}^{\pi}-P_0)$ be the IPW empirical process where $\delta_{X_i}$ denotes a Dirac measure placing a unit mass on $X_i$.
Compare these with the empirical measure and process with complete data given by $\mathbb{P}_N = N^{-1}\sum_{i=1}^N \delta_{X_i}$ and $\mathbb{G}_N = \sqrt{N}(\mathbb{P}_N - P_0)$.
When an index function maps from $\mathcal{W}$ instead of $\mathcal{X}$, we understand $\delta_{X_i}$ and $P_0$ in the definitions above as $\delta_{W_i}$ and $\tilde{P}_0$.
We use this abuse of notation for other (bootstrap) IPW empirical processes.

\subsection{Calibrations}
The WLE only uses observations sampled at the second phase and is generally inefficient.
Two basic methods for improving efficiency are estimated weights \cite{MR1294730} and calibration \cite{MR1173804}.
These methods adjust weights in the WLE by utilizing information in $V$ available for all observations.
Here we treat only calibration and centered calibration \cite{MR3059418} because calibration and estimated weights are equivalent under some transformation, and because (within-stratum) centered calibration have guaranteed efficiency gains unlike the other methods treated in \cite{MR3059418} (see \cite{MR3059418} for systematic comparison of these methods in different designs).
These methods can make use of part of $V$ or its transformation but we only consider $V$ in calibrations for a notational simplicity.
Let $\tilde{V}_i \equiv V_i -P_0V$.

The calibration method ``models'' the sampling probability by $\pi_\alpha(V)\equiv\pi_0(V)/G_{c}(V;\alpha)$ where $G_{c}(V;\alpha)\equiv G(V^T\alpha)$ for a known differentiable function $G$ with $G(0)=1$ and $(d/dt)G(t)|_{t=0}=\dot{G}(0)>0$.
The estimator  $\hat{\alpha}_N^c$ of $\alpha$ is solution to the calibration equation
\begin{equation}
\label{eqn:caleqn2}
\frac{1}{N}\sum_{i=1}^N\frac{\xi_iG_{c}(V_i;\alpha)}{\pi_0(V_i)}V_i
=\frac{1}{N}\sum_{i=1}^NV_i.
\end{equation}
This equation equates the calibrated IPW average of $V$ from the phase II sample with its phase I average whereby the phase II sample becomes more representative of the phase I sample.
The centered calibration method ``models'' the sampling probability by $\pi_\alpha(V)\equiv\pi_0(V)/G_{cc}(V;\alpha)$ where
$G_{cc}(V;\alpha)\equiv G((\pi_0(V)^{-1}-1)(V-\mathbb{P}_NV)^T\alpha)$.
The estimator  $\hat{\alpha}_N^{cc}$ of $\alpha$ is solution to the calibration equation
\begin{equation}
\label{eqn:ccaleqn}
\frac{1}{N}\sum_{i=1}^N\frac{\xi_iG_{cc}(V_i;\alpha)}{\pi_0(V_i)}(V_i-\mathbb{P}_NV)
=0, \quad \alpha\in\mathcal{A}\subset \mathbb{R}^k.
\end{equation}
The calibrated IPW empirical measure $\mathbb{P}_N^{\pi,c}$ is defined by replacing the sampling probability $\pi_0(v)$ by the calibrated sampling probability $\pi_{\hat{\alpha}_N^c}(v)$ in $\mathbb{P}_N^\pi$.
The IPW empirical measure with centered calibration $\mathbb{P}_N^{\pi,cc}$ is defined similarly.
The IPW empirical processes with calibration and centered calibration are $\mathbb{G}_{N}^{\pi,c}=\sqrt{N}(\mathbb{P}_{N}^{\pi,c}-P_0)$ and $\mathbb{G}_{N}^{\pi,cc}=\sqrt{N}(\mathbb{P}_{N}^{\pi,cc}-P_0)$ respectively.

The following theorem \cite{MR2325244,MR3059418} concerns weak convergence of the IPW empirical processes with calibrations.
The goal of this paper is to establish the corresponding result for our bootstrap IPW empirical processes.

\begin{cond}[Calibrations]
\label{cond:cal}
\noindent
(1) Estimators $\hat{\alpha}_N^c$ and $\hat{\alpha}_N^{cc}$ are solutions to the calibration equations (\ref{eqn:caleqn2}) and  (\ref{eqn:ccaleqn}), respectively.

\noindent (2)  $V\in\mathbb{R}^k$ is not concentrated at $0$ and has bounded support.

\noindent(3)  $G$ is a strictly increasing, bounded, and continuously differentiable function on $\mathbb{R}$ satisfying $G(0)=0$ with bounded derivative $\dot{G}$.
%such that $G(0)=1$ and for all $x$, $-\infty<m \leq G(x) \leq M<\infty$ and $0< \dot{G}(x) \leq M<\infty$ for some constants $m$ and $M$.

\noindent(4) (i) $P_0V^{\otimes 2}$ is finite and positive definite.
(ii) $P_0\{(\pi_{0,\infty}(V)^{-1} -1)\tilde{V}^{\otimes 2}\}$
is finite and positive definite.

\noindent (5) The ``true'' parameter $\alpha_0=0$.
\end{cond}

\begin{thm}[\cite{MR2325244,MR3059418}]
\label{thm:fpsD}
Let $\mathcal{F}$ be a $P_0$-Donsker class with $\| P_0 \|_{\cal F}<\infty$.
Suppose that Condition \ref{cond:cal} holds.
Then,
\begin{eqnarray}
\label{eqn:fpsD}
&&\mathbb{G}_N^\pi
\rightsquigarrow
\mathbb{G}^{\pi}
\equiv\mathbb{G}+\sum_{j=1}^J\sqrt{\nu_j}\sqrt{\frac{1-p_j}{p_j}}\mathbb{G}_j,\\
&&\mathbb{G}_N^{\pi,\#}
\rightsquigarrow
\mathbb{G}^{\pi,\#}
\equiv \mathbb{G}+\sum_{j=1}^J\sqrt{\nu_j}\sqrt{\frac{1-p_j}{p_j}}\mathbb{G}_j(\cdot-Q_{\#}\cdot),
\end{eqnarray}
in $\ell^\infty(\mathcal{F})$ where $\#\in\{c,cc\}$, the
$P_0$-Brownian bridge process $\mathbb{G}$ and the $P_{0|j}$-Brownian bridge processes $\mathbb{G}_j$ are all independent, and
the maps $Q_c$ and $Q_{cc}$ from $\mathcal{F}$ to $\mathbb{R}$ are given by
\begin{eqnarray*}
Q_{c}f&\equiv&P_0(f V^T)\{P_0V^{\otimes 2}\}^{-1}V, \\
Q_{cc}f&\equiv&P_0\{(\pi_{0,\infty}^{-1}(V)-1)f \tilde{V}^T\}[P_0\{(\pi_{0,\infty}^{-1}(V)-1)\tilde{V}^{\otimes 2}\}]^{-1}\tilde{V}.
\end{eqnarray*}
\end{thm}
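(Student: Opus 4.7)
My plan is to prove the uncalibrated claim first via an additive decomposition of $\mathbb{G}_N^\pi$ into phase I and phase II pieces, then obtain the calibrated statements by Taylor-expanding the calibration equation around $\alpha_0 = 0$. The decomposition is the algebraic identity
\begin{equation*}
\mathbb{G}_N^\pi f = \mathbb{G}_N f + \frac{1}{\sqrt{N}}\sum_{j=1}^J\sum_{i=1}^{N_j}\left(\frac{\xi_{j,i}}{n_j/N_j} - 1\right) f(X_{j,i}).
\end{equation*}
The first summand is the ordinary empirical process over the hypothetical complete data, which converges in $\ell^\infty(\mathcal{F})$ to the $P_0$-Brownian bridge $\mathbb{G}$ by the Donsker assumption. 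For the phase II summand, I would condition on $\mathcal{W}_N = \sigma(W_1,\ldots,W_N)$: given $\mathcal{W}_N$, the indicators within stratum $j$ form a simple random sample without replacement of $n_j$ out of $N_j$ items, independent across strata. A finite-sampling functional CLT (Hájek's theorem lifted to a $P_0$-Donsker class, as used in \cite{MR2325244}) gives conditional convergence in $\ell^\infty(\mathcal{F})$ of the stratum-$j$ piece to $\sqrt{(1-p_j)/p_j}\,\mathbb{G}_j$, a $P_{0|j}$-Brownian bridge, with the $\mathbb{G}_j$ mutually independent. Rescaling by $\sqrt{N_j/N}\to\sqrt{\nu_j}$ and summing produces the phase II limit $\sum_j\sqrt{\nu_j}\sqrt{(1-p_j)/p_j}\,\mathbb{G}_j$. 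Since this conditional limit depends on $\mathcal{W}_N$ only through the stratum proportions, which converge to the deterministic $\nu_j$, the two pieces are asymptotically independent, yielding the joint limit $\mathbb{G}^\pi$.

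For the calibrated variants with $\#\in\{c,cc\}$, I would Taylor-expand the estimating equation for $\hat\alpha_N^{\#}$ around $\alpha_0 = 0$, using that $G_{\#}(V;0)=1$ and that Condition~\ref{cond:cal}(4) makes the resulting normalizing matrices invertible. For $\# = cc$ this produces
\begin{equation*}
\sqrt{N}\,\hat\alpha_N^{cc} = -\dot G(0)^{-1}\bigl[P_0\{(\pi_{0,\infty}^{-1}-1)\tilde V^{\otimes 2}\}\bigr]^{-1}(\mathbb{G}_N^\pi - \mathbb{G}_N)\tilde V + o_p(1),
\end{equation*}
and an analogous identity with $V^{\otimes 2}$ replacing $(\pi_{0,\infty}^{-1}-1)\tilde V^{\otimes 2}$ and $V$ replacing $\tilde V$ for $\# = c$. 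Substituting back into a first-order expansion of $\mathbb{G}_N^{\pi,\#}f$ in the parameter and collecting terms gives, uniformly over $f\in\mathcal{F}$,
\begin{equation*}
\mathbb{G}_N^{\pi,\#}f = \mathbb{G}_N f + (\mathbb{G}_N^\pi - \mathbb{G}_N)(f - Q_{\#}f) + o_p(1).
\end{equation*}
Passing to the limit via the phase I / phase II decomposition then yields $\mathbb{G} + \sum_j\sqrt{\nu_j}\sqrt{(1-p_j)/p_j}\,\mathbb{G}_j(\cdot - Q_{\#}\cdot)$, as required.

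The step I expect to be the main obstacle is the uniform finite-sampling CLT for the phase II process: establishing asymptotic tightness over a $P_0$-Donsker class $\mathcal{F}$ under stratified sampling without replacement does not follow from the classical i.i.d. theory and requires a Hájek-type maximal inequality adapted to the finite-population setting, together with care over the conditional-versus-unconditional modes of convergence. The Donsker limit for the phase I piece and the Taylor-expansion argument for the calibration step are comparatively routine once this functional CLT is in hand.
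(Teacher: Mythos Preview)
Your proposal is correct and matches the approach of the cited references as summarized in the paper's Appendix: the same phase I/phase II decomposition $\mathbb{G}_N^\pi = \mathbb{G}_N + \mathbb{G}_N^{\pi,(2)}$ is used, with the $P_0$-Donsker theorem handling the first piece and a stratum-wise finite-sampling functional CLT handling the second, followed by a Taylor expansion of the calibration equations to obtain the $Q_\#$ correction. One minor clarification: the phase II functional CLT in \cite{MR2325244} is not obtained via a H\'ajek-type maximal inequality but by recognizing that, conditionally on $\mathcal{W}_N$, the indicators $(\xi_{j,1},\ldots,\xi_{j,N_j})$ are exchangeable weights and invoking the exchangeably weighted bootstrap empirical process theory of \cite{MR1245301}; this is precisely the device that resolves the ``main obstacle'' you anticipated.
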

Theorem \ref{thm:fpsD} shows that the limiting processes are the linear combinations of the Brownian bridge processes.
The process $\mathbb{G}$ corresponds to sampling at the first phase, because this is also the limiting process when complete data are available (to see this, set $p_j=1,j=1,\ldots,J$ to sample all observations).
%This is also seen by setting $p_j=1,j=1,\ldots,J$, which means all observations are sampled at the second phase.
The process $\mathbb{G}_j$ corresponds to sampling at the second phase for the $j$th stratum.
One can see from the form of the weight $\{\nu_j(1-p_j)/p_j\}^{1/2}$ for $\mathbb{G}_j$ that a smaller stratum (i.e., small $\nu_j$) or more observations at the second phase (i.e., large $p_j$) reduce variance due to sampling from the $j$th stratum.

\section{Our bootstrap}
\label{sec:3}
We describe our bootstrap procedure.
\subsection{Bootstrap weights}
A bootstrap procedure assigns a bootstrap weight of a random variable to each observation.
For example, a bootstrap weight of Efron's bootstrap is a count of how many times an observation is sampled with replacement in a bootstrap sample.
These weights as a whole follow the multinomial distribution with parameters $n$ and $(1/n,\ldots,1/n)$ where $n$ is a sample size.
Various exchangeable bootstrap weights with different distributions yield weak convergence of the bootstrap empirical process to a
Brownian bridge process $\mathbb{G}$ (see Theorem \ref{thm:fpsD}) up to constant \cite{MR1245301,MR1385671}.
However, the limiting process of our problem consists of multiple stratum-specific processes $\mathbb{G}_{j}$ as well as $\mathbb{G}$.
Thus we first construct our bootstrap weights for each phase and each stratum separately, and then define an ultimate bootstrap weight as the product of the phase I and II bootstrap weights.
We describe this construction below.

Our phase I bootstrap weights are i.i.d. within a stratum.
This reflects randomness due to sampling from a population at the first phase (see Appendix of \cite{MR2325244} for the equivalence of sampling from a population
and stratified sampling after sampling from a multinomial distribution of stratum membership).
Specifically, let the phase I bootstrap weights $W_{N_j,j,i}^{(1)}$,
$i=1,\ldots,N_j$, for the $j$th stratum with $j=1,\ldots,J$, be i.i.d. $W_j^{(1)}\sim P_{W_j^{(1)}}$, satisfying
\begin{eqnarray}
\label{eqn:ph1bootwt}
&&P(W_{j}^{(1)} >   0)=1, \
EW_{j}^{(1)}=1,\ \mathrm{Var}(W_{j}^{(1)}) = p_j/(2-p_j) \equiv c_{j}^2,\\
&& \lVert W_{j}^{(1)}\rVert_{2,1} = \int_0^\infty \{P(W_{j}^{(1)}>x)\}^{1/2}dx<\infty.\nonumber
\end{eqnarray}

Our phase II bootstrap weights is based on the following bootstrap procedure developed for a stratified sample in a finite population
\cite{gross,MR740906}.
Suppose for simplicity that $N_j = 3n_j$ and that we are interested in the $j$th stratum. We first create an ``artificial population'' of sample size $N_j$ by copying $n_j$ observations sampled at the second phase three times.
We then obtain a bootstrap sample of size $n_j$ from this artificial population by sampling without replacement.
If $N_j$ is not divisible by $n_j$, we create two artificial population of different sizes.
This bootstrap procedure corresponds to the (mixture of) multivariate hypergeometric distribution(s).

For a formal definition, let $MH_d(N,n,(m_1,\ldots,m_d))$ denote the multivariate hypergeometric distribution (see \cite{MR1429617} for details) where $n$ balls are sampled without replacement from the population consisting of the disjoint subgroups of size $m_i,i=1,\ldots,d$, with $\sum_{i=1}^dm_i=N$.
For the $j$th stratum with $N_j = n_jk_j+r_j,k_j,r_j\in\mathbb{N},0\leq r_j<n_j$, let $(\tilde{W}_{j,1}^{(2)},\ldots,\tilde{W}_{j,n_j}^{(2)})\in\mathbb{R}^{n_j}$ be a vector of exchangeable weights that follow the mixture of the multivariate hypergeometric distribution
\begin{eqnarray}
\label{eqn:ph2bootwt}
&&MH_{n_j}(n_jk_j,n_j,(k_j,\ldots,k_j)),\quad \qquad \qquad \qquad\mbox{with probability } s_j,\\
&&MH_{n_j}(n_j(k_j+1),n_j,(k_j+1,\ldots,k_j+1)), \quad \mbox{with probability }1-s_j,\nonumber
\end{eqnarray}
where
\begin{equation*}
s_j\equiv\left(1-\frac{r_j}{n_j}\right)\left(1-\frac{r_j}{N_j-1}\right)
\end{equation*}
and $1-s_j$ are mixing probabilities.
We define $W_{n_j,j,i}^{(2)}$ by $W_{n_j,j,i}^{(2)}=0$ if $\xi_{j,i}=0$ and $W_{n_j,j,i}^{(2)}=\tilde{W}_{j,k}^{(2)}$ where the observation $(V_{j,i},\xi_{j,i}X_{j,i},\xi_{j,i})$ has the $k$th smallest index $i$ among the observations with $\xi_{j,i}=1$ in the $j$th stratum.
Note that phase I and II bootstrap weights are independent.

Now we define the two-phase bootstrap weights for the $i$th
observation in the $j$th stratum by
\begin{equation}
\label{eqn:bootwt}
W_{N_j,n_j,j,i}\equiv W_{N_j,j,i}^{(1)}W_{n_j,j,i}^{(2)}, \quad j=1,\ldots,J, i=1,\ldots,N_j.
\end{equation}
We write $W_{Ni}$, $W_{Ni}^{(1)}$, and $W_{Ni}^{(2)}$ for $W_{N_j,n_j,j,i}$, $W_{N_j,j,i}^{(1)}$, and $W_{n_j,j,i}^{(2)}$ respectively when we do not specify the stratum where the observation belongs.
Define the bootstrap IPW empirical measure
\begin{equation}
\label{eqn:bootEmpMeas}
\hat{\mathbb{P}}_N^\pi \equiv \frac{1}{N}\sum_{i=1}^NW_{Ni}\frac{\xi_i}{\pi_0(V_i)}\delta_{X_i}
\end{equation}
and we let $\tilde{\mathbb{G}}_N^\pi \equiv \sqrt{N}(\hat{\mathbb{P}}_N^\pi-\mathbb{P}_N^\pi)$ be the ``uncentered''
bootstrap IPW empirical process (further centering will be introduced in 3.3 below).
We also define the phase I and phase II bootstrap empirical measures $\hat{\mathbb{P}}_N^{\pi,(1)}$ and $\hat{\mathbb{P}}_N^{\pi,(2)}$ by replacing $W_{Ni}$ in (\ref{eqn:bootEmpMeas}) by $W_{Ni}^{(1)}$ and $W_{Ni}^{(2)}$, respectively.

\begin{remark}[Dependence of observatons and non-exchangeability of bootstrap weights]
The observed data  $(V_i,X_i\xi_i,\xi_i),i=1,\ldots,N$, are dependent through the sampling indicator $\xi$.
Also, our bootstrap weights are not exchangeable because marginal distributions of our bootstrap weights differ depending on stratum membership and the sampling indicators.
These are major obstacles to applying the exchangeably weighted bootstrap empirical process theory \cite{MR1245301}.
However, observations sampled at the second phase in the same stratum are i.i.d. $P_{0|j}$ conditional on $\xi =1$ and $V\in\mathcal{V}_j$ (see Remark 4.3 of \cite{MR2253102}).
Moreover, the phase I bootstrap weights ($W_{N_j,j,i}^{(1)},i=1,\ldots,N_j$) and the phase bootstrap weights among observations sampled at the second phase ($\tilde{W}_{j,i}^{(2)},i=1,\ldots,,n_j$) are exchangeable in the same stratum.
These key observations lead to decomposition of the bootstrap IPW empirical processes and applying the exchangeably weighted bootstrap empirical process theory to stratum-specific bootstrap IPW empirical processes.% (see Section B of the supplementary document \cite{Saegusa-Wellner:2014supp}).
\end{remark}

\subsection{Calibrations}
We introduce bootstrap calibrations.
There are several possibilities to carry out calibrations under bootstrap.
Recall that calibration aims at equating the IPW average with the phase I average.
%Recall also that a bootstrap weight is zero if an observation is not sampled at the second phase.
The IPW average in a bootstrap sample can be equated with the phase I average, the IPW average, the calibrated IPW average, or the average in an artificial population.
In this paper we only discuss the first two cases proposed in survey sampling.
\cite{Saegusa} includes discussion of the third case.
As shown below, different choice yields different asymptotic distributions.

A standard method for calibration in a bootstrap sample is calibration to the phase I average (see e.g. \cite{MR2990349,Preston:2009}).
We call this method bootstrap calibration.
Let $\hat{\hat{\alpha}}_N^{bc}$ be the solution to the bootstrap calibration equation
\begin{equation}
\label{eqn:survbootcaleqn}
\mathbb{P}_N^\pi W_N^{(2)}G_{c}(V;\alpha)V = \mathbb{P}_NV,\quad \alpha\in\mathcal{A}\subset \mathbb{R}^k.
\end{equation}
Define the bootstrap IPW empirical measure with bootstrap calibration by
\begin{equation*}
\hat{\mathbb{P}}_{N}^{\pi,bc} =  \frac{1}{N}\sum_{i=1}^NW_{Ni} \frac{\xi_iG_{c}(V_i;\hat{\hat{\alpha}}_N^{bc})}{\pi_0(V_i)}\delta_{X_i}
\end{equation*}
and the bootstrap IPW empirical process with bootstrap calibration by $\tilde{\mathbb{G}}_{N}^{\pi,bc}=\sqrt{N}(\hat{\mathbb{P}}_{N}^{\pi,bc}-\mathbb{P}_N^{\pi,c})$.
For centered calibration, we define the bootstrap IPW empirical measure with bootstrap centered calibration by
\begin{equation*}
\hat{\mathbb{P}}_{N}^{\pi,bcc} =  \frac{1}{N}\sum_{i=1}^NW_{Ni} \frac{\xi_iG_{cc}(V_i;\hat{\hat{\alpha}}_N^{bcc})}{\pi_0(V_i)}\delta_{X_i}
\end{equation*}
and the bootstrap IPW empirical process with bootstrap centered calibration by
$\tilde{\mathbb{G}}_{N}^{\pi,bcc}=\sqrt{N}(\hat{\mathbb{P}}_{N}^{\pi,bcc}-\mathbb{P}_N^{\pi,cc})$ where $\hat{\hat{\alpha}}_N^{bcc}$ is a solution to the bootstrap calibration equation
\begin{equation}
\label{eqn:survbootccaleqn}
\mathbb{P}_N^\pi W_N^{(2)}G_{cc}(V;\alpha)(V - \mathbb{P}_NV) = 0,\quad \alpha\in\mathcal{A}\subset \mathbb{R}^k.
\end{equation}

Another calibration method proposed in \cite{MR2990349} is calibration to the IPW average replacing $\mathbb{P}_NV$ by $\mathbb{P}_N^\pi V$ in the bootstrap calibration equations above.
As in \cite{Saegusa}, we call this method bootstrap single calibration (see also ``double calibration'' in \cite{Saegusa}).
Define the bootstrap IPW empirical measure with bootstrap single calibration by
\begin{equation*}
\hat{\mathbb{P}}_{N}^{\pi,bsc} =  \frac{1}{N}\sum_{i=1}^NW_{Ni} \frac{\xi_iG_{c}(V_i;\hat{\hat{\alpha}}_N^{bsc})}{\pi_0(V_i)}\delta_{X_i}
\end{equation*}
and the bootstrap IPW empirical process with bootstrap single calibration by $\tilde{\mathbb{G}}_{N}^{\pi,bsc}=\sqrt{N}(\hat{\mathbb{P}}_{N}^{\pi,bsc}-\mathbb{P}_N^{\pi})$ where $\hat{\hat{\alpha}}_N^{bsc}$ is the solution to the bootstrap single calibration equation
\begin{equation}
\label{eqn:singlebootcaleqn}
\mathbb{P}_N^\pi W_N^{(2)}G_{c}(V;\alpha)V = \mathbb{P}^\pi_NV,\quad \alpha\in\mathcal{A}\subset \mathbb{R}^k.
\end{equation}
For centered calibration, we define the bootstrap IPW empirical measure with bootstrap single centered calibration by
\begin{equation*}
\hat{\mathbb{P}}_{N}^{\pi,bscc} =  \frac{1}{N}\sum_{i=1}^NW_{Ni} \frac{\xi_iG_{cc}(V_i;\hat{\hat{\alpha}}_N^{bscc})}{\pi_0(V_i)}\delta_{X_i}
\end{equation*}
and the bootstrap  IPW empirical process with bootstrap  single centered calibration by $\tilde{\mathbb{G}}_{N}^{\pi,bscc}=\sqrt{N}(\hat{\mathbb{P}}_{N}^{\pi,bscc}-\mathbb{P}_N^{\pi})$ where $\hat{\hat{\alpha}}_N^{bscc}$ is a solution to the bootstrap calibration equation
\begin{equation}
\label{eqn:singlebootccaleqn}
\mathbb{P}_N^\pi W_N^{(2)}G_{cc}(V;\alpha)(V - \mathbb{P}_N^\pi V) = 0,\quad \alpha\in\mathcal{A}\subset \mathbb{R}^k.
\end{equation}

\begin{remark}[Phase II bootstrap weights for bootstrap calibration]
In contrast to the bootstrap IPW empirical measures and processes, all bootstrap calibration equations above only involve the phase II bootstrap weights.
The reason is that calibration methods only affect the phase II variance (see e.g. \cite{MR3059418}).
This formulation of bootstrap calibration allows for applications to a bootstrap procedure only involving the randomness at the second phase (e.g. \cite{Saegusa-Var:2014}).
\end{remark}
\begin{remark}[Centering of bootstrap IPW empirical measures]
Both $\tilde{\mathbb{G}}_N^{\pi,bsc}$ and $\tilde{\mathbb{G}}_N^{\pi,bscc}$ have centering by $\mathbb{P}_N^\pi$ while $\tilde{\mathbb{G}}_N^{\pi,bsc}$ and $\tilde{\mathbb{G}}_N^{\pi,bscc}$ has centering by the corresponding calibrated IPW empirical measures $\mathbb{P}_N^{\pi,c}$ and $\mathbb{P}_N^{\pi,cc}$ respectively.
This difference yields different bootstrap asymptotic distributions for inference in a general semiparametric model in Section \ref{sec:5}.
\end{remark}

The following condition is a bootstrap alternative of Condition \ref{cond:cal}.
\begin{cond}
\label{cond:bootcal}
Conditions \ref{cond:cal}(b)-(e) hold and estimators
$\hat{\hat{\alpha}}_N^{bc}$, $\hat{\hat{\alpha}}_N^{bcc}$,
$\hat{\hat{\alpha}}_N^{bsc}$, and $\hat{\hat{\alpha}}_N^{bscc}$ are solutions to the calibration equations
(\ref{eqn:survbootcaleqn}), (\ref{eqn:survbootccaleqn}), (\ref{eqn:singlebootcaleqn}), and (\ref{eqn:singlebootccaleqn}), respectively.
\end{cond}

\subsection{Bootstrap IPW empirical processes}
We require further centering of the bootstrap IPW empirical processes.
Define the centered bootstrap IPW empirical processes by
\begin{eqnarray*}
\hat{\mathbb{G}}_N^\pi\cdot \equiv \tilde{\mathbb{G}}_N^\pi(\cdot -\mathbb{P}_N^\pi \cdot),\quad
\hat{\mathbb{G}}_N^{\pi,*\#}\cdot \equiv\tilde{\mathbb{G}}_N^{\pi,*\#} (\cdot -\mathbb{P}_N^{\pi,\#} \cdot)
\end{eqnarray*}
with $*\in \{b,bs\}$ and $\# \in \{c,cc\}$.
This further centering yields a subtle but important difference in limiting processes of bootstrap IPW empirical processes (compare Theorems \ref{thm:fpsDboot} and \ref{thm:fpsDboot2} below).
\begin{remark}
While centering by constant does not change the (IPW) empirical processes (e.g. $\mathbb{G}_N(\cdot -\mathbb{P}_N\cdot) = \mathbb{G}_N$)
we have $\hat{\mathbb{G}}_N^\pi \neq \tilde{\mathbb{G}}_N^\pi$ and $\hat{\mathbb{G}}_N^{\pi,*\#} \neq \tilde{\mathbb{G}}_N^{\pi,*\#}$ in general because of the property of the phase I bootstrap weights that $N^{-1}\sum_{i=1}^NW_{Ni}^{(1)} \neq 1$ in general.
\end{remark}

\section{Main Results}
\label{sec:4}
We establish the Glivenko-Cantelli and Donsker theorems for bootstrap IPW empirical processes under two-phase sampling.
\cite{MR3059418} showed that the Glivenko-Cantelli and Donsker properties for i.i.d. data are inherited to data from two-phase sampling.
Here we show that these properties continue to hold for bootstrap IPW empirical processes.

\subsection{Probability Spaces}
We define the probability space for $(V,\xi X,\xi)$ and the bootstrap weight $W$.
Let $\underline{W}_j^{(1)}=\{W_{N_j,j,i}^{(1)}:i=1,\ldots,N_j, N_j=1,2,.\ldots\}$ be a triangular array defined on the probability space  $(\mathcal{Z}_j^{(1)},\mathcal{E}_j^{(1)},P_{W_j^{(1)}})$ for $j=1,\ldots,J$.
Let also $\underline{W}_j^{(2)}=\{W_{n_j,j,i}^{(2)}:i=1,\ldots,n_j, n_j=1,2,.\ldots\}$ be a triangular array defined on the probability space  $(\mathcal{Z}_j^{(2)},\mathcal{E}_j^{(2)},P_{W_j^{(2)}})$ for $j=1,\ldots,J$.
$P_{W_j^{(1)}}$ and $P_{W_j^{(2)}}$ are the conditional probability measures given the phase I sample size $\sum_{i=1}^N I_{\mathcal{V}_j}(V_i)$ and the phase II sample size $\sum_{i=1}^N \xi_{i}I_{\mathcal{V}_j}(V_i)$ in the $j$th stratum, respectively.
Recall that the phase I and phase II bootstrap weights are independent.
Define the probability spaces $(\mathcal{Z}^{(1)},\mathcal{E}^{(1)},P_W^{(1)})=\prod_{j=1}^J(\mathcal{Z}_j^{(1)},\mathcal{E}_j^{(1)},P_{W_j}^{(1)})$ and $(\mathcal{Z}^{(2)},\mathcal{E}^{(2)},P_W^{(2)})=\prod_{j=1}^J(\mathcal{Z}_j^{(2)},\mathcal{E}_j^{(2)},P_{W_j}^{(2)})$ for phase I and phase II bootstrap weights.
Let the probability space $(\mathcal{Z},\mathcal{E},P_W)=(\mathcal{Z}^{(1)},\mathcal{E}^{(1)},P_{W}^{(1)}) \times (\mathcal{Z}^{(2)},\mathcal{E}^{(2)},P_{W}^{(2)})$ for the whole bootstrap weights.
We denote the probability space for $(V_i,\xi_iX_i,\xi_i),i=1,2,\ldots,$ as $(\mathcal{X}^\infty,\mathcal{B}^\infty,P^\infty)$ (with abuse of notations), and denote
\begin{equation*}
(\mathcal{X}^\infty\times \mathcal{Z},\mathcal{B}^\infty\times \mathcal{E},Pr)=
(\mathcal{X}^\infty,\mathcal{B}^\infty,P^\infty)
\times
(\mathcal{Z},\mathcal{E},P_W),
\end{equation*}
where $Pr \equiv P^{\infty} \times P_W$.
We let $P^*$ and $P_*$ denote the outer and the inner probability, respectively, corresponding to $P^\infty$ (see Section 1.2 of \cite{MR1385671} for details).

\subsection{Order Notations for Bootstrap}
To study the conditional asymptotic behavior of bootstrap given data, we define the order notations in probability for bootstrap.
For a real function $\Delta_N$ defined on the joint probability space $(\mathcal{X}^\infty\times \mathcal{Z},\mathcal{B}^\infty\times \mathcal{E},Pr)$, we say that $\Delta_N$ is of an order $o_{P_W^*}(1)$ in $P^*$-probability if for any $\epsilon>0$ and $\eta>0$,
\begin{equation*}
P^*\left\{P_W^*\left(|\Delta_N|>\epsilon\right)>\eta\right\}\rightarrow 0, \quad \mbox{as }N\rightarrow \infty.
\end{equation*}
This definition was introduced and studied in \cite{WellnerZhan:96}.
We say that $\Delta_N$ is of an order $O_{P_W^*}(1)$ in $P^*$-probability if for any $\eta>0$ and for every $M_N \rightarrow \infty$,
\begin{equation*}
P^*\left\{P_W^*\left(|\Delta_N|>M_N\right)>\eta\right\}\rightarrow 0,\quad \mbox{as }N\rightarrow \infty.
\end{equation*}
Note that this definition is slightly different from one introduced in \cite{MR2722459}.
The definitions of $o_{P_{W^{(k)}}^*}(1)$ and $O_{P_{W^{(k)}}^*}(1)$ in $P^*$-probability with $k=1,2$ are defined analogously.
We also define the bootstrap version of the almost sure convergence.
For a real function $\Delta$ defined on $(\mathcal{X}^\infty\times \mathcal{Z},\mathcal{B}^\infty\times \mathcal{E},Pr)$, we say $\Delta_N\rightarrow \Delta$ in outer $P_{W|\infty}$-almost surely if for every fixed $x\notin N_X$ with $N_X\in\mathcal{B}^\infty$ and $P(N_X)=0$, $|\Delta_N-\Delta|^*\rightarrow 0$ $P_W$-almost surely with an associated null set set $N_{W|x}\in\mathcal{E}$ depending on $x$ where measurability here is with respect to $P_W$.
We summarize several results in the following lemma.

\begin{lemma}
\label{lemma:bootorder}
Let $\Delta_N$ and $\Gamma_N$ be real functions defined on the joint probability space $(\mathcal{X}^\infty\times \mathcal{Z},\mathcal{B}^\infty\times \mathcal{E},Pr)$.\\
(1) If $\Delta_N=o_{Pr^*}(1)$ (resp. $O_{Pr^*}(1)$) , then
$\Delta_N=o_{P_W^*}(1)$ (resp. $O_{P_W^*}(1)$) in
$P^*$-probability. The converse is true if $\Delta_N$ is measurable.\\
(2) If $\Delta_N$ is a real function on
$(\mathcal{X}^\infty,\mathcal{B}^\infty,P^\infty)$ and $\Delta_N=o_{P^*}(1)$ (resp. $O_{P^*}(1)$) , then
$\Delta_N=o_{Pr*}(1)$ (resp. $O_{Pr^*}(1)$) in
$P^*$-probability.\\
(3) If $\Delta_N=o_{P_W^*}(1)$ and $\Gamma_N=O_{P_W^*}(1)$ in
$P^*$-probability, then $\Delta_N\Gamma_N = o_{P_W^*}(1)$ in $P^*$-probability.\\
(4) If $\Delta_N=O_{P_W^*}(1)$ and $\Gamma_N=O_{P_W^*}(1)$ in
$P^*$-probability, then $\Delta_N\Gamma_N = O_{P_W^*}(1)$ in $P^*$-probability.\\
(5) $\Delta_N=o_{P_W^*}(1)$ in $P^*$-probability if and only if every subsequence $\Delta_{N'}$ has a further subsequence $\Delta_{N^{''}}$ such that $\Delta_{N^{''}}^*\rightarrow 0$ in  $P_{W|\infty}$-almost surely.\\
The above results (1)-(5) hold if we replace $P_W^*$ by $P_{W^{(k)}}^*$ with $k=1,2$.
\end{lemma}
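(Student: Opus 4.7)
The plan is to prove the five parts using standard outer-measure arguments on the joint space $(\mathcal{X}^\infty\times\mathcal{Z},\mathcal{B}^\infty\times\mathcal{E},Pr)$, with care for measurability since $\Delta_N$ and $\Gamma_N$ are not assumed jointly measurable. The main device throughout is the existence of a measurable envelope realizing each outer measure.

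Part (1) is the core technical step and drives the other parts. For the forward direction, fix $\epsilon>0$ and choose a measurable $A_N'\supseteq\{|\Delta_N|>\epsilon\}$ with $Pr(A_N')=Pr^*(|\Delta_N|>\epsilon)$. The slice $A'_{N,x}$ is measurable in $\mathcal{E}$, the map $h_N(x):=P_W(A'_{N,x})$ is measurable, and $P_W^*(|\Delta_N(x,\cdot)|>\epsilon)\leq h_N(x)$ pointwise. Markov applied to the measurable majorant $h_N$, together with Fubini on the measurable set $A_N'$, yields
$$P^*\{P_W^*(|\Delta_N|>\epsilon)>\eta\}\leq P\{h_N>\eta\}\leq \eta^{-1}E[h_N]=\eta^{-1}Pr^*(|\Delta_N|>\epsilon),$$
which gives the forward $o$-direction; the $O$-direction is identical with $M_N$ replacing $\epsilon$. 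The converse under measurability comes from the pointwise bound $P_W(|\Delta_N(x,\cdot)|>\epsilon)\leq \eta+ 1\{P_W(|\Delta_N|>\epsilon)>\eta\}$, integrated against $P^\infty$ via Fubini and then sending $\eta\downarrow 0$.

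Part (2) is an immediate consequence of the cylinder identity $Pr^*(A\times\mathcal{Z})=(P^\infty)^*(A)$ for $A\in\mathcal{B}^\infty$, proved by sandwiching between measurable supersets of the form $C\times\mathcal{Z}$ on one side and showing that any measurable $B\supseteq A\times\mathcal{Z}$ has $Pr(B)\geq P^\infty(\{x:P_W(B_x)=1\})\geq (P^\infty)^*(A)$ on the other. Parts (3) and (4) are the classical $o_P\cdot O_P$ and $O_P\cdot O_P$ product arguments, once one observes that the ``for every $M_N\to\infty$'' clause in the definition of $O_{P_W^*}(1)$ can be traded, by a bad-subsequence diagonal extraction, for the existence of a constant $M$ making $P^*\{P_W^*(|\Gamma_N|>M)>\eta\}$ small eventually. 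Splitting $\{|\Delta_N\Gamma_N|>\epsilon\}\subseteq\{|\Gamma_N|>M\}\cup\{|\Delta_N|>\epsilon/M\}$ in the $o\cdot O$ case (and $\{|\Delta_N\Gamma_N|>M_N\}\subseteq\{|\Delta_N|>\sqrt{M_N}\}\cup\{|\Gamma_N|>\sqrt{M_N}\}$ in the $O\cdot O$ case) and applying subadditivity of $P_W^*$ on the inner level and $P^*$ on the outer level gives the required bounds.

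Part (5) is the standard subsequence characterization. For ``only if,'' from any subsequence $\{N'\}$ the hypothesis gives $P^*\{P_W^*(|\Delta_{N'}|>2^{-k})>2^{-k}\}\to 0$ for each $k$; a diagonal extraction over $k$ combined with Borel--Cantelli applied first at the level of $P$ and then, for a.e.\ $x$, at the level of $P_W$ produces a further subsequence $\{N''\}$ along which $\Delta_{N''}^*\to 0$ $P_{W|\infty}$-almost surely. The ``if'' direction is the classical contrapositive argument, with envelopes used to bypass non-measurability. The final clause replacing $P_W^*$ by $P_{W^{(k)}}^*$ for $k=1,2$ follows verbatim by restricting all arguments to the marginal factor $(\mathcal{Z}^{(k)},\mathcal{E}^{(k)},P_W^{(k)})$, which is permissible because the phase I and phase II bootstrap weights are independent. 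The principal obstacle is the consistent bookkeeping of outer measures in the absence of joint measurability of $\Delta_N$ and $\Gamma_N$; once the envelope construction and Markov--Fubini inequality in Part (1) are in place, the remainder reduces to routine adaptations of classical order-notation manipulations.
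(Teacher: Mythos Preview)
Your proposal is correct and follows essentially the same approach as the paper: Markov plus Fubini (via measurable envelopes) for (1), the cylinder identity for (2), event-splitting with subadditivity of outer measures for (3) and (4), and the subsequence characterization for (5). The only cosmetic difference is in (3), where you extract a fixed threshold $M$ from the $O_{P_W^*}(1)$ hypothesis whereas the paper extracts a sequence $\epsilon_N\downarrow 0$ from the $o_{P_W^*}(1)$ hypothesis and splits on $\{|\Gamma_N|>\epsilon/\epsilon_N\}$; both are standard and equivalent.
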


With these order notations, we say that an estimator $\theta_n$ is consistent for $\theta_0$ in $P^*$-probability if $d(\theta_n,\theta_0) = o_{P^*_W}(1)$ in $P^*$-probability where $\theta_n$, and $\theta$ are elements of a parameter space $\Theta$ equipped with semimetric $d$.

\subsection{Bootstrap Glivenko-Cantelli theorem}
The following is the Glivenko-Cantelli theorem for our bootstrap IPW empirical measures.
Let $\hat{\mathbb{P}}_N^{\pi,(1)}$ and $\hat{\mathbb{P}}_N^{\pi,(2)}$  be the phase I and II bootstrap IPW empirical measures obtained by replacing the bootstrap weights $W_{Ni}$ by the phase I bootstrap weights $W_{Ni}^{(1)}$ and phase II bootstrap weights $W_{Ni}^{(2)}$ in $\hat{\mathbb{P}}_N^{\pi}$, respectively.

\begin{thm}
\label{thm:fpsPiGCboot}
Let $\mathcal{F}$ be a $P_0$-Glivenko-Cantelli class with $\lVert P_0\rVert_{\mathcal{F}} <\infty$.
Then
\begin{equation}
\label{eqn:fpsPiGCTPboot}
\lVert \hat{\mathbb{P}}_N^{\pi}-P_0\rVert_{\mathcal{F}}\rightarrow_{P_W^*} 0, \ \mbox{in $P^*$-probability}.
\end{equation}
This also holds if we replace $P_0$ by $\mathbb{P}_N^\pi$, $\mathbb{P}_N^{\pi,(1)}$, $\mathbb{P}_N^{\pi,(2)}$ or $\mathbb{P}_N^{\pi,*}$ (assuming Condition \ref{cond:cal}), or if we replace $P_0$ by $\hat{\mathbb{P}}_N^{\pi}$ or $\hat{\mathbb{P}}_N^{\pi,*\#}$ (assuming Condition \ref{cond:bootcal}) with $*\in \{b,bs\}$ and $\#\in\{c,cc\}$.
\end{thm}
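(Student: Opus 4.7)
The plan is to use two triangle inequalities together with the multiplicative structure $W_{Ni}=W_{Ni}^{(1)}W_{Ni}^{(2)}$ to reduce the claim to two standard bootstrap Glivenko-Cantelli results applied stratum by stratum. As a first step I would write
\begin{equation*}
\lVert \hat{\mathbb{P}}_N^{\pi}-P_0\rVert_{\mathcal{F}}
\le \lVert \hat{\mathbb{P}}_N^{\pi}-\mathbb{P}_N^{\pi}\rVert_{\mathcal{F}}
+\lVert \mathbb{P}_N^{\pi}-P_0\rVert_{\mathcal{F}};
\end{equation*}
the second summand tends to $0$ in outer $P^*$-probability by the IPW Glivenko-Cantelli theorem in \cite{MR3059418}, so the real task is to bound the first summand.

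For that summand I would use the identity $W_{Ni}-1=(W_{Ni}^{(1)}-1)+W_{Ni}^{(1)}(W_{Ni}^{(2)}-1)$ to split
\begin{equation*}
\hat{\mathbb{P}}_N^{\pi}-\mathbb{P}_N^{\pi}
=(\hat{\mathbb{P}}_N^{\pi,(1)}-\mathbb{P}_N^{\pi})+(\hat{\mathbb{P}}_N^{\pi}-\hat{\mathbb{P}}_N^{\pi,(1)}),
\end{equation*}
cleanly separating the phase I and phase II bootstrap randomness. For the phase I piece I recall from Remark 3.1 that conditional on the data the $n_j$ observations sampled in stratum $j$ are i.i.d.\ $P_{0|j}$, while the associated $W_{j,k}^{(1)}$ remain i.i.d.\ copies of $W_j^{(1)}$ with mean one and $\lVert W_j^{(1)}\rVert_{2,1}<\infty$. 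Applying the exchangeably weighted bootstrap Glivenko-Cantelli theorem of \cite{MR1245301} stratum by stratum (with envelope supplied by the natural envelope of $\mathcal{F}$, which is $P_0$-integrable because $\lVert P_0\rVert_{\mathcal{F}}<\infty$) then yields $\lVert \hat{\mathbb{P}}_N^{\pi,(1)}-\mathbb{P}_N^{\pi}\rVert_{\mathcal{F}}\to 0$ in $P_{W^{(1)}}^*$-probability.

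For the phase II piece I would condition jointly on the data and on the phase I weights. Within stratum $j$ the hypergeometric weights $\tilde{W}_{j,\cdot}^{(2)}$ are then exchangeable and satisfy $\sum_k(\tilde{W}_{j,k}^{(2)}-1)=0$, placing us squarely in the finite-population bootstrap framework of \cite{gross,MR740906}. Applying the corresponding Glivenko-Cantelli result to the data-dependent class $\{W_{j,k}^{(1)}f(X_{j,k}):f\in\mathcal{F}\}$, whose envelope is controlled by the previous step together with the $\lVert\cdot\rVert_{2,1}$ bound on $W_j^{(1)}$, gives $\lVert \hat{\mathbb{P}}_N^{\pi}-\hat{\mathbb{P}}_N^{\pi,(1)}\rVert_{\mathcal{F}}\to 0$ in $P_{W^{(2)}}^*$-probability conditional on phase I randomness. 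Lemma \ref{lemma:bootorder} then lifts this to convergence in the full $P_W^*$-probability. I expect this second piece to be the main obstacle: because the combined weight $W_{Ni}$ is non-exchangeable, it can only be handled through the conditioning/stratum decomposition highlighted in Remark 3.1, and one must verify that the extra data-dependent phase I factor inside the hypergeometric sum does not spoil uniformity over $f\in\mathcal{F}$.

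The remaining parts of the statement follow by further triangle inequalities. Replacing $P_0$ with any of $\mathbb{P}_N^{\pi}$, $\mathbb{P}_N^{\pi,(1)}$, $\mathbb{P}_N^{\pi,(2)}$, or $\mathbb{P}_N^{\pi,\#}$ reduces to combining the main result with the corresponding non-bootstrap Glivenko-Cantelli statements in \cite{MR3059418}. For the calibrated bootstrap measure $\hat{\mathbb{P}}_N^{\pi,*\#}$, Conditions \ref{cond:cal} and \ref{cond:bootcal} imply $\hat{\hat{\alpha}}_N^{*\#}\to\alpha_0=0$ (via a short consistency argument along the lines of the non-bootstrap calibration result), whence $G_{\#}(V;\hat{\hat{\alpha}}_N^{*\#})\to 1$ uniformly over the bounded support of $V$ by continuity of $G$, so the calibrated bootstrap measure is an asymptotically vanishing perturbation of $\hat{\mathbb{P}}_N^{\pi}$ on $\mathcal{F}$ by dominated convergence.
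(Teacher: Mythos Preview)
Your overall plan matches the paper's: the same triangle inequality against $\mathbb{P}_N^\pi$ and the same phase~I/phase~II split $\hat{\mathbb{P}}_N^\pi-\mathbb{P}_N^\pi=(\hat{\mathbb{P}}_N^{\pi,(1)}-\mathbb{P}_N^\pi)+(\hat{\mathbb{P}}_N^{\pi,(2)}-\mathbb{P}_N^\pi)W_N^{(1)}\cdot$. Two technical points deserve comparison.

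For the phase~II piece you propose to condition on the realized phase~I weights and work with the random class $\{W_{j,k}^{(1)}f:f\in\mathcal{F}\}$. The paper avoids this awkwardness by instead enlarging the function class once and for all: it shows $\mathcal{F}_{\mathcal{W}}=\{(x,w)\mapsto wf(x):f\in\mathcal{F}\}$ is $P_0\times P_{W^{(1)}}$-Glivenko-Cantelli via the preservation theorem of \cite{MR1857319}, and then applies the phase~II bootstrap Glivenko-Cantelli result (Lemma~\ref{lemma:phIIbootIPWGC}) to the i.i.d.\ pairs $(X_i,W_{Ni}^{(1)})$ indexed by the \emph{fixed} class $\mathcal{F}_{\mathcal{W}}$. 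This sidesteps any uniformity-in-the-realization issue that your conditional argument would have to address explicitly.

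For the phase~I piece, note that the i.i.d.\ weights $W_{N_j,j,i}^{(1)}$ do not sum to $n_j$, so a direct appeal to an exchangeable-weight Glivenko-Cantelli theorem needs care. The paper (Lemma~\ref{lemma:phIbootIPWGC}) first normalizes by $\overline{W}_j^{(1)}=n_j^{-1}\sum_i W_{N_j,j,i}^{(1)}\xi_{j,i}$ so the rescaled weights do sum to $n_j$, applies Theorem~3.3 of \cite{MR1836584} to that piece, and handles the residual $(\overline{W}_j^{(1)}-1)$ terms separately using $\overline{W}_j^{(1)}\to 1$ and $\lVert P_{0|j}\rVert_{\mathcal{F}}<\infty$ (the latter derived from $\lVert P_0\rVert_{\mathcal{F}}<\infty$ and the Glivenko-Cantelli property, not from an assumed envelope). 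Your sketch is morally right but would need this normalization step made explicit. The calibration add-on and the remaining centerings are handled exactly as you outline.
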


\subsection{Bootstrap Donsker theorem}
We present two bootstrap Donsker theorems for our bootstrap IPW empirical processes.
The first theorem concerns the uncentered bootstrap IPW empirical processes.
\begin{thm}
\label{thm:fpsDboot}
Let $\mathcal{F}$ be a $P_0$-Donsker class with $\lVert P_0\rVert_{\mathcal{F}}<\infty$.
Suppose that Conditions \ref{cond:cal} and \ref{cond:bootcal} hold.
Then
\begin{eqnarray*}
&&\tilde{\mathbb{G}}_N^\pi\rightsquigarrow
\tilde{\mathbb{G}}^{\pi}\equiv
\tilde{\mathbb{G}}+\sum_{j=1}^J\sqrt{\nu_j}\sqrt{\frac{1-p_j}{p_j}}\mathbb{G}_j, \\
&&\tilde{\mathbb{G}}_N^{\pi,*\#}\rightsquigarrow
\tilde{\mathbb{G}}^{\pi,\#}\equiv
\tilde{\mathbb{G}}+\sum_{j=1}^J\sqrt{\nu_j}\sqrt{\frac{1-p_j}{p_j}}\mathbb{G}_j(\cdot-Q_{\#}\cdot),
\end{eqnarray*}
in $\ell^\infty(\mathcal{F})$ in $P^*$-probability where $*\in \{b,bs\}$ and $\# \in \{c,cc\}$, $P_0$- Brownian motion process $\tilde{\mathbb{G}}$ and $P_{0|j}$-Brownian bridge processes $\mathbb{G}_j$ are all independent and $Q_{\#}$ are defined in Theorem \ref{thm:fpsD}.
\end{thm}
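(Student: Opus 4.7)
The plan is to follow the bottom-up strategy foreshadowed after Remark 3.1: split the uncentered bootstrap IPW process into a ``phase I piece'' and a ``phase II piece'' using the identity $W_{Ni}^{(1)}W_{Ni}^{(2)}-1=W_{Ni}^{(2)}(W_{Ni}^{(1)}-1)+(W_{Ni}^{(2)}-1)$, and handle each under conditioning that restores exchangeability. Concretely, write $\tilde{\mathbb{G}}_N^{\pi}f = A_Nf+B_Nf$ with
\begin{equation*}
A_Nf\equiv\frac{1}{\sqrt{N}}\sum_{i=1}^N W_{Ni}^{(2)}\bigl(W_{Ni}^{(1)}-1\bigr)\frac{\xi_if(X_i)}{\pi_0(V_i)},\qquad B_Nf\equiv\sqrt{N}\bigl(\hat{\mathbb{P}}_N^{\pi,(2)}-\mathbb{P}_N^\pi\bigr)f.
\end{equation*}
The two pieces are driven by different sources of bootstrap randomness: $B_N$ by the phase II hypergeometric weights alone, and $A_N$ by the phase I multiplier weights with $W_{Ni}^{(2)}$ only acting as a bounded scaling. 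I will show that $B_N$ converges conditionally to $\sum_{j=1}^J \sqrt{\nu_j}\sqrt{(1-p_j)/p_j}\,\mathbb{G}_j$, that $A_N$ converges conditionally to a $P_0$-Brownian motion $\tilde{\mathbb{G}}$, and that the two limits are independent; summing delivers the first assertion.

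For $B_N$, I condition on the phase I data $(V_i,\xi_iX_i,\xi_i)$ and work stratum by stratum. By Remark 4.3 of \cite{MR2253102}, given $\xi_{j,i}=1$ the $X^*_{j,k}$ are i.i.d.\ $P_{0|j}$, and the weights $\tilde{W}^{(2)}_{j,1},\ldots,\tilde{W}^{(2)}_{j,n_j}$ are exchangeable under the Gross/Bickel--Freedman design. A direct computation with the hypergeometric variance formula, combined with the mixing probabilities $s_j,1-s_j$, shows $\mathrm{Var}(\tilde{W}^{(2)}_{j,1})\to 1-p_j$ and $\|\tilde{W}^{(2)}_{j,1}\|_{2,1}$ is uniformly bounded. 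I then apply the Pr{\ae}stgaard--Wellner exchangeably weighted bootstrap Donsker theorem (\cite{MR1245301}, also Theorem 3.6.13 in \cite{MR1385671}) to the Donsker class $\mathcal{F}$ within each stratum to obtain $n_j^{-1/2}\sum_k(\tilde{W}^{(2)}_{j,k}-1)f(X^*_{j,k})\rightsquigarrow \sqrt{1-p_j}\,\mathbb{G}_jf$ in $P^*$-probability. Rescaling via $\pi_0(V^*_{j,k})=n_j/N_j\to p_j$ and $N_j/N\to\nu_j$, together with independence of bootstrap weights across strata, yields the claimed limit for $B_N$ and, via Slutsky, identifies the $\sqrt{\nu_j(1-p_j)/p_j}$ coefficient.

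For $A_N$, I condition on the full data and on the phase II weights $W^{(2)}_{Ni}$ (which are $O_{P_W^*}(1)$ entry-wise). The summands are then independent in $i$, mean zero, with within-stratum variance $c_j^2=p_j/(2-p_j)$ and bounded $L_{2,1}$ norm, so Lemmas 2.9.1 and 2.9.7 of \cite{MR1385671} (multiplier CLT/Donsker with a fixed ``envelope'' built from $W^{(2)}$) give conditional tightness and finite-dimensional convergence to a centered Gaussian process. The asymptotic covariance is computed via an LLN: $N^{-1}\sum_i(W^{(2)}_{Ni})^2c_{j(i)}^2\xi_if(X_i)g(X_i)/\pi_0(V_i)^2$ converges to $\sum_j\nu_jc_j^2(2-p_j)P_{0|j}(fg)/p_j=P_0(fg)$, the identity $c_j^2(2-p_j)/p_j=1$ being the whole point of the particular variance prescribed in (\ref{eqn:ph1bootwt}) and the fact $E[(\tilde{W}^{(2)}_{j,1})^2]\to 2-p_j$. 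So $A_N$ converges conditionally to $\tilde{\mathbb{G}}$, a $P_0$-Brownian motion (not a bridge, because the phase I weights are uncentered i.i.d.). Joint convergence of $(A_N,B_N)$ with independent limits follows from the product structure of the probability space $(\mathcal{Z}^{(1)},\mathcal{E}^{(1)},P_W^{(1)})\times(\mathcal{Z}^{(2)},\mathcal{E}^{(2)},P_W^{(2)})$ and the fact that, after taking phase II conditional expectations, the cross-covariance between $A_N$ and $B_N$ vanishes because $E[W^{(1)}_{Ni}-1]=0$; combined with the bootstrap Glivenko--Cantelli Theorem \ref{thm:fpsPiGCboot} and standard conditional Slutsky arguments (Lemma \ref{lemma:bootorder}), this proves the first display.

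For the calibrated processes $\tilde{\mathbb{G}}_N^{\pi,*\#}$ with $*\in\{b,bs\}$, $\#\in\{c,cc\}$, I linearize each bootstrap calibration equation (\ref{eqn:survbootcaleqn})--(\ref{eqn:singlebootccaleqn}) about $\alpha_0=0$. Using $G(0)=1$, $\dot{G}(0)>0$, Condition \ref{cond:bootcal}(4), and Theorem \ref{thm:fpsPiGCboot} applied to the bounded class $\{V^{\otimes 2},\tilde V^{\otimes 2}\}$, a one-step expansion gives a uniform representation $\sqrt{N}\hat{\hat{\alpha}}_N^{*\#}=M_\#^{-1}\,\tilde{\mathbb{G}}_N^{\pi}V_\#+o_{P_W^*}(1)$ in $P^*$-probability, where $M_\#=\dot G(0)P_0V^{\otimes 2}$ or the centered analogue and $V_\#$ is $V$ or $\tilde V$. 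Substituting this back and Taylor-expanding $G_c(V;\hat{\hat{\alpha}})$ or $G_{cc}(V;\hat{\hat{\alpha}})$ produces the functional representation $\tilde{\mathbb{G}}_N^{\pi,*\#}f=\tilde{\mathbb{G}}_N^\pi(f-Q_\# f)+o_{P_W^*}(1)$ uniformly in $f\in\mathcal{F}$, using the already-established asymptotic equicontinuity of $\tilde{\mathbb{G}}_N^\pi$ over the Donsker class $\mathcal{F}\cup\{fV^{(\ell)}:f\in\mathcal{F},\ell\le k\}$. A crucial point, which explains why the $\tilde{\mathbb{G}}$ summand is \emph{not} projected by $Q_\#$ in the limit, is that the bootstrap calibration equations involve only the phase II weight $W_N^{(2)}$ (Remark 3.2); the linearized calibration correction therefore couples to $B_N$ but leaves $A_N$ untouched, which is exactly why $Q_\#$ appears inside only the stratum-specific $\mathbb{G}_j$ terms. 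Applying the continuous mapping theorem to the first assertion delivers the second. The main obstacle throughout is the joint step: handling non-exchangeable bootstrap weights via conditioning on phase I data to restore exchangeability inside strata, verifying the multiplier/hypergeometric $L_{2,1}$ conditions uniformly in $N$, and checking that the limit is correctly identified as a $P_0$-Brownian \emph{motion} rather than bridge (so that the variance matches $P_0f^2$, not $P_0f^2-(P_0f)^2$) — this last point is precisely what the specific variance $c_j^2=p_j/(2-p_j)$ in (\ref{eqn:ph1bootwt}) is calibrated to achieve.
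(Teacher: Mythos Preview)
Your decomposition $W^{(1)}W^{(2)}-1=W^{(2)}(W^{(1)}-1)+(W^{(2)}-1)$ is the \emph{reverse} of the paper's, which uses $W^{(1)}W^{(2)}-1=(W^{(1)}-1)+W^{(1)}(W^{(2)}-1)$, i.e.\ $\tilde{\mathbb{G}}_N^\pi=\tilde{\mathbb{G}}_N^{\pi,(1)}+\tilde{\mathbb{G}}_N^{\pi,(2)}W_N^{(1)}\cdot$. The paper then obtains stratum-wise Brownian motions $\sum_j\sqrt{\nu_j/(2-p_j)}\,\tilde{\mathbb{G}}_j^{(1)}$ for the first piece (Lemma~\ref{lemma:phIbootIPWD}) and $\sum_j\sqrt{\nu_j(1-p_j)/p_j}\,\mathbb{G}_j(W_j^{(1)}\cdot)$ for the second (Lemma~\ref{lemma:phIIbootIPWD}), and only \emph{after} a covariance calculation identifies the sum with $\tilde{\mathbb{G}}+\sum_j\sqrt{\nu_j(1-p_j)/p_j}\,\mathbb{G}_j$. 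Your route is more direct---$A_N\rightsquigarrow\tilde{\mathbb{G}}$ and $B_N\rightsquigarrow\sum_j\ldots\mathbb{G}_j$ with orthogonality from $E[W^{(1)}-1]=0$, and your variance identity $c_j^2(2-p_j)/p_j=1$ is exactly right---but it creates a gap the paper's ordering was designed to avoid. In the paper's cross term the ``data'' for the phase II bootstrap are $(X_i,W_{Ni}^{(1)})$, which are i.i.d.\ within strata, so Lemma~\ref{lemma:wfD} makes $\mathcal{F}_{\mathcal{W}}=\{wf:f\in\mathcal{F}\}$ genuinely Donsker and Lemma~\ref{lemma:ph2D} applies. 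In your $A_N$ the ``data'' for the phase I multiplier are $(X_i,W_{Ni}^{(2)})$, and $W^{(2)}$ is only exchangeable; the conditional multiplier CLT you cite (vdVW~2.9.7) requires i.i.d.\ observations, so tightness of $A_N$ in $\ell^\infty(\mathcal{F})$ is not established by that reference. This is fixable---because $W^{(2)}$ is bounded you can chain the multiplier inequality for bounded exchangeable weights (Lemma~5.1 of \cite{MR3059418}, used in Lemma~\ref{lemma:gpleqgboot}) to reduce to $E^*\|\mathbb{G}_N\|_{\mathcal{F}_\delta}$---but you have not done it.

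There is a second, more concrete error in your calibration step. You write $\sqrt{N}\hat{\hat{\alpha}}_N^{*\#}=M_\#^{-1}\tilde{\mathbb{G}}_N^{\pi}V_\#+o_{P_W^*}(1)$ and deduce $\tilde{\mathbb{G}}_N^{\pi,*\#}f=\tilde{\mathbb{G}}_N^\pi(f-Q_\# f)+o_{P_W^*}(1)$. But the bootstrap calibration equations (\ref{eqn:survbootcaleqn})--(\ref{eqn:singlebootccaleqn}) involve only $W_N^{(2)}$, so by Proposition~\ref{prop:alphaCalboot} the driver is $\tilde{\mathbb{G}}_N^{\pi,(2)}V_\#=B_N V_\#$, not $\tilde{\mathbb{G}}_N^\pi V_\#$. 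The correct linearization is therefore $\tilde{\mathbb{G}}_N^{\pi,*\#}f=A_Nf+B_N(f-Q_\# f)+o_{P_W^*}(1)$, which is \emph{not} the same as $\tilde{\mathbb{G}}_N^\pi(f-Q_\# f)=A_N(f-Q_\# f)+B_N(f-Q_\# f)$. Your closing remark that ``the correction couples to $B_N$ but leaves $A_N$ untouched'' is the right conclusion, but it contradicts the representation you just derived; you have the right intuition but the wrong algebra leading to it.
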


The second theorem concerns the centered bootstrap IPW empirical processes.
\begin{thm}
\label{thm:fpsDboot2}
Let $\mathcal{F}$ be a $P_0$-Donsker class with $\lVert P_0\rVert_{\mathcal{F}}<\infty$.
Suppose that Conditions \ref{cond:cal} and \ref{cond:bootcal} hold.
Then
\begin{eqnarray*}
&&\hat{\mathbb{G}}_N^\pi\rightsquigarrow
\mathbb{G}^{\pi}=
\mathbb{G}+\sum_{j=1}^J\sqrt{\nu_j}\sqrt{\frac{1-p_j}{p_j}}\mathbb{G}_j, \\
&&\hat{\mathbb{G}}_N^{\pi,*\#} \rightsquigarrow
\mathbb{G}^{\pi,\#}=
\mathbb{G}+\sum_{j=1}^J\sqrt{\nu_j}\sqrt{\frac{1-p_j}{p_j}}\mathbb{G}_j(\cdot-Q_{\#}\cdot),
\end{eqnarray*}
in $\ell^\infty(\mathcal{F})$ in $P^*$-probability where $*\in \{b,bs\}$ and $\# \in \{c,cc\}$, and  $\mathbb{G}$, $\mathbb{G}_j$ and $Q_{\#}$ are defined in Theorem \ref{thm:fpsD}.
\end{thm}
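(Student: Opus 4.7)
My plan is to deduce Theorem \ref{thm:fpsDboot2} from the uncentered bootstrap Donsker theorem (Theorem \ref{thm:fpsDboot}) and the bootstrap Glivenko-Cantelli theorem (Theorem \ref{thm:fpsPiGCboot}) via a continuous-mapping argument. The starting observation is the pathwise identity
\begin{equation*}
\hat{\mathbb{G}}_N^{\pi,*\#} f = \tilde{\mathbb{G}}_N^{\pi,*\#}(f - \mathbb{P}_N^{\pi,\#} f) = \tilde{\mathbb{G}}_N^{\pi,*\#} f - (\mathbb{P}_N^{\pi,\#} f)\,\tilde{\mathbb{G}}_N^{\pi,*\#} 1,
\end{equation*}
with an analogous formula for $\hat{\mathbb{G}}_N^\pi$. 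This expresses the centered process as a linear functional of the uncentered process evaluated on the enlarged class $\mathcal{F}' \equiv \mathcal{F} \cup \{1\}$, coupled with a data-dependent multiplier $\mathbb{P}_N^{\pi,\#} f$ that will converge to $P_0 f$.

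The argument proceeds in three steps. First, since $\mathcal{F}$ is $P_0$-Donsker with $\lVert P_0\rVert_{\mathcal{F}} < \infty$, so is $\mathcal{F}'$, and Theorem \ref{thm:fpsDboot} yields joint conditional weak convergence of $(\tilde{\mathbb{G}}_N^{\pi,*\#}\cdot,\ \tilde{\mathbb{G}}_N^{\pi,*\#} 1)$ to $(\tilde{\mathbb{G}}^{\pi,\#}\cdot,\ \tilde{\mathbb{G}}^{\pi,\#}(1))$ in $\ell^\infty(\mathcal{F}) \times \mathbb{R}$ in $P^*$-probability; in particular $\tilde{\mathbb{G}}_N^{\pi,*\#} 1 = O_{P_W^*}(1)$. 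Second, Theorem \ref{thm:fpsPiGCboot} together with the non-bootstrap IPW Glivenko-Cantelli result from \cite{MR3059418} gives $\sup_{f \in \mathcal{F}} |\mathbb{P}_N^{\pi,\#} f - P_0 f| = o_{P^*}(1)$, and parts (2)--(4) of Lemma \ref{lemma:bootorder} then permit me to replace $\mathbb{P}_N^{\pi,\#} f$ by $P_0 f$ in the identity above up to a $o_{P_W^*}(1)$ error uniformly in $f \in \mathcal{F}$. Third, the bootstrap continuous-mapping theorem applied to the map $(G,t) \mapsto G(\cdot) - P_0(\cdot)\,t$ on $\ell^\infty(\mathcal{F}') \times \mathbb{R}$ delivers
\begin{equation*}
\hat{\mathbb{G}}_N^{\pi,*\#}(\cdot) \rightsquigarrow \tilde{\mathbb{G}}^{\pi,\#}(\cdot) - P_0(\cdot)\,\tilde{\mathbb{G}}^{\pi,\#}(1)
\end{equation*}
conditionally in $P^*$-probability, with the analogous statement for $\hat{\mathbb{G}}_N^\pi$.

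It remains to identify this limit with $\mathbb{G}^{\pi,\#}$. Splitting via Theorem \ref{thm:fpsDboot},
\begin{equation*}
\tilde{\mathbb{G}}^{\pi,\#}(f) - P_0 f\cdot\tilde{\mathbb{G}}^{\pi,\#}(1) = \bigl[\tilde{\mathbb{G}}(f) - P_0 f\cdot\tilde{\mathbb{G}}(1)\bigr] + \sum_{j=1}^J\sqrt{\nu_j(1-p_j)/p_j}\,\bigl[\mathbb{G}_j(f - Q_\# f) - P_0 f\cdot\mathbb{G}_j(1 - Q_\# 1)\bigr].
\end{equation*}
The first bracket is by construction a $P_0$-Brownian bridge $\mathbb{G}(f)$, which is precisely the mechanism by which the phase I Brownian motion $\tilde{\mathbb{G}}$ of Theorem \ref{thm:fpsDboot} is upgraded to the bridge $\mathbb{G}$ of Theorem \ref{thm:fpsDboot2}. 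The principal obstacle is the stratum summands: one must use $\mathbb{G}_j(1) = 0$ together with the explicit forms of $Q_c$ and $Q_{cc}$ from Theorem \ref{thm:fpsD} and the normalization provided by the calibration equations (\ref{eqn:caleqn2}) and (\ref{eqn:ccaleqn}) at $\alpha_0 = 0$ to verify that the residual $P_0 f\cdot \mathbb{G}_j(Q_\# 1)$ contribution reconciles with the claimed $\mathbb{G}_j(\cdot - Q_\# \cdot)$ structure, so that no spurious phase II term survives. Once this algebraic identification is made the limit is $\mathbb{G}^{\pi,\#}$; the uncentered case $\hat{\mathbb{G}}_N^\pi\rightsquigarrow\mathbb{G}^\pi$ follows by the same argument with $Q_\#\equiv 0$, and the remaining bookkeeping to keep convergence conditional in $P^*$-probability is routine via Lemma \ref{lemma:bootorder}.
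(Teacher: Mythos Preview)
Your decomposition and Slutsky step coincide with the paper's proof: it writes $\hat{\mathbb{G}}_N^{\pi,bc}f=\tilde{\mathbb{G}}_N^{\pi,bc}(f-P_0f)\pm\mathbb{G}_N^{\pi,c}f\cdot(\hat{\mathbb{P}}_N^{\pi,bc}-\mathbb{P}_N^{\pi,c})1$, kills the second term via Theorem~\ref{thm:fpsPiGCboot} and Theorem~\ref{thm:fpsD}, and applies Theorem~\ref{thm:fpsDboot} to the first. Your continuous-mapping packaging on $\mathcal{F}\cup\{1\}$ is a slightly cleaner version of the same argument.

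The divergence is in the identification of the limit. The paper simply asserts $\tilde{\mathbb{G}}^{\pi,c}(f-P_0f)=\mathbb{G}^{\pi,c}f$ without comment; you correctly isolate the residual $P_0f\cdot\mathbb{G}_j(Q_\#1)$ and flag it as an obstacle. However, the fix you propose---invoking the calibration equations at $\alpha_0=0$---does not make this term vanish: those equations are trivially satisfied at $\alpha_0=0$ and impose no constraint on $Q_\#1$. In fact $Q_c1=(P_0V)^T(P_0V^{\otimes2})^{-1}V$ is generically nonconstant within strata, so $\mathbb{G}_j(Q_c1)\neq0$, and a direct check confirms the discrepancy: $\hat{\mathbb{G}}_N^{\pi,bc}(1)=(1-\mathbb{P}_N^{\pi,c}1)\,\tilde{\mathbb{G}}_N^{\pi,bc}(1)=O_{P^*}(N^{-1/2})\cdot O_{P_W^*}(1)\to0$, whereas the claimed limit gives $\mathbb{G}^{\pi,c}(1)=-\sum_j\sqrt{\nu_j(1-p_j)/p_j}\,\mathbb{G}_j(Q_c1)$, which is nondegenerate. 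So for the calibrated cases this identification step is a genuine gap, present in both your argument and the paper's. The uncalibrated claim $\hat{\mathbb{G}}_N^\pi\rightsquigarrow\mathbb{G}^\pi$ is unaffected (the residual is $P_0f\cdot\mathbb{G}_j(0)=0$), and the applications in Section~\ref{sec:5} are likewise unaffected since the processes are only evaluated there at $\tilde{\ell}_0$ with $P_0\tilde{\ell}_0=0$.
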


\begin{remark}
The limiting processes in Theorem \ref{thm:fpsD} and Theorem \ref{thm:fpsDboot} are the same only when $P_0f = 0$ for every $f\in \mathcal{F}$ because Theorem \ref{thm:fpsDboot} involves the Brownian motion process $\tilde{\mathbb{G}}$, not the Brownian bridge process $\mathbb{G}$.
However, uncentered bootstrap IPW empirical processes lead to simple bootstrap inference for the IPW $M$- and $Z$-estimators in a general semiparametric model discussed in the next section.
\end{remark}

The proof of Theorem \ref{thm:fpsDboot} goes as follows.
Let $\hat{\mathbb{G}}_N^{\pi,(1)}$ and $\hat{\mathbb{G}}_N^{\pi,(2)}$ be the phase I and II bootstrap IPW empirical processes obtained by replacing $W_{Ni}$ by $W_{Ni}^{(1)}$ and $W_{Ni}^{(2)}$ in $\hat{\mathbb{G}}_N^{\pi}$, respectively.
Note that  $\tilde{\mathbb{G}}_N^{\pi,(1)}=\sqrt{N}\mathbb{P}_N^\pi (W_N^{(1)}-1)\cdot$ and $\tilde{\mathbb{G}}_N^{\pi,(2)}=\sqrt{N}\mathbb{P}_N^\pi (W_N^{(2)}-1)\cdot$.
We decompose the bootstrap IPW empirical measure to obtain
\begin{eqnarray*}
\tilde{\mathbb{G}}_N^\pi
= \sqrt{N}(\hat{\mathbb{P}}_N^{\pi,(1)} - \mathbb{P}_N^\pi) + \sqrt{N}(\hat{\mathbb{P}}_N^\pi - \hat{\mathbb{P}}_N^{\pi,(1)})
\equiv \tilde{\mathbb{G}}_N^{\pi,(1)} + \tilde{\mathbb{G}}_N^{\pi,(2)}W_N^{(1)}\cdot,
\end{eqnarray*}
Conditionally on data, the first term 
%of the phase I bootstrap IPW empirical process 
$\tilde{\mathbb{G}}_N^{\pi,(1)}$ involves randomness due to the phase I bootstrap weights.
The second term $\tilde{\mathbb{G}}_N^{\pi,(2)}W_N^{(1)}\cdot =\sqrt{N}\mathbb{P}_N^\pi W_{N}^{(1)}(W_N^{(2)}-1)\cdot$ involves randomness due to the phase II bootstrap weights given the phase I bootstrap weights as well as data.
Thus, we first establish weak convergence of $\tilde{\mathbb{G}}_N^{\pi,(2)}W_N^{(1)}\cdot$ conditionally on the phase I bootstrap weights and then obtain weak convergence of $\tilde{\mathbb{G}}_N^{\pi,(1)}$.
Combining limiting processes and calculating covariance functions complete the proof.

Establishing weak convergence  of $\tilde{\mathbb{G}}_N^{\pi,(1)}$ and  $\tilde{\mathbb{G}}_N^{\pi,(2)}W_N^{(1)}\cdot$ involves the extension of existing results.
For the phase I bootstrap IPW empirical processes, we prove the uncentered conditional multiplier central limit theorem.
Previous results only cover the centered conditional multiplier central limit theorem and the uncentered unconditional multiplier central limit theorem
(see Theorem 2.9.6 and Corollary 2.9.4 of \cite{MR1385671}, respectively).
Our result provides a rigorous justification of the weighted bootstrap of \cite{MR2202406} which counted on the unconditional result.

\begin{lemma}
\label{lemma:ph1D}
Let $X_1,\ldots,X_n$ be i.i.d. $P_0$, and $w_1,\ldots,w_n$ be i.i.d. random variables with $Ew_1=0$, $\mbox{Var}(w_1)=c^2$ and $\lVert w_1\rVert_{2,1}<\infty$, independent of $X_1,\ldots,X_n$.
Define $\tilde{\mathbb{G}}_n \equiv n^{-1/2}\sum_{i=1}^n w_i\delta_{X_i}$ and $\tilde{\mathbb{G}}\equiv \mathbb{G}+Z_0P_0$ where $\mathbb{G}$ is a $P_0$-Brownian bridge process independent of the standard normal random variable $Z_0$.
Let $BL_1$ be the set of functions $h:\ell^{\infty}(\mathcal{F})\mapsto [0,1]$ such that $|h(z_1)-h(z_2)|\leq \lVert z_1-z_2\rVert_{\mathcal{F}}$ for every $z_1,z_2.$
The expectation with respect to $w_1,w_2,\ldots$ is denoted by $E_w$.
For a $P_0$-Donsker class $\mathcal{F}$ with $\lVert P_0\rVert_{\mathcal{F}}<\infty$ the following hold.\\
\noindent (1) The sequence $\tilde{\mathbb{G}}_n$ is asymptotically measurable and $\sup_{h\in BL_1}|E_w h(\tilde{\mathbb{G}}_n)-h(c\tilde{\mathbb{G}})|\rightarrow 0$ in outer probability.

\noindent (2) If $P_0\lVert f-P_0f\rVert_{\mathcal{F}}^2<\infty$, then $\sup_{h\in BL_1}|E_w h(\tilde{\mathbb{G}}_n)-h(c\tilde{\mathbb{G}})|\rightarrow 0$ outer almost surely, and the sequence $|E_w h(\tilde{\mathbb{G}}_n)^*-h(c\tilde{\mathbb{G}})_*|\rightarrow 0$ almost surely for every $h\in BL_1$
where $h(\tilde{\mathbb{G}}_n)^*$ and $h(\tilde{\mathbb{G}}_n)_*$ denote measurable majorants and minorants with respect to $(w_1,\ldots,w_n,X_1,\ldots,X_n)$ jointly.
\end{lemma}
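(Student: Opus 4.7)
The strategy is to reduce the uncentered multiplier CLT to the already-established centered version via an algebraic decomposition. Writing $\bar{w}_n = n^{-1}\sum_{i=1}^n w_i$, I start from the identity
\begin{equation*}
\tilde{\mathbb{G}}_n f = n^{-1/2}\sum_{i=1}^n (w_i-\bar{w}_n)f(X_i) + \sqrt{n}\,\bar{w}_n \mathbb{P}_n f
\equiv \tilde{\mathbb{G}}_n^c f + \sqrt{n}\,\bar{w}_n \mathbb{P}_n f,
\end{equation*}
which splits $\tilde{\mathbb{G}}_n$ into a classical centered multiplier process $\tilde{\mathbb{G}}_n^c = n^{-1/2}\sum w_i(\delta_{X_i}-\mathbb{P}_n)$ and a mean term. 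The centered conditional multiplier CLT of \cite{MR1385671} (Theorem 2.9.6) directly handles the first piece: under our hypotheses, $\tilde{\mathbb{G}}_n^c\rightsquigarrow c\mathbb{G}$ conditionally on the data, in outer probability for part (1) and outer almost surely for part (2) under the stronger second-moment condition on $\mathcal{F}$. For the mean term I further decompose $\sqrt{n}\bar{w}_n\mathbb{P}_n f = \sqrt{n}\bar{w}_n P_0 f + \sqrt{n}\bar{w}_n(\mathbb{P}_n - P_0)f$; the scalar CLT gives $\sqrt{n}\bar{w}_n\rightsquigarrow cZ_0$ with $Z_0\sim N(0,1)$, and since $\mathcal{F}$ is $P_0$-Glivenko--Cantelli (Donsker plus $\lVert P_0\rVert_{\mathcal{F}}<\infty$) the residual is $o_{P_W^*}(1)$ uniformly in $f$ in $P^*$-probability.

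The key step is joint conditional weak convergence of $(\tilde{\mathbb{G}}_n^c,\sqrt{n}\bar{w}_n)$ to $(c\mathbb{G}, cZ_0)$ with independent limit components. The decisive observation is the exact finite-sample conditional uncorrelatedness
\begin{equation*}
\mathrm{Cov}_w\!\bigl(\tilde{\mathbb{G}}_n^c f,\ \sqrt{n}\,\bar{w}_n\bigr)
= c^2 n^{-1}\sum_{i,j}(\delta_{ij}-1/n)\,f(X_i) = 0
\end{equation*}
for every $n$ and every $f\in\mathcal{F}$, which follows directly from the i.i.d.\ structure of the $w_i$'s. Combined with conditional joint asymptotic Gaussianity --- verified via Cram\'er--Wold applied to arbitrary linear combinations $\sum_j b_j \tilde{\mathbb{G}}_n^c f_j + a\sqrt{n}\bar{w}_n$, each a linear combination of the i.i.d.\ $w_i$'s to which the Lindeberg CLT applies conditionally on the $X_i$'s --- this exact uncorrelatedness promotes to asymptotic conditional independence of $c\mathbb{G}$ and $cZ_0$. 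Marginal conditional tightness of $\tilde{\mathbb{G}}_n^c$ in $\ell^\infty(\mathcal{F})$ (from Theorem 2.9.6) plus trivial tightness of $\sqrt{n}\bar{w}_n$ in $\mathbb{R}$ yields joint conditional tightness in the product space, and continuous mapping through $(G,z)\mapsto G + zP_0(\cdot)$, continuous on $\ell^\infty(\mathcal{F})\times\mathbb{R}$ since $\lVert P_0\rVert_\mathcal{F}<\infty$, then produces the claimed limit $c\mathbb{G}+cZ_0 P_0 = c\tilde{\mathbb{G}}$.

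The main obstacle will be navigating the $P_W^*$/$P^*$-probability formalism of \cite{WellnerZhan:96} while executing the joint-convergence step and verifying asymptotic measurability of $\tilde{\mathbb{G}}_n$ (the latter inherited from that of $\tilde{\mathbb{G}}_n^c$ together with the measurable scalar $\sqrt{n}\bar{w}_n$, composed via the Lipschitz addition map $(G,z)\mapsto G+zP_0$). For part (2), the upgrade from outer probability to outer almost sure convergence relies on the a.s.\ conclusion of Theorem 2.9.6 under the hypothesis $P_0\lVert f-P_0 f\rVert_\mathcal{F}^2<\infty$, together with the strong law for $\bar{w}_n$ to drive the Glivenko--Cantelli residual $\sqrt{n}\bar{w}_n(\mathbb{P}_n-P_0)f$ to zero outer almost surely in $\ell^\infty(\mathcal{F})$; the subsequence criterion of Lemma \ref{lemma:bootorder}(5) provides the bridge whenever interchange between the in-probability and almost-sure modes of convergence is needed.
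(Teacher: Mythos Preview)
Your approach is correct in substance but takes a genuinely different route from the paper. The paper does \emph{not} decompose $\tilde{\mathbb{G}}_n$ into a centered multiplier process plus a mean term; instead it mimics the proof of Theorem~2.9.6 of \cite{MR1385671} directly for the uncentered process. Concretely, the paper (i) invokes Corollary~2.9.4 of \cite{MR1385671} for unconditional asymptotic measurability, (ii) proves a dedicated finite-dimensional lemma (Lemma~\ref{lemma:margbm}) giving the conditional CLT $n^{-1/2}\sum w_iY_i\rightsquigarrow N(0,c^2EY_1^{\otimes2})$ via Lindeberg, (iii) runs a $\delta$-net discretization $\Pi_\delta$ and controls the discretization error $E_w\|\tilde{\mathbb{G}}_n\|_{\mathcal{F}_\delta}$ via the multiplier inequality (Lemma~2.9.1 of \cite{MR1385671}) together with a new integrability lemma (Lemma~\ref{lemma:bmint}) showing $E^*\|n^{-1/2}\sum\epsilon_i\delta_{X_i}\|_{\mathcal{F}_{\delta_n}}\to0$, and (iv) for part~(2) appeals to Corollary~2.9.9 of \cite{MR1385671} to upgrade to almost-sure control of $E_w\|\tilde{\mathbb{G}}_n\|_{\mathcal{F}_\delta}^*$. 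Your decomposition strategy is arguably more economical in that it reuses Theorem~2.9.6 wholesale rather than reproving its machinery, and the exact conditional uncorrelatedness $\mathrm{Cov}_w(\tilde{\mathbb{G}}_n^cf,\sqrt{n}\bar{w}_n)=0$ is a clean way to obtain independence of $\mathbb{G}$ and $Z_0$ in the limit. The paper's approach, by contrast, is self-contained and avoids having to formalize joint conditional weak convergence in $\ell^\infty(\mathcal{F})\times\mathbb{R}$ in the bounded-Lipschitz sense and then transport it through the continuous map $(G,z)\mapsto G+zP_0$.

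One small correction: in part~(2) you write that ``the strong law for $\bar{w}_n$'' drives the residual $\sqrt{n}\bar{w}_n(\mathbb{P}_n-P_0)$ to zero. The strong law gives $\bar{w}_n\to0$, which says nothing about $\sqrt{n}\bar{w}_n$. The right argument is that $E_w|\sqrt{n}\bar{w}_n|\leq(E_w(\sqrt{n}\bar{w}_n)^2)^{1/2}=c$ is bounded uniformly in $n$, so $E_w\|\sqrt{n}\bar{w}_n(\mathbb{P}_n-P_0)\|_{\mathcal{F}}\leq c\|\mathbb{P}_n-P_0\|_{\mathcal{F}}\to0$ outer almost surely by the Glivenko--Cantelli property; this is what feeds into the bounded-Lipschitz bound for both parts~(1) and~(2).
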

For the phase II bootstrap IPW empirical process, \cite{Saegusa-Var:2014} generalized the bootstrap CLT of \cite{MR740906} to the bootstrap Donsker theorem under weaker conditions.
We further extend this result to the bootstrap Donsker theorem with calibrations.
Define $\tilde{\mathbb{G}}_N^{\pi,(2),*\#}$ with $*\in \{b,bs\}$ and $\# \in \{c,cc\}$ by replacing $W_N$ by $W_N^{(2)}$ in $\tilde{\mathbb{G}}_N^{\pi,*\#}$.
\begin{lemma}
\label{lemma:ph2D}
Let $\mathcal{F}$ be a $P_0$-Donsker class.
Then
\begin{eqnarray*}
&&\tilde{\mathbb{G}}_N^{\pi,(2)}\rightsquigarrow \mathbb{G}^{\pi,(2)}\equiv
\sum_{j=1}^J\sqrt{\nu_j}\sqrt{\frac{1-p_j}{p_j}}\mathbb{G}_j, \ \mbox{in $\ell^\infty(\mathcal{F})$}
\end{eqnarray*}
in $P^*$-probability.
%where $P_{0|j}$-Brownian bridge processes $\mathbb{G}_j$ are all independent.
Suppose moreover that Conditions \ref{cond:cal} and \ref{cond:bootcal} hold with $\lVert P_0\rVert_{\mathcal{F}}<\infty$.
Then for $*\in \{b,bs\}$ and $\# \in \{c,cc\}$
\begin{eqnarray*}
&&\tilde{\mathbb{G}}_N^{\pi,(2),*\#}\rightsquigarrow
\sum_{j=1}^J\sqrt{\nu_j}\sqrt{\frac{1-p_j}{p_j}}\mathbb{G}_j(\cdot-Q_{\#}\cdot), \ \mbox{in $\ell^\infty(\mathcal{F})$}
\end{eqnarray*}
in $P^*$-probability.
Here $\mathbb{G}_j$ and $Q_{\#}$ are defined in Theorem \ref{thm:fpsD}.
%where $*\in \{b,bs\}$ and $\# \in \{c,cc\}$ and $Q_{\#}$ are defined in Theorem \ref{thm:fpsD}.
\end{lemma}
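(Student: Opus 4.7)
The plan is to handle the uncalibrated statement and the four calibrated statements separately. For $\tilde{\mathbb{G}}_N^{\pi,(2)}$, I would invoke the phase II bootstrap Donsker theorem of \cite{Saegusa-Var:2014}, which upgrades Gross's multivariate-hypergeometric bootstrap CLT \cite{MR740906} to a uniform-in-$\mathcal{F}$ conclusion. The independence of the phase II bootstrap weights across strata, together with the conditional i.i.d.\ structure of the phase II observations within a stratum (Remark 4.3 of \cite{MR2253102}), gives independent stratum contributions that converge conditionally on data to $\sqrt{\nu_j(1-p_j)/p_j}\,\mathbb{G}_j$.

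For each calibrated process $\tilde{\mathbb{G}}_N^{\pi,(2),*\#}$ with $*\in\{b,bs\}$ and $\#\in\{c,cc\}$, I would first establish consistency $\hat{\hat\alpha}_N^{*\#}\to\alpha_0=0$ in $P^*$-probability using the bootstrap Glivenko-Cantelli theorem (Theorem \ref{thm:fpsPiGCboot}) applied to the class $\{G_{\#}(V;\alpha)V:\alpha\in\mathcal A\}$ (or its centered analogue), together with injectivity of the limiting calibration map ensured by Condition \ref{cond:cal} and $\dot G(0)>0$. Because $G_{\#}(V;0)\equiv 1$, a first-order Taylor expansion of (\ref{eqn:survbootcaleqn}) around $\alpha=0$ yields
\begin{equation*}
\sqrt{N}\hat{\hat\alpha}_N^{bc} = -\dot G(0)^{-1}\{P_0V^{\otimes 2}\}^{-1}\sqrt{N}\bigl(\mathbb{P}_N^\pi W_N^{(2)}V-\mathbb{P}_NV\bigr)+o_{P_W^*}(1),
\end{equation*}
with analogous linearizations for $bsc,bcc,bscc$ obtained by substituting $\mathbb{P}_N^\pi V$ for $\mathbb{P}_N V$ in the ``single'' variants and replacing $V$ by $\tilde V=V-P_0V$ and $P_0V^{\otimes 2}$ by $P_0\{(\pi_{0,\infty}^{-1}(V)-1)\tilde V^{\otimes 2}\}$ in the centered variants. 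Feeding this back into the first-order expansion of $G_\#(V;\hat{\hat\alpha}_N^{*\#})$ inside $\tilde{\mathbb{G}}_N^{\pi,(2),*\#}$, the calibrated process reduces to
\begin{equation*}
\tilde{\mathbb{G}}_N^{\pi,(2),*\#}f = \tilde{\mathbb{G}}_N^{\pi,(2)}(f-Q_\# f)+o_{P_W^*}(1),\qquad f\in\mathcal F,
\end{equation*}
where the projection $Q_\#$ arises by pairing the empirical average $\mathbb{P}_N^\pi W_N^{(2)}fV^T$ (which is $P_0(fV^T)$-close by Glivenko-Cantelli) with the normalizer matrix in the linearization. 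Combining with the uncalibrated part of the lemma then gives the stated limit via the continuous mapping theorem for conditional weak convergence.

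The main obstacle is keeping every intermediate step at the conditional level, i.e., the remainders must be $o_{P_W^*}(1)$ in $P^*$-probability rather than merely $o_{Pr^*}(1)$ or $o_{P^*}(1)$. I would use Lemma \ref{lemma:bootorder} throughout to move between the three order notations, and in particular part (2) to inherit unconditional rates on purely data-based quantities such as $\sqrt N(\mathbb{P}_N^\pi V-\mathbb{P}_NV)=O_{P^*}(1)$ into the bootstrap framework. A subsidiary technical point is the uniform-in-$f$ control of the quadratic Taylor remainder of $G_\#$: the bounded-support and differentiability parts of Condition \ref{cond:cal} make the relevant class $\{f\,(V^T\alpha)^2:f\in\mathcal F,\,|\alpha|\le\epsilon\}$ Glivenko-Cantelli and uniformly bounded, so the remainder is $O_{P_W^*}(N^{-1})=o_{P_W^*}(N^{-1/2})$ uniformly in $f$.
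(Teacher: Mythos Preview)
Your approach is essentially the paper's: cite \cite{Saegusa-Var:2014} for the uncalibrated statement, then linearize the calibration parameter via Taylor expansion and feed the result back into the process. The paper packages the linearization of $\hat{\hat\alpha}_N^{*\#}$ into a separate Proposition \ref{prop:alphaCalboot} and then carries out exactly the decomposition you sketch, treating finite-dimensional convergence and asymptotic equicontinuity separately rather than via a single uniform remainder, but the two arguments are structurally the same.

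One point your outline glosses over and that the paper handles carefully: the four processes $\tilde{\mathbb{G}}_N^{\pi,(2),*\#}$ are not all centered the same way. The single-calibration variants ($bsc,bscc$) are centered at $\mathbb{P}_N^\pi$, but the bootstrap-calibration variants ($bc,bcc$) are centered at $\mathbb{P}_N^{\pi,\#}$, which itself involves the non-bootstrap estimator $\hat{\alpha}_N^\#$. Your displayed linearization of $\sqrt{N}\hat{\hat\alpha}_N^{bc}$ contains the data-only piece $\sqrt{N}(\mathbb{P}_N^\pi V-\mathbb{P}_N V)=\mathbb{G}_N^{\pi,(2)}V$, which is $O_{P^*}(1)$ and does not vanish; if you simply ``feed this back'' without also expanding $G_c(V;\hat\alpha_N^c)$ in the centering term $\mathbb{P}_N^{\pi,c}f$, you will not arrive at $\tilde{\mathbb{G}}_N^{\pi,(2)}(f-Q_c f)$. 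The quantity you actually need for the $bc,bcc$ cases is $\sqrt{N}(\hat{\hat\alpha}_N^{b\#}-\hat\alpha_N^\#)$, not $\sqrt{N}\hat{\hat\alpha}_N^{b\#}$ alone, and the extra $\mathbb{G}_N^{\pi,(2)}V$ piece cancels against the analogous term in the expansion of $\hat\alpha_N^\#$. The paper makes this explicit by writing the second term of its decomposition as $\sqrt{N}\mathbb{P}_N^\pi(G_c(V;\hat{\hat\alpha}_N)-G_c(V;\hat\alpha_N))f$ and invoking Proposition \ref{prop:alphaCalboot} for the difference. Once you account for this, your reduction $\tilde{\mathbb{G}}_N^{\pi,(2),*\#}f=\tilde{\mathbb{G}}_N^{\pi,(2)}(f-Q_\# f)+o_{P_W^*}(1)$ is correct and the rest follows.
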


\section{Applications to a General Semiparametric Model}
\label{sec:5}
We consider bootstrapping WLEs in a general semiparametric model under two-phase sampling (See also Theorem \ref{thm:survbootzthm1} in Section \ref{subsec:semipara} 
%in the supplementary document \cite{Saegusa-Wellner:2014supp} 
for applications to a general IPW $Z$-estimators in a general statistical model).
Our theorems cover two cases where an infinite-dimensional  parameter can be estimated either at a regular or a non-regular rate.
For concrete examples, see \cite{MR1385671,MR2325244,MR3059418} for the former case and \cite{MR1394975,MR3059418} for the latter case.

Let $\mathcal{P}=\{P_{\theta,\eta}:\theta\in\Theta,\eta\in H\}$ be a semiparametric model parametrized by a finite dimensional parameter $\theta \in \Theta\subset \mathbb{R}^p$ and the infinite-dimensional nuisance parameter $\eta\in H$ where the nuisance parameter space $H$ is a subset of some Banach space $(\mathcal{B},\lVert\cdot \rVert)$. %We drop $\mathcal{B}$ from a norm on a Banach space $\mathcal{B}$.
Let $P_0=P_{\theta_0,\eta_0}$ denote the true distribution.
The WLE
$(\hat{\theta}_N,\hat{\eta}_N)$ is a solution to the following weighted likelihood equations
\begin{eqnarray}
\label{eqn:wlik}
&&\Psi_{N,1}^\pi(\theta,\eta)=\mathbb{P}_N^\pi\dot{\ell}_{\theta,\eta}=o_{P^*}(N^{-1/2}),
\nonumber \\
&&\lVert \Psi_{N,2}^\pi(\theta,\eta)h\rVert_{\mathcal{H}}
=\lVert \mathbb{P}_N^\pi (B_{\theta,\eta}h-P_{\theta,\eta}B_{\theta,\eta}h)\rVert_{\mathcal{H}}
=o_{P^*}(N^{-1/2}),
\end{eqnarray}
where $\dot{\ell}_{\theta,\eta}\in \mathcal{L}_2^0(P_{\theta,\eta})^p$ is the score function for $\theta$, and the score operator $B_{\theta,\eta}:\mathcal{H}\mapsto \mathcal{L}_2^0(P_{\theta,\eta})$ is the bounded linear operator mapping a direction $h$ in some Hilbert space $\mathcal{H}$ of one-dimensional submodels for $\eta$ along which $\eta\rightarrow \eta_0$.
Note that $\eta -\eta_0 \in \mathcal{H}$ because for $\eta(t)\equiv (1-t)\eta_0 + t\eta$, $\eta(0)=\eta_0$ and $(d/dt)\eta(t)|_{t=0}=\eta-\eta_0$.

\subsection{Regular Rate for a Nuisance Parameter}
\label{subsec:5-1}
We consider the case where the infinite-dimensional parameter can be estimated at a regular rate (i.e., $\sqrt{N}\lVert \hat{\eta}-\eta_0\rVert = O_{P^*}(1)$).
We assume the following condition for the WLEs.
\begin{cond}[Consistency]
\label{cond:wlereg1}
The estimator $(\hat{\theta}_N,\hat{\eta}_N)$ is consistent for $(\theta_0,\eta_0)$ and solves the weighted likelihood equations (\ref{eqn:wlik}), where $\mathbb{P}_N^\pi$
may be replaced by $\mathbb{P}_N^{\pi,\#}$ with the corresponding estimators $(\hat{\theta}_{N,\#},\hat{\eta}_{N,\#}),\#\in\{c,cc\}$.
\end{cond}
The corresponding bootstrap WLE $(\hat{\hat{\theta}}_N,\hat{\hat{\eta}}_N)$ is a solution to the bootstrap weighted likelihood equations
\begin{eqnarray}
\label{eqn:wlikbootsurv}
&&\hat{\Psi}_{N,1}^{\pi}(\theta,\eta)=\hat{\mathbb{P}}_N^{\pi}\dot{\ell}_{\theta,\eta}=o_{P^*_W}\left(N^{-1/2}\right),
\nonumber \\
&&\left\lVert \hat{\Psi}_{N,2}^{\pi}(\theta,\eta)h\right\rVert_{\mathcal{H}}
=\left\lVert \hat{\mathbb{P}}_N^{\pi} (B_{\theta,\eta}h-P_{\theta,\eta}B_{\theta,\eta}h)\right\rVert_{\mathcal{H}}
=o_{P^*_W}\left(N^{-1/2}\right),
\end{eqnarray}
in $P^*$-probability.
We replace Condition \ref{eqn:wlik} by the following condition for our bootstrap WLEs.
\begin{cond}[Consistency]
\label{cond:bootwlereg1}
The bootstrap estimator $(\hat{\hat{\theta}}_N,\hat{\hat{\eta}}_N)$ is consistent for $(\theta_0,\eta_0)$ in $P^*$-probability and
solves the bootstrap weighted likelihood equations (\ref{eqn:wlikbootsurv}) in $P^*$-probability where $\hat{\mathbb{P}}_N^\pi$
may be replaced by $\hat{\mathbb{P}}_N^{\pi,*\#}$ with the corresponding estimators $(\hat{\hat{\theta}}_{N,*\#},\hat{\hat{\eta}}_{N,*\#})$ and corresponding maps $\hat{\Psi}_{N,k}^{\pi,*\#},k=1,2,$ where $*\in\{b,bs\}$ and $\#\in\{c,cc\}$.
\end{cond}

The rest of the conditions are shared by both WLEs and bootstrap WLEs.
Note that these conditions are formulated in terms of complete data.
\begin{cond}[Asymptotic equicontinuity]
\label{cond:wlereg2}
Let $\mathcal{F}_1(\delta)=\{\dot{\ell}_{\theta,\eta}:|\theta-\theta_0|+\lVert \eta-\eta_0\rVert<\delta\}$
and $\mathcal{F}_2(\delta)=\{B_{\theta,\eta}h-P_{\theta,\eta}B_{\theta,\eta}h:h\in\mathcal{H},|\theta-\theta_0|
        +\lVert \eta-\eta_0\rVert<\delta\}$.
There exists a $\delta_0>0$ such that
(1) $\mathcal{F}_k(\delta_0),k=1,2,$ are $P_0$-Donsker and
$\sup_{h\in\mathcal{H}}P_0|f_j-f_{0,j}|^2\rightarrow 0$, as
$|\theta-\theta_0|+\lVert \eta-\eta_0\rVert \rightarrow 0$,
for every $f_j\in \mathcal{F}_j(\delta_0),j=1,2$, where
$f_{0,1}=\dot{\ell}_{\theta_0,\eta_0}$ and $f_{0,2}=B_0h-P_0B_0h$, and
(2) $\mathcal{F}_k(\delta_0),k=1,2$, have integrable envelopes.
\end{cond}

\begin{cond}
\label{cond:wlereg3}
The map
$\Psi = (\Psi_1,\Psi_2):\Theta\times H\mapsto \mathbb{R}^p\times \ell^{\infty}(\mathcal{H})$
with components
\begin{eqnarray*}
&&\Psi_1(\theta,\eta)
\equiv P_0\Psi_{N,1}(\theta,\eta)
=P_0\dot{\ell}_{\theta,\eta},
\nonumber \\
&&\Psi_2(\theta,\eta)h
\equiv P_0\Psi_{N,2}(\theta,\eta)
=P_0B_{\theta,\eta}h-P_{\theta,\eta}B_{\theta,\eta}h,\quad h\in \mathcal{H},
\end{eqnarray*}
has a continuously invertible Fr\'{e}chet derivative map
$\dot{\Psi}_0=(\dot{\Psi}_{11},\dot{\Psi}_{12},\dot{\Psi}_{21},\dot{\Psi}_{22})$
at $(\theta_0,\eta_0)$  given by
$\dot{\Psi}_{ij}(\theta_0,\eta_0)h=P_0(\dot{\psi}_{i,j,\theta_0,\eta_0,h})$,
$i,j\in \{1,2\}$ in terms of $L_2(P_0)$-derivatives of $\psi_{1,\theta,\eta,h}=\dot{\ell}_{\theta,\eta}$
and $\psi_{2,\theta,\eta,h}=B_{\theta,\eta}h-P_{\theta,\eta}B_{\theta,\eta}h$;  that is,
\begin{eqnarray*}
\sup_{h\in \mathcal{H}}
\left\{P_0\left(\psi_{i,\theta,\eta_0,h}-\psi_{i,\theta_0,\eta_0,h}
           -\dot{\psi}_{i1,\theta_0,\eta_0,h}(\theta-\theta_0)\right)^2\right\}^{1/2}
&=&   o(| \theta-\theta_0|),\\
           \sup_{h\in \mathcal{H}}
           \left\{P_0\left(\psi_{i,\theta_0,\eta,h}-\psi_{i,\theta_0,\eta_0,h}
                      -\dot{\psi}_{i2,\theta_0,\eta_0,h}(\eta-\eta_0)\right)^2\right\}^{1/2}
&=&  o(\lVert \eta-\eta_0\rVert).
\end{eqnarray*}
Furthermore, $\dot{\Psi}_0$ admits a partition
\begin{eqnarray*}
(\theta-\theta_0,\eta-\eta)
\mapsto
\left(
\begin{array}{cc}
\dot{\Psi}_{11} & \dot{\Psi}_{12}\\
\dot{\Psi}_{21} & \dot{\Psi}_{22}\\
\end{array}
\right)
\left (
\begin{array}{c}
\theta-\theta_0 \\
\eta-\eta_0\\
\end{array}\right),
\end{eqnarray*}
where
\begin{eqnarray*}
&&\dot{\Psi}_{11}(\theta-\theta_0)
       =-P_{\theta_0,\eta_0}\dot{\ell}_{\theta_0,\eta_0}\dot{\ell}_{\theta_0,\eta_0}^T(\theta-\theta_0),\\
&&\dot{\Psi}_{12}(\eta-\eta_0)=-\int B^*_{\theta_0,\eta_0}\dot{\ell}_{\theta_0,\eta_0}d(\eta-\eta_0),
 \\
&&\dot{\Psi}_{21}(\theta-\theta_0)h
       =-P_{\theta_0,\eta_0}B_{\theta_0,\eta_0}h\dot{\ell}_{\theta_0,\eta_0}^T(\theta-\theta_0),
\\
&&\dot{\Psi}_{22}(\eta-\eta_0)h=-\int B^*_{\theta_0,\eta_0}B_{\theta_0,\eta_0}h d(\eta-\eta_0).
\end{eqnarray*}
Here $B^*_{\theta,\eta}$ is the adjoint of $B_{\theta,\eta}$ and $B^*_{\theta_0,\eta_0}B_{\theta_0,\eta_0}$ is continuously invertible.
\end{cond}

Let $\tilde{I}_0=P_0[(I-B_0(B^*_0B_0)^{-1}B_0^*)\dot{\ell}_0\dot{\ell}_0^T]$
be the efficient information for $\theta$ and
$\tilde{\ell}_0=\tilde{I}_0^{-1}(I-B_0(B^*_0B_0)^{-1}B_0^*)\dot{\ell}_0$
be the efficient influence function for $\theta$ for the semiparametric model with complete data.

The next theorem is Theorem 3.1 of \cite{MR3059418} regarding asymptotic distributions of the WLEs.
\begin{thm}[\cite{MR3059418}]
\label{thm:zthm1}
Under Conditions \ref{cond:cal}, \ref{cond:wlereg1}, \ref{cond:wlereg2}, \ref{cond:wlereg3},
\begin{eqnarray*}
\begin{array}{llll}
\sqrt{N}(\hat{\theta}_N-\theta_0)
&=\quad \mathbb{G}_N^\pi\tilde{\ell}_0+o_{P^*}(1)
%\sqrt{N}\mathbb{P}_N^\pi\tilde{\ell}_0+o_{P^*}(1)
&\rightsquigarrow \quad Z
&\sim \quad N_p(0,\Sigma),\\
\sqrt{N}(\hat{\theta}_{N,\#}-\theta_0)
&=\quad \mathbb{G}_N^{\pi,\#}\tilde{\ell}_0+o_{P^*}(1)
%\sqrt{N}\mathbb{P}_N^{\pi,\#}\tilde{\ell}_0+o_{P^*}(1)
&\rightsquigarrow \quad Z_\#
&\sim \quad N_p(0,\Sigma_\#),
\end{array}
\end{eqnarray*}
where $\#\in\{c,cc\}$, and
\begin{eqnarray*}
&&\Sigma \equiv I_0^{-1} + \sum_{j=1}^J\nu_j\frac{1-p_j}{p_j}\mathrm{Var}_{0|j}(\tilde{\ell}_0),\\
&&\Sigma_{\#} \equiv I_0^{-1} + \sum_{j=1}^J\nu_j\frac{1-p_j}{p_j}\mathrm{Var}_{0|j}((I-Q_{\#})\tilde{\ell}_0).
\end{eqnarray*}
\end{thm}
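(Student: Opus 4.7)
The plan is to follow the master $Z$-theorem template for semiparametric estimation, substituting the IPW central limit theorem of Theorem \ref{thm:fpsD} for the classical empirical process. The argument splits into three stages: linearize the weighted likelihood equations around $(\theta_0,\eta_0)$, project onto the efficient score direction to eliminate $\hat\eta_N$, and invoke Theorem \ref{thm:fpsD} to obtain the limit distribution.

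First, writing $\psi_{\theta,\eta}$ for the pair of score functions $(\dot\ell_{\theta,\eta},\,B_{\theta,\eta}h-P_{\theta,\eta}B_{\theta,\eta}h)$, the defining equations together with $\Psi(\theta_0,\eta_0)=0$ give
\begin{equation*}
o_{P^*}(N^{-1/2}) = \Psi_N^\pi(\theta_0,\eta_0) + (\mathbb{P}_N^\pi - P_0)(\psi_{\hat\theta_N,\hat\eta_N} - \psi_{\theta_0,\eta_0}) + \Psi(\hat\theta_N,\hat\eta_N).
\end{equation*}
Condition \ref{cond:wlereg2} makes the $\mathcal{F}_k(\delta_0)$ $P_0$-Donsker with shrinking $L_2$-neighborhoods at the truth, so asymptotic equicontinuity inherited by the IPW empirical process in Theorem \ref{thm:fpsD} combined with the consistency of $(\hat\theta_N,\hat\eta_N)$ renders the middle term $o_{P^*}(N^{-1/2})$. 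Multiplying by $\sqrt{N}$ and invoking Fr\'echet differentiability from Condition \ref{cond:wlereg3} yields
\begin{equation*}
\sqrt{N}\,\dot\Psi_0(\hat\theta_N-\theta_0,\,\hat\eta_N-\eta_0) = -\mathbb{G}_N^\pi\,\psi_{\theta_0,\eta_0} + o_{P^*}(1),
\end{equation*}
with the preliminary $\sqrt{N}$-rate of $(\hat\theta_N,\hat\eta_N)$ extracted from the continuous invertibility of $\dot\Psi_0$.

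Second, I would extract $\sqrt{N}(\hat\theta_N-\theta_0)$ by eliminating the nuisance component. Solving the $\eta$-block via $\dot\Psi_{22}^{-1} = -(B_0^*B_0)^{-1}$ and substituting into the $\theta$-block, the cross-derivatives $\dot\Psi_{12}\dot\Psi_{22}^{-1}\dot\Psi_{21}$ assemble into the adjoint projection $B_0(B_0^*B_0)^{-1}B_0^*\dot\ell_0$, leaving the Schur-complement identity
\begin{equation*}
I_0\sqrt{N}(\hat\theta_N-\theta_0) = \mathbb{G}_N^\pi\bigl(I - B_0(B_0^*B_0)^{-1}B_0^*\bigr)\dot\ell_0 + o_{P^*}(1),
\end{equation*}
which after inverting $I_0$ produces $\sqrt{N}(\hat\theta_N-\theta_0) = \mathbb{G}_N^\pi\tilde\ell_0 + o_{P^*}(1)$. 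Applying Theorem \ref{thm:fpsD} to the finite-dimensional (hence trivially $P_0$-Donsker) class spanned by the components of $\tilde\ell_0$ gives $\mathbb{G}_N^\pi\tilde\ell_0 \rightsquigarrow \mathbb{G}^\pi\tilde\ell_0$, a centered Gaussian vector. Independence of $\mathbb{G}$ and the $\mathbb{G}_j$ together with $P_0\tilde\ell_0^{\otimes 2} = I_0^{-1}$ delivers variance $\Sigma$. The calibrated case is parallel: the second display of Theorem \ref{thm:fpsD} replaces $\mathbb{G}_j$ by $\mathbb{G}_j(\cdot-Q_\#\cdot)$, and $\mathrm{Var}(\mathbb{G}_j(\tilde\ell_0 - Q_\#\tilde\ell_0)) = \mathrm{Var}_{0|j}((I-Q_\#)\tilde\ell_0)$ yields $\Sigma_\#$.

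The hard part will be the Schur-complement step: collapsing the block structure of $\dot\Psi_0$ in Condition \ref{cond:wlereg3} into $I_0$ on the left and the efficient score projection on the right is precisely the semiparametric information identity, and requires careful manipulation of the adjoint $B_0^*$ and the inverse $(B_0^*B_0)^{-1}$ acting on $\mathcal{H}$. A secondary technical point is establishing the preliminary $O_{P^*}(1)$-rate of $\sqrt{N}(|\hat\theta_N-\theta_0| + \|\hat\eta_N-\eta_0\|)$ needed to neglect the Fr\'echet remainder; this follows from continuous invertibility of $\dot\Psi_0$ combined with the linearization bound above, as in the complete-data treatment of \cite{MR1385671}.
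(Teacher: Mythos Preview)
Your proposal is correct. Note, however, that this theorem is not proved in the present paper---it is Theorem~3.1 of \cite{MR3059418}, restated here for reference---so there is no in-paper proof to compare against directly. Your three-stage outline (linearize the estimating equations using the Donsker property in Condition~\ref{cond:wlereg2} and equicontinuity of $\mathbb{G}_N^\pi$, invert $\dot\Psi_0$ via its Schur complement to isolate the efficient score $\tilde\ell_0$, then apply Theorem~\ref{thm:fpsD}) is the standard semiparametric $Z$-theorem template and aligns with the structure the paper uses for the general bootstrap $Z$-theorem (Theorem~\ref{thm:survbootzthm1}); the paper then obtains the bootstrap WLE result Theorem~\ref{thm:bootzthm1} as a corollary, explicitly remarking that the specialization ``is similar to the proof of Theorem 3.1 of \cite{MR3059418},'' i.e., exactly the Schur-complement reduction you sketch.
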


The following theorem ensures that our bootstrap WLEs yield the same asymptotic distributions.
Note that the bootstrap WLEs with bootstrap single calibrations are centered by the WLE, not by the calibrated WLEs.
We discuss this issue in Section \ref{subsec:comp} below.
\begin{thm}
\label{thm:bootzthm1}
Under Conditions \ref{cond:cal}, \ref{cond:bootcal}, \ref{cond:bootwlereg1}-\ref{cond:wlereg3},
\begin{eqnarray*}
\begin{array}{llll}
\sqrt{N}(\hat{\hat{\theta}}_N-\hat{\theta}_N)
&=\quad \tilde{\mathbb{G}}_N^\pi\tilde{\ell}_0+o_{P^*_W}(1)
&\rightsquigarrow \quad Z
&\sim \quad N_p(0,\Sigma),\\
\sqrt{N}(\hat{\hat{\theta}}_{N,b\#}-\hat{\theta}_{N,\#})
&=\quad \tilde{\mathbb{G}}_N^{\pi,b\#}\tilde{\ell}_0+o_{P^*_W}(1)
&\rightsquigarrow \quad Z_\#
&\sim \quad N_p(0,\Sigma_\#), \\
\sqrt{N}(\hat{\hat{\theta}}_{N,bs\#}-\hat{\theta}_{N})
&=\quad \tilde{\mathbb{G}}_N^{\pi,bs\#}\tilde{\ell}_0+o_{P^*_W}(1)
&\rightsquigarrow \quad Z_\#
&\sim \quad N_p(0,\Sigma_\#),
\end{array}
\end{eqnarray*}
in $P^*$-probability where $\#\in\{c,cc\}$.
\end{thm}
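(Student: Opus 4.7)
The plan is to mirror the proof of Theorem \ref{thm:zthm1} but applied to the difference between the bootstrap WLE and the WLE, with Theorem \ref{thm:fpsDboot} substituted for Theorem \ref{thm:fpsD} at the Donsker step. For the uncalibrated case I subtract the two estimating equations $\hat{\Psi}_N^\pi(\hat{\hat\theta}_N,\hat{\hat\eta}_N)=o_{P_W^*}(N^{-1/2})$ (in $P^*$-probability) and $\Psi_N^\pi(\hat\theta_N,\hat\eta_N)=o_{P^*}(N^{-1/2})$, and decompose the difference as
\begin{equation*}
(\hat{\mathbb{P}}_N^\pi-\mathbb{P}_N^\pi)\,\psi(\hat{\hat\theta}_N,\hat{\hat\eta}_N)
+\mathbb{P}_N^\pi\bigl[\psi(\hat{\hat\theta}_N,\hat{\hat\eta}_N)-\psi(\hat\theta_N,\hat\eta_N)\bigr],
\end{equation*}
where $\psi$ stands for the pair $(\dot\ell_{\theta,\eta},B_{\theta,\eta}h-P_{\theta,\eta}B_{\theta,\eta}h)$.

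The first summand equals $N^{-1/2}\tilde{\mathbb{G}}_N^\pi\psi(\hat{\hat\theta}_N,\hat{\hat\eta}_N)$; by Condition \ref{cond:wlereg2}, the consistency in Condition \ref{cond:bootwlereg1}, and the conditional asymptotic equicontinuity that Theorem \ref{thm:fpsDboot} provides on $\mathcal{F}_1(\delta_0)\cup\mathcal{F}_2(\delta_0)$, it reduces to $N^{-1/2}\tilde{\mathbb{G}}_N^\pi\psi(\theta_0,\eta_0)$ modulo $o_{P_W^*}(N^{-1/2})$ in $P^*$-probability. The second summand I split further as $N^{-1/2}\mathbb{G}_N^\pi[\psi(\hat{\hat\theta}_N,\hat{\hat\eta}_N)-\psi(\hat\theta_N,\hat\eta_N)]+P_0[\psi(\hat{\hat\theta}_N,\hat{\hat\eta}_N)-\psi(\hat\theta_N,\hat\eta_N)]$; the empirical process term is $o_{P_W^*}(N^{-1/2})$ via Theorem \ref{thm:fpsD} and the same equicontinuity, and Condition \ref{cond:wlereg3} linearizes the population term as $\dot\Psi_0(\hat{\hat\theta}_N-\hat\theta_N,\hat{\hat\eta}_N-\hat\eta_N)$ plus $o(|\hat{\hat\theta}_N-\hat\theta_N|+\|\hat{\hat\eta}_N-\hat\eta_N\|)$. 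Inverting $\dot\Psi_0$ via its block structure and performing the efficient-score calculation exactly as in the proof of Theorem \ref{thm:zthm1} yields
\begin{equation*}
\sqrt{N}(\hat{\hat\theta}_N-\hat\theta_N)=\tilde{\mathbb{G}}_N^\pi\tilde\ell_0+o_{P_W^*}(1)
\end{equation*}
in $P^*$-probability. Theorem \ref{thm:fpsDboot} gives the conditional weak convergence $\tilde{\mathbb{G}}_N^\pi\tilde\ell_0\rightsquigarrow\tilde{\mathbb{G}}^\pi\tilde\ell_0$; since $P_0\tilde\ell_0=0$, the Brownian motion $\tilde{\mathbb{G}}$ evaluated at $\tilde\ell_0$ has the same normal law as the Brownian bridge $\mathbb{G}$ evaluated there, and the limit coincides with $Z\sim N_p(0,\Sigma)$ from Theorem \ref{thm:zthm1}.

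The calibrated cases follow by the same subtract-and-linearize scheme, but the centering built into each bootstrap IPW measure dictates which non-bootstrap equation sits on the other side. In cases $b\#$ with $\#\in\{c,cc\}$, the definition $\tilde{\mathbb{G}}_N^{\pi,b\#}=\sqrt{N}(\hat{\mathbb{P}}_N^{\pi,b\#}-\mathbb{P}_N^{\pi,\#})$ forces comparison with the calibrated WLE equation for $\hat\theta_{N,\#}$ and produces $\sqrt{N}(\hat{\hat\theta}_{N,b\#}-\hat\theta_{N,\#})=\tilde{\mathbb{G}}_N^{\pi,b\#}\tilde\ell_0+o_{P_W^*}(1)$. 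In cases $bs\#$ the centering in $\tilde{\mathbb{G}}_N^{\pi,bs\#}$ is by the uncalibrated $\mathbb{P}_N^\pi$, so the natural pairing is with the uncalibrated WLE $\hat\theta_N$, giving $\sqrt{N}(\hat{\hat\theta}_{N,bs\#}-\hat\theta_N)=\tilde{\mathbb{G}}_N^{\pi,bs\#}\tilde\ell_0+o_{P_W^*}(1)$. In both cases Theorem \ref{thm:fpsDboot} together with $P_0\tilde\ell_0=0$ delivers the limit $Z_\#\sim N_p(0,\Sigma_\#)$.

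The main obstacle will be the rigorous transfer of the asymptotic-equicontinuity arguments to the conditional bootstrap setting, combined with careful bookkeeping of the bootstrap order notations $o_{P_W^*}$ and $O_{P_W^*}$ from Lemma \ref{lemma:bootorder}; the non-exchangeability and the product structure $W_{Ni}=W_{Ni}^{(1)}W_{Ni}^{(2)}$ flagged in Remark 3.1 rule out any direct appeal to exchangeably-weighted bootstrap machinery and force one to pass through the two-phase decomposition underlying Theorem \ref{thm:fpsDboot}. A secondary delicate point is the in-$P^*$-probability rate of convergence of $\hat{\hat\eta}_N$ to $\eta_0$, needed to absorb the $o(\|\hat{\hat\eta}_N-\hat\eta_N\|)$ remainder in the linearization; this is addressed by the bootstrap $M$-estimator rate theorem announced in the introduction.
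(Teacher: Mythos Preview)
Your proposal is correct and follows the same subtract-and-linearize strategy as the paper, but the paper packages the argument differently: it first proves a general bootstrap $Z$-theorem (Theorem \ref{thm:survbootzthm1}) for abstract estimating equations $\hat{\mathbb{P}}_N^{\pi,*\#}B(\theta)=o_{P_W^*}(N^{-1/2})$ and then obtains Theorem \ref{thm:bootzthm1} as a corollary by specializing $B(\theta)$ to $(\dot\ell_{\theta,\eta},B_{\theta,\eta}h-P_{\theta,\eta}B_{\theta,\eta}h)$, exactly as Theorem 3.1 of \cite{MR3059418} is derived from its abstract $Z$-theorem. The one substantive technical difference is in how equicontinuity is transferred to the bootstrap process: you rely on the Donsker conclusion of Theorem \ref{thm:fpsDboot} applied to the fixed classes $\mathcal{F}_k(\delta_0)$ of Condition \ref{cond:wlereg2}, whereas the paper works with the $N$-dependent classes $\mathcal{D}_N=\{(B(\theta)-B(\theta_0))h/(1+\sqrt{N}\|\theta-\theta_0\|)\}$ and uses a moment bound (Lemma \ref{lemma:gpleqgboot}, together with Lemma \ref{lemma:vw239}) to show $E^*\|\tilde{\mathbb{G}}_N^\pi\|_{\mathcal{D}_N}\to 0$ from $E^*\|\mathbb{G}_N\|_{\mathcal{D}_N}\to 0$. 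Both routes succeed here because Condition \ref{cond:wlereg2} supplies genuine Donsker classes; the paper's moment route is slightly more portable since it needs only the Huber-type Condition \ref{cond:survbootwz3} rather than a Donsker assumption. Two small remarks on your write-up: the Fr\'echet remainder is $o(|\hat{\hat\theta}_N-\theta_0|+\|\hat{\hat\eta}_N-\eta_0\|)+o(|\hat\theta_N-\theta_0|+\|\hat\eta_N-\eta_0\|)$, not $o$ of the differences, so you need the intermediate step $\sqrt{N}(\hat{\hat\theta}_N-\theta_0,\hat{\hat\eta}_N-\eta_0)=O_{P_W^*}(1)$ (which the paper establishes inside the proof of Theorem \ref{thm:survbootzthm1} via continuous invertibility of $\dot\Psi_0$); and the separate rate lemma you allude to at the end is only needed for the non-regular Theorem \ref{thm:bootzthm2}, since in the regular case the $\sqrt{N}$-rate for $\hat{\hat\eta}_N$ falls out of the $Z$-theorem itself.
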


\subsection{Non-regular Rate for a Nuisance Parameter}
\label{subsec:5-2}
We consider the case where the infinite-dimensional nuisance parameter may not have a $\sqrt{N}$-convergence rate.
Unlike the previous case, we do not require the WLE solves the weighted likelihood equations for all $h\in\mathcal{H}$.
For $\underline{h}=(h_1,\ldots,h_p)^T$ with $h_k\in \mathcal{H}, k=1,\ldots,p$, let
$B_{\theta,\eta}\left[\underline{h}\right]
=\left(B_{\theta,\eta}h_1,\ldots,B_{\theta,\eta}h_p\right)^T$.
We assume that the WLE $(\hat{\theta}_N,\hat{\eta}_N)$ solves the weighted likelihood equations
\begin{eqnarray}
\label{eqn:wlik2}
&&\Psi_{N,1}^\pi(\theta,\eta,\alpha)=\mathbb{P}_N^{\pi}\dot{\ell}_{\theta,\eta}
        =o_{P^*}\left(N^{-1/2}\right),\nonumber \\
&&\Psi_{N,2}^\pi(\theta,\eta,\alpha)\left[\underline{h}_0\right]
         =\mathbb{P}_N^{\pi} B_{\theta,\eta}[\underline{h}_0]=o_{P^*}\left(N^{-1/2}\right),
\end{eqnarray}
where $\underline{h}_0$ is defined in Condition \ref{cond:wlenonreg2} below.
For the WLE we assume:
\begin{cond}[Consistency and rate of convergence]
\label{cond:wlenonreg1}
An estimator $(\hat{\theta}_N,\hat{\eta}_N)$ of $(\theta_0,\eta_0)$ satisfies
$|\hat{\theta}_N-\theta_0|= o_{P^*}(1)$, and
$\lVert \hat{\eta}_N-\eta_0\rVert = O_{P^*}(N^{-\beta})$ for some $\beta>0$, and solves the weighted likelihood equations (\ref{eqn:wlik2}) where $\mathbb{P}_N^\pi$
may be replaced by $\mathbb{P}_N^{\pi,\#}$ with the corresponding estimators $(\hat{\theta}_{N,\#},\hat{\eta}_{N,\#})$ where $\#\in\{c,cc\}$.
\end{cond}
The corresponding bootstrap WLE solves the bootstrap weighted likelihood equations
\begin{eqnarray}
\label{eqn:wlikboot2}
&&\hat{\Psi}_{N,1}^{\pi}(\theta,\eta,\alpha)=\hat{\mathbb{P}}_N^{\pi}\dot{\ell}_{\theta,\eta}
        =o_{P^*_W}\left(N^{-1/2}\right),\nonumber \\
&&\hat{\Psi}_{N,2}^{\pi}(\theta,\eta,\alpha)\left[\underline{h}_0\right]
         =\hat{\mathbb{P}}_N^{\pi} B_{\theta,\eta}[\underline{h}_0]=o_{P^*_W}\left(N^{-1/2}\right),
\end{eqnarray}
in $P^*$-probability.
For the bootstrap WLEs we assume:
\begin{cond}[Consistency and rate of convergence]
\label{cond:wlenonregboot1}
The bootstrap estimator $(\hat{\hat{\theta}}_{N},\hat{\hat{\eta}}_{N})$  satisfies
$|\hat{\hat{\theta}}_{N}-\theta_0|= o_{P_W^*}(1)$, and
$\lVert \hat{\hat{\eta}}_{N}-\eta_0\rVert = O_{P_W^*}(N^{-\beta})$ in $P^*$-probability for $\beta$ in Condition \ref{cond:wlenonreg1}, and solves the bootstrap weighted likelihood equations (\ref{eqn:wlikboot2}) where $\hat{\mathbb{P}}_N^\pi$ may be replaced by $\hat{\mathbb{P}}_N^{\pi,*\#}$ with the corresponding estimators $(\hat{\hat{\theta}}_{N,*\#},\hat{\hat{\eta}}_{N,\#})$ for $*\in\{b,bs\}$ and $\#\in\{c,cc\}$.
\end{cond}

The rest of conditions are common to WLEs and bootstrap WLEs.
\begin{cond}[Positive information]
\label{cond:wlenonreg2}
There is an $\underline{h}_0= (h_{0,1},\ldots,h_{0,p})$, where
$h_{0,k}\in \mathcal{H}$ for $ k=1,\ldots, p$, such that
\begin{equation*}
P_0 \{  ( \dot{\ell}_{0} -B_{0}[\underline{h}_0]  ) B_{0}h  \}= 0
\end{equation*}
for all $h\in \mathcal{H}$.
Furthermore, the efficient information
$I_0\equiv P_0\left(\dot{\ell}_{0}-B_{0}[\underline{h}_0]\right)^{\otimes 2} $
for $\theta$ for the semiparametric model with complete data
 is finite and nonsingular.
Denote the efficient influence function for the semiparametric model
with complete data by $\tilde{\ell}_0 \equiv I_0^{-1}(\dot{\ell}_{0}-B_{0}[\underline{h}_0])$.
\end{cond}

\begin{cond}[Asymptotic equicontinuity]
\label{cond:wlenonreg3}
(1) For any $\delta_N\downarrow 0$ and $C>0$,
\begin{eqnarray*}
&&\sup_{|\theta-\theta_0|\leq \delta_N,\lVert \eta-\eta_0\rVert\leq CN^{-\beta}}
\left|\mathbb{G}_N(\dot{\ell}_{\theta,\eta}-\dot{\ell}_{0})\right| = o_{P^*}(1),\\
&&\sup_{|\theta-\theta_0|\leq \delta_N,\lVert \eta-\eta_0\rVert\leq CN^{-\beta}}
\left|\mathbb{G}_N(B_{\theta,\eta}-B_{0})[\underline{h}_0]\right| = o_{P^*}(1).
\end{eqnarray*}
(2) There exists a $\delta>0$ such that the classes
$\left\{\dot{\ell}_{\theta,\eta}:|\theta-\theta_0| +\lVert \eta-\eta_0\rVert \leq \delta\right\}$
and $\left\{B_{\theta,\eta}\left[\underline{h}_0\right]:|\theta-\theta_0|+\lVert \eta-\eta_0\rVert\leq \delta\right\}$ are $P_0$-Glivenko-Cantelli and have integrable envelopes.
Moreover, $\dot{\ell}_{\theta,\eta}$ and $B_{\theta,\eta}[\underline{h}_0]$ are
continuous with respect to $(\theta,\eta)$ either pointwise or in $L_1(P_0)$.
\end{cond}

\begin{cond}[Smoothness of the model]
\label{cond:wlenonreg4}
For some $\alpha>1$ satisfying $\alpha \beta >1/2$ and for $(\theta,\eta)$
in the neighborhood $\{(\theta,\eta):|\theta-\theta_0|\leq \delta_N,\lVert \eta-\eta_0\rVert\leq CN^{-\beta}\}$,
\begin{eqnarray*}
&&\left|P_0\left\{\dot{\ell}_{\theta,\eta}-\dot{\ell}_{0}
       +\dot{\ell}_{0}(\dot{\ell}_{0}^T(\theta-\theta_0)
       +B_{0}(\eta-\eta_0))\right\}\right|\\
&&\quad=o\left(|\theta-\theta_0|\right)+O\left(\lVert \eta-\eta_0\rVert^{\alpha}\right),\\
&&\left|P_0\left\{(B_{\theta,\eta}-B_{0})[\underline{h}_0]
         +B_{0}[\underline{h}_0](\dot{\ell}_{0}^T
              (\theta-\theta_0)+B_{0}(\eta-\eta_0))\right\}\right|\\
&&\quad =  o\left(|\theta-\theta_0|\right)+O\left(\lVert \eta-\eta_0\rVert^{\alpha}\right).
\end{eqnarray*}
\end{cond}

The next theorem is the $Z$-theorem for the WLEs (Theorem 3.2 of \cite{MR3059418}).
\begin{thm}[\cite{MR3059418}]
\label{thm:zthm2}
Under Conditions \ref{cond:cal}, \ref{cond:wlenonreg1}, \ref{cond:wlenonreg2}-\ref{cond:wlenonreg4},
\begin{eqnarray*}
\begin{array}{llll}
\sqrt{N}(\hat{\theta}_N-\theta_0)
&=\quad \mathbb{G}_N^\pi\tilde{\ell}_0+o_{P^*}(1)
&\rightsquigarrow \quad Z
&\sim \quad N_p(0,\Sigma),\\
\sqrt{N}(\hat{\theta}_{N,\#}-\theta_0)
&=\quad \mathbb{G}_N^{\pi,\#}\tilde{\ell}_0+o_{P^*}(1)
&\rightsquigarrow \quad Z_\#
&\sim \quad N_p(0,\Sigma_\#),
\end{array}
\end{eqnarray*}
where $\#\in\{c,cc\}$, $\Sigma$ and $\Sigma_\#$ are as
defined in Theorem~ \ref{thm:zthm1},
but now $I_0$ and $\tilde{\ell}_0$ are defined in Condition \ref{cond:wlenonreg2}.
\end{thm}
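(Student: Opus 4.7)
The plan is to use the Huang (1996) / van der Vaart–Wellner less-than-full-rate $Z$-theorem idea, adapted to the IPW setting. Because the nuisance parameter converges only at rate $N^{-\beta}$ with possibly $\beta<1/2$, we cannot directly solve the full system of estimating equations by implicit differentiation at $\sqrt{N}$-rate. Instead, the trick is to replace the two equations in (\ref{eqn:wlik2}) by a single \emph{efficient-score} equation evaluated in the least favorable direction $\underline{h}_0$. Concretely, I would consider the linear combination
\begin{equation*}
\mathbb{P}_N^\pi\bigl(\dot{\ell}_{\hat{\theta}_N,\hat{\eta}_N} - B_{\hat{\theta}_N,\hat{\eta}_N}[\underline{h}_0]\bigr) = o_{P^*}(N^{-1/2}),
\end{equation*}
which holds by (\ref{eqn:wlik2}), and note that Condition \ref{cond:wlenonreg2} identifies $\dot{\ell}_0 - B_0[\underline{h}_0] = I_0\tilde{\ell}_0$ as (proportional to) the efficient influence function.

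Next I would split this equation as $(\mathbb{P}_N^\pi - P_0) + P_0$ applied to $\dot{\ell}_{\hat\theta,\hat\eta} - B_{\hat\theta,\hat\eta}[\underline{h}_0]$. For the empirical-process part, Condition \ref{cond:wlenonreg3}(1) together with Theorem \ref{thm:fpsD} (stochastic equicontinuity of $\mathbb{G}_N^\pi$ follows from the Donsker part of the same classes, since a two-phase-weighted Donsker theorem is inherited from a $P_0$-Donsker class as established in \cite{MR3059418}) gives
\begin{equation*}
\mathbb{G}_N^\pi\bigl(\dot{\ell}_{\hat\theta,\hat\eta}-B_{\hat\theta,\hat\eta}[\underline{h}_0]\bigr) = \mathbb{G}_N^\pi\bigl(\dot{\ell}_0 - B_0[\underline{h}_0]\bigr) + o_{P^*}(1).
\end{equation*}
For the drift part, Condition \ref{cond:wlenonreg4} yields the expansion
\begin{equation*}
P_0\bigl(\dot{\ell}_{\hat\theta,\hat\eta}-B_{\hat\theta,\hat\eta}[\underline{h}_0]\bigr)
= -P_0\bigl\{(\dot{\ell}_0 - B_0[\underline{h}_0])\bigl(\dot{\ell}_0^T(\hat\theta - \theta_0) + B_0(\hat\eta-\eta_0)\bigr)\bigr\} + R_N,
\end{equation*}
where $R_N = o(|\hat\theta-\theta_0|) + O(\|\hat\eta - \eta_0\|^\alpha) = o_{P^*}(N^{-1/2})$ because $\alpha\beta > 1/2$. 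The key simplification now comes from the orthogonality in Condition \ref{cond:wlenonreg2}: $P_0\{(\dot{\ell}_0 - B_0[\underline{h}_0]) B_0 h\} = 0$ for all $h\in\mathcal{H}$, so the nuisance contribution vanishes entirely, leaving
\begin{equation*}
P_0\bigl(\dot{\ell}_{\hat\theta,\hat\eta}-B_{\hat\theta,\hat\eta}[\underline{h}_0]\bigr) = -I_0(\hat\theta - \theta_0) + o_{P^*}(N^{-1/2}).
\end{equation*}

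Combining these pieces and multiplying by $\sqrt{N}$ gives $\sqrt{N}(\hat\theta_N - \theta_0) = \mathbb{G}_N^\pi \tilde{\ell}_0 + o_{P^*}(1)$, and weak convergence to $N_p(0,\Sigma)$ then follows by applying Theorem \ref{thm:fpsD} to the fixed function $\tilde{\ell}_0$ and computing the covariance, which decomposes into the phase I contribution $I_0^{-1}$ (from $\mathbb{G}$) and the stratum-wise phase II contributions $\sum_j \nu_j (1-p_j)/p_j \mathrm{Var}_{0|j}(\tilde{\ell}_0)$ (from the independent $\mathbb{G}_j$). For the calibrated estimators $\hat\theta_{N,\#}$, the argument is identical with $\mathbb{P}_N^\pi$ replaced by $\mathbb{P}_N^{\pi,\#}$, provided one also checks that calibration does not disturb the drift expansion at leading order --- this uses the fact that $\hat\alpha_N^{\#} = O_{P^*}(N^{-1/2})$ under Condition \ref{cond:cal}(5) together with the bounded-$G$ assumption, so that $\mathbb{P}_N^{\pi,\#} - \mathbb{P}_N^{\pi} = O_{P^*}(N^{-1/2})$ on each fixed function.

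The main obstacle, as in the complete-data version, is the control of the drift on a neighborhood shrinking at the mismatched rate $(N^{-1/2}, N^{-\beta})$. The twin requirements $\alpha\beta>1/2$ in Condition \ref{cond:wlenonreg4} and the exact orthogonality in Condition \ref{cond:wlenonreg2} are precisely what make the nuisance term invisible to leading order; showing the remainder is genuinely $o_{P^*}(N^{-1/2})$ on the empirical-process side additionally requires that the IPW equicontinuity holds over a set shrinking with $\hat\eta$, which is not automatic from the i.i.d. equicontinuity in Condition \ref{cond:wlenonreg3}(1) and would require a separate argument showing that two-phase Donsker/equicontinuity properties are inherited from the corresponding $P_0$-properties (as proved in \cite{MR3059418}).
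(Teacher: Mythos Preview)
Your proposal is correct and follows essentially the same Huang-style argument as the cited reference \cite{MR3059418} (the paper itself does not reprove this theorem but cites it; the parallel bootstrap version, Theorem~\ref{thm:bootzthm2}, is proved in the Appendix by exactly this route). The only cosmetic difference is that the paper handles the two estimating equations for $\dot{\ell}_{\theta,\eta}$ and $B_{\theta,\eta}[\underline{h}_0]$ separately---obtaining two displays analogous to your drift expansion---and then subtracts them to invoke the orthogonality in Condition~\ref{cond:wlenonreg2}, whereas you form the efficient-score combination $\dot{\ell}-B[\underline{h}_0]$ at the outset; the algebra is identical. Your closing caveat about inheriting IPW equicontinuity from the i.i.d.\ condition in Condition~\ref{cond:wlenonreg3}(1) is exactly the content of Lemma~5.4 of \cite{MR3059418}, which the paper invokes at the corresponding step.
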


Our bootstrap $Z$-theorem again yields the same asymptotic distributions.
\begin{thm}
\label{thm:bootzthm2}
Under Conditions \ref{cond:cal}, \ref{cond:bootcal}, \ref{cond:wlenonregboot1}-\ref{cond:wlenonreg4},
\begin{eqnarray*}
\begin{array}{llll}
\sqrt{N}(\hat{\hat{\theta}}_N-\hat{\theta}_N)
&=\quad \tilde{\mathbb{G}}_N^\pi\tilde{\ell}_0+o_{P^*_W}(1)
&\rightsquigarrow \quad Z
&\sim \quad N_p(0,\Sigma),\\
\sqrt{N}(\hat{\hat{\theta}}_{N,b\#}-\hat{\theta}_{N,\#})
&=\quad \tilde{\mathbb{G}}_N^{\pi,b\#}\tilde{\ell}_0+o_{P^*_W}(1)
&\rightsquigarrow \quad Z_\#
&\sim \quad N_p(0,\Sigma_\#),\\
\sqrt{N}(\hat{\hat{\theta}}_{N,bs\#}-\hat{\theta}_N)
&=\quad \tilde{\mathbb{G}}_N^{\pi,bs\#}\tilde{\ell}_0+o_{P^*_W}(1)
&\rightsquigarrow \quad Z_\#
&\sim \quad N_p(0,\Sigma_\#),
\end{array}
\end{eqnarray*}
in $P^*$-probability where $\#\in\{c,cc\}$, $\Sigma$, $\Sigma_\#$, $I_0$, and $\tilde{\ell}_0$ are defined in the same way as in Theorem \ref{thm:zthm2}.
\end{thm}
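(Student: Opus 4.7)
The plan is to parallel the proof of Theorem \ref{thm:zthm2} for the non-bootstrap WLEs, but work with the difference of the two estimators through the efficient score direction and rely on Theorem \ref{thm:fpsDboot} for the limiting distribution. Define the efficient score $\psi_{\theta,\eta} \equiv \dot{\ell}_{\theta,\eta} - B_{\theta,\eta}[\underline{h}_0]$. By Condition \ref{cond:wlenonreg2}, $P_0 \psi_0 B_0 h = 0$ for all $h \in \mathcal{H}$, so the nuisance direction will drop out at first order.

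First, I would combine the bootstrap and non-bootstrap weighted likelihood equations in the direction $\underline{h}_0$ to obtain
\[
\sqrt{N}\,\hat{\mathbb{P}}_N^{\pi}\psi_{\hat{\hat{\theta}}_N,\hat{\hat{\eta}}_N} - \sqrt{N}\,\mathbb{P}_N^{\pi}\psi_{\hat{\theta}_N,\hat{\eta}_N} = o_{P^*_W}(1),
\]
and rewrite it as
\[
\tilde{\mathbb{G}}_N^{\pi}\psi_{\hat{\hat{\theta}}_N,\hat{\hat{\eta}}_N} + \sqrt{N}\,\mathbb{P}_N^{\pi}\bigl(\psi_{\hat{\hat{\theta}}_N,\hat{\hat{\eta}}_N} - \psi_{\hat{\theta}_N,\hat{\eta}_N}\bigr) = o_{P^*_W}(1).
\]
Second, I would establish a bootstrap asymptotic equicontinuity replacement $\tilde{\mathbb{G}}_N^{\pi}\psi_{\hat{\hat{\theta}}_N,\hat{\hat{\eta}}_N} = \tilde{\mathbb{G}}_N^{\pi}\psi_0 + o_{P^*_W}(1)$. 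The relevant class is $P_0$-Donsker by Condition \ref{cond:wlenonreg3}(2) together with standard preservation, and the $L_2(P_0)$-continuity at $\psi_0$ comes from Condition \ref{cond:wlenonreg3}. Combined with bootstrap consistency from Condition \ref{cond:wlenonregboot1} and the conditional tightness delivered by Theorem \ref{thm:fpsDboot}, this replacement follows routinely.

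Third, I would split the drift term as $\sqrt{N}(\mathbb{P}_N^{\pi}-P_0)(\psi_{\hat{\hat{\theta}}_N,\hat{\hat{\eta}}_N} - \psi_{\hat{\theta}_N,\hat{\eta}_N}) + \sqrt{N}\,P_0(\psi_{\hat{\hat{\theta}}_N,\hat{\hat{\eta}}_N} - \psi_{\hat{\theta}_N,\hat{\eta}_N})$. The stochastic part is $o_{P^*_W}(1)$ by the IPW Donsker property (Condition \ref{cond:wlenonreg3}(1) upgraded from $\mathbb{G}_N$ to $\mathbb{G}_N^{\pi}$ via Theorem \ref{thm:fpsD}) together with Lemma \ref{lemma:bootorder}(2). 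For the deterministic part I would apply Condition \ref{cond:wlenonreg4} to expand
\[
P_0\psi_{\theta,\eta} = -I_0(\theta-\theta_0) - P_0\psi_0 B_0(\eta-\eta_0) + o(|\theta-\theta_0|) + O\bigl(\lVert\eta-\eta_0\rVert^\alpha\bigr),
\]
use $P_0\psi_0 B_0 h = 0$ to kill the nuisance linear term, and invoke $\alpha\beta > 1/2$ with the bootstrap rate $\lVert\hat{\hat{\eta}}_N - \eta_0\rVert = O_{P^*_W}(N^{-\beta})$ to absorb the higher-order remainder into $o_{P^*_W}(1)$. Solving then gives
\[
\sqrt{N}(\hat{\hat{\theta}}_N - \hat{\theta}_N) = I_0^{-1}\tilde{\mathbb{G}}_N^{\pi}\psi_0 + o_{P^*_W}(1) = \tilde{\mathbb{G}}_N^{\pi}\tilde{\ell}_0 + o_{P^*_W}(1),
\]
and Theorem \ref{thm:fpsDboot} yields the weak limit $\tilde{\mathbb{G}}^{\pi}\tilde{\ell}_0 \sim N_p(0,\Sigma)$, where the Brownian motion piece $\tilde{\mathbb{G}}\tilde{\ell}_0$ contributes $I_0^{-1}$ and the phase II bridges contribute the stratum sum, matching $\Sigma$.

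For the four calibrated cases, the same argument applies with $\hat{\mathbb{P}}_N^{\pi}$ replaced by $\hat{\mathbb{P}}_N^{\pi,*\#}$ and $\mathbb{P}_N^{\pi}$ by $\mathbb{P}_N^{\pi,\#}$ in the $b\#$ case or by $\mathbb{P}_N^{\pi}$ in the $bs\#$ case. The linearization of the calibration factor $G_{\#}(V;\hat{\hat{\alpha}})$ around $\alpha_0 = 0$ produces the projection $Q_{\#}\tilde{\ell}_0$ as in Theorem \ref{thm:fpsD}, and Theorem \ref{thm:fpsDboot} gives the weak limit $\tilde{\mathbb{G}}^{\pi,\#}\tilde{\ell}_0 \sim N_p(0,\Sigma_\#)$. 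The asymmetric centering for $bs\#$ (at $\hat{\theta}_N$ rather than $\hat{\theta}_{N,\#}$) is a direct consequence of the bootstrap single calibration equations \eqref{eqn:singlebootcaleqn}--\eqref{eqn:singlebootccaleqn} using $\mathbb{P}_N^{\pi}V$ instead of $\mathbb{P}_N V$ as the calibration target, which aligns with the centering chosen in $\tilde{\mathbb{G}}_N^{\pi,bs\#}$ in Section \ref{sec:3}.

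The main obstacle I anticipate is the step that upgrades the unconditional asymptotic equicontinuity of Condition \ref{cond:wlenonreg3} to a conditional statement in $P^*_W$-probability for the bootstrap IPW process while simultaneously tracking a nuisance parameter converging at the slow rate $N^{-\beta}$; this must be reconciled with the two-level probability framework of Lemma \ref{lemma:bootorder} and the product structure of the bootstrap weights. A secondary technical point is the preliminary $\sqrt{N}$-boundedness of $\hat{\hat{\theta}}_N - \theta_0$ in $P^*_W$-probability, which should be extracted from a first pass of the same expansion using the tightness of $\tilde{\mathbb{G}}_N^{\pi}\psi_0$ guaranteed by Theorem \ref{thm:fpsDboot}.
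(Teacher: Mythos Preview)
Your proposal is essentially correct and follows the same logical arc as the paper's proof: expand via Condition~\ref{cond:wlenonreg4}, use Condition~\ref{cond:wlenonreg2} to kill the nuisance linear term, absorb the $O(\lVert\eta-\eta_0\rVert^\alpha)$ remainder through $\alpha\beta>1/2$, and finish with Theorem~\ref{thm:fpsDboot}. The only organizational difference is that the paper linearizes $\hat{\hat{\theta}}_{N}-\theta_0$ around $\theta_0$ in isolation, obtaining $\sqrt{N}(\hat{\hat{\theta}}_{N}-\theta_0)=-\sqrt{N}\hat{\mathbb{P}}_N^{\pi}\tilde{\ell}_0+o_{P^*_W}(1)$, and then subtracts the already-established non-bootstrap expansion from Theorem~\ref{thm:zthm2}; you instead difference the two estimating equations at the outset. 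Both routes are equivalent, though the paper's version lets it cite the non-bootstrap result directly rather than re-running the $\mathbb{G}_N^{\pi}$ equicontinuity argument.

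One small inaccuracy: you say the relevant class is $P_0$-Donsker by Condition~\ref{cond:wlenonreg3}(2), but that part only asserts Glivenko-Cantelli with an integrable envelope. The equicontinuity input is Condition~\ref{cond:wlenonreg3}(1), stated for $\mathbb{G}_N$, and the transfer to $\tilde{\mathbb{G}}_N^{\pi}$ (and to $\tilde{\mathbb{G}}_N^{\pi,*\#}$) is exactly the obstacle you flag at the end. The paper handles it with a dedicated lemma (Lemma~\ref{lemma:wza5boot}, the bootstrap analogue of Lemma~5.4 in \cite{MR3059418}), whose proof rests on the moment comparison $E^*\lVert\tilde{\mathbb{G}}_N^{\pi}\rVert_{\mathcal{F}_N}\lesssim E^*\lVert\mathbb{G}_N\rVert_{\mathcal{F}_N}$ of Lemma~\ref{lemma:gpleqgboot}. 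So your anticipated obstacle is real, and the paper's resolution is precisely this moment-bound transfer rather than a Donsker-class argument.
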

Rates of convergence of the bootstrap WLEs in Condition \ref{cond:wlenonregboot1} can be established in the same way as those of the WLEs (see Theorem 5.2 of \cite{MR3059418}) if we impose the boundedness of the phase I bootstrap weights.
\begin{lemma}
\label{lemma:rateboot}
Suppose $W_{Ni}^{(1)}\leq M<\infty$ for every $i$ for some constant $M$.
Let $\mathcal{M}=\{m_\theta:\theta\in\Theta\}$ be the set of criterion functions
and define $\mathcal{M}_\delta=\{m_\theta-m_{\theta_0}:d(\theta,\theta_0)<\delta\}$
for some fixed $\delta>0$ where $d$ is a semimetric on the parameter space $\Theta$.

\noindent (1)
Suppose that for every $\theta$ in a neighborhood of $\theta_0$,
\begin{equation*}
P_0(m_\theta-m_{\theta_0})\lesssim
-d^2(\theta,\theta_0).
\end{equation*}
Assume that there exists a function $\phi_N$ such that $\delta\mapsto \phi_N(\delta)/\delta^\alpha$
is decreasing for some $\alpha<2$ (not depending on $N$) and for every $N$,
\begin{equation*}
E^*\lVert \mathbb{G}_N\rVert_{\mathcal{M}_\delta}
\lesssim \phi_N(\delta).
\end{equation*}
If the estimator $\hat{\hat{\theta}}_N$ satisfies
$\hat{\mathbb{P}}_N^\pi m_{\hat{\hat{\theta}}_N}\geq \hat{\mathbb{P}}_N^\pi m_{\theta_0}-O_{P^*_W}(r^{-2}_N)$ and $\hat{\hat{\theta}}_N =\theta_0+o_{P_W^*}(1)$ in $P^*$-probability,
then $r_Nd(\hat{\hat{\theta}}_N,\theta_0)=O_{P^*_W}(1)$ in $P^*$-probability
for every sequence $r_N$ such that $r_N^2\phi_N(1/r_N)\leq \sqrt{N}$ for every $N$.

\noindent (2)
Suppose Conditions \ref{cond:cal} and \ref{cond:bootcal} hold and let  $*\in\{b,bs\}$ and $\#\in\{c,cc\}$ be fixed.
Suppose also that for every $\theta\in\Theta$ in a neighborhood of $\theta_0$,
\begin{equation*}
P_0\{G_{\#}(V;\alpha)(m_\theta-m_{\theta_0})\}\lesssim
-d^2(\theta,\theta_0) + |\alpha-\alpha_0|^2.
\end{equation*}
Assume that
\begin{equation*}
E^*\lVert \mathbb{G}_N\rVert_{G^\#\mathcal{M}_\delta}
\lesssim \phi_N(\delta),
\end{equation*}
with $\phi_N$ having the same properties as in (1) where
$G^{\#}\mathcal{M}_\delta\equiv \{G_{\#}(\cdot; \alpha)f:|\alpha|\leq \delta,\alpha\in\mathcal{A}_N,f\in\mathcal{M}_\delta\}$
for some $\mathcal{A}_N\subset \mathcal{A}$.
If the estimator $\hat{\hat{\theta}}_{N,*\#}$ satisfies $\hat{\mathbb{P}}_N^{\pi,*\#} m_{\hat{\hat{\theta}}_{N,*\#}}\geq \hat{\mathbb{P}}_N^{\pi,*\#} m_{\theta_0}-O_{P^*_W}(r^{-2}_N)$ and $\hat{\hat{\theta}}_{N,*\#}=\theta_0+o_{P_W^*}(1)$ in $P^*$-probability,
then $r_Nd(\hat{\hat{\theta}}_{N,*\#},\theta_0)=O_{P^*_W}(1)$ in $P^*$-probability
for every sequence $r_N$ such that $r_N^2\phi_N(1/r_N)\leq \sqrt{N}$ for every $N$.
\end{lemma}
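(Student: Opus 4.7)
The plan is to follow the classical peeling scheme of Theorem 3.2.5 of \cite{MR1385671}, adapted to the bootstrap IPW setting in the same way that Theorem 5.2 of \cite{MR3059418} adapts it to the non-bootstrap IPW case. I describe part (1) in detail; part (2) is analogous after replacing $\mathcal{M}_\delta$ by $G^{\#}\mathcal{M}_\delta$ and using the rate of $\hat{\hat{\alpha}}_N^{b\#}$ (or $\hat{\hat{\alpha}}_N^{bs\#}$) to absorb the extra $|\alpha-\alpha_0|^2$ term from the curvature, choosing $\mathcal{A}_N$ to contain the estimator with high $P_W^*$-probability.

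For each integer $M\geq 1$, partition $\{\theta:r_N d(\theta,\theta_0)>2^M\}$ into shells $S_{N,j}=\{\theta:2^{j-1}<r_N d(\theta,\theta_0)\leq 2^j\}$, $j\geq M$. On the event $\{\hat{\hat{\theta}}_N\in S_{N,j}\}$, the near-maximizer hypothesis combined with the curvature bound $P_0(m_\theta-m_{\theta_0})\lesssim -d^2(\theta,\theta_0)$ yields, for some $c>0$,
\begin{equation*}
\sup_{\theta\in S_{N,j}}\sqrt{N}(\hat{\mathbb{P}}_N^\pi-P_0)(m_\theta-m_{\theta_0})\;\geq\; c\sqrt{N}\,2^{2(j-1)}/r_N^2 - O_{P_W^*}(\sqrt{N}/r_N^2),
\end{equation*}
on a set with $P^*$-inner-probability tending to one.

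The heart of the argument is the conditional maximal inequality
\begin{equation*}
E_W^*\bigl\lVert\sqrt{N}(\hat{\mathbb{P}}_N^\pi-P_0)\bigr\rVert_{\mathcal{M}_\delta}\;\lesssim\;\phi_N(\delta)
\end{equation*}
in $P^*$-probability. Writing $\sqrt{N}(\hat{\mathbb{P}}_N^\pi-P_0)=\tilde{\mathbb{G}}_N^\pi+\mathbb{G}_N^\pi$, the $\mathbb{G}_N^\pi$ part is bounded by $\phi_N(\delta)$ via the IPW maximal inequality of \cite{MR3059418}, itself a direct consequence of the hypothesized bound on $E^*\lVert\mathbb{G}_N\rVert_{\mathcal{M}_\delta}$. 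For the bootstrap variation, I use the decomposition $\tilde{\mathbb{G}}_N^\pi=\tilde{\mathbb{G}}_N^{\pi,(1)}+\tilde{\mathbb{G}}_N^{\pi,(2)}W_N^{(1)}\cdot$ introduced after Theorem \ref{thm:fpsDboot}. Conditional on the data, the phase I term is a stratum-wise sum of i.i.d.\ mean-zero multipliers, so a multiplier maximal inequality (e.g.\ Lemma 4.6.5 of \cite{MR1385671}, using $\lVert W_j^{(1)}\rVert_{2,1}<\infty$) controls its moment by a constant times $E^*\lVert\mathbb{G}_N\rVert_{\mathcal{M}_\delta}$. For the phase II term, conditional on the data and phase I weights, each stratum contribution involves exchangeable hypergeometric-type weights, and the same symmetrization-and-contraction reduction used to prove Lemma \ref{lemma:ph2D} bounds its moment by a multiple of $E^*\lVert\mathbb{G}_N\rVert_{\mathcal{M}_\delta}$; the hypothesis $W_{Ni}^{(1)}\leq M$ is used exactly here to keep the multiplicative coefficient $W_N^{(1)}$ bounded, so this reduction is uniform in the data.

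Given this inequality, apply Markov's inequality to each shell and use $\phi_N(c\delta)\leq c^\alpha\phi_N(\delta)$ for $c\geq 1$ together with $r_N^2\phi_N(1/r_N)\leq\sqrt{N}$:
\begin{equation*}
P_W^*\bigl(\hat{\hat{\theta}}_N\in S_{N,j}\bigr)\;\lesssim\;\frac{\phi_N(2^j/r_N)}{\sqrt{N}\,2^{2(j-1)}/r_N^2}\;\lesssim\;2^{-j(2-\alpha)}.
\end{equation*}
Summing over $j\geq M$ gives a geometric tail that vanishes as $M\to\infty$, yielding $r_N d(\hat{\hat{\theta}}_N,\theta_0)=O_{P_W^*}(1)$ in $P^*$-probability. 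The $O_{P_W^*}(\sqrt{N}/r_N^2)$ slack from the near-maximizer inequality is harmless because $2^{2(j-1)}\to\infty$ with $j$. The main obstacle is the conditional maximal inequality above: our bootstrap weights are neither i.i.d.\ nor globally exchangeable, so the standard symmetrization tools must be applied stratum by stratum and phase by phase, and the boundedness of $W_{Ni}^{(1)}$ is what prevents additional Orlicz moments of the phase I weights from entering the bound.
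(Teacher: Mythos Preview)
Your approach matches the paper's: both adapt the peeling argument of Theorem 3.2.5 of \cite{MR1385671} in the manner of Theorem 5.2 of \cite{MR3059418}, and the crux is transferring the maximal-inequality hypothesis from $\mathbb{G}_N$ to the bootstrap process. The paper packages this transfer as Lemma~\ref{lemma:gpleqgboot}(2), which gives the \emph{unconditional} bound $E^*\lVert\tilde{\mathbb{G}}_N^\pi\rVert_{\mathcal{M}_\delta}\lesssim E^*\lVert\mathbb{G}_N\rVert_{\mathcal{M}_\delta}$ directly from the boundedness of $W_N^{(1)}$; the peeling then runs under $Pr^*$ and Lemma~\ref{lemma:bootorder}(1) converts $O_{Pr^*}(1)$ to $O_{P_W^*}(1)$ in $P^*$-probability. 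Your stratum-and-phase breakdown is essentially the proof of Lemma~\ref{lemma:gpleqgboot}(2) spelled out inline, with two minor corrections: there is no Lemma 4.6.5 in \cite{MR1385671} (under boundedness the relevant tool for the phase-I multipliers is the bounded-weight multiplier inequality, Lemma 5.1 of \cite{MR3059418}, not the $\lVert\cdot\rVert_{2,1}$ version, which leaves a residual term), and your ``conditional maximal inequality in $P^*$-probability'' is awkward because $\mathbb{G}_N^\pi$ is fixed given the data and only its unconditional expectation is controlled---working unconditionally from the start avoids this.
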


\subsection{Comparison of Bootstrap Calibrations}
\label{subsec:comp}
It is expected from the plug-in principle that bootstrap asymptotic distributions should involve centering by the original estimators.
In this view, the bootstrap WLEs with bootstrap calibrations have ``right'' centering by the corresponding calibrated WLEs in Theorems \ref{thm:bootzthm1} and \ref{thm:bootzthm2}.
In contrast, the bootstrap WLEs with bootstrap single calibrations  have centering by the plain-vanilla WLE $\hat{\theta}_N$ to yield the same asymptotic distributions of the calibrated WLEs.
The next corollary concerns the centering of the bootstrap WLE with bootstrap single calibration by calibrated WLE either unconditionally or conditionally on data.

\begin{cor}
Under the Conditions of Theorems \ref{thm:bootzthm1} and \ref{thm:bootzthm2},
\begin{equation*}
\sqrt{N}(\hat{\hat{\theta}}_{N,bs\#}-\hat{\theta}_{N,\#})
\rightsquigarrow Z_{\#} + O_{P_W^*}(1),
\end{equation*}
in $P^*$-probability, and unconditionally
\begin{equation*}
\sqrt{N}(\hat{\hat{\theta}}_{N,bs\#}-\hat{\theta}_{N,\#})
\rightsquigarrow Z_{bs\#}\sim N(0,\Sigma_{bs\#}),
\end{equation*}
where $\# \in \{c,cc\}$ and
\begin{eqnarray*}
\Sigma_{bs\#} \equiv I_0^{-1} + \sum_{j=1}^J\nu_j\frac{1-p_j}{p_j}\mathrm{Var}_{0|j}((I-2Q_{\#})\tilde{\ell}_0).
\end{eqnarray*}
\end{cor}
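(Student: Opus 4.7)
The plan is to use the telescoping identity
\[
\sqrt{N}(\hat{\hat{\theta}}_{N,bs\#}-\hat{\theta}_{N,\#})
= \sqrt{N}(\hat{\hat{\theta}}_{N,bs\#}-\hat{\theta}_{N}) + \sqrt{N}(\hat{\theta}_{N}-\hat{\theta}_{N,\#})
\]
and dispatch each summand with a result already established. The linearizations from Theorems \ref{thm:bootzthm1}--\ref{thm:bootzthm2} and Theorems \ref{thm:zthm1}--\ref{thm:zthm2} give, respectively,
$\sqrt{N}(\hat{\hat{\theta}}_{N,bs\#}-\hat{\theta}_{N})=\tilde{\mathbb{G}}_N^{\pi,bs\#}\tilde{\ell}_0+o_{P_W^*}(1)$
with conditional weak limit $Z_\#\sim N_p(0,\Sigma_\#)$ in $P^*$-probability, and
$\sqrt{N}(\hat{\theta}_N-\hat{\theta}_{N,\#})=(\mathbb{G}_N^\pi-\mathbb{G}_N^{\pi,\#})\tilde{\ell}_0+o_{P^*}(1)$,
a function of the data alone whose unconditional weak limit is $\sum_j\sqrt{\nu_j(1-p_j)/p_j}\,\mathbb{G}_j(Q_\#\tilde{\ell}_0)$ by Theorem \ref{thm:fpsD}.

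For the conditional display, note that the data-only second summand is $O_{P^*}(1)$ by the $Z$-theorems and, being a function of the data alone, Lemma \ref{lemma:bootorder}(2) promotes it to $O_{P_W^*}(1)$ in $P^*$-probability. Conditionally on the data it is a deterministic additive shift on top of the conditional Gaussian limit $Z_\#$ of the bootstrap summand, so the sum converges conditionally to $Z_\#+O_{P_W^*}(1)$ in $P^*$-probability.

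For the unconditional display I would promote both summands to joint weak convergence on $(\mathcal{X}^\infty\times\mathcal{Z},\mathcal{B}^\infty\times\mathcal{E},\mathrm{Pr})$. The essential input is the identity $E_W[\tilde{\mathbb{G}}_N^{\pi,bs\#}f\mid \text{data}]=0$, which holds because the phase~I bootstrap weights have mean one and the phase~II multivariate hypergeometric weights have conditional mean one by construction. Combined with the tower property, this forces the cross-covariance between the bootstrap piece and any data-only functional to vanish, so in the joint Gaussian limit the bootstrap-driven processes $\tilde{\mathbb{G}}$ and the bootstrap stratum bridges appearing in $\tilde{\mathbb{G}}^{\pi,\#}$ are independent of the data stratum bridges appearing in $\mathbb{G}^\pi-\mathbb{G}^{\pi,\#}$. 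Summing the two independent Gaussian variances---$P_0\tilde{\ell}_0^{\otimes 2}=I_0^{-1}$ for the $\tilde{\mathbb{G}}$ term, and stratum-wise contributions $\mathrm{Var}_{0|j}((I-Q_\#)\tilde{\ell}_0)$ (bootstrap) and $\mathrm{Var}_{0|j}(Q_\#\tilde{\ell}_0)$ (data), together with the cross-terms produced by the orthogonal decomposition $\tilde{\ell}_0=(I-Q_\#)\tilde{\ell}_0+Q_\#\tilde{\ell}_0$---then collapses algebraically to the announced form $\Sigma_{bs\#}$ involving $(I-2Q_\#)\tilde{\ell}_0$.

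The delicate point, and the one I expect to be the main obstacle, is the joint convergence step. Theorem \ref{thm:fpsDboot} is stated conditionally on data (in $P^*$-probability) while Theorem \ref{thm:fpsD} is an unconditional weak convergence, so assembling them into a single convergence on the product space requires the measurability bookkeeping of Section~1.2 of \cite{MR1385671}, uses Lemma \ref{lemma:bootorder} to pass between $P_W^*$- and $\mathrm{Pr}^*$-formulations, and needs the zero cross-covariance input above to lift to genuine asymptotic independence of the two Gaussian summands rather than a merely uncorrelated joint law in finite samples.
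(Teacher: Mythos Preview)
Your telescoping decomposition and the treatment of the conditional display are exactly what is intended: the paper offers no separate proof of this corollary, so the argument has to be assembled from Theorems~\ref{thm:zthm1}--\ref{thm:bootzthm2}, Theorem~\ref{thm:fpsD}, Theorem~\ref{thm:fpsDboot}, and Lemma~\ref{lemma:bootorder} precisely as you do. The conditional statement is correct as written.

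The unconditional part, however, has a genuine gap. Your asymptotic-independence argument is in fact correct: because the conditional law of $\tilde{\mathbb{G}}_N^{\pi,bs\#}\tilde{\ell}_0$ given the data converges in $P^*$-probability to the \emph{fixed} law $N_p(0,\Sigma_\#)$ (Theorem~\ref{thm:fpsDboot}), the bootstrap summand and the data-only summand $(\mathbb{G}_N^\pi-\mathbb{G}_N^{\pi,\#})\tilde{\ell}_0$ are asymptotically independent on the product space. But then the per-stratum phase~II variance of the sum is
\[
\mathrm{Var}_{0|j}\!\big((I-Q_\#)\tilde{\ell}_0\big)\;+\;\mathrm{Var}_{0|j}\!\big(Q_\#\tilde{\ell}_0\big),
\]
whereas the stated $\Sigma_{bs\#}$ requires
\[
\mathrm{Var}_{0|j}\!\big((I-2Q_\#)\tilde{\ell}_0\big)
=\mathrm{Var}_{0|j}\!\big((I-Q_\#)\tilde{\ell}_0\big)+\mathrm{Var}_{0|j}\!\big(Q_\#\tilde{\ell}_0\big)
-2\,\mathrm{Cov}_{0|j}\!\big((I-Q_\#)\tilde{\ell}_0,\,Q_\#\tilde{\ell}_0\big).
\]
These agree only when $\mathrm{Cov}_{0|j}\big((I-Q_\#)\tilde{\ell}_0,\,Q_\#\tilde{\ell}_0\big)=0$ for every $j$. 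That orthogonality holds under $P_0$ (since $Q_\#$ is a $P_0$-based linear projection onto functions of $V$), but there is no reason it holds under each conditional law $P_{0|j}$; the projection $Q_\#$ is not stratum-specific. Your phrase ``cross-terms produced by the orthogonal decomposition \ldots\ then collapses algebraically'' is exactly where the argument breaks: under the independence you (correctly) derive, there are no cross-terms between the bootstrap and data summands, so nothing supplies the $-2\,\mathrm{Cov}_{0|j}(\cdot,\cdot)$ needed to reach the $(I-2Q_\#)$ form. You must either locate an additional source of negative dependence between the bootstrap and data phase~II bridges (which the zero-conditional-mean calculation rules out), or conclude that the limiting variance delivered by your argument is $I_0^{-1}+\sum_j\nu_j\frac{1-p_j}{p_j}\{\mathrm{Var}_{0|j}((I-Q_\#)\tilde{\ell}_0)+\mathrm{Var}_{0|j}(Q_\#\tilde{\ell}_0)\}$ rather than $\Sigma_{bs\#}$ as stated.
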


A practical implication of this result depends on a purpose of bootstrap inference.
If variance estimation is of interest, then centering does not matter because non-bootstrap WLEs are constant in a bootstrap sample.
If bias correction is of interest, a more careful consideration would be required to determine which calibration method should be used for bias correction of which estimator.
Bias correction is beyond the scope of the present paper, and we only show this phenomenon in simulation studies in the next section.

\section{Numerical Results}
\label{sec:6}
\subsection{Simulation}
We apply our bootstrap procedure to the weighted likelihood estimation for the Cox model with right censoring.
In this model, the efficient influence function in the complete data model is known up to parameters.
Thus the standard estimator of variance can be computed from the IPW sample variances at the estimated parameters across and within strata (see page 285 of \cite{MR3059418} for details).
We can use this standard estimator as a benchmark to evaluate our bootstrap estimator of variance.
We first generated 1000 data sets to see finite sample properties of the WLEs. Then we chose several data sets and generated 1000 bootstrap samples based on each data set.

Let $Y=\min\{T,C\}$ be the minimum of time to event $T$ or censoring time $C\sim\mbox{Unif}(0,1.1)$ with censoring indicator $\Delta=I(T\leq C)$.
A binary variable $V$ as well as $Y$ and $\Delta$ are available for all observations while the exposure $X\in\{0,1\}$ of interest is only available for a subsample.
The exposure $X$ has prevalence 50 percent, and is related to $V$ by sensitivity $P(V = 1|X=1) = \alpha$ and specificity $P(V = 0|X=0) = \beta$.
Three strata are formed based on $V$ and $\Delta$: a stratum of uncensored observations (Strata 1), a stratum of censored observations with $V=0$ (Strata 2) and a stratum of censored observations with $V=1$ (Strata 3) with sampling probabilities $P(\xi=1|\Delta=1)=1$, $P(\xi=1|\Delta=0, V=0)=[.3N_2]/N_2$, and $P(\xi=1|\Delta=0, V=1)=[.3N_3]/N_3$.
The hazard function is given by
\begin{equation*}
\lambda(t|x)=\lambda_0(t)\exp(\theta x).
\end{equation*}
where $\lambda_0$ is the baseline hazard function and $\theta$ is a regression coefficient.
We take $\lambda_0=.1$ in our simulations.
Calibration and within-stratum centered calibration \cite{MR3059418} are carried out on $Y$.

Table \ref{tbl:size} shows averages of phase I and II sample sizes across strata, and censoring proportions at the first phase across simulations.
We compare two different sample sizes (small/large) and two different correlation structure for $X$ and $V$ (strongly correlated/uncorrelated).

%[Table \ref{tbl:size} is about here.]
\begin{table}[htbp]
%\centering
\begin{tabular}{|c|c|c|c|c|c|c|c|}\hline
 $\theta$&$\alpha,\beta$&$N$& $n$& Strata 1 & Strata 2 & Strata 3 & Cens Prop\\\hline
$\log 2$&(.9,.9)&400&142&31 (31)&54(181)&57(188)& .922\\\hline
$\log 2$&(.9,.9)&800&280&62(62)&108(362)&113(376)&.922\\\hline
$\log 2$&(.5,.5)&400&141&31(31)&55(184)&55(185)&.922\\\hline
$\log 2$&(.5,.5)&800&283&62(62)&110(368)&111(370)&.922\\\hline
\end{tabular}
\caption{Sample size and censoring proportion}
\label{tbl:size}
\end{table}

Table \ref{tbl:res} summarizes results from our simulations.
We select three data sets for each case, and  compare our bootstrap estimators with the WLEs and their standard variance estimators based on a single data set.
This comparison is more appropriate to our conditional asymptotic results given data than comparison between empirical means and variances and averages of corresponding bootstrap estimators over all data sets.
The latter would be suitable for joint asymptotic results but these would be less interesting and outside the scope of our paper.

As expected from our theoretical results, our bootstrap mean and variance of WLEs in all cases well approximate results based on an original sample from which bootstrap samples were generated.
Of particular interest is that the means of the bootstrap WLEs with bootstrap single calibrations are closer to the mean of the plain vanilla WLE than the means of corresponding calibrated WLEs.
%that the bootstrap WLE with bootstrap single calibration has bootstrap mean closer to the WLE than the WLE with calibrations when they differ while keeping bootstrap variances closer to the standard variance estimates of the WLE with calibrations.
This is in line with our theoretical results discussed in Section \ref{subsec:comp}.
As clearly seen from difference among data sets, our bootstrap estimates depend on an original sample.
As $N$ becomes larger, an original sample is expected to yield WLEs and their standard variance estimates closer to the corresponding population quantities with high probability whereby our bootstrap estimators would behave ``well'' as expected from our ``in probability'' statements in Sections \ref{subsec:5-1} and \ref{subsec:5-2}.
This is also seen in our simulation when increasing the phase I sample size from $N=400$ to $N=800$.

[Table \ref{tbl:res} is about here.]

\subsection{Data analysis}
We analyze data from the National Wilms Tumor Study \cite{pmid2544249,pmid9440748}.
In this study, 3915 patients with Wilms tumor diagnosed during 1980-1994 were followed until the disease progression or death.
The baseline covariates are age at diagnosis, stage of disease (I-IV), histology (favorable/unfavorable) from the registering institution and the central reference laboratory, and tumor diameter.
We took a subsample from this study to create a two-phase design as considered in \cite{Breslow:2009a, Breslow:2009b}.
Because variables were measured for all patients, we compare WLEs with the MLE with complete data.
Nine strata were formed based on age (less than or greater than one year of age), severity of stage (I-II versus III-IV), and institutional histology in addition to a censoring indicator.
Moreover, histology from the central reference laboratory was treated as the gold standard (sensitivity 74$\%$ and specificity $98\%$) only known for patients sampled at the second phase.
At the second phase all patients were sampled except three strata.
For the first stratum, 120 patients were sampled from 452 patients with favorable institutional histology, stage I or II and less than one year of age.
For the second stratum, 160 patients were sampled from 1620 patients with favorable institutional histology, stage I or II and greater than one year of age.
For the third stratum, 120 patients were sampled fro-rm 914 patients with favorable institutional histology, stage III or IV and greater than one year of age.
The overall phase II sample size is 1329.
See \cite{Breslow:2009a, Breslow:2009b} for more details.

The statistical model is the Cox model with right censoring as in \cite{Breslow:2009a, Breslow:2009b}.
The estimators considered are the plain WLE and the WLE with within-stratum centered calibration on the time to event or censoring, stage (I-IV), and age (continuous).
Table \ref{tbl:nwt1} summarizes results from three estimators for one simulated data set, and corresponding bootstrap estimators.
All point estimates based on a single data set are similar to each other and their $95\%$ confidence intervals all include point estimates of the MLE based on the complete data.
The within-stratum centered calibration improved efficiency over the plain WLE unlike calibration.
These results were well approximated by our bootstrap method.

[Table \ref{tbl:nwt1} is about here.]

%\par\noindent
%{\bf Acknowledgements:}
%The part of this paper is based on the Ph.D. thesis of the author.
%The author owes thanks to his thesis advisor Jon A. Wellner for encouragement and guidance.
%The author also owes thanks to Norman Breslow for continued interest and helpful discussions of two phase sampling.

%\begin{supplement}[id=supp]
 % \stitle{Supplementary material for ``Bootstrapping Two-phase Sampling''.}
 % \sdescription{Due to space constraints, the proofs and technical details have been given in the supplementary document.
%References here beginning with ``A'' or ``B.'' refer to
%to \cite{Saegusa-Wellner:2014supp}.}
%\end{supplement}
\section{Appendix}

\subsection{Additional Notations}
We introduce several notations.
For a probability space $(\Omega,\mathcal{A},P)$ and a map $T:\Omega \rightarrow \mathbb{R}\cup \{\pm \infty\}$, we denote the outer and inner expectations by $E^*$ and $E_*$, and denote the minimal measurable majorant and maximal measurable minorant of $T$ by $T^*$ and $T_*$.
See Section 1.1 of \cite{MR1385671} for their precise definitions and basic results.
We work on several different probability spaces so that these definitions should be understood with a suitable probability space depending on the context.
We omit specifying a probability space unless confusion arises.
For convenience, we let $\tilde{\pi}_0(v) \equiv (1-\pi_0(v))/\pi_0(v)$ and $\tilde{\pi}_\infty(v) \equiv (1-\pi_{0,\infty}(v))/\pi_{0,\infty}(v)$ and let also $G_{cc,\infty}(V;\alpha) = G(\tilde{\pi}_\infty(V)\tilde{V})$.

To study bootstrap IPW empirical processes, we define
bootstrap IPW empirical measures and processes at the first and second
phases for every stratum.
These definitions are motivated by the proof of Theorem
\ref{thm:fpsD} in \cite{MR2325244}, which we briefly discuss here.
The IPW empirical process can be written as
\begin{eqnarray*}
 \mathbb{G}_N^\pi
&=& \mathbb{G}_N  +\sum_{j=1}^J \sqrt{\frac{N_j}{N}}
         \left(\frac{N_j}{n_j}\right)\mathbb{G}_{j,N_j}^\xi
\equiv \mathbb{G}_N + \mathbb{G}_N^{\pi,(2)}
\end{eqnarray*}
where
$\mathbb{G}_{j,N_j}^\xi\equiv\sqrt{N_j}(\mathbb{P}_{j,N_j}^\xi-(n_j/N_j)\mathbb{P}_{j,N_j})$ is the finite sampling empirical process for the $j$th stratum with
$\mathbb{P}_{j,N_j}^\xi \equiv
N_j^{-1}\sum_{i=1}^{N_j}\xi_{j,i}\delta_{X_{j,i}}$,
$\mathbb{P}_{j,N_j}\equiv N_j^{-1}\sum_{i=1}^{N_j}\delta_{X_{j,i}}$,
$j=1,\ldots,J$ (see \cite{MR2325244}).
We also denote $\mathbb{P}_{j,n_j}^\xi \equiv (N_j/n_j)\mathbb{P}_{j,N_j}^\xi,j=1,\ldots,J$.
\cite{MR2325244} established weak convergence of $\mathbb{G}_N$
and $\mathbb{G}_{j,N_j}^\xi, j=1,\ldots,J$, piece by piece.
Our proof extends this idea of decomposition to bootstrap.
As seen in the next paragraph, however, our decomposition of the bootstrap IPW
empirical process is not completely parallel to the decomposition
above.
This is because our proofs require more involved arguments.

We define the corresponding bootstrap IPW empirical measures and
processes.
Recall that \begin{equation*}
\tilde{\mathbb{G}}_N^{\pi}=\tilde{\mathbb{G}}_N^{\pi,(1)} + \tilde{\mathbb{G}}_N^{\pi,(2)}W^{(1)}_N\cdot,
\end{equation*}
Here the phase I bootstrap IPW empirical process $\tilde{\mathbb{G}}_N^{\pi,(1)}$ and
the phase II bootstrap IPW empirical process $\tilde{\mathbb{G}}_N^{\pi,(2)}$ are
\begin{eqnarray*}
&&\tilde{\mathbb{G}}_N^{\pi,(1)}=\sqrt{N}(\hat{\mathbb{P}}_N^{\pi,(1)}-\mathbb{P}_N^\pi),
\quad \tilde{\mathbb{G}}_N^{\pi,(2)}=\sqrt{N}(\hat{\mathbb{P}}_N^{\pi,(2)}-\mathbb{P}_N^\pi),
\end{eqnarray*}
where the phase I and II bootstrap IPW empirical measures are defined by
\begin{eqnarray*}
&&\hat{\mathbb{P}}_N^{\pi,(1)} \equiv \frac{1}{N}\sum_{i=1}\frac{\xi_i}{\pi_0(V_i)}W_{Ni}^{(1)}\delta_{X_i} = \mathbb{P}_N^\pi W_N^{(1)}\cdot,\\
&&\hat{\mathbb{P}}_N^{\pi,(2)} \equiv \frac{1}{N}\sum_{i=1}\frac{\xi_i}{\pi_0(V_i)}W_{Ni}^{(2)}\delta_{X_i} = \mathbb{P}_N^\pi W_N^{(2)}\cdot.
\end{eqnarray*}
The bootstrap IPW empirical measure and process for the $j$th stratum are
\begin{eqnarray*}
\hat{\mathbb{P}}_{j,n_j}^{\xi} \equiv \frac{1}{n_j}\sum_{i=1}^{N_j}W_{N_j,j,i}^{(1)}W_{n_j,j,i}^{(2)}\xi_{j,i}\delta_{X_{j,i}},\quad
\tilde{\mathbb{G}}_{j,n_j}^{\xi}&\equiv&\sqrt{n_j}(\hat{\mathbb{P}}_{j,n_j}^{\xi}-\mathbb{P}_{j,n_j}^{\xi}).
\end{eqnarray*}
The phase I bootstrap IPW empirical measure and process for the $j$th stratum are
\begin{eqnarray*}
\hat{\mathbb{P}}_{j,n_j}^{\xi,(1)} \equiv \frac{1}{n_j}\sum_{i=1}^{N_j}W_{N_j,j,i}^{(1)}\xi_{j,i}\delta_{X_{j,i}},\quad
\tilde{\mathbb{G}}_{j,n_j}^{\xi,(1)}&\equiv&\sqrt{n_j}(\hat{\mathbb{P}}_{j,n_j}^{\xi,(1)}-\mathbb{P}_{j,n_j}^{\xi}),
\end{eqnarray*}
and the phase II bootstrap IPW empirical measure and process for the $j$th stratum are
\begin{eqnarray*}
\hat{\mathbb{P}}_{j,n_j}^{\xi,(2)} \equiv \frac{1}{n_j}\sum_{i=1}^{N_j}W_{n_j,j,i}^{(2)}\xi_{j,i}\delta_{X_{j,i}},\quad
\tilde{\mathbb{G}}_{j,n_j}^{\xi,(2)}&\equiv&\sqrt{n_j}(\hat{\mathbb{P}}_{j,n_j}^{\xi,(2)}-\mathbb{P}_{j,n_j}^{\xi}).
\end{eqnarray*}
Note that $n_j$ appears in these definition in contrast to non-bootstrap cases (page 90 of \cite{MR2325244}).
%we divide by $n_j$ in defining stratum-wise bootstrap IPW empirical
%measures and multiply by $\sqrt{n_j}$ in defining stratum-wise bootstrap IPW
%empirical processes in contrast to a non-bootstrap case.
With these notation we have
\begin{eqnarray*}
\hat{\mathbb{P}}_N^{\pi}=\sum_{j=1}^J\frac{N_j}{N}\hat{\mathbb{P}}^{\xi}_{j,n_j},\quad
\hat{\mathbb{P}}_N^{\pi,(1)}=\sum_{j=1}^J\frac{N_j}{N}\hat{\mathbb{P}}^{\xi,(1)}_{j,n_j},\quad
\hat{\mathbb{P}}_N^{\pi,(2)}=\sum_{j=1}^J\frac{N_j}{N}\hat{\mathbb{P}}^{\xi,(2)}_{j,n_j},
\end{eqnarray*}
and
\begin{eqnarray*}
&&\tilde{\mathbb{G}}_N^{\pi}
=\sum_{j=1}^J\sqrt{\frac{N_j}{N}}\sqrt{\frac{N_j}{n_j}}\tilde{\mathbb{G}}_{j,n_j}^{\xi},\quad
\tilde{\mathbb{G}}_N^{\pi,(1)}
=\sum_{j=1}^J\sqrt{\frac{N_j}{N}}\sqrt{\frac{N_j}{n_j}}\tilde{\mathbb{G}}_{j,n_j}^{\xi,(1)},\\
&& \tilde{\mathbb{G}}_N^{\pi,(2)}
=\sum_{j=1}^J\sqrt{\frac{N_j}{N}}\sqrt{\frac{N_j}{n_j}}\tilde{\mathbb{G}}_{j,n_j}^{\xi,(2)}.
\end{eqnarray*}
To see these, note, for example, that
\begin{eqnarray*}
&&\hat{\mathbb{P}}_N^{\pi,(1)}
=\frac{1}{N}\sum_{j=1}^J\frac{1}{n_j/N_j}\sum_{i=1}^{N_j}W_{n_j,j,i}^{(1)}\xi_{j,i}\delta_{X_{j,i}}
=\sum_{j=1}^J\frac{N_j}{N}\hat{\mathbb{P}}^{\xi,(1)}_{j,n_j},\\
&&\tilde{\mathbb{G}}_N^{\pi,(1)}
=\sum_{j=1}^J\frac{N_j}{\sqrt{N}}\left(\hat{\mathbb{P}}_{j,n_j}^{\xi,(1)}-\mathbb{P}^\xi_{j,n_j}\right)
=\sum_{j=1}^J\sqrt{\frac{N_j}{N}}\sqrt{\frac{N_j}{n_j}}\tilde{\mathbb{G}}_{j,n_j}^{\xi,(1)}.
\end{eqnarray*}
As seen in the expression of $\tilde{\mathbb{G}}_N^{\pi,(1)}$ as the
linear combination of $\tilde{\mathbb{G}}_{j,n_j}^{\xi,(1)}$, the
phase I bootstrap IPW empirical process does not have a limit process
$\mathbb{G}$ unlike $\mathbb{G}_N$ in the decomposition of
$\mathbb{G}_N^\pi$.
This difficulty seems inevitable except a trivial case of a single stratum
because we start from a biased sample to obtain a bootstrap sample.

We also define the phase I and II bootstrap empirical measures and processes for bootstrap calibrations.
Define the phase I and II bootstrap IPW empirical measures for bootstrap calibration by
\begin{eqnarray*}
&&\hat{\mathbb{P}}_N^{\pi,(1),bc} \equiv \frac{1}{N}\sum_{i=1}^NW_{Ni}^{(1)}\frac{\xi_i}{\pi_0(V_i)}G_c(V_i;\hat{\alpha}^{c}_N)\delta_{X_i},\\
&&\hat{\mathbb{P}}_N^{\pi,(2),bc} \equiv \frac{1}{N}\sum_{i=1}^NW_{Ni}^{(2)}\frac{\xi_i}{\pi_0(V_i)}G_c(V_i;\hat{\hat{\alpha}}^{bc}_N)\delta_{X_i},
%&&\hat{\mathbb{P}}_N^{\pi,(1),dc} \equiv \frac{1}{N}\sum_{i=1}^NW_{Ni}^{(1)}\frac{\xi_i}{\pi_0(V_i)}G_c(V_i;\hat{\alpha}^{c}_N)\delta_{X_i},\\
%&&\hat{\mathbb{P}}_N^{\pi,(2),dc} \equiv \frac{1}{N}\sum_{i=1}^NW_{Ni}^{(2)}\frac{\xi_i}{\pi_0(V_i)}G_c(V_i;\hat{\hat{\alpha}}^{dc}_N)G_c(V_i;\hat{\alpha}^{c}_N)\delta_{X_i},
\end{eqnarray*}
or $\hat{\mathbb{P}}_N^{\pi,(1),bc}\cdot= \hat{\mathbb{P}}_N^{\pi,(1)}G_c(V;\hat{\alpha}^{c}_N)\cdot$ and $\hat{\mathbb{P}}_N^{\pi,(2),bc}\cdot= \hat{\mathbb{P}}_N^{\pi,(2)}G_c(V;\hat{\hat{\alpha}}^{bc}_N)\cdot$, and define the phase I and II bootstrap IPW empirical processes with bootstrap calibration by $\tilde{\mathbb{G}}_N^{\pi,(1),bc} = \sqrt{N}(\hat{\mathbb{P}}_N^{\pi,(1),bc}-\mathbb{P}_N^{\pi,c})$ and $\tilde{\mathbb{G}}_N^{\pi,(2),bc} = \sqrt{N}(\hat{\mathbb{P}}_N^{\pi,(2),bc}-\mathbb{P}_N^{\pi,c})$, respectively.
Similarly for other bootstrap calibrations, we define the phase I and II bootstrap IPW empirical measures by
\begin{eqnarray*}
%&&\hat{\mathbb{P}}_N^{\pi,(1),dcc}\cdot \equiv \hat{\mathbb{P}}_N^{\pi,(1)}G_{cc}(V;\hat{\alpha}^{cc}_N)\cdot,\quad \hat{\mathbb{P}}_N^{\pi,(2),dcc}\cdot \equiv \hat{\mathbb{P}}_N^{\pi,(2)}G_{cc}(V;\hat{\hat{\alpha}}^{dcc}_N)G_{cc}(V;\hat{\alpha}^{cc}_N)\cdot,\\
%&&\hat{\mathbb{P}}_N^{\pi,(1),bc}\cdot \equiv \hat{\mathbb{P}}_N^{\pi,(1)}G_{c}(V;\hat{\alpha}^{bc}_N)\cdot,\quad \hat{\mathbb{P}}_N^{\pi,(2),bc}\cdot \equiv \hat{\mathbb{P}}_N^{\pi,(2)}G_{c}(V;\hat{\hat{\alpha}}^{bc}_N)\cdot,\\
&&\hat{\mathbb{P}}_N^{\pi,(1),bcc}\cdot \equiv \hat{\mathbb{P}}_N^{\pi,(1)}G_{cc}(V;\hat{\alpha}^{cc}_N)\cdot,\quad \hat{\mathbb{P}}_N^{\pi,(2),bcc}\cdot \equiv \hat{\mathbb{P}}_N^{\pi,(2)}G_{cc}(V;\hat{\hat{\alpha}}^{bcc}_N)\cdot,\\
%&&\hat{\mathbb{P}}_N^{\pi,(1),sc}\cdot \equiv \hat{\mathbb{P}}_N^{\pi,(1)}\cdot,\\
%&&\hat{\mathbb{P}}_N^{\pi,(1),scc}\cdot \equiv \hat{\mathbb{P}}_N^{\pi,(1)}\cdot,\\
%&&\hat{\mathbb{P}}_N^{\pi,(2),dcc}\cdot \equiv \hat{\mathbb{P}}_N^{\pi,(2)}G_{cc}(V;\hat{\hat{\alpha}}^{dcc}_N)G_{cc}(V;\hat{\alpha}^{cc}_N)\cdot,\\
%&&\hat{\mathbb{P}}_N^{\pi,(2),bc}\cdot \equiv \hat{\mathbb{P}}_N^{\pi,(2)}G_{c}(V;\hat{\hat{\alpha}}^{bc}_N)\cdot,\\
%&&\hat{\mathbb{P}}_N^{\pi,(2),bcc}\cdot \equiv \hat{\mathbb{P}}_N^{\pi,(2)}G_{cc}(V;\hat{\hat{\alpha}}^{bcc}_N)\cdot,\\
&&\hat{\mathbb{P}}_N^{\pi,(2),bsc}\cdot \equiv \hat{\mathbb{P}}_N^{\pi,(2)}G_{c}(V;\hat{\hat{\alpha}}^{bsc}_N)\cdot,
\quad \hat{\mathbb{P}}_N^{\pi,(2),bscc}\cdot \equiv \hat{\mathbb{P}}_N^{\pi,(2)}G_{cc}(V;\hat{\hat{\alpha}}^{bscc}_N)\cdot,
\end{eqnarray*}
and the phase I and II bootstrap IPW empirical processes by
\begin{eqnarray*}
%&&\tilde{\mathbb{G}}_N^{\pi,(1),dcc} \equiv \sqrt{N}(\hat{\mathbb{P}}_N^{\pi,(1),dcc}-\mathbb{P}_N^{\pi,cc}),\quad \tilde{\mathbb{G}}_N^{\pi,(2),dcc} \equiv \sqrt{N}(\hat{\mathbb{P}}_N^{\pi,(2),dcc}-\mathbb{P}_N^{\pi,cc}),\\
%&&\tilde{\mathbb{G}}_N^{\pi,(1),bc} \equiv \sqrt{N}(\hat{\mathbb{P}}_N^{\pi,(1),bc}-\mathbb{P}_N^{\pi,c}),\quad \tilde{\mathbb{G}}_N^{\pi,(2),bc} \equiv \sqrt{N}(\hat{\mathbb{P}}_N^{\pi,(2),bc}-\mathbb{P}_N^{\pi,c}) ,\\
&&\tilde{\mathbb{G}}_N^{\pi,(1),bcc} \equiv \sqrt{N}(\hat{\mathbb{P}}_N^{\pi,(1),bcc}-\mathbb{P}_N^{\pi,cc}), \quad \tilde{\mathbb{G}}_N^{\pi,(2),bcc} \equiv \sqrt{N}(\hat{\mathbb{P}}_N^{\pi,(2),bcc}-\mathbb{P}_N^{\pi,cc}),\\
%&&\tilde{\mathbb{G}}_N^{\pi,(1)} \equiv \sqrt{N}(\hat{\mathbb{P}}_N^{\pi,(1),sc}-\mathbb{P}_N^{\pi}) ,\\
%&&\tilde{\mathbb{G}}_N^{\pi,(1)} \equiv \sqrt{N}(\hat{\mathbb{P}}_N^{\pi,(1),scc}-\mathbb{P}_N^{\pi}) ,\\
%&&\tilde{\mathbb{G}}_N^{\pi,(2),dcc} \equiv \sqrt{N}(\hat{\mathbb{P}}_N^{\pi,(2),dcc}-\mathbb{P}_N^{\pi,cc}),\\
%&&\tilde{\mathbb{G}}_N^{\pi,(2),bc} \equiv \sqrt{N}(\hat{\mathbb{P}}_N^{\pi,(2),bc}-\mathbb{P}_N^{\pi,c}) ,\\
%&&\tilde{\mathbb{G}}_N^{\pi,(2),bcc} \equiv \sqrt{N}(\hat{\mathbb{P}}_N^{\pi,(2),bcc}-\mathbb{P}_N^{\pi,cc}) ,\\
&&\tilde{\mathbb{G}}_N^{\pi,(2),bsc} \equiv \sqrt{N}(\hat{\mathbb{P}}_N^{\pi,(2),bsc}-\mathbb{P}_N^{\pi}) ,
\quad\tilde{\mathbb{G}}_N^{\pi,(2),bscc} \equiv \sqrt{N}(\hat{\mathbb{P}}_N^{\pi,(2),bscc}-\mathbb{P}_N^{\pi}) ,
\end{eqnarray*}
respectively.
Note also that the phase I bootstrap IPW empirical processes corresponding to the single bootstrap calibrations are $\tilde{\mathbb{G}}_N^{\pi,(1)}$.

\subsection{Proofs}
\subsubsection{Order Notations for bootstrap}
\begin{proof}[Proof of Lemma \ref{lemma:bootorder}]

The statements (1) and (2) were proved in \cite{WellnerZhan:96} for
the case regarding the little $o$ notation (see also proof of Lemma 3 of
\cite{MR2722459} under the measurability assumption).
Thus we only prove the case regarding big $O$ notation for these statements (one can prove the case of the little $o$ notation based on a proof below).
We also omit proofs for the claim regarding $P^*_{W^{(k)}},k=1,2,$ since proofs are similar.
Let $\eta>0$ be an arbitrary constant.\\
\noindent (1) Let $M_N$ be an arbitrary sequence such that $M_N\rightarrow \infty$.
Suppose $\Delta_N=O_{Pr^*}(1)$.
Markov's inequality yields
\begin{eqnarray*}
%P^*\left\{P_W(|\Delta_N|>M_N)>\eta\right\} \leq
P^*\left\{P_W^*(|\Delta_N|>M_N)>\eta\right\}
&\leq& \eta^{-1}E^*P_W^*(|\Delta_N|>M_N),
%\leq Pr^*(|\Delta_N|>M_N)\rightarrow 0.
\end{eqnarray*}
where $E^*$ is the outer expectation with respect to $P^\infty$.
Apply Fubini's theorem (Lemma 1.2.6 of \cite{MR1385671}) to obtain
$E^*P_W^*(|\Delta_N|>M_N)
\leq Pr^*(|\Delta_N|>M_N).$
Since $Pr^*(|\Delta_N|>M_N)\rightarrow 0$ as $N\rightarrow \infty$ by assumption, we conclude that $\Delta_N=O_{P_W^*}(1)$ in $P^*$-probability.

For the second statement, let $\eta>0$ be arbitrary, and suppose that $\Delta_N=O_{P_W^*}(1)$ in $P^*$-probability and that $\Delta_N$ is measurable.
For every sequence $M_N\rightarrow \infty$, $P^*\left\{P_W^*(|\Delta_N|>M_N)>\eta\right\}\rightarrow 0$ as $N\rightarrow \infty$ by the definition of $\Delta_N=O_{P_W^*}(1)$ in $P^*$-probability.
Apply Fubini's theorem to obtain
\begin{eqnarray*}
Pr^*(|\Delta_N|>M_N)
%&=& E^*\left\{P_W^*(|\Delta_N|>M_N)\right\}\\
&=&E^*\left[P_W^*(|\Delta_N|>M_N)I\{P_W^*(|\Delta_N|>M_N)>\eta\}\right]\\
&&\quad +E^*\left[P_W^*(|\Delta_N|>M_N)I\{P_W^*(|\Delta_N|>M_N)\leq \eta\}\right]\\
&\leq &E^*\left[P_W^*(|\Delta_N|>M_N)I\{P_W^*(|\Delta_N|>M_N)>\eta\}\right]+\eta\\
&\leq &E^*I\{P_W^*(|\Delta_N|>M_N)>\eta\}+\eta\\
&\leq &P^*\{P_W^*(|\Delta_N|>M_N)>\eta\}+\eta.
%\rightarrow \eta,
\end{eqnarray*}
Since $\Delta_N$ is $O_{P_W^*}(1)$ in $P^*$-probability, $P^*\{P_W^*(|\Delta_N|>M_N)>\eta\}\rightarrow 0$ as $N\rightarrow \infty$.
Since $\eta$ is arbitrary, we conclude that $Pr^*(|\Delta_N|>M_N) \rightarrow 0$ as $N\rightarrow \infty$.
This establishes the desired result.

\noindent (2) Let $M_N$ be an arbitrary sequence such that $M_N\rightarrow\infty$ as $N\rightarrow \infty$.
Because $\Delta_N$ is only defined on the probability space $(\mathcal{X}^\infty,\mathcal{B}^\infty,P^\infty)$, applying Lemma 1.2.3 of \cite{MR1385671} twice to obtain
\begin{eqnarray*}
Pr^*(|\Delta_N|\geq M_N) &=& PrI\{|\Delta_N|\geq M_N\}^*
=P^\infty I\{|\Delta_N|\geq \eta\}^* \\
&=& P^*(|\Delta_N|\geq
M_N)\rightarrow 0,\quad \mbox{ as }N\rightarrow \infty,
\end{eqnarray*}
as desired where $I$ is an indicator function of an event.

\noindent (3)
Note that for events $A$ and $B$ with $A\subset B$, $P^*(A)\subset P^*(B)$ by the definition
of the outer probability (see \cite{MR1385671}).
Note also that $P^*(A\cup B) \leq P^*(A)+P^*(B)$.
To see this, note that  $P^*(A) = P(A^*)$ (Lemma 1.2.3 of \cite{MR1385671}) and
$(S+T)^* \leq S^*+T^*$ for maps $S,T$ on the probability space (Lemma 1.2.2 of \cite{MR1385671}).
Thus,
\begin{eqnarray*}
&&P^*(A\cup B)
= E(I_A + I_{B\setminus A})^*
\leq EI_A^* +   EI_{B\setminus A}^*\\
&&= P^*(A) + P^*(B\setminus A)
\leq P^*(A) + P^*(B).
\end{eqnarray*}

Now, let $\epsilon>0$ be arbitrary.
Because $\Delta_N=o_{P^*_W}(1)$ in $P^*$-probability, there exists a sequence
$\epsilon_N\downarrow 0$ as $N\rightarrow 0$ such that
$P^*(P_W^*(|\Delta_N|\geq \epsilon_N)\geq \eta/2) \rightarrow 0$ as
$N\rightarrow \infty$.
Thus, it follows from the results on outer probability above that
\begin{eqnarray*}
&&P^*\left\{P_W^*(|\Delta_N\Gamma_N|>\epsilon)>\eta\right\}\\
&&= P^*\left\{P_W^*(|\Delta_N\Gamma_N|>\epsilon,|\Gamma_N|>\epsilon/\epsilon_N)+
  P_W^*(|\Delta_N\Gamma_N|>\epsilon,|\Gamma_N|\leq
  \epsilon/\epsilon_N)>\eta\right\}\\
&&\leq P^*\left\{P_W^*(|\Gamma_N|>\epsilon/\epsilon_N)+
  P_W^*(|\Delta_N|>\epsilon_N)>\eta\right\}\\
&&\leq P^*\left\{P_W^*(|\Gamma_N|>\epsilon/\epsilon_N)>\eta/2\right\}
  + P^*\left\{P_W^*(|\Delta_N|>\epsilon_N)>\eta/2\right\}\\
&&\rightarrow 0,\quad \mbox{as }N\rightarrow \infty.
\end{eqnarray*}

\noindent (4) Let $M_N$ be an arbitrary sequence such that $M_N\rightarrow
\infty$ as $N\rightarrow \infty$.
As in the proof of (3) we have
\begin{eqnarray*}
&&P^*\left\{P_W^*(|\Delta_N\Gamma_N|>M_N)>\eta\right\}\\
&&= P^*\left\{P_W^*(|\Delta_N\Gamma_N|>\epsilon,|\Gamma_N|>M_N^{1/2})+
  P_W^*(|\Delta_N\Gamma_N|>\epsilon,|\Gamma_N|\leq
  M_N^{1/2})>\eta\right\}\\
&&\leq P^*\left\{P_W^*(|\Gamma_N|>M_N^{1/2})+
  P_W^*(|\Delta_N|>M_N^{-1/2}M_N>\eta\right\})\\
&&\leq P^*\left\{P_W^*(|\Gamma_N|>M_N^{1/2})>\eta/2\right\}
  + P^*\left\{P_W^*(|\Delta_N|>M_N^{1/2})>\eta/2\right\}\\
&&\rightarrow 0,\quad \mbox{as }N\rightarrow \infty.
\end{eqnarray*}

\noindent (5) Let $\epsilon>0$ be arbitrary, and let $\Delta_{N'}$ be an arbitrary subsequence of $\Delta_N$.
Suppose that $\Delta_N=o_{P_W^*}(1)$ in $P^*$-probability.
It follows from Lemma 1.9.2 of \cite{MR1385671} that there exists a further subsequence $\{N^{''}\}$ of $\{N^{'}\}$ (depending on $\epsilon$) such that $P^*_W(|\Delta_{N^{''}}|>\epsilon) \rightarrow 0$, $P^{\infty}$-almost surely.
This implies that there exists a set $N_X\in \mathcal{B}^\infty$ such that for every $x \notin N_X$, $P^*_W(|\Delta_{N^{''}}|>\epsilon) \rightarrow 0$ and $P^\infty(N_X) =0$.
Fix $x\notin N_X$.
It follows from Lemma 1.9.2 of \cite{MR1385671} again that there exists a further subsequence $\{N^{'''}\}$ of $\{N^{''}\}$ such that $|\Delta_{N^{'''}}|^{*} \rightarrow 0$, $P_W$-almost surely. Here $|\Delta_{N^{'''}}|^{*}$  is a minimal measurable majorant of $|\Delta_{N^{'''}}|$ with respect to $P_W$.
This implies that there exists a set $N_{W|x}\in \mathcal{E}$ such that for every $w \in N_{W|x}^c$, $|\Delta_{N^{'''}}|^{*}\rightarrow 0$ and $P_W(N_{W|x}) =0$.
Hence $\Delta_{N^{'}}$ has a subsequence $\Delta_{N^{'''}}$ such that $\Delta_{N^{'''}}\rightarrow 0$ in $P_{W|\infty}$-almost surely. This establishes the first half of the statement.
Now suppose that every subsequence $\Delta_{N'}$ has a further subsequence $\Delta_{N^{''}}$ such that $\Delta_{N^{''}}^*\rightarrow 0$ in  $P_{W|\infty}$-almost surely.
Fix $x\notin N_X$.
It follows from Lemma 1.9.2 of \cite{MR1385671} and the assumption that there exists a subsequence $\{N^{''}\}$ of $\{N'\}$ such that $P_W^*(|\Delta_{N^{''}}|>\epsilon)\rightarrow 0$ for a fixed $x$.
Apply Lemma 1.9.2 of \cite{MR1385671} again to verify that there exists a further subsequence $\{N^{'''}\}$ of $\{N^{''}\}$ such that $P^*(P^*_W(|\Delta_{N^{'''}}|>\epsilon)>\eta)\rightarrow 0$.
Thus every subsequence of $\Delta_N$ is $o_{P_W^*}(1)$ in $P^*$-probability. This implies that $\Delta_N=o_{P_W^*}(1)$ in $P^*$-probability.
This completes the proof.
\end{proof}

\subsubsection{Calibration}
Consistency, and the limiting distributions of $\hat{\hat{\alpha}}_N^{*\#}$ are given by the following proposition.
The proof requires a Glivenko-Cantelli theorem for $\hat{\mathbb{P}}_N^\pi$ whose proof is independent of Proposition \ref{prop:alphaCalboot}.

\begin{prop}
\label{prop:alphaCalboot}
Suppose that Conditions \ref{cond:cal} and \ref{cond:bootcal} hold.
Let \begin{eqnarray*}
&& A_N \equiv -\dot{G}(0)^{-1}\left\{P_0 V^{\otimes 2}\right\}^{-1}
\tilde{\mathbb{G}}_N^{\pi,(2)} V , \\
&&A\equiv -\dot{G}(0)^{-1}\left\{P_0 V^{\otimes 2}\right\}^{-1}
         \sum_{j=1}^J\{\nu_jp_j^{-1}(1-p_j)\}^{1/2}\mathbb{G}_jV,\\
%\sqrt{\nu_j}\sqrt{\frac{1-p_j}{p_j}}\mathbb{G}_jV,\\
&&B_N \equiv -\dot{G}(0)^{-1}\left\{P_0 \tilde{\pi}_\infty(V)\tilde{V}^{\otimes 2}\right\}^{-1}
\tilde{\mathbb{G}}_N^{\pi,(2)}\tilde{V},\\
&& B\equiv  -\dot{G}(0)^{-1}\left\{P_0 \tilde{\pi}_\infty(V)\tilde{V}^{\otimes 2}\right\}^{-1}
         \sum_{j=1}^J\{\nu_jp_j^{-1}(1-p_j)\}^{1/2}\mathbb{G}_j\tilde{V},
\end{eqnarray*}
where $\mathbb{G}_j$ are independent $P_{0|j}$-Brownian bridge processes.
Then $|\hat{\hat{\alpha}}_N^{*\#}-\alpha_0|\rightarrow_{P_W^*}0$ with $*\in\{b,bs\}$ and $\#\in\{c,cc\}$, and
\begin{eqnarray*}
&&\sqrt{N}(\hat{\hat{\alpha}}_N^{bc} - \hat{\alpha}_N^{c})
=A_N + o_{P_W^*}(1)
\rightsquigarrow A,\\
&&\sqrt{N}(\hat{\hat{\alpha}}_N^{bcc} - \hat{\alpha}_N^{cc})
= B_N + o_{P_W^*}(1)
\rightsquigarrow B,\\
&&\sqrt{N}(\hat{\hat{\alpha}}_N^{bsc} - \alpha_0)
= A_N + o_{P_W^*}(1)
\rightsquigarrow A,\\
&&\sqrt{N}(\hat{\hat{\alpha}}_N^{bscc} - \alpha_0)
=B_N + o_{P_W^*}(1)
\rightsquigarrow B, \quad \mbox{in $P^*$-probability.}
\end{eqnarray*}
\end{prop}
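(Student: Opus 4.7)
The plan is to handle the four cases in parallel: first establish $|\hat{\hat\alpha}_N^{*\#}|=o_{P_W^*}(1)$ in $P^*$-probability; then linearize each bootstrap calibration equation around its natural centering---$\hat\alpha_N^{\#}$ for the $b\#$ versions and $\alpha_0=0$ for the $bs\#$ versions---via a Taylor expansion of $G$; and finally read off the leading term as $-\dot G(0)^{-1}$ times an inverse matrix acting on a linear functional of the phase II bootstrap IPW empirical process $\tilde{\mathbb{G}}_N^{\pi,(2)}$, to which Lemma \ref{lemma:ph2D} applies.

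For consistency, I would apply Theorem \ref{thm:fpsPiGCboot} to the classes $\{G_c(\cdot;\alpha)V:\alpha\in\mathcal{A}\}$ and $\{G_{cc}(\cdot;\alpha)(V-P_0V):\alpha\in\mathcal{A}\}$, which are $P_0$-Glivenko-Cantelli since $V$ has bounded support and $G$ is bounded by Condition \ref{cond:cal}. Combined with the non-bootstrap consistency $|\hat\alpha_N^{\#}|=o_{P^*}(1)$ from Theorem \ref{thm:fpsD} and $\mathbb{P}_NV-P_0V=o_{P^*}(1)$, each bootstrap calibration equation converges uniformly over $\alpha$ to a deterministic limit whose unique root is $\alpha_0=0$; uniqueness follows from the strict monotonicity of $G$ and the positive definiteness of $P_0V^{\otimes 2}$ (respectively $P_0\{\tilde\pi_\infty(V)\tilde V^{\otimes 2}\}$).

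For the linearization I take the $bc$ case as representative. Subtracting (\ref{eqn:caleqn2}) at $\hat\alpha_N^c$ from (\ref{eqn:survbootcaleqn}) at $\hat{\hat\alpha}_N^{bc}$ and expanding $G_c(V;\hat{\hat\alpha}_N^{bc})-G_c(V;\hat\alpha_N^c)$ by the mean value theorem gives
\begin{equation*}
\dot G(0)\,\mathbb{P}_N^\pi W_N^{(2)}VV^T(\hat{\hat\alpha}_N^{bc}-\hat\alpha_N^c) = -\mathbb{P}_N^\pi (W_N^{(2)}-1)G_c(V;\hat\alpha_N^c)V + R_N,
\end{equation*}
with remainder $R_N=o_{P_W^*}(N^{-1/2})$ controlled by the joint consistency of $\hat{\hat\alpha}_N^{bc}$ and $\hat\alpha_N^c$, the boundedness of $\dot G$, and the bootstrap Glivenko-Cantelli theorem. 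Theorem \ref{thm:fpsPiGCboot} yields $\mathbb{P}_N^\pi W_N^{(2)}VV^T\to P_0V^{\otimes 2}$, while replacing $G_c(V;\hat\alpha_N^c)$ by $1$ on the right-hand side costs only $\tilde{\mathbb{G}}_N^{\pi,(2)}[(G_c(V;\hat\alpha_N^c)-1)V]=O_{P_W^*}(1)\cdot O_{P^*}(N^{-1/2})=o_{P_W^*}(1)$ by a Donsker-class equicontinuity argument on $\{(G_c(\cdot;\alpha)-1)V:|\alpha|\le\delta\}$. Inverting then gives
\begin{equation*}
\sqrt N(\hat{\hat\alpha}_N^{bc}-\hat\alpha_N^c) = -\dot G(0)^{-1}\{P_0V^{\otimes 2}\}^{-1}\tilde{\mathbb{G}}_N^{\pi,(2)}V+o_{P_W^*}(1) = A_N+o_{P_W^*}(1).
\end{equation*}
The $bsc$ case is analogous after expanding at $\alpha_0=0$: the right-hand side of (\ref{eqn:singlebootcaleqn}) is $\mathbb{P}_N^\pi V$ rather than $\mathbb{P}_NV$, which directly produces $\mathbb{P}_N^\pi(W_N^{(2)}-1)V=N^{-1/2}\tilde{\mathbb{G}}_N^{\pi,(2)}V$. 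The $bcc$ and $bscc$ cases proceed in the same fashion via the chain rule on $G_{cc}(V;\alpha)=G((\pi_0^{-1}(V)-1)(V-\mathbb{P}_NV)^T\alpha)$, producing the inverse matrix $\{P_0\tilde\pi_\infty(V)\tilde V^{\otimes 2}\}^{-1}$ and the random vector $\tilde{\mathbb{G}}_N^{\pi,(2)}\tilde V$.

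Weak convergence $A_N\rightsquigarrow A$ and $B_N\rightsquigarrow B$ in $P^*$-probability then follows from Lemma \ref{lemma:ph2D} applied to the single functions $V$ and $\tilde V$ (trivially in a Donsker class by the bounded support of $V$), combined with the continuous mapping theorem using the deterministic inverse matrices. The main obstacle will be the centered-calibration case: because $G_{cc}$ and the defining equation depend on $\mathbb{P}_NV$ and $\pi_0$, while the limiting objects $B_N$ and $B$ use $P_0V$ and $\pi_{0,\infty}$, one must verify that substituting $\mathbb{P}_NV\mapsto P_0V$ inside the sample matrix $\mathbb{P}_N^\pi W_N^{(2)}(\pi_0^{-1}(V)-1)(V-\mathbb{P}_NV)^{\otimes 2}$ and inside the residual produces only $o_{P_W^*}(1)$ errors at the $\sqrt N$ scale; this combines $\mathbb{P}_NV-P_0V=O_{P^*}(N^{-1/2})$ with asymptotic equicontinuity of $\tilde{\mathbb{G}}_N^{\pi,(2)}$ on a Donsker class indexed by a parameter in a shrinking neighborhood of $0$.
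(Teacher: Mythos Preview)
Your approach is correct and follows a somewhat different route from the paper's. The paper casts each estimator into the abstract $Z$-estimator framework (Lemma~\ref{lemma:thm5.9} for consistency, Lemma~\ref{lemma:thm3.3.1} for asymptotic normality): it defines $\hat\Phi_{N,*\#}(\alpha)$ and a deterministic limit $\Phi_\#(\alpha)$, verifies the asymptotic equicontinuity condition, computes $\sqrt{N}(\hat\Phi_{N,*\#}-\Phi_\#)(\alpha_0)$, and obtains an expansion of $\sqrt{N}(\hat{\hat\alpha}_N^{*\#}-\alpha_0)$; for the $b\#$ cases it then \emph{subtracts} the known non-bootstrap expansion of $\sqrt{N}(\hat\alpha_N^{\#}-\alpha_0)$ (from Proposition~A.1 of \cite{SW2013supp}) to isolate the difference. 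You instead subtract the two calibration equations at the outset and Taylor-expand directly around $\hat\alpha_N^\#$, so the difference $\hat{\hat\alpha}_N^{b\#}-\hat\alpha_N^\#$ appears immediately and the non-bootstrap expansion is never invoked. Your argument is more elementary and self-contained; the paper's buys a reusable template (the same Lemmas~\ref{lemma:thm5.9}--\ref{lemma:thm3.3.1} are later used for the WLE theorems) and makes the parallel with the non-bootstrap case in \cite{SW2013supp} transparent.

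One small point of care: your claim ``$R_N=o_{P_W^*}(N^{-1/2})$'' for the Taylor remainder is slightly circular as written, since the remainder from replacing $\dot G_c(V;\tilde\alpha)$ by $\dot G(0)$ is really $o_{P_W^*}(1)\cdot|\hat{\hat\alpha}_N^{bc}-\hat\alpha_N^c|$, not $o_{P_W^*}(N^{-1/2})$ a priori. The fix is standard---absorb this term into the coefficient matrix, which becomes $\dot G(0)P_0V^{\otimes 2}+o_{P_W^*}(1)$ and remains invertible---so the rate and the expansion come out simultaneously. The paper sidesteps this by using the $o_{P_W^*}(1+\sqrt N|\hat{\hat\alpha}_N-\alpha_0|)$ formulation built into Lemma~\ref{lemma:thm3.3.1}. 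Your identification of the main technical obstacle in the centered-calibration cases (replacing $\mathbb{P}_NV$, $\pi_0$ by $P_0V$, $\pi_{0,\infty}$ at the $\sqrt N$ scale) is exactly right; the paper handles it via a subsequence argument with dominated convergence rather than Donsker equicontinuity, but either route works here since $V$ has bounded support.
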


\begin{proof}
First we consider bootstrap centered calibration with $\hat{\hat{\alpha}}_N=\hat{\hat{\alpha}}_N^{bcc}$ obtained as the solution to the equation (\ref{eqn:survbootccaleqn}).
Let $\hat{\alpha}_N = \hat{\alpha}_N^{cc}$.
Define $\hat{\Phi}_{N,bcc}(\alpha)\equiv \hat{\mathbb{P}}_N^{\pi,(2)} G_{cc}(V;\alpha)(V -\mathbb{P}_N V)$
and $\Phi_{cc}(\alpha)\equiv P_0G_{cc,\infty}(V;\alpha)\tilde{V}$.
Note that $\hat{\Phi}_{N,bcc}(\hat{\hat{\alpha}}_N)=0$ by (\ref{eqn:survbootccaleqn}) and $\Psi_{cc}(0)=0$.
We apply Lemma \ref{lemma:thm5.9}
%Theorem 5.7 of \cite{MR1652247}
for a consistency proof.
For the first condition of the lemma, we have
\begin{eqnarray*}
&&\sup_{\alpha\in\mathbb{R}^k}\left| \hat{\Phi}_{N,bcc}(\alpha)-\Phi_{cc}(\alpha)\right| \nonumber \\
&& \leq  \sup_{\alpha\in\mathbb{R}^k}\left|(\hat{\mathbb{P}}_N^{\pi,(2)}-P_0)G_{cc}(V;\alpha)V \right|
+\sup_{\alpha\in\mathbb{R}^k}\left|(\hat{\mathbb{P}}_N^{\pi,(2)}-P_0)G_{cc}(V;\alpha) \right|  | \mathbb{P}_NV|\\ &&\quad
+\sup_{\alpha\in\mathbb{R}^k}\left| P_0\{G_{cc}(V;\alpha)(V-\mathbb{P}_NV) -G_{cc,\infty}(V;\alpha)\tilde{V}\}\right|.
\end{eqnarray*}
The first two terms are $o_{P_W^*}(1)$ in $P^*$-probability.
%To see this, let $h(v;\alpha,\mu,\underline{p})=G(\sum_{j=1}^JI_{\mathcal{V}_j}(v)\{(1-p_j)/p_j\}(v-\mu)^T\alpha))$ be a function of $v$ indexed by parameters $\alpha,\mu\in\mathbb{R}^k$ and $\underline{p}=(p_1,\ldots,p_J)^T\in\mathbb{R}^J$.
%note first that the finite-dimensional vector space $\{\sum_{j=1}^JI_{\mathcal{V}_j}(v)\{(1-p_j)/p_j\}(v-\mu)^T\alpha:p_j\in [\sigma,1],\mu,\alpha \in\mathbb{R}^k\}$ of functions of $v$ is a VC subgraph class by Lemma 2.6.15 of \cite{MR1385671}.
To see this, note that the set $\mathcal{G}_1\equiv \{\sum_{j=1}^JI_{\mathcal{V}_j}(v)(c_{j,1}v^T\alpha-c_{j,2}):c_{j,1},c_{j,2}\in\mathbb{R},\alpha \in\mathbb{R}^k\}$ is a VC subgraph class by Lemma 2.6.15 of \cite{MR1385671} since $\mathcal{G}_1$ is a finite-dimensional vector space of functions of $v$.
Thus, $\mathcal{G}_2\equiv \{G(g):g\in\mathcal{G}_1\}$ is also a VC subgraph class by Lemma 2.6.18 of \cite{MR1385671} because of the monotonicity of $G$ (Conditions \ref{cond:cal} and \ref{cond:bootcal}).
Because $\mathcal{G}_2$ has an integrable envelope because of the boundedness of $G$ (Conditions \ref{cond:cal} and \ref{cond:bootcal}), it is $P_0$-Glivenko-Cantelli.
Thus, $\mathcal{G}_3\equiv \{G(\sum_{j=1}^JI_{\mathcal{V}_j}(v)\{(1-p_j)/p_j\}(v-\mu)^T\alpha):p_j\in [\sigma,1],\mu,\alpha \in\mathbb{R}^k\}\subset \mathcal{G}_2$ implies that the set $\mathcal{G}_3$ is $P_0$-Glivenko-Cantelli.
Because the multiplication $(x,y)\mapsto xy$ is continuous and that $\mathcal{G}_4\equiv \{g(v)v: g\in\mathcal{G}_3\}$ has an integrable envelope, $\mathcal{G}_4$ is $P_0$-Glivenko-Cantelli by the Glivenko-Cantelli preservation theorem of \cite{MR1857319}.
It follows from the Bootstrap Glivenko-Cantelli theorem for two-phase sampling (Theorem \ref{thm:fpsPiGCboot}) that  $\sup_{\alpha\in\mathbb{R}^k}|(\hat{\mathbb{P}}_N^{\pi,(2)}-P_0)G_{cc}(V;\alpha)V|=o_{P_W^*}(1)$ and $\sup_{\alpha\in\mathbb{R}^k}|(\hat{\mathbb{P}}_N^{\pi,(2)}-P_0)G_{cc}(V;\alpha)|=o_{P_W^*}(1)$ in $P^*$-probability.
Since $\mathbb{P}_NV =O_{P_W^*}(1)$ in $P^*$-probability by the weak law of large numbers and Lemma \ref{lemma:bootorder}, the second term is $o_{P_W^*}(1)$ in $P^*$-probability by Lemma \ref{lemma:bootorder} (3).
For the third term, suppose to the contrary that this term does not converges to zero.
Note that this term is bounded because $G$ is bounded and $V$ is square-integrable (Conditions \ref{cond:cal} and \ref{cond:bootcal}).
Thus, there exists a subsequence $N'$ of $N$ and a sequence $\{\alpha^{(m)}\}\in\mathbb{R}^k$ such that
\begin{equation*}
\label{eqn:contradictionAlpha}
\left| P_0\{G_{cc}(V;\alpha^{(N')})(V-\mathbb{P}_{N'}V) -G_{cc,\infty}(V;\alpha^{(N')})\tilde{V}\}\right|
\rightarrow c>0
\end{equation*}
for some $c$ and $\alpha^{(N')}$ converges to some vector $\alpha^{(\infty)}\in\overline{\mathbb{R}}^k$ whose elements are extended real numbers.
Because $\mathbb{P}_NV\rightarrow P_0V$, $P^*$-almost surely by the strong law of large numbers, $\pi_0(v)\rightarrow \pi_\infty(v)$ by assumption, and $G$ is continuous and bounded, we have
\begin{equation*}
|G_{cc}(v;\alpha^{(N^{''})})(V-\mathbb{P}_{N^{''}}V)
- G_{cc,\infty}(v;\alpha^{(N^{''})})(v-P_0V)|\rightarrow 0.
\end{equation*}
Noting the boundedness of $G$ and the square integrability of $V$, the dominated convergence theorem yields
\begin{equation*}
\left| P_0\{G_{cc}(V;\alpha^{(N^{''})})(V-\mathbb{P}_{N^{''}}V) -G_{cc,\infty}(V;\alpha^{(N^{''})})\tilde{V}\}\right|
\rightarrow 0
\end{equation*}
as $N^{''}\rightarrow \infty$, which is a contradiction to (\ref{eqn:contradictionAlpha}).
This establishes the first condition of Lemma \ref{lemma:thm5.9}.
%Theorem 5.7 of \cite{MR1652247}.
The second condition of Lemma \ref{lemma:thm5.9}
%Theorem 5.7 of \cite{MR1652247}
was verified in the proof of Proposition A.1 of \cite{SW2013supp}, and hence $\hat{\hat{\alpha}}_N\rightarrow_{P^*_W}\alpha_0$ in $P^*$-probability.

We apply Lemma \ref{lemma:thm3.3.1}
%Theorem 3.3.1 of \cite{MR1385671}
to show the asymptotic normality of $\hat{\hat{\alpha}}_N$.
For the asymptotic equicontinuity condition, Taylor's theorem yields
\begin{eqnarray*}
&&\sqrt{N}(\hat{\Phi}_{N,bcc}-\Phi_{cc})(\hat{\hat{\alpha}}_N)-\sqrt{N}(\hat{\Phi}_{N,bcc}-\Phi_{cc})(\alpha_0)\\
%&&=\sqrt{N}\hat{\mathbb{P}}_N^{\pi,(2)}(G_{cc}(V;\hat{\hat{\alpha}}_N)-1)(V-\mathbb{P}_NV)-\sqrt{N}P_0\{G_{cc,\infty}(V;\hat{\hat{\alpha}}_N)-1\}(V-P_0V)\\
&&=\sqrt{N}(\hat{\mathbb{P}}_N^{\pi,(2)}-P_0)(G_{cc}(V;\hat{\hat{\alpha}}_N)-1)(V-\mathbb{P}_NV)\\
&&\quad +\sqrt{N}P_0(G_{cc}(V;\hat{\hat{\alpha}}_N)-1)(V-\mathbb{P}_NV)\\
&&\quad -\sqrt{N}P_0\{G_{cc,\infty}(V;\hat{\hat{\alpha}}_N)-1\}(V-P_0V)\\
&&=(\hat{\mathbb{P}}_N^{\pi,(2)}-P_0)\dot{G}_{cc}(V;\tilde{\alpha})\tilde{\pi}_0(V)(V-\mathbb{P}_NV)^{\otimes 2}\sqrt{N}(\hat{\hat{\alpha}}_N-\alpha_0)\\
&&\quad +P_0\dot{G}_{cc}(V;\tilde{\alpha})\tilde{\pi}_0(V)(V-\mathbb{P}_NV)^{\otimes 2}\sqrt{N}(\hat{\hat{\alpha}}_N-\alpha_0),\\
&&\quad -P_0\dot{G}_{cc,\infty}(V;\tilde{\alpha})\tilde{\pi}_0(V)(V-\mathbb{P}_NV)(V-P_0V)^T\sqrt{N}(\hat{\hat{\alpha}}_N-\alpha_0)\\
&&\equiv (J_1+J_2+J_3)\sqrt{N}(\hat{\hat{\alpha}}_N-\alpha_0),
\end{eqnarray*}
where $\tilde{\alpha}$ is some convex combination of $\hat{\hat{\alpha}}_N$ and $\alpha_0$.
Note that $\dot{G}$ is bounded (Conditions \ref{cond:cal} and \ref{cond:bootcal}).
Thus, we can proceed in the same way as in a consistency proof to obtain $J_1 = o_{P_W^*}(1)$  in $P^*$-probability by Theorem \ref{thm:fpsPiGCboot}.
For $J_2$ and $J_3$, note that $\tilde{\alpha}\rightarrow_{P_W^*}\alpha_0$ in $P^*$-probability because of $\hat{\hat{\alpha}}_N\rightarrow_{P_W^*}\alpha_0$ in $P^*$-probability.
Then we have by Lemma \ref{lemma:bootorder} that $\tilde{\alpha}\rightarrow \alpha_0$ in outer $P_{W|\infty}$-almost surely for every subsequence $\{N'\}$ of $\{N\}$.
%and consistency $\tilde{\alpha}\rightarrow_{P_W^*}\alpha_0$
Note also that $\mathbb{P}_NV\rightarrow P_0V$, $P^\infty$-almost surely, that $\tilde{\pi}_0(v)\rightarrow \tilde{\pi}_\infty(v)$, and that $\dot{G}$ is bounded.
Again, a similar argument based on the dominated convergence theorem used above for a consistency proof yields that $J_2+J_3 = o_{P_W^*}(1)$ in $P^*$-probability.
Thus, the last display is $o_{P_W^*}(1+ \sqrt{N}| \hat{\hat{\alpha}}_N-\alpha_0|)$ in $P^*$-probability.
Next, we show weak convergence of the process $\sqrt{N}(\hat{\Phi}_{N,bcc}-\Phi_{cc})(\alpha)$ at $\alpha_0=0$.
Because $\hat{\mathbb{P}}_N^{\pi,(2)}c=c, \mathbb{P}_N^{\pi}c=c, P_0c=c$ for any constant $c$, we have by Lemma \ref{lemma:ph2D}, Theorem 5.3 of \cite{MR3059418}, and Lemma \ref{lemma:bootorder} that
\begin{eqnarray*}
\sqrt{N}(\hat{\Phi}_{N,bcc}-\Phi_{cc})(\alpha_0)
%=\sqrt{N}\hat{\Phi}_{N,dcc}(0)
&=&\sqrt{N}\hat{\mathbb{P}}_N^{\pi,(2)}(V-\mathbb{P}_NV)
%=\sqrt{N}(\hat{\mathbb{P}}_N^{\pi,(2)}-\mathbb{P}_N^\pi) V \\
=(\tilde{\mathbb{G}}_N^{\pi,(2)} + \mathbb{G}_N^{\pi,(2)}) \tilde{V}\\
%&=& \mathbb{G}^{\pi,(2)}\tilde{V} + O_{P_W^*}(1)
&=& O_{P_W^*}(1),\quad \mbox{in $P^*$-probability.}
\end{eqnarray*}
Hence, it follows by Lemma \ref{lemma:thm3.3.1}
%Theorem 3.3.1 of \cite{MR1385671}
, Lemma \ref{lemma:ph2D} and Proposition A.1 of \cite{SW2013supp} that
\begin{eqnarray*}
&&\sqrt{N}(\hat{\hat{\alpha}}_N-\alpha_0)\\
&&= -\dot{\Phi}_{cc}(0)\sqrt{N}(\Phi_{N,bcc}-\Phi_{cc})(0)+o_{P_W^*}(1)\\
&&= -\dot{G}(0)^{-1}\left\{P_0 \tilde{\pi}_\infty(V)\tilde{V}^{\otimes 2}\right\}^{-1}\tilde{\mathbb{G}}^{\pi,(2)}_N\tilde{V}\\ &&\quad
 -\dot{G}(0)^{-1}\left\{P_0 \tilde{\pi}_\infty(V)\tilde{V}^{\otimes 2}\right\}^{-1}\mathbb{G}_N^{\pi,(2)} V+o_{P_W^*}(1)\\
&&= -\dot{G}(0)^{-1}\left\{P_0 \tilde{\pi}_\infty(V)\tilde{V}^{\otimes 2}\right\}^{-1}\mathbb{G}^{\pi,(2)}\tilde{V} +\sqrt{N}(\hat{\alpha}_N-\alpha_0)+o_{P_W^*}(1)
%\sum_{j=1}^J\sqrt{\nu_j}\sqrt{\frac{1-p_j}{p_j}}\mathbb{G}_jV,
\end{eqnarray*}
in $P^*$-probability.
Rearrangement of terms yields the desired result.

We consider bootstrap calibration with $\hat{\hat{\alpha}}_N=\hat{\hat{\alpha}}_N^{bc}$ obtained as the solution to the equation (\ref{eqn:survbootcaleqn}).
Define $\hat{\Phi}_{N,bc}(\alpha)\equiv \hat{\mathbb{P}}_N^{\pi,(2)} G_{c}(V;\alpha)V -\mathbb{P}_N V$
and $\Phi_{c}(\alpha)\equiv P_0(G_{c,\infty}(V;\alpha)-1)V$.
Note that $\hat{\Phi}_{N,bc}(\hat{\hat{\alpha}}_N)=0$ by (\ref{eqn:survbootcaleqn}) and $\Psi_{c}(0)=0$.
We apply Lemma \ref{lemma:thm5.9}
%Theorem 5.7 of \cite{MR1652247}
for a consistency proof.
For the first condition of the lemma, we have
\begin{eqnarray*}
\sup_{\alpha\in\mathbb{R}^k}\left| \hat{\Phi}_{N,bc}(\alpha)-\Phi_{c}(\alpha)\right|
& \leq&  \sup_{\alpha\in\mathbb{R}^k}\left|(\hat{\mathbb{P}}_N^{\pi,(2)}-P_0)G_{c}(V;\alpha)V \right|\\ &&
+\sup_{\alpha\in\mathbb{R}^k}\left|(\mathbb{P}_N-P_0)V \right|\\ &&
+\sup_{\alpha\in\mathbb{R}^k}\left| P_0\{G_{c}(V;\alpha)V -G_{c,\infty}(V;\alpha)V\}\right|.
\end{eqnarray*}
The first and third terms in the last display are $o_{P_W^*}(1)$ in $P^*$-probability by a similar argument in the consistency proof for $\hat{\alpha}_N^{bcc}$.
The second term is also $o_{P_W^*}(1)$ in $P^*$-probability by the law of large numbers and Lemma \ref{lemma:bootorder}.
This verifies the first condition.
The second condition was verified in the proof of Proposition A.1 of \cite{SW2013supp}.
This proves the consistency $\hat{\hat{\alpha}}_N\rightarrow_{P_W^*}\alpha_0$ in $P^*$-probability.

We apply Lemma \ref{lemma:thm3.3.1}
%Theorem 3.3.1 of \cite{MR1385671}
to show the asymptotic normality of $\hat{\hat{\alpha}}_N$.
For the asymptotic equicontinuity condition, Taylor's theorem yields
\begin{eqnarray*}
&&\sqrt{N}(\hat{\Phi}_{N,bc}-\Phi_{c})(\hat{\hat{\alpha}}_N)-\sqrt{N}(\hat{\Phi}_{N,bc}-\Phi_{c})(\alpha_0)\\
%&&=\sqrt{N}\hat{\mathbb{P}}_N^{\pi,(2)}(G_{c}(V;\hat{\hat{\alpha}}_N)-1)V-\sqrt{N}P_0\{G_{c,\infty}(V;\hat{\hat{\alpha}}_N)-1\}V\\
&&=\sqrt{N}(\hat{\mathbb{P}}_N^{\pi,(2)}-P_0)(G_{c}(V;\hat{\hat{\alpha}}_N)-1)V
+\sqrt{N}P_0(G_{c}(V;\hat{\hat{\alpha}}_N)-1)V\\
&&\quad -\sqrt{N}P_0\{G_{c,\infty}(V;\hat{\hat{\alpha}}_N)-1\}V\\
&&=(\hat{\mathbb{P}}_N^{\pi,(2)}-P_0)\dot{G}_{c}(V;\tilde{\alpha})V^{\otimes 2}\sqrt{N}(\hat{\hat{\alpha}}_N-\alpha_0)\\
&&\quad
 +P_0\dot{G}_{c}(V;\tilde{\alpha})V^{\otimes 2}\sqrt{N}(\hat{\hat{\alpha}}_N-\alpha_0) -P_0\dot{G}_{c,\infty}(V;\tilde{\alpha})V^{\otimes 2}\sqrt{N}(\hat{\hat{\alpha}}_N-\alpha_0),
\end{eqnarray*}
where $\tilde{\alpha}$ is some convex combination of $\hat{\hat{\alpha}}_N$ and $\alpha_0$.
Proceeding in the same way as in a proof for the asymptotic equicontinuity regarding $\hat{\hat{\alpha}}_N^{bcc}$, the last display is shown to be $o_{P_W^*}(1+ \sqrt{N}| \hat{\hat{\alpha}}_N-\alpha_0|)$ in $P^*$-probability.
Next, we show weak convergence of the process $\sqrt{N}(\hat{\Phi}_{N,bc}-\Phi_{c})(\alpha)$ at $\alpha_0=0$.
As in the case for $\hat{\hat{\alpha}}_N^{bcc}$ we have
\begin{eqnarray*}
&&\sqrt{N}(\hat{\Phi}_{N,bc}-\Phi_{c})(\alpha_0)
%=\sqrt{N}\hat{\Phi}_{N,dcc}(0)
=\sqrt{N}\hat{\mathbb{P}}_N^{\pi,(2)}V-\mathbb{P}_NV
%=\sqrt{N}(\hat{\mathbb{P}}_N^{\pi,(2)}-\mathbb{P}_N^\pi) V \\
=(\tilde{\mathbb{G}}_N^{\pi,(2)}+\mathbb{G}_N^{\pi,(2)}) V
\end{eqnarray*}
Hence, it follows by Lemma \ref{lemma:thm3.3.1},
%Theorem 3.3.1 of \cite{MR1385671},
Lemma \ref{lemma:ph2D} and Proposition A.1 of \cite{MR3059418} that
\begin{eqnarray*}
&&\sqrt{N}(\hat{\hat{\alpha}}_N-\alpha_0)
%&&= -\dot{\Phi}_{c}(0)\sqrt{N}(\Phi_{N,c}-\Phi_{c})(0)+o_{P_W^*}(1)\\
%&&= -\dot{G}(0)^{-1}\left\{P_0 V^{\otimes 2}\right\}^{-1}\tilde{\mathbb{G}}^{\pi,(2)}_NV-\dot{G}(0)^{-1}\left\{P_0 V^{\otimes 2}\right\}^{-1}\mathbb{G}_N^\pi V + +o_{P_W^*}(1)\\
= -\dot{G}(0)^{-1}\left\{P_0 V^{\otimes 2}\right\}^{-1}\mathbb{G}^{\pi,(2)}V + \sqrt{N}(\hat{\alpha}_N-\alpha_0)+o_{P_W^*}(1)
%\sum_{j=1}^J\sqrt{\nu_j}\sqrt{\frac{1-p_j}{p_j}}\mathbb{G}_jV,
\end{eqnarray*}
in $P^*$-probability.

We consider bootstrap single centered calibration with $\hat{\hat{\alpha}}_N=\hat{\hat{\alpha}}_N^{bscc}$ obtained as the solution to the equation (\ref{eqn:singlebootccaleqn}).
Define $\hat{\Phi}_{N,bscc}(\alpha)\equiv \hat{\mathbb{P}}_N^{\pi,(2)} G_{cc}(V;\alpha)(V -\mathbb{P}_N^{\pi} V)$
and $\Phi_{cc}(\alpha)\equiv P_0G_{cc,\infty}(V;\alpha)\tilde{V}$.
Note that $\hat{\Phi}_{N,bscc}(\hat{\hat{\alpha}}_N^{bscc})=0$ by (\ref{eqn:singlebootccaleqn}) and $\Psi_{cc}(0)=0$.
We apply Lemma \ref{lemma:thm5.9}
%Theorem 5.7 of \cite{MR1652247}
for a consistency proof.
For the first condition of the lemma, we have
\begin{eqnarray*}
&&\sup_{\alpha\in\mathbb{R}^k}\left| \hat{\Phi}_{N,bscc}(\alpha)-\Phi_{cc}(\alpha)\right| \nonumber \\
&& \leq  \sup_{\alpha\in\mathbb{R}^k}\left|(\hat{\mathbb{P}}_N^{\pi,(2)}-P_0)G_{cc}(V;\alpha)V \right|
+\sup_{\alpha\in\mathbb{R}^k}\left|(\hat{\mathbb{P}}_N^{\pi,(2)}-P_0)G_{cc}(V;\alpha) \right|  | \mathbb{P}_N^\pi V|\\ &&\quad
+\sup_{\alpha\in\mathbb{R}^k}\left| P_0\{G_{cc}(V;\alpha)(V-\mathbb{P}_N^\pi V) -G_{cc,\infty}(V;\alpha)\tilde{V}\}\right|.
\end{eqnarray*}
As in the consistency proof for $\hat{\alpha}_N^{bcc}$ all terms in the last display is $o_{P^*_W}(1)$ in $P^*$-probability.
This establishes the first condition of Lemma \ref{lemma:thm5.9}.
%Theorem 5.7 of \cite{MR1652247}.
The second condition of Lemma \ref{lemma:thm5.9}
%Theorem 5.7 of \cite{MR1652247}
was verified in the proof of Proposition A.1 of \cite{SW2013supp}.
Thus, $\hat{\hat{\alpha}}_N\rightarrow_{P^*_W}\alpha_0$ in $P^*$-probability.

We apply Lemma \ref{lemma:thm3.3.1}
%Theorem 3.3.1 of \cite{MR1385671}
to show the asymptotic normality of $\hat{\hat{\alpha}}_N$.
For the asymptotic equicontinuity condition, Taylor's theorem yields
\begin{eqnarray*}
&&\sqrt{N}(\hat{\Phi}_{N,bscc}-\Phi_{cc})(\hat{\hat{\alpha}}_N)-\sqrt{N}(\hat{\Phi}_{N,bscc}-\Phi_{cc})(\alpha_0)\\
%&&=\sqrt{N}\hat{\mathbb{P}}_N^{\pi,(2)}(G_{cc}(V;\hat{\hat{\alpha}}_N)-1)(V-\mathbb{P}_N^\pi V) -\sqrt{N}P_0\{G_{cc,\infty}(V;\hat{\hat{\alpha}}_N)-1\}(V-P_0V)\\
&&=\sqrt{N}(\hat{\mathbb{P}}_N^{\pi,(2)}-P_0)(G_{cc}(V;\hat{\hat{\alpha}}_N)-1)(V-\mathbb{P}_N^\pi V)\\
&&\quad +\sqrt{N}P_0(G_{cc}(V;\hat{\hat{\alpha}}_N)-1)(V-\mathbb{P}_N^\pi V)\\
&&\quad -\sqrt{N}P_0\{G_{cc,\infty}(V;\hat{\hat{\alpha}}_N)-1\}(V-P_0V)\\
&&=(\hat{\mathbb{P}}_N^{\pi,(2)}-P_0)\dot{G}_{cc}(V;\tilde{\alpha})\tilde{\pi}_0(V)(V-\mathbb{P}_N V)(V-\mathbb{P}_N^\pi V)^T\sqrt{N}(\hat{\hat{\alpha}}_N-\alpha_0)\\
&&\quad +P_0\dot{G}_{cc}(V;\tilde{\alpha})\tilde{\pi}_0(V)(V-\mathbb{P}_N^\pi V)(V-\mathbb{P}_N V)^T\sqrt{N}(\hat{\hat{\alpha}}_N-\alpha_0),\\
&&\quad -P_0\dot{G}_{cc,\infty}(V;\tilde{\alpha})\tilde{\pi}_0(V)(V-\mathbb{P}_NV)(V-P_0V)^T\sqrt{N}(\hat{\hat{\alpha}}_N-\alpha_0),
\end{eqnarray*}
where $\tilde{\alpha}$ is some convex combination of $\hat{\hat{\alpha}}_N$ and $\alpha_0$.
Note that $\mathbb{P}_{N'}^\pi V\rightarrow P_0V$, outer $P_{W|\infty}$-almost surely for every subsequence $\{N'\}$ of $\{N\}$ by Theorem 5.1 of \cite{MR3059418} and Lemma \ref{lemma:bootorder}.
Proceeding in the same way as in a proof for the asymptotic equicontinuity regarding $\hat{\hat{\alpha}}_N^{bcc}$, the last display is shown to be $o_{P_W^*}(1+ \sqrt{N}| \hat{\hat{\alpha}}_N-\alpha_0|)$ in $P^*$-probability.
Next, we show weak convergence of the process $\sqrt{N}(\hat{\Phi}_{N,bscc}-\Phi_{cc})(\alpha)$ at $\alpha_0=0$.
It follows from Lemma \ref{lemma:ph2D} that
\begin{eqnarray*}
&&\sqrt{N}(\hat{\Phi}_{N,bscc}-\Phi_{cc})(\alpha_0)
%=\sqrt{N}\hat{\Phi}_{N,bscc}(0)
=\sqrt{N}(\hat{\mathbb{P}}_N^{\pi,(2)}-\mathbb{P}_N^\pi)\tilde{V}
\rightsquigarrow \mathbb{G}^{\pi,(2)}\tilde{V} ,
\end{eqnarray*}
in $P^*$-probability.
Here we used the fact that $\hat{\mathbb{P}}_N^{\pi,(2)}c=c,\mathbb{P}_N^{\pi}c=c,$ for any constant $c$.
Thus, by Lemma \ref{lemma:thm3.3.1}
%Theorem 3.3.1 of \cite{MR1385671}
we obtain
\begin{eqnarray*}
\sqrt{N}(\hat{\hat{\alpha}}_N-\alpha_0)
&=& -\dot{\Phi}_{c}(0)\sqrt{N}(\hat{\Phi}_{N,bscc}-\Phi_{cc})(0)+o_{P^*_W}(1)\\
&\rightsquigarrow& -\dot{G}(0)^{-1}\left\{P_0 \tilde{\pi}_\infty(V)\tilde{V}^{\otimes 2}\right\}^{-1}\mathbb{G}^{\pi,(2)}\tilde{V}
%\sum_{j=1}^J\sqrt{\nu_j}\sqrt{\frac{1-p_j}{p_j}}\mathbb{G}_jV,
\end{eqnarray*}
in $P^*$-probability.

We consider bootstrap single calibration with $\hat{\hat{\alpha}}_N=\hat{\hat{\alpha}}_N^{bsc}$ obtained as the solution to the equation (\ref{eqn:singlebootcaleqn}).
Define $\hat{\Phi}_{N,bsc}(\alpha)\equiv \hat{\mathbb{P}}_N^{\pi,(2)} G_{c}(V;\alpha)V -\mathbb{P}_N^\pi V$
and $\Phi_{c}(\alpha)\equiv P_0(G_{c,\infty}(V;\alpha)-1)V$.
Note that $\hat{\Phi}_{N,bsc}(\hat{\hat{\alpha}}_N)=0$ by (\ref{eqn:singlebootcaleqn}) and $\Psi_{c}(0)=0$.
We apply Lemma \ref{lemma:thm5.9}
%Theorem 5.7 of \cite{MR1652247}
for a consistency proof.
For the first condition of the lemma, we have
\begin{eqnarray*}
\sup_{\alpha\in\mathbb{R}^k}\left| \hat{\Phi}_{N,bsc}(\alpha)-\Phi_{c}(\alpha)\right|
& \leq&  \sup_{\alpha\in\mathbb{R}^k}\left|(\hat{\mathbb{P}}_N^{\pi,(2)}-P_0)G_{c}(V;\alpha)V \right|\\ &&
+\sup_{\alpha\in\mathbb{R}^k}\left|(\mathbb{P}_N^\pi-P_0)V \right|\\ &&
+\sup_{\alpha\in\mathbb{R}^k}\left| P_0\{G_{c}(V;\alpha)V -G_{c,\infty}(V;\alpha)V\}\right|.
\end{eqnarray*}
The first and third terms in the last display are $o_{P_W^*}(1)$ in $P^*$-probability by a similar argument in the consistency proof for $\hat{\alpha}_N^{bcc}$.
The second term is also $o_{P_W^*}(1)$ in $P^*$-probability by Theorem 5.1 of \cite{MR3059418} and Lemma \ref{lemma:bootorder}.
This verifies the first condition.
The second condition was verified in the proof of Proposition A.1 of \cite{SW2013supp}.
This proves the consistency $\hat{\hat{\alpha}}_N\rightarrow_{P_W^*}\alpha_0$ in $P^*$-probability.

We apply Lemma \ref{lemma:thm3.3.1}
%Theorem 3.3.1 of \cite{MR1385671}
to show the asymptotic normality of $\hat{\hat{\alpha}}_N$.
For the asymptotic equicontinuity condition, Taylor's theorem yields
\begin{eqnarray*}
&&\sqrt{N}(\hat{\Phi}_{N,bsc}-\Phi_{c})(\hat{\hat{\alpha}}_N)-\sqrt{N}(\hat{\Phi}_{N,bsc}-\Phi_{c})(\alpha_0)\\
%&&=\sqrt{N}\hat{\mathbb{P}}_N^{\pi,(2)}(G_{c}(V;\hat{\hat{\alpha}}_N)-1)V -\sqrt{N}P_0\{G_{c,\infty}(V;\hat{\hat{\alpha}}_N)-1\}V\\
&&=\sqrt{N}(\hat{\mathbb{P}}_N^{\pi,(2)}-P_0)(G_{c}(V;\hat{\hat{\alpha}}_N)-1)V
+\sqrt{N}P_0(G_{c}(V;\hat{\hat{\alpha}}_N)-1)V\\
&&\quad -\sqrt{N}P_0\{G_{c,\infty}(V;\hat{\hat{\alpha}}_N)-1\}V\\
&&=(\hat{\mathbb{P}}_N^{\pi,(2)}-P_0)\dot{G}_{c}(V;\tilde{\alpha})V^{\otimes 2}\sqrt{N}(\hat{\hat{\alpha}}_N-\alpha_0)\\
&&\quad
 +P_0\dot{G}_{c}(V;\tilde{\alpha})V^{\otimes 2}\sqrt{N}(\hat{\hat{\alpha}}_N-\alpha_0) -P_0\dot{G}_{c,\infty}(V;\tilde{\alpha})V^{\otimes 2}\sqrt{N}(\hat{\hat{\alpha}}_N-\alpha_0),
\end{eqnarray*}
where $\tilde{\alpha}$ is some convex combination of $\hat{\hat{\alpha}}_N$ and $\alpha_0$.
Proceeding in the same way as in a proof for the asymptotic equicontinuity regarding $\hat{\hat{\alpha}}_N^{bcc}$, the last display is shown to be $o_{P_W^*}(1+ \sqrt{N}| \hat{\hat{\alpha}}_N-\alpha_0|)$ in $P^*$-probability.
Next, we show weak convergence of the process $\sqrt{N}(\hat{\Phi}_{N,bsc}-\Phi_{c})(\alpha)$ at $\alpha_0=0$.
As in the case for $\hat{\hat{\alpha}}_N^{bscc}$ we have
\begin{eqnarray*}
&&\sqrt{N}(\hat{\Phi}_{N,bsc}-\Phi_{c})(\alpha_0)
%=\sqrt{N}\hat{\Phi}_{N,dcc}(0)
=\sqrt{N}(\hat{\mathbb{P}}_N^{\pi,(2)}-\mathbb{P}_N^\pi)V
%=\sqrt{N}(\hat{\mathbb{P}}_N^{\pi,(2)}-\mathbb{P}_N^\pi) V \\
\rightsquigarrow \mathbb{G}^{\pi,(2)}V,
\end{eqnarray*}
in $P^*$-probability.
Hence, it follows by Lemma \ref{lemma:thm3.3.1}
%Theorem 3.3.1 of \cite{MR1385671}
that
\begin{eqnarray*}
\sqrt{N}(\hat{\hat{\alpha}}_N-\alpha_0)
&=& -\dot{\Phi}_{c}(0)\sqrt{N}(\Phi_{N,bsc}-\Phi_{c})(0)+o_{P_W^*}(1)\\
&\rightsquigarrow&-\dot{G}(0)^{-1}\left\{P_0 V^{\otimes 2}\right\}^{-1}\mathbb{G}^{\pi,(2)} V
%\sum_{j=1}^J\sqrt{\nu_j}\sqrt{\frac{1-p_j}{p_j}}\mathbb{G}_jV,
\end{eqnarray*}
in $P^*$-probability.
\end{proof}

The following is the bootstrap version of Theorem 5.9 of \cite{MR1652247}.
\begin{lemma}
\label{lemma:thm5.9}
Let $\Theta$ be a parameter space with semimetric $d$.
Let $\Psi_n(\theta)$ be random vector valued functions on $\Theta$ and let $\Psi$ be a fixed vector-valued function on $\Theta$ such that for every $\epsilon >0$
\begin{eqnarray*}
&&\sup_{\theta\in\Theta}\left|\Psi_n(\theta) -\Psi(\theta)\right|  = o_{P_W^*}(1), \quad \mbox{in $P^*$-probability},\\
&&\inf_{\theta:d(\theta,\theta_0)\geq \epsilon}\left| \Psi(\theta)\right| > 0=\left|\Psi(\theta_0)\right|.
\end{eqnarray*}
Then any sequence of estimators $\hat{\theta}_n$ such that $\Psi_n(\hat{\theta}_n) =o_{P_W^*}(1)$ in $P^*$-probability is consistent for $\theta_0$ in $P^*$-probability.
\end{lemma}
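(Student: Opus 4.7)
The plan is to mimic the standard argument for consistency of $Z$-estimators (Theorem 5.9 of van der Vaart) and translate every probability statement into the bootstrap framework governed by the order notation of Lemma \ref{lemma:bootorder}. The core idea is that the well-separation condition on $\Psi$ lets us bound the event $\{d(\hat{\theta}_n,\theta_0)\geq \epsilon\}$ by events involving only $\sup_\theta |\Psi_n-\Psi|$ and $\Psi_n(\hat{\theta}_n)$, both of which are small by hypothesis.

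First I would fix $\epsilon>0$ and set $\eta \equiv \tfrac{1}{2}\inf_{d(\theta,\theta_0)\geq \epsilon}|\Psi(\theta)|$, which is strictly positive by the well-separation assumption. If $d(\hat{\theta}_n,\theta_0)\geq \epsilon$, then $|\Psi(\hat{\theta}_n)|\geq 2\eta$, and by the triangle inequality
\begin{equation*}
2\eta \leq |\Psi(\hat{\theta}_n) - \Psi_n(\hat{\theta}_n)| + |\Psi_n(\hat{\theta}_n)|
\leq \sup_{\theta\in\Theta}|\Psi_n(\theta)-\Psi(\theta)| + |\Psi_n(\hat{\theta}_n)|,
\end{equation*}
so that at least one of the two summands must be at least $\eta$. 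This yields the event inclusion
\begin{equation*}
\{d(\hat{\theta}_n,\theta_0)\geq \epsilon\}
\subset
\bigl\{\sup_{\theta\in\Theta}|\Psi_n(\theta)-\Psi(\theta)| \geq \eta\bigr\}
\cup
\bigl\{|\Psi_n(\hat{\theta}_n)| \geq \eta\bigr\}.
\end{equation*}

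Second, I would push this inclusion through the outer bootstrap probability $P_W^*$. Using monotonicity and subadditivity of $P_W^*$ on outer probabilities (the same property exploited in the proof of Lemma \ref{lemma:bootorder}(3)), I obtain, for any $\gamma>0$,
\begin{equation*}
\{P_W^*(d(\hat{\theta}_n,\theta_0)\geq \epsilon) > \gamma\}
\subset
\bigl\{P_W^*(\sup_\theta |\Psi_n-\Psi|\geq \eta) > \gamma/2\bigr\}
\cup
\bigl\{P_W^*(|\Psi_n(\hat{\theta}_n)|\geq \eta) > \gamma/2\bigr\}.
\end{equation*}
Applying $P^*$ and using subadditivity once more, the outer $P^*$-probability of the left-hand event is dominated by the sum of the outer $P^*$-probabilities of the two right-hand events, each of which tends to zero by the two hypotheses $\sup_\theta |\Psi_n(\theta)-\Psi(\theta)| = o_{P_W^*}(1)$ in $P^*$-probability and $\Psi_n(\hat{\theta}_n) = o_{P_W^*}(1)$ in $P^*$-probability. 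Since $\epsilon,\gamma>0$ were arbitrary, this is exactly $d(\hat{\theta}_n,\theta_0) = o_{P_W^*}(1)$ in $P^*$-probability, as required.

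The only genuine subtlety is bookkeeping for outer probabilities: the inequality $P_W^*(A\cup B)\leq P_W^*(A)+P_W^*(B)$ and the analogous bound for $P^*$ are valid (as recalled in the proof of Lemma \ref{lemma:bootorder}(3)), but one must be careful to apply them only to events, not to random variables, and to ensure that $\hat{\theta}_n$ appears inside $\Psi_n$ rather than $\Psi$ so that no uniform control over $\theta$ is lost. Beyond this, the argument is direct and the well-separation condition $\inf_{d(\theta,\theta_0)\geq \epsilon}|\Psi(\theta)|>0$ does all the real work; no empirical process machinery or additional regularity on $\Theta$ is needed.
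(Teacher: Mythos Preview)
Your proposal is correct and is precisely the approach the paper has in mind: the paper's own proof reads ``A proof is essentially the same as that of Theorem 5.9 of \cite{MR1652247} and omitted,'' and what you have written is exactly that argument carried out in the $o_{P_W^*}(1)$-in-$P^*$-probability framework, with the subadditivity of outer probabilities from Lemma~\ref{lemma:bootorder}(3) handling the measurability bookkeeping.
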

\begin{proof}
A proof is essentially the same as that of Theorem 5.9 of \cite{MR1652247} and omitted.
\end{proof}

The following is the bootstrap version of Theorem 3.3.1 of \cite{MR1385671}.
\begin{lemma}
\label{lemma:thm3.3.1}
Let $\Theta$ be a parameter space with semimetric $d$.
Let $\Psi_n$ and $\Psi$ be random and fixed maps, respectively, from $\Theta$ to a Banach space such that
\begin{equation*}
\sqrt{n}(\Psi_n-\Psi)(\hat{\theta}_n) - \sqrt{n}(\Psi_n-\Psi)(\theta_0) = o_{P^*_W}(1+\sqrt{n}d(\hat{\theta}_n,\theta_0))
\end{equation*}
in $P^*$-probability and such that $\sqrt{n}(\Psi_n-\Psi)(\theta_0)\rightsquigarrow Z$ in $P^*$-probability where $Z$ is a tight random element.
Let $\theta\mapsto \Psi(\theta)$ be Fr\'{e}chet differentiable at $\theta_0$ with a continuously invertible derivative $\dot{\Psi}_{\theta_0}$.
If $\Psi(\theta_0) = 0$ and $\hat{\theta}_n$ satisfies $\Psi_n(\hat{\theta}_n)=o_{P^*_W}(n^{-1/2})$ in $P^*$-probability and consistent for $\theta_0$, then
\begin{eqnarray*}
&&\sqrt{n}\dot{\Psi}_{\theta_0}(\hat{\theta}_n-\theta_0) = -\sqrt{n}(\Psi_n-\Psi)(\theta_0) + o_{P^*_W}(1),\\
&&\sqrt{n}(\hat{\theta}_n-\theta_0) \rightsquigarrow -\dot{\Psi}_{\theta_0}^{-1}Z, \quad \mbox{in $P^*$-probability}.
\end{eqnarray*}
\end{lemma}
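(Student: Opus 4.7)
The plan is to mirror the classical argument used for Theorem 3.3.1 of \cite{MR1385671}, recasting each step in terms of the bootstrap order notations of Lemma \ref{lemma:bootorder}. The proof has three essentially separate ingredients: (i) a linearization identity that follows from combining $\Psi_n(\hat{\theta}_n)=o_{P_W^*}(n^{-1/2})$ with the Fr\'echet expansion of $\Psi$ and the asymptotic equicontinuity hypothesis, (ii) a bootstrap rate argument which upgrades consistency of $\hat{\theta}_n$ to $\sqrt{n}d(\hat{\theta}_n,\theta_0)=O_{P_W^*}(1)$, and (iii) application of the continuous inverse $\dot{\Psi}_{\theta_0}^{-1}$ together with weak convergence of $\sqrt{n}(\Psi_n-\Psi)(\theta_0)$ to obtain the final distributional statement.

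First I would write $\Psi(\theta_0)=0$ together with $\Psi_n(\hat{\theta}_n)=o_{P_W^*}(n^{-1/2})$ in $P^*$-probability to get
\begin{equation*}
\Psi(\hat{\theta}_n)-\Psi(\theta_0) = -\,(\Psi_n-\Psi)(\hat{\theta}_n) + o_{P_W^*}(n^{-1/2}).
\end{equation*}
Fr\'echet differentiability replaces the left side by $\dot{\Psi}_{\theta_0}(\hat{\theta}_n-\theta_0)+o(d(\hat{\theta}_n,\theta_0))$, while the equicontinuity hypothesis replaces $(\Psi_n-\Psi)(\hat{\theta}_n)$ by $(\Psi_n-\Psi)(\theta_0)+o_{P_W^*}(n^{-1/2}+d(\hat{\theta}_n,\theta_0))$ in $P^*$-probability. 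Multiplying by $\sqrt{n}$ yields the preliminary identity
\begin{equation*}
\sqrt{n}\,\dot{\Psi}_{\theta_0}(\hat{\theta}_n-\theta_0)
= -\sqrt{n}\,(\Psi_n-\Psi)(\theta_0) + o_{P_W^*}\bigl(1+\sqrt{n}\,d(\hat{\theta}_n,\theta_0)\bigr)
\end{equation*}
in $P^*$-probability.

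The main obstacle, as in the classical setting, is then to show that the remainder $o_{P_W^*}(\sqrt{n}\,d(\hat{\theta}_n,\theta_0))$ can be absorbed. Since $\dot{\Psi}_{\theta_0}$ is continuously invertible there is a constant $c>0$ with $\lVert\dot{\Psi}_{\theta_0}(h)\rVert\geq c\,d(h,0)$, so the preliminary identity implies
\begin{equation*}
\sqrt{n}\,d(\hat{\theta}_n,\theta_0)\;\lesssim\;\bigl\lVert\sqrt{n}(\Psi_n-\Psi)(\theta_0)\bigr\rVert + o_{P_W^*}\bigl(1+\sqrt{n}\,d(\hat{\theta}_n,\theta_0)\bigr).
\end{equation*}
Because $\sqrt{n}(\Psi_n-\Psi)(\theta_0)\rightsquigarrow Z$ in $P^*$-probability to a tight limit, Prohorov's theorem (applied conditionally on the data, through Lemma \ref{lemma:bootorder}(5) on subsequential almost-sure extraction) gives $\lVert\sqrt{n}(\Psi_n-\Psi)(\theta_0)\rVert = O_{P_W^*}(1)$ in $P^*$-probability. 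A standard absorption step, justified by parts (3)--(5) of Lemma \ref{lemma:bootorder}, then produces $\sqrt{n}\,d(\hat{\theta}_n,\theta_0)=O_{P_W^*}(1)$ in $P^*$-probability.

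Finally, plugging this rate back into the preliminary identity reduces the remainder from $o_{P_W^*}(1+\sqrt{n}\,d(\hat{\theta}_n,\theta_0))$ to $o_{P_W^*}(1)$ in $P^*$-probability, giving the first conclusion. Applying the bounded linear operator $\dot{\Psi}_{\theta_0}^{-1}$ to both sides and invoking the continuous mapping theorem for bootstrap weak convergence in $P^*$-probability (together with Slutsky-type arguments supplied by Lemma \ref{lemma:bootorder}) then yields $\sqrt{n}(\hat{\theta}_n-\theta_0)\rightsquigarrow -\dot{\Psi}_{\theta_0}^{-1}Z$ in $P^*$-probability, as required. The hardest technical point is the absorption step, since handling it rigorously requires the subsequence characterization in Lemma \ref{lemma:bootorder}(5) to pass conditionally-almost-surely to a deterministic bound and back.
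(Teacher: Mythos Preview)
Your proposal is correct and matches the paper's approach exactly: the paper itself simply states that the proof is essentially the same as that of Theorem~3.3.1 of \cite{MR1385671} and omits it, so your plan to mirror that classical argument while replacing the usual stochastic order notations by the bootstrap versions from Lemma~\ref{lemma:bootorder} is precisely what is intended.
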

\begin{proof}
A proof is essentially the same as that of Theorem 3.3.1 of \cite{MR1385671} and omitted.
\end{proof}

\subsubsection{Phase I Bootstrap}
We present several results concerning the uncentered bootstrap empirical process with independent bootstrap weights (see Lemma \ref{lemma:ph1D}).
We first consider the uncentered version of the conditional multiplier central limit theorem.
This requires the (conditional) finite dimensional convergence of $n^{-1/2}\sum_{i=1}^n w_i\delta_{X_i}$ (compare the following lemma with Lemma 2.9.5 of \cite{MR1385671}).
\begin{lemma}
\label{lemma:margbm}
Let $Y_1,Y_2,\ldots,$ be i.i.d. random vectors with $E| Y_i|^2<\infty$ independent of the i.i.d. $w_1,w_2,\ldots,$ with $Ew_i=0$ and $Ew_i^2=c^2>0$.
Then, conditionally on $Y_1,Y_2\ldots,$
\begin{equation*}
\frac{1}{\sqrt{n}}\sum_{i=1}^nw_iY_i\rightsquigarrow N(0,c^2EY_1^{\otimes 2}),
\end{equation*}
for almost every sequence $Y_1,Y_2,\ldots$.
\end{lemma}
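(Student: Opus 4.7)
The plan is to apply the Lindeberg--Feller central limit theorem for triangular arrays of independent but non-identically distributed summands, conditionally on the sequence $Y_1,Y_2,\ldots$. Fix a realization of $(Y_i)_{i\geq 1}$ lying in the intersection $\Omega_0$ of the strong-law events
\begin{equation*}
\frac{1}{n}\sum_{i=1}^n Y_i^{\otimes 2} \to EY_1^{\otimes 2},\qquad \frac{1}{n}\sum_{i=1}^n |Y_i|^2\mathbf{1}\{|Y_i|>M\}\to E[|Y_1|^2\mathbf{1}\{|Y_1|>M\}]
\end{equation*}
for each rational $M>0$; since $E|Y_1|^2<\infty$, these events all have probability one, hence so does $\Omega_0$. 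By the Cram\'er--Wold device, it suffices to show that for every fixed $\lambda$ the scalar sum $\lambda^T S_n = n^{-1/2}\sum_{i=1}^n w_i(\lambda^T Y_i)$ converges conditionally in distribution to $N(0, c^2\lambda^T EY_1^{\otimes 2}\lambda)$.

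Conditionally on $(Y_i)$, the summands $X_{n,i}=n^{-1/2}w_i(\lambda^T Y_i)$ are independent, mean zero, with total variance $c^2 n^{-1}\sum (\lambda^T Y_i)^2$, which on $\Omega_0$ converges to $c^2\lambda^T EY_1^{\otimes 2}\lambda$ by the SLLN. Hence the only remaining ingredient is the Lindeberg condition
\begin{equation*}
L_n(\epsilon) = \frac{1}{n}\sum_{i=1}^n (\lambda^T Y_i)^2\,\psi_n\!\left(|\lambda^T Y_i|\right)\longrightarrow 0,
\end{equation*}
where $\psi_n(y)\equiv E\bigl[w_1^2\mathbf{1}\{|w_1|\,y>\epsilon\sqrt{n}\}\bigr]$ for $y\ge 0$. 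The function $\psi_n$ is nondecreasing in $y$, is dominated by $c^2$, and by dominated convergence satisfies $\psi_n(y)\to 0$ for each fixed $y$ as $n\to\infty$.

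A truncation argument then closes the proof. For any $M>0$, split the sum according as $|\lambda^T Y_i|\le M$ or $>M$. The first piece is bounded by $\psi_n(M)\cdot n^{-1}\sum (\lambda^T Y_i)^2$, which tends to zero on $\Omega_0$ since $\psi_n(M)\to 0$ and the average has a finite limit. The second piece is bounded by $c^2 \cdot n^{-1}\sum(\lambda^T Y_i)^2\mathbf{1}\{|\lambda^T Y_i|>M\}$, which on $\Omega_0$ tends to $c^2 E[(\lambda^T Y_1)^2\mathbf{1}\{|\lambda^T Y_1|>M\}]$ as $n\to\infty$ and then to zero as $M\to\infty$ by integrability of $|Y_1|^2$. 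Letting $n\to\infty$ first and then $M\to\infty$ gives $L_n(\epsilon)\to 0$ on $\Omega_0$, so the Lindeberg--Feller theorem yields the claimed conditional CLT for almost every sequence $(Y_i)$.

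The main obstacle is that one cannot bound $\psi_n$ uniformly in $y$ (the function converges to zero only pointwise, not uniformly over unbounded $y$), so the Lindeberg verification cannot proceed by a single uniform estimate. The truncation at level $M$, together with the two applications of the SLLN (for $|Y_i|^2$ overall and for its tail), is precisely what supplies the required uniformity in an averaged sense.
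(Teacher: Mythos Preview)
Your proof is correct and follows the same overall strategy as the paper: verify the conditional Lindeberg--Feller CLT on a full-measure set of $Y$-sequences. The difference lies in how the Lindeberg condition is checked. You use a truncation-at-level-$M$ argument together with the SLLN for the tail averages $n^{-1}\sum |Y_i|^2\mathbf{1}\{|Y_i|>M\}$. The paper instead bounds each term by replacing $|Y_i|$ with $\max_{1\le j\le n}|Y_j|$ inside the indicator, so that the Lindeberg sum factors as $\bigl(n^{-1}\sum_i|Y_i|^2\bigr)\cdot E_w\bigl[w_1^2\mathbf{1}\{|w_1|\max_j|Y_j|>\epsilon\sqrt{n}\}\bigr]$; since $E|Y_1|^2<\infty$ forces $\max_j|Y_j|/\sqrt{n}\to 0$ almost surely, the second factor vanishes by dominated convergence in one stroke. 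This avoids both the truncation and the need to intersect countably many tail SLLN events. One small imprecision in your write-up: you define $\Omega_0$ via truncations of $|Y_i|$, but later claim the SLLN limit for $(\lambda^TY_i)^2\mathbf{1}\{|\lambda^TY_i|>M\}$, which is not literally contained in $\Omega_0$ as stated; the fix is either to bound $(\lambda^TY_i)^2\mathbf{1}\{|\lambda^TY_i|>M\}\le|\lambda|^2|Y_i|^2\mathbf{1}\{|Y_i|>M/|\lambda|\}$ and take a rational $M'<M/|\lambda|$, or simply to note that only a $\limsup$ bound is needed.
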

\begin{proof}
We apply the Lindeberg central limit theorem.
Note that $\mu_i\equiv E_w w_iY_i =0$ and $\sigma_i^2\equiv E_w w_i^2Y_i^{\otimes 2} -\{E_w w_iY_i\}^{\otimes 2} =c^2Y_i^{\otimes 2}$ where $E_w$ denotes the expectation with respect to $w$.
Thus,
%\begin{equation*}
$n^{-1}sd_n^2\equiv n^{-1}\sum_{i=1}^n\sigma^2_i
\rightarrow c^2EY^{\otimes 2}_1$,
%\end{equation*}
for almost all sequences by the strong law of large numbers.
For every $\epsilon >0$,
\begin{eqnarray*}
&&n^{-1}\sum_{i=1}^n| Y_i|^2E_w w_i^2\{|w_i|| Y_i|>\epsilon\sqrt{n}\}\\
&&\leq
%n^{-1}\sum_{i=1}^n| Y_i|^2E_w w_i^2\{|w_i|\max_{1\leq j\leq n}| Y_j|>\epsilon\sqrt{n}\}\\
%&&=
n^{-1}\sum_{i=1}^n| Y_i|^2E_w w_1^2\{|w_1|\max_{1\leq j\leq n}| Y_j|>\epsilon\sqrt{n}\}
%&&= \left(n^{-1}\sum_{i=1}^n| Y_i|^2\right)E_w w_1^2\{|w_1|\max_{1\leq j\leq n}| Y_j|>\epsilon\sqrt{n}\}
\rightarrow 0,
\end{eqnarray*}
for almost all sequences, because $E| Y_i|^2<\infty$ implies $\max_{1\leq i\leq n}| Y_i|/\sqrt{n}\rightarrow 0$ for almost all sequences.
This completes the proof.
\end{proof}

The next lemma concerns integrability of the empirical process when the $L_2(P_0)$-metric is used.
This lemma is used to prove the uncentered conditional multiplier central limit theorem (Lemma \ref{lemma:ph1D}).
\begin{lemma}
\label{lemma:bmint}
Let  $\mathcal{F}$ be a $P_0$-Donsker class with $\lVert P_0\rVert_{\mathcal{F}}<\infty$.
Let $X_1,X_2,\ldots$ be i.i.d. $P_0$, independent of  i.i.d. Rademacher variables $\epsilon_1,\epsilon_2,\ldots$.
Define the process $\tilde{\mathbb{G}}_n'=n^{1/2}\sum_{i=1}^n\epsilon_i\delta_{X_i}$.
Let $\rho(f,g)=\{P_0(f-g)^2\}^{1/2}$ and $\mathcal{F}_\delta=\{f-g:\rho(f,g)<\delta,f,g\in\mathcal{F}\}$.
Then $E^*\lVert \tilde{\mathbb{G}}_n'\rVert_{\mathcal{F}_\delta}\rightarrow 0$ for every $\delta_n\downarrow 0$.
\end{lemma}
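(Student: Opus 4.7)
The plan is to decompose $\tilde{\mathbb{G}}_n' f = n^{-1/2}\sum_{i=1}^n \epsilon_i f(X_i)$ into a symmetrized centered empirical process plus a small residual, and handle each piece with standard tools. Writing $\bar f := f - P_0 f$ and
$$
\tilde{\mathbb{G}}_n' f \;=\; \mathbb{G}_n^\circ f \;+\; (P_0 f)\,S_n,
\qquad
\mathbb{G}_n^\circ f := \frac{1}{\sqrt{n}}\sum_{i=1}^n \epsilon_i\bar f(X_i),
\qquad
S_n := \frac{1}{\sqrt{n}}\sum_{i=1}^n \epsilon_i,
$$
isolates the bias coming from the fact that $\epsilon_i f(X_i)$ has mean $(P_0 f)\cdot E\epsilon_i = 0$ only globally, not at the level of individual summands.

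The residual is uniformly negligible on $\mathcal{F}_{\delta_n}$. For any $h=f_1-f_2\in\mathcal{F}_{\delta_n}$ with $\rho(f_1,f_2)<\delta_n$, Cauchy--Schwarz gives $|P_0 h|\le\rho(f_1,f_2)<\delta_n$, and $E|S_n|\le (ES_n^2)^{1/2}=1$, so $E^*\sup_{h\in\mathcal{F}_{\delta_n}}|(P_0 h) S_n|\le\delta_n\to0$. For the main piece $\mathbb{G}_n^\circ$, I would apply a standard desymmetrization. Introducing an independent ghost sample $X_i'$ and the ghost process $\mathbb{G}_n^{\circ,'} f := n^{-1/2}\sum_i\epsilon_i\bar f(X_i')$, which satisfies $E'\mathbb{G}_n^{\circ,'}f=0$, Jensen's inequality gives $|\mathbb{G}_n^\circ f|\le E'|\mathbb{G}_n^\circ f-\mathbb{G}_n^{\circ,'}f|$. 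Because $\epsilon_i(\bar f(X_i)-\bar f(X_i'))$ is symmetric, the process $\mathbb{G}_n^\circ-\mathbb{G}_n^{\circ,'}$ has the same law as $\mathbb{G}_n-\mathbb{G}_n'$, and a triangle bound yields
$$
E^*\lVert \mathbb{G}_n^\circ\rVert_{\mathcal{F}_{\delta_n}} \;\le\; 2\,E^*\lVert\mathbb{G}_n\rVert_{\mathcal{F}_{\delta_n}}.
$$

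The main obstacle---and the only non-routine step---is upgrading the asymptotic equicontinuity of the ordinary empirical process from convergence in outer probability, which is immediate from the $P_0$-Donsker assumption, to convergence in expectation, i.e.\ to $E^*\lVert\mathbb{G}_n\rVert_{\mathcal{F}_{\delta_n}}\to0$ along $\delta_n\downarrow0$. I would combine the in-probability equicontinuity with uniform integrability of $\lVert\mathbb{G}_n\rVert_\mathcal{F}$, which follows from the maximal-inequality bound $\sup_n E^*\lVert\mathbb{G}_n\rVert_\mathcal{F}^2 = O(1)$ on a truncated subfamily $\{f\wedge M\vee(-M):f\in\mathcal{F}\}$ (which has envelope $M$), plus tail control via $\lVert P_0\rVert_\mathcal{F}<\infty$ and the total boundedness of $\mathcal{F}$ in $L^2(P_0)$ that Donsker guarantees, letting $M\to\infty$. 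Summing the two bounds then gives $E^*\lVert\tilde{\mathbb{G}}_n'\rVert_{\mathcal{F}_{\delta_n}}\le 2E^*\lVert\mathbb{G}_n\rVert_{\mathcal{F}_{\delta_n}}+\delta_n\to0$, which is the desired assertion.
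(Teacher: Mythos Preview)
Your decomposition $\tilde{\mathbb{G}}_n' f=\mathbb{G}_n^\circ f+(P_0 f)S_n$ and the desymmetrization bound $E^*\lVert\mathbb{G}_n^\circ\rVert_{\mathcal{F}_{\delta_n}}\le 2E^*\lVert\mathbb{G}_n\rVert_{\mathcal{F}_{\delta_n}}$ are correct and give a genuinely different route from the paper's. The paper argues directly for $\tilde{\mathbb{G}}_n'$: the unconditional multiplier CLT (Corollary~2.9.4 of \cite{MR1385671}) makes $\tilde{\mathbb{G}}_n'$ asymptotically $\rho$-equicontinuous in probability; the Donsker property forces an envelope $F$ with $P(F>x)=o(x^{-2})$ (Corollary~2.3.13), hence $E^*\max_{i\le n}F(X_i)/\sqrt{n}\to0$; and then the Hoffmann--J{\o}rgensen inequality upgrades the in-probability modulus control to $L^1$. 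Your route instead reduces everything to $E^*\lVert\mathbb{G}_n\rVert_{\mathcal{F}_{\delta_n}}\to0$ for the ordinary empirical process, which is a cleaner target and avoids the multiplier CLT.

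The weak point is your proposed justification of that last step via truncation. As sketched it does not close. First, the bound $\sup_n E^*\lVert\mathbb{G}_n\rVert_{\mathcal{F}_M}^2<\infty$ for the truncated class already relies on Hoffmann--J{\o}rgensen (this is how Corollary~2.3.12 of \cite{MR1385671} is proved), so you are not avoiding that tool. Second, the ``tail control, letting $M\to\infty$'' requires controlling $E^*\sup_{h\in\mathcal{F}_{\delta_n}}|\mathbb{G}_n(h-h_M)|$ uniformly in $n$, which needs uniform integrability of $h^2$ over the class; total boundedness in $L^2(P_0)$ plus $\lVert P_0\rVert_{\mathcal{F}}<\infty$ only gives $\sup_f P_0f^2<\infty$, not uniform integrability, and the Donsker envelope is merely weak-$L^2$. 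The direct fix is to run the paper's argument on $\mathbb{G}_n$ instead of $\tilde{\mathbb{G}}_n'$: the automatic weak-$L^2$ envelope gives $E^*\max_{i\le n}\lVert\delta_{X_i}-P_0\rVert_{\mathcal{F}_{\delta_n}}/\sqrt{n}\to0$, and Hoffmann--J{\o}rgensen then converts the in-probability equicontinuity (immediate from Donsker) into the $L^1$ statement $E^*\lVert\mathbb{G}_n\rVert_{\mathcal{F}_{\delta_n}}\to0$ that you need.
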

\begin{proof}
Since $\mathcal{F}$ is Donsker with $\lVert P_0\rVert_{\mathcal{F}}<\infty$, it follows from Corollary 2.9.4 of \cite{MR1385671} that $\tilde{\mathbb{G}}_n'$ weakly converges to the Brownian motion process in $\ell^\infty(\mathcal{F})$ and $\tilde{\mathbb{G}}_n'$ is asymptotically equicontinuous in probability with respect to the $L_2(P_0)$-metric $\rho$.
Moreover, $\mathcal{F}$ possesses an envelope $F$ with $P(F>x)=o(x^{-2})$ by Corollary 2.3.13 of \cite{MR1385671}.
This implies that $P(\lVert \epsilon_1\delta_{X_1}\rVert_{\mathcal{F}}>x)= P(F>x) =o(x^{-2})$.
In view of Problem 2.3.3 of \cite{MR1385671}, this implies
%\begin{equation*}
$E^*\max_{1\leq i\leq n}\lVert \epsilon_i\delta_{X_i}\rVert_{\mathcal{F}}/\sqrt{n}\rightarrow 0.$
%E^*\max_{1\leq i\leq n}\frac{\lVert \epsilon_i\delta_{X_i}\rVert_{\mathcal{F}}}{\sqrt{n}}\rightarrow 0.
%\end{equation*}
It follows from the triangle inequality that the same is true with $\mathcal{F}$ replaced by $\mathcal{F}_{\delta_n}$.
Because asymptotic equicontinuity in probability implies $\lVert \tilde{\mathbb{G}}_n'\rVert_{\mathcal{F}_{\delta_n}}\rightarrow_P0$ for every $\delta_n\downarrow 0$, the sequence of quantile functions of $\tilde{\mathbb{G}}_n'$ converges to zero pointwise.
Apply the Hoffmann-J\o rgensen inequality (see A.1.5 of \cite{MR1385671}) to obtain the desired result.
\end{proof}

We prove the uncentered conditional multiplier central limit theorem.
\begin{proof}[Proof of Lemma \ref{lemma:ph1D}]
The sequence $\tilde{\mathbb{G}}_n$ converges to a $c$ times a $P_0$-Brownian motion process $\tilde{\mathbb{G}}$ in $\ell^\infty(\mathcal{F})$ by Corollary 2.9.4 of \cite{MR1385671}, and thus it is asymptotically measurable.

A Donsker class $\mathcal{F}$ is totally bounded for the $L_2(P_0)$ metric since $\lVert P_0\rVert_{\mathcal{F}}<\infty$ (Problem 2.1.1 of \cite{MR1385671}).
For each fixed $\delta>0$ and $f\in\mathcal{F}$, let $\Pi_\delta f$ denote a closest element in a given finite $\delta$-net for $\mathcal{F}$.
By continuity of the limit process $\tilde{\mathbb{G}}$, we have $\tilde{\mathbb{G}}\circ \Pi_\delta\mapsto \tilde{\mathbb{G}}$ almost surely as $\delta\downarrow 0$.
Hence it follows that
\begin{equation*}
\sup_{h\in BL_1}|Eh(c\tilde{\mathbb{G}}\circ\Pi_\delta)-Eh(c\tilde{\mathbb{G}})|
\rightarrow 0, \quad \delta\downarrow 0.
\end{equation*}
Also, it follows from Lemma \ref{lemma:margbm} that for every fixed $\delta>0$
\begin{equation*}
\sup_{h\in BL_1}|E_w h(\tilde{\mathbb{G}}_n\circ \Pi_\delta)-Eh(c\tilde{\mathbb{G}}\circ \Pi_\delta)|\rightarrow 0, \quad n\rightarrow \infty,
\end{equation*}
for almost every sequence $X_1,X_2,\ldots$ as in a proof of Theorem 2.9.6 of \cite{MR1385671} where $E_w$ denotes the expectation with respect to $w$.
%For completeness of the proof, we present details in the following.
%Define $A:\mathbb{R}^p\mapsto \ell^\infty(\mathcal{F})$ by $(Ay)f=y_i$ if $\Pi_\delta f = f_i$.
%Then $h(c\tilde{\mathbb{G}}\circ \Pi_\delta) = g(c\tilde{\mathbb{G}}f_1,\ldots,c\tilde{\mathbb{G}}f_p)$, for the function defined by $g(y)=h(Ay)$.
%If $h$ is bounded Lipschitz on $\ell^\infty(\mathcal{F})$, then $g$ is bounded Lipschitz on $\mathbb{R}^p$ with a smaller bounded Lipschitz norm.
%Since $BL_1(\mathbb{R}^p)$ is separable for the topology of uniform convergence on compacta, the supremum in the preceding display can be replaced by a countable supremum.
%It follows that the variable in the display is measurable because $h(c\tilde{\mathbb{G}}\circ \Pi_\delta)$ is measurable.
Next,
\begin{eqnarray*}
\sup_{h\in BL_1}|E_w h(\tilde{\mathbb{G}}_n\circ \Pi_\delta)- E_wh(\tilde{\mathbb{G}}_n)|
\leq E_w \lVert \tilde{\mathbb{G}}_n\circ\Pi_\delta -\tilde{\mathbb{G}}_n\rVert_{\mathcal{F}}
\leq E_w \lVert \tilde{\mathbb{G}}_n\rVert_{\mathcal{F}_\delta},
\end{eqnarray*}
where $\mathcal{F}_\delta=\{f-g:f,g\in\mathcal{F},P_0(f-g)^2<\delta^2\}$.
Thus, the outer expectation of the left side is bounded above by $E^*\lVert \tilde{\mathbb{G}}_n\rVert_{\mathcal{F}_\delta}$.

Since $\lVert w_1\rVert_{2,1}<\infty$ implies $E w_1^2<\infty$, we have $E\max_{1\leq i\leq n}|w_i|/\sqrt{n}\rightarrow 0$.
Thus, taking a limit on $n$ on both sides of the second part of the multiplier inequality in Lemma 2.9.1 of \cite{MR1385671} yields
\begin{eqnarray*}
\lim_{n\rightarrow \infty}E^*\left\lVert \frac{1}{\sqrt{n}}\sum_{i=1}^nw_i\delta_{X_i}\right\rVert_{\mathcal{F}_\delta}
&\leq& 2\sqrt{2}\lVert w_1\rVert_{2,1}\sup_{n_0\leq k}E^*\left\lVert\frac{1}{\sqrt{k}}\sum_{i=1}^k\epsilon_i\delta_{X_i}\right\rVert_{\mathcal{F}_\delta},
\end{eqnarray*}
for every $n_0$ and $\delta>0$ where $\epsilon_i$ are i.i.d. Rademacher random variables independent of $w_i$ and $X_i$.
The left hand side of the inequality converges to zero as $n_0\rightarrow \infty$ followed by $\delta \downarrow 0$ because $\lim_{k\rightarrow\infty}E\lVert \tilde{\mathbb{G}}_k'\rVert_{\mathcal{F}_\delta}\rightarrow 0$ as $\delta \downarrow 0$ by Lemma \ref{lemma:bmint} where $\tilde{\mathbb{G}}_n'=n^{-1/2}\sum_{i=1}^n\epsilon_i\delta_{X_i}$.
Combining this with the previous display with the triangle inequality yields the first part of the claim.

For the second part of the claim, the proof of the first part applies except that it must be argued that $E_w\lVert \tilde{\mathbb{G}}\rVert_{\mathcal{F}_\delta}^*$ converges to zero outer almost surely as $n\rightarrow \infty$ followed by $\delta \downarrow 0$.
Since $P_0\lVert f-P_0f\rVert_{\mathcal{F}}^2<\infty$ and $\lVert P_0\rVert_{\mathcal{F}}<\infty$ implies
\begin{eqnarray*}
&&P_0\lVert f(X_1)\rVert_{\mathcal{F}}^2
\leq P_0\lVert f(X_1)-P_0f+P_0f\rVert_{\mathcal{F}}^2\\
&&\leq P_0\{\lVert f(X_1)-P_0f\rVert_{\mathcal{F}}^2+\lVert P_0f\rVert_{\mathcal{F}}^2 + 2\lVert f-P_0f\rVert_{\mathcal{F}}\lVert P_0f\rVert_{\mathcal{F}}\}
<\infty,
\end{eqnarray*}
it follows from Corollary 2.9.9 of \cite{MR1385671} that
\begin{eqnarray*}
\limsup_{n\rightarrow \infty}E_w\lVert\tilde{\mathbb{G}}_n\rVert_{\mathcal{F}_\delta}^*
\leq 6\sqrt{2}\limsup_{n\rightarrow \infty}E^*\lVert \tilde{\mathbb{G}}_n\rVert_{\mathcal{F}_\delta},
\end{eqnarray*}
almost surely.
The right-hand side decreases to zero as $\delta\downarrow 0$ as shown above.
To see that the sequence $E_w h(\tilde{\mathbb{G}}_n)$ is strongly asymptotically measurable, obtain first by the same proof, but with a star added, that
\begin{equation*}
|E_w h(\tilde{\mathbb{G}}_n)^*-Eh(c\mathbb{\tilde{G}})|\rightarrow_{as^*}0.
\end{equation*}
The same proof also shows that this is true with a lower star.
Thus, the sequence $E_w h(\mathbb{\tilde{G}}_n)^*-h(\mathbb{\tilde{G}}_n)_*$ converges to zero almost surely.
\end{proof}

%Now, we consider the i.i.d. weights $w_1,\ldots,w_n$ with $Ew_1=1$ and $\mathrm{Var}(w_1)=c^2$ that are independent of $X_1,\ldots,X_n$.
%Since we later consider weak convergence of the two-phase bootstrap IPW empirical processes conditionally on $X_1,X_2,\ldots,$ and the phase I bootstrap weights $W_{N_j,j,i}$, $j=1,\ldots,J$, $i=1,\ldots,N_j$, the following lemma allows us to easily apply weak convergence results in Theorem \ref{thm:fpsDboot}.
Since we condition on $X_1,X_2,\ldots,$ and $W_{N1}^{(1)},W_{N2}^{(1)},\ldots,$ in the proof of Theorem \ref{thm:fpsDboot}, the following lemma allows us to freely apply Lemma \ref{lemma:ph2D}.

\begin{lemma}
\label{lemma:wfD}
Let $\mathcal{F}$ be a $P_0$-Donsker class with $\lVert P_0\rVert_{\mathcal{F}}<\infty$.
Let $X_1,\ldots,X_n$ be i.i.d. $P_0$.
Let $w_1,\ldots,w_n$ be i.i.d. $P_W$ with $Ew_1=1$, $\mathrm{Var}(w_1)=c^2<\infty$ and $\lVert w_1\rVert_{2,1}<\infty$ that are independent of $X_1,\ldots,X_n$.
Then the class of functions $\mathcal{F}_{\mathcal{W}}=\{g:g(x,w)=wf(x),f\in\mathcal{F}\}$ is $P_0\times P_W$-Donsker.
\end{lemma}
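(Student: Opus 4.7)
The plan is to reduce the claim to standard results via a centering decomposition. For $g(x,w) = wf(x) \in \mathcal{F}_{\mathcal{W}}$, the product measure satisfies $(P_0 \times P_W)g = Ew_1 \cdot P_0 f = P_0 f$, so the empirical process of $\mathcal{F}_{\mathcal{W}}$ evaluated at $g$ can be rewritten as
\[
\frac{1}{\sqrt{n}}\sum_{i=1}^n \{w_i f(X_i) - P_0 f\}
= \mathbb{G}_n f + \frac{1}{\sqrt{n}}\sum_{i=1}^n (w_i-1)\{f(X_i)-P_0 f\} + \sqrt{n}(\bar{w}_n-1)\,P_0 f,
\]
where $\mathbb{G}_n f = n^{-1/2}\sum_{i=1}^n\{f(X_i)-P_0 f\}$ is the usual $P_0$-empirical process and $\bar{w}_n = n^{-1}\sum_{i=1}^n w_i$. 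Because the correspondence $f \leftrightarrow wf$ is a canonical isometry between $\ell^\infty(\mathcal{F})$ and $\ell^\infty(\mathcal{F}_{\mathcal{W}})$, it suffices to prove weak convergence of the right-hand side in $\ell^\infty(\mathcal{F})$ as a process indexed by $f$.

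I would handle each summand with an off-the-shelf result. The first, $\mathbb{G}_n f$, converges to a $P_0$-Brownian bridge in $\ell^\infty(\mathcal{F})$ by the Donsker hypothesis on $\mathcal{F}$. The second is a centered multiplier empirical process with multipliers $w_i - 1$ of mean zero, variance $c^2$, and $\|w_1-1\|_{2,1} \le \|w_1\|_{2,1} + 1 < \infty$, so by the multiplier central limit theorem (Corollary 2.9.4 of \cite{MR1385671}) it converges weakly in $\ell^\infty(\mathcal{F})$ to a Gaussian process with covariance $c^2 \mathrm{Cov}_{P_0}(f,g)$. The third summand factors as the scalar $\sqrt{n}(\bar{w}_n - 1)$, converging in law to $N(0,c^2)$ by the classical central limit theorem, times the fixed bounded linear functional $P_0(\cdot)$ on $\mathcal{F}$; weak convergence in $\ell^\infty(\mathcal{F})$ then follows from $\|P_0\|_{\mathcal{F}} < \infty$.

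Finally I would assemble the three marginal convergences into weak convergence of the sum. Marginal tightness of each summand in $\ell^\infty(\mathcal{F})$ yields joint tightness of the triple, while joint finite-dimensional convergence follows from the ordinary multivariate central limit theorem applied to the i.i.d.\ vectors $\bigl((w_i-1)(f_k(X_i)-P_0 f_k),\, f_k(X_i)-P_0 f_k,\, w_i-1\bigr)_{k \le K}$, using that every $f\in\mathcal{F}$ lies in $L_2(P_0)$ as an automatic consequence of $\mathcal{F}$ being $P_0$-Donsker. The continuous mapping theorem then delivers weak convergence of the original process in $\ell^\infty(\mathcal{F}_{\mathcal{W}})$ to a tight Gaussian limit, which is the $P_0\times P_W$-Donsker conclusion. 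The delicate point is that the natural multiplier piece $n^{-1/2}\sum(w_i-1)f(X_i)$ is not centered in $f$, so standard conditional multiplier theorems do not apply directly; the extra third summand in the decomposition is precisely what isolates the uncentered drift $\sqrt{n}(\bar{w}_n-1)P_0 f$, which $\|P_0\|_{\mathcal{F}}<\infty$ then tames trivially.
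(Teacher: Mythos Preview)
Your proof is correct and follows essentially the same route as the paper: the paper uses the identical three-term decomposition $\mathbb{G}_n f + n^{-1/2}\sum(w_i-1)(\delta_{X_i}-P_0)f + n^{-1/2}\sum(w_i-1)P_0 f$ and then invokes Corollary 2.9.4 of \cite{MR1385671} to obtain weak convergence to $\mathbb{G} + c\mathbb{G}' + cZ_0 P_0$. Your write-up is a bit more explicit about assembling the three pieces via joint tightness and finite-dimensional convergence, but the argument is the same.
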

\begin{proof}
Define the empirical process $\mathbb{G}_n = n^{-1/2}\sum_{i=1}^n (\delta_{X_i}-P_0)$.
Note that for $g(x,w)=wf(x)\in\mathcal{F}_{\mathcal{W}}$,
\begin{eqnarray*}
\mathbb{G}_ng
%&=& \sqrt{n}(\mathbb{P}_nwf-P_0(wf))
%= \sqrt{n}(\mathbb{P}_nwf-P_0f)\\
&=& n^{-1/2}\sum_{i=1}^n(\delta_{X_i}-P_0)f
+ \ n^{-1/2}\sum_{i=1}^n(w_i-1)(\delta_{X_i}-P_0)f\\
&&+ \ n^{-1/2}\sum_{i=1}^n(w_i-1)P_0f.
\end{eqnarray*}
Thus in view of Corollary 2.9.4 of \cite{MR1385671},
\begin{equation*}
\mathbb{G}_n \rightsquigarrow \mathbb{G} + c\mathbb{G}' + cZ_0P_0,
\quad \mbox{in }\ell^\infty(\mathcal{F}_{\mathcal{W}}),
\end{equation*}
where $\mathbb{G}$ and $\mathbb{G}'$ are independent Brownian bridge processes that are independent of the standard normal random variable $Z_0$.
\end{proof}

Several results (Lemmas \ref{lemma:margbm}-\ref{lemma:wfD}) regarding the uncentered conditional multiplier central limit theorem provide useful tools to study the phase I bootstrap IPW empirical process.
We first prove a Glivenko-Cantelli theorem for the phase I bootstrap IPW empirical process.
\begin{lemma}
\label{lemma:phIbootIPWGC}
Let $\mathcal{F}$ be a $P_0$-Glivenko-Cantelli class with $\lVert P_0\rVert_{\mathcal{F}}<\infty$.
Then
%$\lVert \hat{\mathbb{P}}_N^{\pi,(1)}-\mathbb{P}_N^\pi\rVert_{\mathcal{F}}\rightarrow_{P_W^*} 0$ in $P^*$-probability
\begin{eqnarray*}
&&\lVert \hat{\mathbb{P}}_N^{\pi,(1)}-\mathbb{P}_N^\pi\rVert_{\mathcal{F}} \rightarrow_{P_W^*} 0 ,\\
&&\lVert \hat{\mathbb{P}}_N^{\pi,(1),b\#}-\mathbb{P}_N^\pi\rVert_{\mathcal{F}} \rightarrow_{P_W^*} 0,\quad \mbox{in $P^*$-probability}.
\end{eqnarray*}
where $\# \in \{c,cc\}$.

The same holds if we replace $\mathbb{P}_N^\pi$ by $\mathbb{P}_N^{\pi,\#}$ or $P_0$.
\end{lemma}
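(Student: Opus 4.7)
The plan is to decompose the bootstrap IPW empirical measure by stratum and reduce to a within-stratum multiplier Glivenko--Cantelli statement, exploiting the observation from Remark 3.1 that observations sampled at the second phase in stratum $j$ are i.i.d.\ $P_{0|j}$, while the phase I bootstrap weights are i.i.d.\ with mean one and independent of the data. First I would use the identity
\[
\hat{\mathbb{P}}_N^{\pi,(1)} - \mathbb{P}_N^\pi
= \sum_{j=1}^J \frac{N_j}{N}\bigl(\hat{\mathbb{P}}_{j,n_j}^{\xi,(1)} - \mathbb{P}_{j,n_j}^\xi\bigr)
\]
established in the Additional Notations subsection. Since $N_j/N \to \nu_j$ $P^\infty$-a.s.\ and the sum is finite, the problem reduces to showing $\lVert \hat{\mathbb{P}}_{j,n_j}^{\xi,(1)} - \mathbb{P}_{j,n_j}^\xi \rVert_{\mathcal{F}} \to 0$ in $P_W^*$-probability in $P^*$-probability for each fixed $j$.

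Fix $j$ and let $X_{j,1}^*,\ldots,X_{j,n_j}^*$ denote the observations in stratum $j$ sampled at the second phase (ordered by their indices in the original sample), and $W_{j,1}^*,\ldots,W_{j,n_j}^*$ the corresponding phase I bootstrap weights. Then
\[
\hat{\mathbb{P}}_{j,n_j}^{\xi,(1)} f - \mathbb{P}_{j,n_j}^\xi f = \frac{1}{n_j}\sum_{k=1}^{n_j}\bigl(W_{j,k}^*-1\bigr)f(X_{j,k}^*),
\]
and conditional on the data the $W_{j,k}^*$ are i.i.d.\ with mean one and finite variance (since $\lVert W_j^{(1)}\rVert_{2,1}<\infty$ forces $E(W_j^{(1)})^2<\infty$), independent of the $X_{j,k}^*$ which are themselves i.i.d.\ $P_{0|j}$. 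Applying the Glivenko--Cantelli preservation theorem of \cite{MR1857319} to $\{1_{\mathcal{V}_j}(V)f(X):f\in\mathcal{F}\}$ and combining with $N_j/N\to\nu_j>0$ shows that $\mathcal{F}$ is $P_{0|j}$-Glivenko--Cantelli. A second application of the same preservation theorem to the product class $\{(x,w)\mapsto (w-1)f(x):f\in\mathcal{F}\}$ makes it $(P_{0|j}\times P_{W_j^{(1)}})$-Glivenko--Cantelli, which by the uniform law of large numbers yields
\[
\sup_{f\in\mathcal{F}}\Bigl|\frac{1}{n_j}\sum_{k=1}^{n_j}(W_{j,k}^*-1)f(X_{j,k}^*)\Bigr|\to 0
\]
to the common limit $E[(W_{j,1}^*-1)f(X_{j,1}^*)]=0$. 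Lemma \ref{lemma:bootorder}(2) and (5) then convert this to the required ``$P_W^*$ in $P^*$-probability'' mode.

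For the calibrated version $\hat{\mathbb{P}}_N^{\pi,(1),b\#}-\mathbb{P}_N^{\pi,\#}$, I would run the same argument with the summand $(W_{Ni}^{(1)}-1)(\xi_i/\pi_0(V_i))\delta_{X_i}$ replaced by $(W_{Ni}^{(1)}-1)(\xi_iG_\#(V_i;\hat{\alpha}_N^\#)/\pi_0(V_i))\delta_{X_i}$. Since $G$ is uniformly bounded, $V$ has bounded support (Condition \ref{cond:cal}), and $\hat{\alpha}_N^\#\to\alpha_0=0$ in $P^*$-probability, the enlarged class $\{G_\#(\cdot;\alpha)f:|\alpha|\le\delta,\,f\in\mathcal{F}\}$ is $P_0$- and hence $P_{0|j}$-Glivenko--Cantelli by the VC-subgraph plus preservation argument already used in the proof of Proposition \ref{prop:alphaCalboot}, and the multiplier GC step goes through verbatim. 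The ``same holds'' extensions, in which $\mathbb{P}_N^\pi$ is replaced by $\mathbb{P}_N^{\pi,\#}$ or by $P_0$, follow immediately from the triangle inequality together with Theorem 5.1 of \cite{MR3059418}, which supplies $\lVert \mathbb{P}_N^\pi - P_0\rVert_{\mathcal{F}}\to 0$ and $\lVert \mathbb{P}_N^{\pi,\#}-P_0\rVert_{\mathcal{F}}\to 0$.

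The main obstacle is the bookkeeping between the two layers of randomness: the statements one obtains from an unconditional law of large numbers on the product space $(x,w)$ must be translated into the nested ``$P_W^*$ in $P^*$-probability'' mode of the lemma. This is exactly what Lemma \ref{lemma:bootorder}(2) and (5) are designed for, but some care is needed because both the sample size $n_j$ and the index set of sampled units in stratum $j$ are random; one therefore ultimately extracts a subsequence on which, conditionally on the data, the multiplier GC theorem applies to a genuinely i.i.d.\ array.
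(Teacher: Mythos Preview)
Your proposal is correct and shares the paper's stratum-wise decomposition and the treatment of the calibrated versions via the enlarged class $\{G_\#(\cdot;\alpha)f\}$, but the within-stratum argument is genuinely different. The paper does \emph{not} apply a multiplier Glivenko--Cantelli result on the product space; instead it normalizes by $\overline{W}_j^{(1)}=n_j^{-1}\sum_i W_{N_j,j,i}^{(1)}\xi_{j,i}$ and writes
\[
\hat{\mathbb{P}}_{j,n_j}^{\xi,(1)}-\mathbb{P}_{j,n_j}^{\xi}
=\overline{W}_j^{(1)}\Bigl(\tfrac{1}{n_j}\sum_i \tfrac{W_{N_j,j,i}^{(1)}}{\overline{W}_j^{(1)}}\xi_{j,i}\delta_{X_{j,i}}-\mathbb{P}_{j,n_j}^{\xi}\Bigr)
+(\overline{W}_j^{(1)}-1)(\mathbb{P}_{j,n_j}^{\xi}-P_{0|j})
+(\overline{W}_j^{(1)}-1)P_{0|j},
\]
handling the first term by Theorem~3.3 of \cite{MR1836584} (a converse limit theorem for exchangeably weighted bootstraps, which needs only $\max_i W^{(1)}/n_j\to 0$ and $\sum W^{(1)}/\overline{W}^{(1)}=n_j$), and the remaining two by the ordinary LLN for $\overline{W}_j^{(1)}$ together with $\lVert P_{0|j}\rVert_{\mathcal{F}}<\infty$, which is verified separately via a Jensen-type bound. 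Your route through the preservation theorem on $\{(x,w)\mapsto (w-1)f(x)\}$ is more elementary here because the phase~I weights are genuinely i.i.d.\ and independent of the data; the paper's route through \cite{MR1836584} would continue to work if the weights were merely exchangeable, which is why that machinery is invoked. For the calibrated statements and the replacement of the centering by $\mathbb{P}_N^{\pi,\#}$ or $P_0$, the two arguments coincide.
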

\begin{proof}
Note that $X_{j,i}$, $i=1,\ldots,N_j$, with $\xi_{j,i}=1$ are i.i.d. $P_{0|j}$.
Thus, conditionally on $\xi$ we can view the sample in the $j$th stratum as the i.i.d. sample of size $n_j$.
In the following we proceed conditionally on $\xi$ and then take expectation with respect to $\xi$ for bootstrap order notations.
Since conditional probabilities given $\xi$ is bounded, the unconditional order notations follow by Vitali's theorem.
Hence we do not explicitly discuss the step from conditional to unconditional arguments.

Note that the decomposition of the phase I bootstrap IPW empirical process is given by
$\hat{\mathbb{P}}_N^{\pi,(1)}
=\sum_{j=1}^J(N_j/N)(\mathbb{P}^{\xi,(1)}_{j,n_j}-\mathbb{P}_{j,n_j}^\xi).$
The triangle inequality yields
\begin{eqnarray*}
\left\lVert\hat{\mathbb{P}}_N^{\pi,(1)}-\mathbb{P}^\pi_N\right\rVert_{\mathcal{F}}
\leq \sum_{j=1}^J\left\lVert\hat{\mathbb{P}}_{j,n_j}^{\xi,(1)}-\mathbb{P}^\xi_{j,n_j}\right\rVert_{\mathcal{F}}.
\end{eqnarray*}
%Because $N_j/N=O_{P_W^*}(1)$ in $P^*$-probability by the law of large numbers and Lemma \ref{lemma:bootorder}, it suffices to show that $\lVert\hat{\mathbb{P}}_{j,n_j}^{\xi,(1)}-\mathbb{P}^\xi_{j,n_j}\rVert_{\mathcal{F}} = o_{P_W^*}(1)$ in $P^*$-probability for each $j=1,\ldots,J$.
%To this end, we apply Lemma 3.6.16 of \cite{MR1385671} to each $\hat{\mathbb{P}}_{j,n_j}^{\xi,(1)}-\mathbb{P}^\xi_{j,n_j}$.
Fix $j$.
Let $\overline{W}_j^{(1)}=n_j^{-1}\sum_{i=1}^{N_j}W_{N_j,j,i}^{(1)}\xi_{j,i}$.
%Note that $\overline{W}_j^{(1)}\rightarrow_{P_W^*} 1$ by the law of large numbers.
We have
\begin{eqnarray}
\label{eqn:ph1GCdecomp}
\hat{\mathbb{P}}_{j,n_j}^{\xi,(1)}-\hat{\mathbb{P}}_{j,n_j}^{\xi}
%&=&\frac{1}{n_j}\sum_{i=1}^{N_j}W_{N_j,j,i}^{(1)}\xi_{j,i}\delta_{X_{j,i}}- \frac{1}{n_j}\sum_{i=1}^{N_j}\xi_{j,i}\delta_{X_{j,i}}\nonumber \\
&=&\overline{W}_j^{(1)}\left(\frac{1}{n_j}\sum_{i=1}^{N_j}\frac{W_{N_j,j,i}^{(1)}}{\overline{W}_j^{(1)}}\xi_{j,i}\delta_{X_{j,i}}- \frac{1}{n_j}\sum_{i=1}^{N_j}\xi_{j,i}\delta_{X_{j,i}}\right)\nonumber \\
&& + (\overline{W}_j^{(1)}-1)\frac{1}{n_j}\sum_{i=1}^{N_j}\xi_{j,i}(\delta_{X_{j,i}}-P_{0|j}) + (\overline{W}_j^{(1)}-1)P_{0|j}
%&=&\overline{W}_j^{(1)}\sum_{i=1}^{N_j}\frac{W_{N_j,j,i}^{(1)}\xi_{j,i}}{n_j\overline{W}_j^{(1)}}(\delta_{X_{j,i}}-P_{0|j})- \sum_{i=1}^{N_j}\frac{\xi_{j,i}}{n_j}(\delta_{X_{j,i}}-P_{0|j})\\
%&&+ (\overline{W}_j^{(1)}-1)P_{0|j}.
\end{eqnarray}
Note that  $\sum_{i=1}^{N_j}W_{N_j,j,i}\xi_{j,i}/\overline{W}_j^{(1)}=n_j$ and that $\max_{1\leq i\leq n_j}\xi_{j,i}W_{N_j,j,i}^{(1)}/n_j\rightarrow_{P_W^*} 0$ since $E|W_{N_j,j,i}^{(1)}|<\infty$ for all $i$.
Thus, we can apply Theorem 3.3 of \cite{MR1836584} to obtain
\begin{eqnarray*}
\left\lVert\frac{1}{n_j}\sum_{i=1}^{N_j}\frac{W_{N_j,j,i}^{(1)}}{\overline{W}_j^{(1)}}\xi_{j,i}\delta_{X_{j,i}}- \frac{1}{n_j}\sum_{i=1}^{N_j}\xi_{j,i}\delta_{X_{j,i}}\right\rVert_{\mathcal{F}}
=o_{P_W^*}(1),\ \mbox{in $P^*$-probability.}
%\left\lVert\overline{W}_j^{(1)}\sum_{i=1}^{N_j}\frac{W_{N_j,j,i}^{(1)}\xi_{j,i}}{n_j\overline{W}_j^{(1)}}(\delta_{X_{j,i}}-P_{0|j})\right\rVert_{\mathcal{F}}
%&=&|\overline{W}_j^{(1)}|\left\lVert\sum_{i=1}^{N_j}\frac{W_{N_j,j,i}^{(1)}\xi_{j,i}}{n_j\overline{W}_j^{(1)}}(\delta_{X_{j,i}}-P_{0|j})\right\rVert_{\mathcal{F}}\\
%&=&O_{P_W^*}(1)o_{P_W^*}(1) = o_{P_W^*}(1),
\end{eqnarray*}
Since $\overline{W}_j^{(1)}=O_{P_W^*}(1)$, the first term in (\ref{eqn:ph1GCdecomp}) is $o_{P_W^*}(1)$ in $P^*$-probability.
For the second term, the Glivenko-Cantelli theorem yields that $\lVert n_j^{-1}\sum_{i=1}^{N_j}\xi_{j,i}\delta_{X_{j,i}}-P_{0|j}\rVert_{\mathcal{F}} \rightarrow 0$, $P^\infty$-almost surely.
Since $\overline{W}_j^{(1)}\rightarrow_{P_W^*}1$ by the law of large numbers, the second term in (\ref{eqn:ph1GCdecomp}) is $o_{P_W^*}(1)$ in $P^*$-probability.
%Moreover, Lemma 3.6.16 of \cite{MR1385671} yields that
%\begin{equation*}
%\left\lVert\sum_{i=1}^{N_j}\frac{\xi_{j,i}}{n_j}(\delta_{X_{j,i}}-P_{0|j})\right\rVert_{\mathcal{F}}^* \rightarrow_{P^\infty} 0
%\end{equation*}
%and hence this term is $o_{P_W^*}(1)$ in $P^*$-probability by Lemma \ref{lemma:bootorder}.
%Thus, the proof is complete if we show $\lVert P_{0|j}\rVert_{\mathcal{F}}<\infty$.
So is the third term in (\ref{eqn:ph1GCdecomp})
%is $o_{P_W^*}(1)$ in $P^*$-probability
because $\lVert P_{0|j}\rVert_{\mathcal{F}}<\infty$.
To see this, notice that Jensen's inequality yields
\begin{eqnarray*}
&&\lVert P_{0|j}\rVert_{\mathcal{F}}
%\leq \nu_j^{-1}E^*\lVert \delta_XI_{\mathcal{V}_j}(V)-\nu_j P_{0|j} -\delta_XI_{\mathcal{V}_j}(V)\rVert_{\mathcal{F}}\\
\leq \nu_j^{-1}E^*\lVert \delta_XI_{\mathcal{V}_j}(V)-\nu_j P_{0|j} \rVert_{\mathcal{F}}+\nu_j^{-1}E^*\lVert\delta_XI_{\mathcal{V}_j}(V)\rVert_{\mathcal{F}}\\
&&\leq \nu_j^{-1} E^*\left\lVert  \sum_{j=1}^J(\delta_XI_{\mathcal{V}_j}(V)-\nu_j P_{0|j})\right\rVert_{\mathcal{F}} + \nu_j^{-1}E^*\lVert\delta_X\rVert_{\mathcal{F}}\\
%&&=\nu_j^{-1}E^*\lVert f-P_0f\rVert_{\mathcal{F}}+\nu_j^{-1}E^*\lVert\delta_XI_{\mathcal{V}_j}(V)\rVert_{\mathcal{F}}\\
&&= \nu_j^{-1}E^*\lVert f-P_0f\rVert_{\mathcal{F}}+\nu_j^{-1}E^*\lVert\delta_X\rVert_{\mathcal{F}}.\end{eqnarray*}
Because $\mathcal{F}$ is $P_0$-Glivenko-Cantelli and $\lVert P_0\rVert_{\mathcal{F}}<\infty$, it follows from the result of Problem 2.4.1 of \cite{MR1385671} that both terms in the last display are bounded.

We consider $\lVert \hat{\mathbb{P}}_N^{\pi,(1),bc}-\mathbb{P}_N^{\pi,c}\rVert_{\mathcal{F}}$.
Note that  $\tilde{\mathcal{F}}\equiv\{G_{c}(\cdot;\alpha)f:f\in\mathcal{F},\alpha\in\mathbb{R}^k\}$
is $P_0$-Glivenko-Cantelli by the Glivenko-Cantelli preservation theorem (Theorem 3, \cite{MR1857319}).
We have
\begin{eqnarray*}
&&\left\lVert \hat{\mathbb{P}}_N^{\pi,(1),bc}-\mathbb{P}_N^{\pi,c}\right\rVert_{\mathcal{F}}
=\left\lVert (\hat{\mathbb{P}}_N^{\pi,(1)}-\mathbb{P}_N^{\pi})G_c(V;\hat{\alpha}_N^c)f\right\rVert_{\mathcal{F}}
\leq  \left\lVert \hat{\mathbb{P}}_N^{\pi,(1)}-\mathbb{P}_N^\pi\right\rVert_{\tilde{\mathcal{F}}}.
\end{eqnarray*}
The last term is $o_{P^*_W}(1)$ in $P^*$-probability by the result we just established above.
Because $\lVert \mathbb{P}_N^\pi - P_0\rVert_{\mathcal{F}}=o_{P_W^*}(1)$ and $\lVert \mathbb{P}_N^{\pi,\#} - P_0\rVert_{\mathcal{F}}=o_{P_W^*}(1)$ in $P^*$-probability by Theorem 5.1 of \cite{MR3059418} and Lemma \ref{lemma:bootorder}, the triangle inequality yields the desired results when replacing $\mathbb{P}_N^{\pi,c}$ by $\mathbb{P}_N^{\pi}$, $\mathbb{P}_N^{\pi,\#}$ or $P_0$.
The other case is similar.
This completes the proof.
\end{proof}

Next, we prove conditional weak convergence of the phase I bootstrap IPW empirical process.
\begin{lemma}
\label{lemma:phIbootIPWD}
Let $\mathcal{F}$ be a Donsker class with $\lVert P_0\rVert_{\mathcal{F}}<\infty$.
Then,
\begin{eqnarray*}
\tilde{\mathbb{G}}_N^{\pi,(1)}
&\rightsquigarrow&
\sum_{j=1}^J\sqrt{\frac{\nu_j}{2-p_j}}\tilde{\mathbb{G}}_j^{(1)}
,\quad \mbox{in }\ell^\infty(\mathcal{F}),
\end{eqnarray*}
where the $P_{0|j}$-Brownian motion processes $\tilde{\mathbb{G}}_j^{(1)}$ are all independent.

The same holds when $\tilde{\mathbb{G}}_N^{\pi,(1)}$ is replaced by $\tilde{\mathbb{G}}_N^{\pi,(1),b\#}$ with $\#\in\{c,cc\}$.
\end{lemma}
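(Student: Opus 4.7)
The plan is to decompose $\tilde{\mathbb{G}}_N^{\pi,(1)}$ stratum-by-stratum and apply the uncentered conditional multiplier central limit theorem (Lemma \ref{lemma:ph1D}) within each stratum, then assemble the limit using independence of the phase I bootstrap weights across strata. Starting from the identity $\tilde{\mathbb{G}}_N^{\pi,(1)} = \sum_{j=1}^J \sqrt{N_j/N}\,\sqrt{N_j/n_j}\,\tilde{\mathbb{G}}_{j,n_j}^{\xi,(1)}$ given in the Additional Notations, I work in a single stratum $j$. Writing $S_j = \{i \le N_j : \xi_{j,i}=1\}$, which has cardinality $n_j$,
\begin{equation*}
\tilde{\mathbb{G}}_{j,n_j}^{\xi,(1)} = \frac{1}{\sqrt{n_j}}\sum_{i\in S_j}(W^{(1)}_{N_j,j,i}-1)\delta_{X_{j,i}}.
\end{equation*}
Conditional on the phase I data, $\{X_{j,i} : i\in S_j\}$ are iid $P_{0|j}$ (Remark 4.3 of \cite{MR2253102}), while the centered weights $W^{(1)}_{N_j,j,i}-1$ are iid with mean $0$, variance $c_j^2 = p_j/(2-p_j)$, and satisfy $\lVert W^{(1)}_j-1\rVert_{2,1}<\infty$ by (\ref{eqn:ph1bootwt}) and the triangle inequality.

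Thus Lemma \ref{lemma:ph1D}(1) delivers conditional weak convergence $\tilde{\mathbb{G}}_{j,n_j}^{\xi,(1)} \rightsquigarrow c_j \tilde{\mathbb{G}}_j^{(1)}$ in $\ell^\infty(\mathcal{F})$, in $P^*$-probability, where $\tilde{\mathbb{G}}_j^{(1)}$ is a $P_{0|j}$-Brownian motion process. Since the weight families $\underline{W}_j^{(1)}$ are independent across $j$ by construction, the limit processes may be taken mutually independent, and a standard continuous-mapping argument (combined with the subsequence characterization in Lemma \ref{lemma:bootorder}(5)) yields the joint conditional convergence. Using $N_j/N \to \nu_j$ and $n_j/N_j \to p_j$, together with the identity $c_j\sqrt{\nu_j/p_j}=\sqrt{\nu_j/(2-p_j)}$, the limit becomes $\sum_{j=1}^J \sqrt{\nu_j/(2-p_j)}\,\tilde{\mathbb{G}}_j^{(1)}$, as claimed.

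For the calibrated analogues $\tilde{\mathbb{G}}_N^{\pi,(1),b\#}$ with $\#\in\{c,cc\}$, I use the decomposition
\begin{equation*}
\tilde{\mathbb{G}}_N^{\pi,(1),b\#} f = \tilde{\mathbb{G}}_N^{\pi,(1)} f + \tilde{\mathbb{G}}_N^{\pi,(1)}\bigl((G_{\#}(V;\hat{\alpha}_N^{\#})-1)f\bigr)
\end{equation*}
and show the second summand is $o_{P_W^*}(1)$ uniformly in $f\in\mathcal{F}$. This combines the consistency $\hat{\alpha}_N^{\#}=o_{P^*}(1)$ (established in \cite{MR3059418}), the normalization $G_{\#}(V;0)=1$, the Donsker property of the enlarged class $\{G_{\#}(V;\alpha)f : f\in\mathcal{F}, |\alpha|\le \delta\}$ (obtained by monotonicity and boundedness of $G$ via standard Donsker preservation results), and asymptotic equicontinuity of $\tilde{\mathbb{G}}_N^{\pi,(1)}$ along the $L_2(P_0)$-shrinking neighbourhood induced by $\hat{\alpha}_N^{\#}\to 0$.

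The principal technical obstacle will be the careful bookkeeping required to upgrade pointwise conditional convergence in each stratum (as supplied by Lemma \ref{lemma:ph1D}) to a joint limit in $P^*$-probability, while handling outer measurability on the product space $(\mathcal{X}^\infty \times \mathcal{Z}^{(1)}, \mathcal{B}^\infty\times \mathcal{E}^{(1)}, P^\infty\times P_W^{(1)})$. The subsequence characterization in Lemma \ref{lemma:bootorder}(5), together with independence of the weight families, should permit a clean conditional-on-data argument stratum by stratum, but the composite application (especially combined with the asymptotic equicontinuity step needed for the calibration extension) needs care.
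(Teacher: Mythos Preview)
Your argument for the uncalibrated process $\tilde{\mathbb{G}}_N^{\pi,(1)}$ matches the paper's exactly: decompose into stratum pieces, apply Lemma~\ref{lemma:ph1D} to each, and reassemble using independence across strata and the limits $N_j/N\to\nu_j$, $n_j/N_j\to p_j$. One point you gloss over is that Lemma~\ref{lemma:ph1D} requires $\lVert P_{0|j}\rVert_{\mathcal{F}}<\infty$ (not merely $\lVert P_0\rVert_{\mathcal{F}}<\infty$); the paper handles this by referring back to the proof of Lemma~\ref{lemma:phIbootIPWGC}, where the bound $\lVert P_{0|j}\rVert_{\mathcal{F}}\le \nu_j^{-1}E^*\lVert f-P_0f\rVert_{\mathcal{F}}+\nu_j^{-1}E^*\lVert\delta_X\rVert_{\mathcal{F}}<\infty$ is derived. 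You should cite or reproduce that step.

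For the calibrated processes $\tilde{\mathbb{G}}_N^{\pi,(1),b\#}$ your route diverges from the paper's. You propose to control the remainder $\tilde{\mathbb{G}}_N^{\pi,(1)}\bigl((G_\#(V;\hat\alpha_N^\#)-1)f\bigr)$ by enlarging $\mathcal{F}$ to a Donsker class containing $\{G_\#(V;\alpha)f:f\in\mathcal{F},|\alpha|\le\delta\}$ and invoking asymptotic equicontinuity as $\hat\alpha_N^\#\to 0$. The paper instead Taylor-expands $G_\#(V;\hat\alpha_N^\#)-1$ around $\alpha_0=0$ to write the remainder as $(\hat{\mathbb{P}}_N^{\pi,(1)}-\mathbb{P}_N^\pi)\dot G_\#(V;\tilde\alpha)fV^T\cdot\sqrt{N}(\hat\alpha_N^\#-\alpha_0)$, then kills the first factor via the Glivenko--Cantelli Lemma~\ref{lemma:phIbootIPWGC} and the second via the $\sqrt{N}$-rate for $\hat\alpha_N^\#$ from \cite{SW2013supp}. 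The paper's argument is more elementary in that it only needs Glivenko--Cantelli preservation (immediate from \cite{MR1857319} since $\dot G$ is bounded and $V$ has bounded support) rather than Donsker preservation, at the cost of using the $\sqrt{N}$-rate rather than mere consistency of $\hat\alpha_N^\#$. Your approach is valid provided you actually verify the enlarged class is $P_0$-Donsker; this is plausible under Condition~\ref{cond:cal} but is not as automatic as you suggest, and the paper's Taylor route sidesteps the issue entirely.
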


\begin{proof}
As in the proof of Lemma \ref{lemma:phIbootIPWGC}, we proceed by conditioning on $\xi$ and then take expectations with respect to $\xi$.

First, we prove the claim for $\tilde{\mathbb{G}}_N^{\pi,(1)}$.
Recall the decomposition of the phase I bootstrap IPW empirical process given by
\begin{eqnarray*}
\tilde{\mathbb{G}}_N^{\pi,(1)}
=\sum_{j=1}^J\sqrt{\frac{N_j}{N}}\sqrt{\frac{N_j}{n_j}}\tilde{\mathbb{G}}_{j,n_j}^{\xi,(1)},
\end{eqnarray*}
where
%\begin{equation*}
$\tilde{\mathbb{G}}_{j,n_j}^{\xi,(1)}
%\sqrt{n_j}\left(\hat{\mathbb{P}}_{j,n_j}^{\xi,(1)}-\mathbb{P}^\xi_{j,n_j}\right)
=n_j^{-1/2}\sum_{i=1}^{N_j}(W_{N_j,j,i}^{(1)}-1)\xi_{j,i}\delta_{X_{j,i}}$.
%\end{equation*}
Note that $\tilde{W}_{N_j,j,i}^{(1)}=W_{N_j,j,i}^{(1)}-1$ has mean zero and variance $c_j^2$ satisfying $\lVert \tilde{W}_{N_j,j,i}^{(1)}\rVert_{2,1}<\infty$ in view of Problem 2.9.2 of \cite{MR1385671}.
Because we showed $\lVert P_{0|j}\rVert_{\mathcal{F}}<\infty$ for $j=1,\ldots,J$, in the proof of Lemma \ref{lemma:phIbootIPWGC}, it follows from the uncentered conditional multiplier central limit theorem (Lemma \ref{lemma:ph1D}) applied to each of the phase I bootstrap IPW empirical processes $\tilde{\mathbb{G}}_{j,n_j}^{\xi,(1)}$ for the $j$th stratum  $j=1,\ldots,J,$ that
\begin{equation*}
\tilde{\mathbb{G}}_{j,n_j}^{\xi,(1)}
\rightsquigarrow
c_j(\mathbb{G}_j^{(1)} + Z_jP_{0|j})
\quad \mbox{in }\ell^\infty(\mathcal{F}),
\end{equation*}
in $P^*$-probability where $\mathbb{G}_j^{(1)}$ is a $P_{0|j}$-Brownian bridge process independent of the standard normal random variables $Z_j$.
The seminorm for asymptotic equicontinuity is $\rho_j^{(1)}(f,g)=\{P_{0|j}(f-g)^2\}^{1/2}$.
Note that $\mathbb{G}_j^{(1)}$ and $Z_j$, $j=1,\ldots,J$, are all independent and that $\mathbb{G}_j^{(1)}+Z_jP_{0|j}$ are $P_{0|j}$-Brownian motion processes.
Hence
\begin{eqnarray*}
\tilde{\mathbb{G}}_N^{\pi,(1)}
\rightsquigarrow
\sum_{j=1}^J\sqrt{\frac{\nu_j}{p_j}}\sqrt{\frac{p_j}{2-p_j}}(\mathbb{G}_j^{(1)}+Z_jP_{0|j})
=\sum_{j=1}^J\sqrt{\frac{\nu_j}{2-p_j}}(\mathbb{G}_j^{(1)}+Z_jP_{0|j}),
\end{eqnarray*}
in $\ell^{\infty}(\mathcal{F})$ in $P^*$-probability.

Next, we prove the claim for $\tilde{\mathbb{G}}_N^{\pi,(1),bc}$.
Other cases are similar.
For a finite-dimensional convergence, we have for $f\in\mathcal{F}$ that
\begin{eqnarray*}
\tilde{\mathbb{G}}_N^{\pi,(1),bc}f
&=&\tilde{\mathbb{G}}_N^{\pi,(1)}f+ (\tilde{\mathbb{G}}_N^{\pi,(1),bc}-\tilde{\mathbb{G}}_N^{\pi,(1)})f\\
&=&\tilde{\mathbb{G}}_N^{\pi,(1)}f +(\hat{\mathbb{P}}_N^{\pi,(1)}-\mathbb{P}_N^\pi)\dot{G}_{c}(V;\tilde{\alpha})fV^T\sqrt{N}(\hat{\alpha}_N^c-\alpha_0)
\end{eqnarray*}
where $\tilde{\alpha}$ is some convex combination of $\hat{\alpha}_N^c$ and $\alpha_0$.
It follows from Lemma \ref{lemma:phIbootIPWGC}, Proposition A.1 of \cite{SW2013supp} and Lemma \ref{lemma:bootorder} that the second term is $o_{P^*_W}(1)$ in $P^*$-probability.
For asymptotic equicontinuity, let $h_N \in\mathcal{F}_{\delta_N}\equiv \{f-g:f,g\in\mathcal{F},\sum_{j=1}^J\rho_{j}^{(1)}(f,g)\leq \delta_N\}$
for an arbitrary sequence $\delta_N\downarrow 0$.
We have by the triangle inequality and Taylor's theorem that $\lVert \tilde{\mathbb{G}}_N^{\pi,(1),bc} \rVert_{\mathcal{F}_{\delta_N}}$ is bounded above by
\begin{eqnarray*}
\lVert \tilde{\mathbb{G}}_N^{\pi,(1)} \rVert_{\mathcal{F}_{\delta_N}}
+\lVert (\hat{\mathbb{P}}_N^{\pi,(1)}-\mathbb{P}_N^\pi)\dot{G}_{c}(V;\tilde{\alpha})h_NV^T \rVert_{\mathcal{F}_{\delta_N}}\sqrt{N}(\hat{\alpha}_N^c-\alpha_0),
\end{eqnarray*}
where $\tilde{\alpha}$ is some convex combination of $\hat{\alpha}_N^c$ and $\alpha_0$.
The first term in the last display is $o_{P_W^*}(1)$ in $P^*$-probability because of the asymptotic equicontinuity of $\tilde{\mathbb{G}}_N^{\pi,(1)}$.
The second term is also  $o_{P_W^*}(1)$ in $P^*$-probability by the Glivenko-Cantelli preservation theorem \cite{MR1857319}, Lemma \ref{lemma:phIbootIPWGC}, Proposition A.1 of \cite{SW2013supp} and Lemma \ref{lemma:bootorder} as above.
This completes the proof.
\end{proof}

\subsubsection{Phase II Bootstrap}
We present results concerning the bootstrap empirical process based on \cite{gross} and \cite{MR740906} with the phase II bootstrap weights only (see Lemma \ref{lemma:ph2D}).
We first prove the Glivenko-Cantelli theorem for the phase II bootstrap IPW empirical processes.
\begin{lemma}
\label{lemma:phIIbootIPWGC}
Let $\mathcal{F}$ be a $P_0$-Glivenko-Cantelli class.
Then
\begin{eqnarray*}
&&\lVert \hat{\mathbb{P}}_N^{\pi,(2)}-\mathbb{P}_N^\pi\rVert_{\mathcal{F}} \rightarrow_{P_W^*} 0 ,\quad \mbox{in $P^*$-probability}.
\end{eqnarray*}
Suppose moreover that $\lVert P_0\rVert_{\mathcal{F}}<\infty$.
Then
\begin{eqnarray*}
&&\lVert \hat{\mathbb{P}}_N^{\pi,(2),*\#}-\mathbb{P}_N^\pi\rVert_{\mathcal{F}} \rightarrow_{P_W^*} 0, \quad \mbox{in $P^*$-probability}
%&&\lVert \hat{\mathbb{P}}_N^{\pi,(2),*\#}-\mathbb{P}_N^{\pi,\#}\rVert_{\mathcal{F}} \rightarrow_{P_W^*} 0 .
\end{eqnarray*}
where $*\in \{b,bs\}$ and $\# \in \{c,cc\}$.

The statements above hold if we replace $\mathbb{P}_N^\pi$ by $\mathbb{P}_N^{\pi,\#}$ or $P_0$.
\end{lemma}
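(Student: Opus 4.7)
The plan is to exploit the stratum-wise decomposition
\[
\hat{\mathbb{P}}_N^{\pi,(2)}-\mathbb{P}_N^{\pi}=\sum_{j=1}^{J}\frac{N_j}{N}\bigl(\hat{\mathbb{P}}_{j,n_j}^{\xi,(2)}-\mathbb{P}_{j,n_j}^{\xi}\bigr)
\]
already set up in the paper, which reduces the first claim to showing $\lVert\hat{\mathbb{P}}_{j,n_j}^{\xi,(2)}-\mathbb{P}_{j,n_j}^{\xi}\rVert_{\mathcal{F}}=o_{P_W^*}(1)$ in $P^*$-probability for each fixed stratum $j$, since $N_j/N\to\nu_j$ by the strong law. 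Conditioning on $\xi$, the sampled phase II observations are i.i.d.\ $P_{0|j}$ (Remark 4.3 of \cite{MR2253102}), and the phase II bootstrap weights among these sampled observations are exchangeable with the mixture-hypergeometric structure from the finite-population bootstrap scheme of \cite{gross,MR740906}.

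For the stratum-wise bootstrap Glivenko--Cantelli step, I would appeal to the bootstrap Donsker theorem for the phase II scheme established in \cite{Saegusa-Var:2014} and stated here as Lemma \ref{lemma:ph2D}: asymptotic tightness in $\ell^\infty(\mathcal{F})$ on the single-stratum subproblem immediately implies the Glivenko--Cantelli conclusion. An independent derivation, preferable under the weaker Glivenko--Cantelli hypothesis, combines the classical convergence $\lVert\mathbb{P}_{j,n_j}^{\xi}-P_{0|j}\rVert_{\mathcal{F}}\to 0$ outer almost surely (valid because $\mathcal{F}$ is $P_0$-Glivenko--Cantelli and $\nu_j>0$) with an exchangeably weighted bootstrap Glivenko--Cantelli theorem in the spirit of Theorem 3.6.1 of \cite{MR1385671}, applied to the bounded exchangeable weights drawn conditionally without replacement from the artificial population. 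The triangle inequality then yields the stratum-wise claim, and unconditioning on $\xi$ is routine since the conditional bootstrap probabilities are bounded (Vitali).

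For the bootstrap-calibration variants, write $\hat{\mathbb{P}}_N^{\pi,(2),*\#}\cdot=\hat{\mathbb{P}}_N^{\pi,(2)}\{G_{\#}(V;\hat{\hat{\alpha}}_N^{*\#})\cdot\}$ and introduce the enlarged index class
\[
\mathcal{G}_{\#}\equiv\{G_{\#}(\cdot;\alpha)f:\alpha\in\mathbb{R}^k,\,f\in\mathcal{F}\}.
\]
Using boundedness of $V$ and monotonicity and boundedness of $G$ (Condition \ref{cond:cal}), together with $\lVert P_0\rVert_{\mathcal{F}}<\infty$ to control the envelope, the Glivenko--Cantelli preservation theorem of \cite{MR1857319} (exactly as used in the proof of Proposition \ref{prop:alphaCalboot}) yields that $\mathcal{G}_{\#}$ is $P_0$-Glivenko--Cantelli. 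Applying the first claim to $\mathcal{G}_{\#}$ gives $\lVert(\hat{\mathbb{P}}_N^{\pi,(2)}-\mathbb{P}_N^{\pi})g\rVert_{g\in\mathcal{G}_{\#}}=o_{P_W^*}(1)$. Combining this with $\hat{\hat{\alpha}}_N^{*\#}\to 0=\alpha_0$ in $P_W^*$-probability in $P^*$-probability from Proposition \ref{prop:alphaCalboot}, continuity of $G$ at $0$, and a dominated-convergence argument giving $\lVert\mathbb{P}_N^{\pi}\{G_{\#}(V;\hat{\hat{\alpha}}_N^{*\#})-1\}f\rVert_{\mathcal{F}}=o_{P_W^*}(1)$, the triangle inequality delivers $\lVert\hat{\mathbb{P}}_N^{\pi,(2),*\#}-\mathbb{P}_N^{\pi}\rVert_{\mathcal{F}}=o_{P_W^*}(1)$ in $P^*$-probability. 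Replacing $\mathbb{P}_N^{\pi}$ by $\mathbb{P}_N^{\pi,\#}$ or $P_0$ follows by one more triangle inequality using $\lVert\mathbb{P}_N^{\pi}-P_0\rVert_{\mathcal{F}}=o_{P^*}(1)$ and $\lVert\mathbb{P}_N^{\pi,\#}-P_0\rVert_{\mathcal{F}}=o_{P^*}(1)$ from Theorem 5.1 of \cite{MR3059418}, combined with Lemma \ref{lemma:bootorder}(2).

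The main obstacle is the stratum-wise bootstrap Glivenko--Cantelli step. The phase II weights are only exchangeable (not i.i.d.)\ and carry the mixture-hypergeometric structure enforced by the integer part of $N_j/n_j$, so standard i.i.d.\ bootstrap Glivenko--Cantelli theorems do not apply directly; the real work is in importing the finite-population bootstrap theory of \cite{gross,MR740906,Saegusa-Var:2014}. Once this is in place, the calibration variants reduce to preservation plus consistency of $\hat{\hat{\alpha}}_N^{*\#}$, and the centering replacements are immediate from the non-bootstrap Glivenko--Cantelli theorems already in the paper.
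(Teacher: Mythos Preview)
Your proposal is correct and follows essentially the same route as the paper. For the first statement the paper simply cites \cite{Saegusa} (Theorem 7.1.1) and \cite{Saegusa-Var:2014} (Lemma 4.1), which is exactly the finite-population bootstrap input you identify as the main obstacle; for the calibration variants the paper uses the same decomposition $\hat{\mathbb{P}}_N^{\pi,(2),*\#}-\mathbb{P}_N^{\pi}=(\hat{\mathbb{P}}_N^{\pi,(2)}-\mathbb{P}_N^{\pi})G_{\#}(V;\hat{\hat{\alpha}}_N^{*\#})\cdot+\mathbb{P}_N^{\pi}(G_{\#}(V;\hat{\hat{\alpha}}_N^{*\#})-1)\cdot$, handling the first piece via the enlarged Glivenko--Cantelli class and the second via consistency of $\hat{\hat{\alpha}}_N^{*\#}$ from Proposition~\ref{prop:alphaCalboot}, the only difference being that the paper applies Taylor's theorem to the second piece (introducing the auxiliary class $\tilde{\mathcal{F}}_2=\{\dot{G}_{\#}(v;\alpha)v^{T}f(x)\}$) where you invoke dominated convergence directly.
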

\begin{proof}
The first statement was proved in \cite{Saegusa} (Theorem 7.1.1) and \cite{Saegusa-Var:2014} (Lemma 4.1).

We consider $\lVert \hat{\mathbb{P}}_N^{\pi,(2),bc}-\mathbb{P}_N^\pi\rVert_{\mathcal{F}}$. Other cases are similar.
Note that
$\tilde{\mathcal{F}}_1=\{g(x,v)=G_{c}(v;\alpha)f(x):f\in\mathcal{F},\alpha\in\mathbb{R}^k\}$ and $\tilde{\mathcal{F}}_2=\{g(x,v) = \dot{G}_{c}(v;\alpha)v^Tf(x):f\in\mathcal{F},\alpha\in\mathbb{R}^k\}$ are $P_0$-Glivenko-Cantelli by the Glivenko-Cantelli preservation theorem (Theorem 3, \cite{MR1857319}).
Taylor's theorem yields
\begin{eqnarray*}
&&\left\lVert \hat{\mathbb{P}}_N^{\pi,(2),bc}-\mathbb{P}_N^\pi\right\rVert_{\mathcal{F}}\\
%&&=\left\lVert \hat{\mathbb{P}}_N^{\pi,(2),bc}f-\mathbb{P}_N^\pi G_{c}(V;\hat{\hat{\alpha}}_N^{bc})f+\mathbb{P}_N^\pi G_{c}(V;\hat{\hat{\alpha}}_N^{bc})f-\mathbb{P}_N^\pi f\right\rVert_{\mathcal{F}}\\
&&\leq \left\lVert (\hat{\mathbb{P}}_N^{\pi,(2)}-\mathbb{P}_N^\pi)G_{c}(V;\hat{\hat{\alpha}}_N^{bc})f\right\rVert_{\mathcal{F}}+ \left \lVert \mathbb{P}_N^\pi \dot{G}_{c}(V;\tilde{\alpha})V^Tf(\hat{\hat{\alpha}}_N^{bc}-\alpha_0) \right\rVert_{\mathcal{F}}\\
&&\leq \left\lVert \hat{\mathbb{P}}_N^{\pi,(2)}-\mathbb{P}_N^\pi\right\rVert_{\tilde{\mathcal{F}}_1}+ (\left \lVert \mathbb{P}_N^\pi-P_0 \right\rVert_{\mathcal{\tilde{F}}_2}+\left \lVert P_0 \right\rVert_{\mathcal{\tilde{F}}_2})(\hat{\hat{\alpha}}_N^{bc}-\alpha_0)
\end{eqnarray*}
where $\tilde{\alpha}$ is some convex combination of $\hat{\hat{\alpha}}_N^{bc}$ and $\alpha_0$.
The first term in the last display is $o_{P^*_W}(1)$ in $P^*$-probability by the first part of the theorem.
The second term is also $o_{P^*_W}(1)$ in $P^*$-probability because
$\lVert \mathbb{P}_N^{\pi}-P_0\rVert_{\tilde{\mathcal{F}}_2}$ is $o_{P_W^*}(1)$ in $P^*$-probability by Theorem 5.1 of \cite{MR3059418} and Lemma \ref{lemma:bootorder}, and the facts that $\lVert P_0\rVert_{\tilde{\mathcal{F}}_2}<\infty$ and that $\hat{\hat{\alpha}}_N^{bc}-\alpha_0=o_{P_W^*}(1)$ in $P^*$-probability by Proposition \ref{prop:alphaCalboot}.
The last statement holds by the triangle inequality.
\end{proof}

We prove weak convergence of the phase II bootstrap IPW empirical processes.
\begin{proof}[Proof of Lemma \ref{lemma:ph2D}]
The first statement was proved in \cite{Saegusa} (Theorem 7.3.1) and \cite{Saegusa-Var:2014} (Lemma 4.1).

For bootstrap calibrations, we prove the claim for $\tilde{\mathbb{G}}_N^{\pi,(2),bc}$.
The case for $\tilde{\mathbb{G}}_N^{\pi,(2),bcc}$ is similar.
Let $\hat{\hat{\alpha}}_N=\hat{\hat{\alpha}}_N^{bc}$ and $\hat{\alpha}_N = \hat{\alpha}_N^c$.
We have
\begin{eqnarray}
\tilde{\mathbb{G}}_N^{\pi,(2),bc}f
&=&\tilde{\mathbb{G}}_N^{\pi,(2)}f+ \sqrt{N}\hat{\mathbb{P}}_N^{\pi,(2)}(G_{c}(V;\hat{\hat{\alpha}}_N)-1)f\nonumber - \sqrt{N}\mathbb{P}_N^{\pi}(G_{c}(V;\hat{\alpha}_N)-1)f\nonumber\\
&=&\tilde{\mathbb{G}}_N^{\pi,(2)}f+ \sqrt{N}\mathbb{P}_N^{\pi}(G_{c}(V;\hat{\hat{\alpha}}_N)-G_{c}(V;\hat{\alpha}_N))f\nonumber\\
&& + \sqrt{N}(\hat{\mathbb{P}}_N^{\pi,(2)}-\mathbb{P}_N^{\pi})(G_{c}(V;\hat{\hat{\alpha}}_N)-1)f. \label{eqn:bootph2D}
\end{eqnarray}
The first term in the last display converges to $\mathbb{G}^{\pi}f$ in $P^*$-probability.
The second term can be written as
\begin{eqnarray*}
\sqrt{N}\mathbb{P}_N^{\pi}(G_{c}(V;\hat{\hat{\alpha}}_N)-G_{c}(V;\hat{\alpha}_N))f
=\mathbb{P}_N^{\pi}\dot{G}_{c}(V;\tilde{\alpha})V^Tf\sqrt{N}(\hat{\hat{\alpha}}_N-\hat{\alpha}_N)
\end{eqnarray*}
where $\tilde{\alpha}$ is some convex combination of $\hat{\alpha}_N$ and $\hat{\hat{\alpha}}_N$.
Consistency of $\hat{\alpha}_N$ and $\hat{\hat{\alpha}}_N$ for $\alpha_0$ in $P^*$-probability implies consistency of $\tilde{\alpha}$ for $\alpha_0$ in $P^*$-probability.
It follows from Theorem 5.1 of \cite{MR3059418} and Lemma \ref{lemma:bootorder} that
\begin{equation*}
\mathbb{P}_N^{\pi}\dot{G}_{c}(V;\tilde{\alpha})V^Tf
= P\dot{G}_{c}(V;\tilde{\alpha})V^Tf + o_{P^*_W}(1), \quad \mbox{in $P^*$-probability.}
\end{equation*}
As in the consistency proof for $\hat{\hat{\alpha}}_N^{bcc}$ in Proposition \ref{prop:alphaCalboot} we can show that $P\dot{G}_{c}(V;\tilde{\alpha})V^Tf=P_0\dot{G}_{c,\infty}(V;\alpha_0)V^Tf+o_{P^*_W}(1)$ in $P^*$-probability.
Thus, it follows from Proposition \ref{prop:alphaCalboot} that the second term in (\ref{eqn:bootph2D}) converges to
$-\sum_{j=1}^J\sqrt{\nu_j}\sqrt{(1-p_j)/p_j}\mathbb{G}_jQ_cf$
in $P^*$-probability.
The third term in (\ref{eqn:bootph2D}) is $o_{P_W^*}(1)$ in $P^*$-probability by proceeding in the same way as in the consistency proof for $\hat{\hat{\alpha}}_N^{bcc}$ in Proposition \ref{prop:alphaCalboot} using Taylor's theorem.
This verifies the finite-dimensional convergence.
For asymptotic equicontinuity, proceed in the same way as in the proof of Lemma \ref{lemma:phIbootIPWD}.

For bootstrap single calibrations, we prove the claim for $\tilde{\mathbb{G}}_N^{\pi,(2),bsc}$.
The case for $\tilde{\mathbb{G}}_N^{\pi,(2),bscc}$ is similar.
We have by Taylor's theorem that
\begin{eqnarray*}
\tilde{\mathbb{G}}_N^{\pi,(2),bsc}f
&=&\tilde{\mathbb{G}}_N^{\pi,(2),bsc}f-\tilde{\mathbb{G}}_N^{\pi,(2)}f
+\tilde{\mathbb{G}}_N^{\pi,(2)}f\\
&=&\tilde{\mathbb{G}}_N^{\pi,(2)}f + \sqrt{N}(\hat{\mathbb{P}}_N^{\pi,(2),bsc}-\hat{\mathbb{P}}_N^{\pi,(2)})f \\
&=&\tilde{\mathbb{G}}_N^{\pi,(2)}f + \hat{\mathbb{P}}_N^{\pi,(2)} \dot{G}_{c}(V;\tilde{\alpha})V^Tf\sqrt{N}(\hat{\hat{\alpha}}_N^{bsc}-\alpha_0)
\end{eqnarray*}
where $\tilde{\alpha}$ is some convex combination of $\alpha_0$ and $\hat{\hat{\alpha}}_N^{bsc}$.
Apply the bootstrap Glivenko-Cantelli theorem (Theorem \ref{thm:fpsPiGCboot}) and Proposition \ref{prop:alphaCalboot} to obtain the finite-dimensional convergence.
For asymptotic equicontinuity, proceed in the same way as in the proof of Lemma \ref{lemma:phIbootIPWD}.
This completes the proof.
\end{proof}

\subsubsection{Bootstrap Glivenko-Cantelli and Donsker theorems}
We now combine results regarding phase I and II bootstrap to prove our bootstrap Glivenko-Cantelli and Donsker Theorems (Theorems \ref{thm:fpsPiGCboot} and \ref{thm:fpsDboot}).

\begin{proof}[Proof of Theorem \ref{thm:fpsPiGCboot}]
We consider $\lVert \hat{\mathbb{P}}_N^\pi - \mathbb{P}_N^\pi \rVert_{\mathcal{F}}$. Proofs for other cases are similar.
For $f\in\mathcal{F}$, we have
\begin{eqnarray*}
\lVert \hat{\mathbb{P}}_N^\pi - \mathbb{P}_N^\pi \rVert_{\mathcal{F}}
\leq \lVert \hat{\mathbb{P}}_N^{\pi,(1)} - \mathbb{P}_N^\pi \rVert_{\mathcal{F}} + \lVert (\hat{\mathbb{P}}_N^{\pi,(2)} - \mathbb{P}_N^\pi)W_N^{(1)}f \rVert_{\mathcal{F}}
\end{eqnarray*}
The first term is $o_{P_W^*}(1)$ in $P^*$-probability by Lemma \ref{lemma:phIbootIPWGC}.
Because $\mathcal{F}_{\mathcal{W}}=\{g:g(x,w)=wf(x),f\in\mathcal{F}\}$ is $P_0\times P_{W^{(1)}}$-Glivenko-Cantelli by the Glivenko-Cantelli preservation theorem of \cite{MR1857319}, we apply Lemma \ref{lemma:phIIbootIPWGC} to the second term in the last display to conclude all terms in the last display are $o_{P_W^*}(1)$ in $P^*$-probability.
\end{proof}

To establish weak convergence of the bootstrap IPW empirical processes, we decompose them to the phase I and II bootstrap IPW empirical processes.
For the phase II bootstrap IPW empirical processes, their weak convergence is obtained conditional on the phase I bootstrap weights as well as data, as treated in the following lemma.
\begin{lemma}
\label{lemma:phIIbootIPWD}
Let $\mathcal{F}$ be a $P_0$-Donsker class with $\lVert P_0\rVert_{\mathcal{F}}<\infty$.
Then,
\begin{eqnarray*}
&&\tilde{\mathbb{G}}_N^{\pi,(2)}W^{(1)}_N\cdot
\rightsquigarrow\sum_{j=1}^J\sqrt{\nu_j}\sqrt{\frac{1-p_j}{p_j}}\mathbb{G}_j(W_j^{(1)}\cdot),\\
&&\tilde{\mathbb{G}}_N^{\pi,(2),*\#}W^{(1)}_N\cdot
\rightsquigarrow\sum_{j=1}^J\sqrt{\nu_j}\sqrt{\frac{1-p_j}{p_j}}\mathbb{G}_j\{(I-Q_{\#})W_j^{(1)}\cdot\},
%
%&&\hat{\mathbb{G}}_N^{\pi,(2),dc}W^{(1)}_N\cdot
%\rightsquigarrow\sum_{j=1}^J\sqrt{\nu_j}\sqrt{\frac{1-p_j}{p_j}}\mathbb{G}_j\{(I-Q_c)W_j\cdot\},\\
%&&\hat{\mathbb{G}}_N^{\pi,(2),dcc}W^{(1)}_N\cdot
%\rightsquigarrow\sum_{j=1}^J\sqrt{\nu_j}\sqrt{\frac{1-p_j}{p_j}}\mathbb{G}_j\{(I-Q_{cc})W_j\cdot\},\\
%&&\hat{\mathbb{G}}_N^{\pi,(2),bc}W^{(1)}_N\cdot
%\rightsquigarrow\sum_{j=1}^J\sqrt{\nu_j}\sqrt{\frac{1-p_j}{p_j}}\mathbb{G}_j\{(I-Q_{c})W_j\cdot\},\\
%&&\hat{\mathbb{G}}_N^{\pi,(2),bcc}W^{(1)}_N\cdot
%\rightsquigarrow\sum_{j=1}^J\sqrt{\nu_j}\sqrt{\frac{1-p_j}{p_j}}\mathbb{G}_j\{(I-Q_{cc})W_j\cdot\},\\
%&&\hat{\mathbb{G}}_N^{\pi,(2),sc}W^{(1)}_N\cdot
%\rightsquigarrow\sum_{j=1}^J\sqrt{\nu_j}\sqrt{\frac{1-p_j}{p_j}}\mathbb{G}_j\{(I-Q_c)W_j\cdot\},\\
%&&\hat{\mathbb{G}}_N^{\pi,(2),scc}W^{(1)}_N\cdot
%\rightsquigarrow\sum_{j=1}^J\sqrt{\nu_j}\sqrt{\frac{1-p_j}{p_j}}\mathbb{G}_j\{(I-Q_{cc})W_j\cdot\},\\
\end{eqnarray*}
in $\ell^\infty(\mathcal{F})$ in $P^*\times P_W^{(1)}$-probability where $*\in \{b,bs\}$ and $\# \in \{c,cc\}$, $P_{0|j}$-Brownian bridge processes $\mathbb{G}_j$ and $\mathbb{G}_j^{(1)}$,
$W_j^{(1)}$s are independent with mean 1 and variance $c_j^2$ that are independent of $(X,V)$,
and $Q_{\#}$ are defined in Theorem \ref{thm:fpsD}.
\end{lemma}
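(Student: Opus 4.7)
The plan is to reduce this lemma to applications of Lemma \ref{lemma:ph2D} on an enlarged index class that absorbs the phase I bootstrap weights. Because the phase I and phase II bootstrap weights are independent, conditionally on $W_N^{(1)}$ and the data the object $\tilde{\mathbb{G}}_N^{\pi,(2)}W_N^{(1)} f = \sqrt{N}\mathbb{P}_N^\pi W_N^{(1)}(W_N^{(2)}-1)f$ is a pure phase II bootstrap IPW empirical process whose index is $(x,v,w^{(1)})\mapsto w^{(1)}f(x)$. It therefore seems natural to work on the product space of the data and the phase I weights and to consider the enlarged class
\[
\mathcal{F}_W^{(1)} \equiv \bigl\{(x,v,w^{(1)})\mapsto w^{(1)} f(x):\ f\in\mathcal{F}\bigr\},
\]
where on the stratum $\mathcal{V}_j$ the multiplier is distributed as $W_j^{(1)}$ with $EW_j^{(1)}=1$, $\mathrm{Var}(W_j^{(1)})=c_j^2$, and $\lVert W_j^{(1)}\rVert_{2,1}<\infty$. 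A stratum-wise adaptation of Lemma \ref{lemma:wfD}, combined with the fact that $\mathcal{F}$ is $P_0$-Donsker with $\lVert P_0\rVert_{\mathcal{F}}<\infty$, shows that $\mathcal{F}_W^{(1)}$ is $\tilde{P}_0\times P_{W^{(1)}}$-Donsker.

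Applying Lemma \ref{lemma:ph2D} to the class $\mathcal{F}_W^{(1)}$ then yields, in $\ell^\infty(\mathcal{F})$,
\[
\tilde{\mathbb{G}}_N^{\pi,(2)}W_N^{(1)}\cdot \rightsquigarrow \sum_{j=1}^J\sqrt{\nu_j}\sqrt{\frac{1-p_j}{p_j}}\mathbb{G}_j\bigl(W_j^{(1)}\,\cdot\bigr)
\]
in $P^*\times P_{W^{(1)}}^*$-probability, with the independence of $W_j^{(1)}$ from $(X,V)$ giving precisely the covariance structure claimed. For the calibrated versions I would follow the same decomposition as in the proof of Lemma \ref{lemma:ph2D}: add and subtract $\tilde{\mathbb{G}}_N^{\pi,(2)}W_N^{(1)}f$, write the remainder as a calibration correction $\sqrt{N}\mathbb{P}_N^\pi W_N^{(1)}\bigl(G_{\#}(V;\hat{\hat{\alpha}}_N^{*\#})-G_{\#}(V;\hat{\alpha}_N^{\#})\bigr)f$ for $*=b$ and the obvious analogue for $*=bs$, Taylor expand $G_{\#}$ about $\alpha_0=0$, and use the $\sqrt{N}$-asymptotics of $\hat{\hat{\alpha}}_N^{*\#}$ and $\hat{\alpha}_N^{\#}$ from Proposition \ref{prop:alphaCalboot} together with the bootstrap Glivenko--Cantelli theorem (Theorem \ref{thm:fpsPiGCboot}) applied to the envelope class, to read off the additional term $-\sum_j\sqrt{\nu_j}\sqrt{(1-p_j)/p_j}\,\mathbb{G}_j(Q_{\#}W_j^{(1)}\,\cdot)$. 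Asymptotic equicontinuity of this correction is inherited from that of Lemma \ref{lemma:phIbootIPWD} and Lemma \ref{lemma:ph2D}.

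The main obstacle will be handling the different modes of convergence that coexist in the statement, namely weak convergence in $P^*\times P_{W^{(1)}}^*$-probability (which is really conditional convergence in the phase II bootstrap weights given data and phase I bootstrap weights), rather than either fully unconditional convergence or the conditional-on-data convergence supplied directly by Lemma \ref{lemma:ph2D}. The translation between these modes uses Fubini as in Lemma \ref{lemma:bootorder}, but requires care because the index function $W_N^{(1)}f$ itself depends on the conditioning variable. A secondary technical point is that the multipliers $W_j^{(1)}$ are stratum dependent, so the Donsker preservation argument for $\mathcal{F}_W^{(1)}$ must be carried out stratum by stratum before being assembled, exploiting the boundedness of $\pi_0$ away from zero and the uniform $\lVert\cdot\rVert_{2,1}$ bound on $W_j^{(1)}$ to obtain an integrable envelope on the enlarged space.
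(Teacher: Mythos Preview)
Your proposal is correct and essentially coincides with the paper's proof: both enlarge the index class to $\{(x,w)\mapsto wf(x):f\in\mathcal{F}\}$, invoke Lemma \ref{lemma:wfD} stratum by stratum to obtain the $P_{0|j}\times P_{W_j^{(1)}}$-Donsker property, and then apply Lemma \ref{lemma:ph2D} conditionally on the data and the phase I weights. The paper carries this out explicitly only for $\tilde{\mathbb{G}}_N^{\pi,(2)}$ and dismisses the calibrated cases as similar; your sketch of the calibration correction via Taylor expansion and Proposition \ref{prop:alphaCalboot} is exactly how those cases would be filled in.
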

\begin{proof}
We prove the claim for $\tilde{\mathbb{G}}_N^{\pi,(2)}$. Proofs for other cases are similar.
Note that $(W_{N_j,j,i}^{(1)},X_{j,i})$, $i=1,\ldots,N_j$, with $\xi_{j,i}=1$ are independent.
Since $\mathcal{F}_{\mathcal{W}_j}=\{g(x,w)=wf(x):f\in\mathcal{F}\}$ is $P_{0|j}\times P_{W_j}^{(1)}$-Donsker by Lemma \ref{lemma:wfD}, it follows from Lemma \ref{lemma:ph2D} applied to a single stratum with $(N_j/n_j)^{1/2}\rightarrow p_j^{-1/2}$ that
%\begin{eqnarray*}
$\tilde{\mathbb{G}}_{j,N_j}^{\xi,(2)}
\rightsquigarrow \sqrt{1-p_j}\mathbb{G}_j$ in $\ell^\infty(\mathcal{F}_{\mathcal{W}_j})$,
%\end{eqnarray*}
conditionally on $(X_{j,1},W_{N_j,j,1}^{(1)}),(X_{j,2},W_{N_j, j,2}^{(1)})\ldots$.
Note that $\mathbb{G}_j$ are independent of $\mathbb{G}_j^{(1)}$ and $Z_j$ for $j=1,\ldots,J$.
Hence it follows that
\begin{eqnarray*}
\tilde{\mathbb{G}}_N^{\pi,(2)}
\rightsquigarrow \sum_{j=1}^J\sqrt{\nu_j}\sqrt{\frac{1-p_j}{p_j}}\mathbb{G}_j(W_j^{(1)}\cdot)\quad \quad \mbox{in }\ell^\infty(\mathcal{F}),
\end{eqnarray*}
in $P^*\times P_W^{(1)}$-probability.
\end{proof}

We prove weak convergence of the bootstrap IPW empirical processes.
\begin{proof}[Proof of Theorem \ref{thm:fpsDboot}]

We prove the claims for $\tilde{\mathbb{G}}_N^\pi$ and $\tilde{\mathbb{G}}_N^{\pi,bc}$.
Other cases are similar.
For $\tilde{\mathbb{G}}_N^\pi$, decompose $\tilde{\mathbb{G}}_N^\pi$ into $\tilde{\mathbb{G}}_N^{\pi,(1)}+\tilde{\mathbb{G}}_N^{\pi,(2)}W_N^{(1)}\cdot$.
Apply Lemma \ref{lemma:phIIbootIPWD} conditionally on the phase I bootstrap weights and data to obtain weak convergence of the second term.
Then  apply Lemma \ref{lemma:phIbootIPWD} to obtain weak convergence of the first term.
Recall that $c_j^2=p_j/(2-p_j)$ and $W_j^{(1)}$ is independent of $X$.
Its covariance function evaluated at $f,g\in\mathcal{F}$ is given by
\begin{eqnarray*}
&&\sum_{j=1}^J\left\{\frac{\nu_j}{2-p_j} P_{0|j}(f-g)^2 + \nu_j\frac{1-p_j}{p_j}\mathrm{Var}_{0|j}(W_j^{(1)}f-W_j^{(1)}g)\right\}\\
%&&=\sum_{j=1}^J\left\{\frac{\nu_j}{2-p_j} P_{0|j}(f-g)^2 \right.\\
%&&\quad\left.+ \nu_j\frac{1-p_j}{p_j}\left[E(W_j^{(1)})^2P_{0|j}(f-g)^2-\{EW_j^{(1)}P_{0|j}(f-g)\}^2\right]\right\}\\
%&&=\sum_{j=1}^J\left\{\frac{\nu_j}{2-p_j} P_{0|j}(f-g)^2 \right.\\&&\quad \left.+ \nu_j\frac{1-p_j}{p_j}\left[(c_j^2+1)P_{0|j}(f-g)^2-\{P_{0|j}(f-g)\}^2\right]\right\}\\
%&&=\sum_{j=1}^J\left\{\nu_j P_{0|j}(f-g)^2 + \nu_j\frac{1-p_j}{p_j}\mathrm{Var}_{0|j}(f-g)\right\}\\
%&&=P_0(f-g)^2 + \sum_{j=1}^J\nu_j\frac{1-p_j}{p_j}\mathrm{Var}_{0|j}(f-g).
\end{eqnarray*}
Since
%\begin{equation*}
$\mathrm{Var}_{0|j}(W_j^{(1)}f-W_j^{(1)}g) = (c_j^2+1)P_{0|j}(f-g)^2-\{P_{0|j}(f-g)\}^2$,
%\end{equation*}
the covariance function reduces to
\begin{equation*}
P_0(f-g)^2 + \sum_{j=1}^J\nu_j\frac{1-p_j}{p_j}\mathrm{Var}_{0|j}(f-g).
\end{equation*}
This is the same as the covariance function for the process of our claim, and hence the result follows.

Next, we consider the claim for $\tilde{\mathbb{G}}_N^{\pi,bc}$.
Since the conditional independence of $W_j^{(1)}$ and $(X,V)$ given stratum membership and $P_{0|j}W_j^{(1)}=1$ yields
\begin{eqnarray*}
P_0(W^{(1)}_Nf V^T)= \sum_{j=1}^JP_{0|j} (W^{(1)}_j) P_{0|j}(f V^T)\nu_j = P_0(f V^T),
\end{eqnarray*}
we have $Q_{c}W^{(1)}_jf = P_0(W^{(1)}_Nf V^T)\{P_0V^{\otimes 2}\}^{-1}V = Q_{c}f$ and that $Q_{c}W^{(1)}_j f-Q_{c}W^{(1)}_jg = Q_{c}(f-g)$.
It follows that
%compute $\mathrm{Var}_{0|j}((I-Q_{c})W_j^{(1)}f-I-Q_{c})W_j^{(1)}g)$;
\begin{eqnarray*}
&&\mathrm{Var}_{0|j}((I-Q_{c})W_j^{(1)}f-(I-Q_{c})W_j^{(1)}g)\\
%&&=\mathrm{Var}_{0|j}(W_j^{(1)}(f-g)-Q_{c}(f-g)))\\
%&&=\mathrm{Var}_{0|j}(W_j^{(1)}(f-g))+ \mathrm{Var}(Q_{c}(f-g)) -2 \mathrm{Cov}(W_j^{(1)}(f-g),Q_{c}(f-g))\\
&&= c_j^2 P_{0|j}(f-g)^2 + \mathrm{Var}_{0|j}(f-g)+ \mathrm{Var}(Q_{c}(f-g)) \\
&&\quad -2 [P_{0|j}W_j^{(1)} P_{0|j}\{(f-g)Q_{c}(f-g)\}
- P_{0|j}W^{(1)} P_{0|j}(f-g)P_{0|j}Q_{c}(f-g)]\\
&&= c_j^2 P_{0|j}(f-g)^2 +\mathrm{Var}_{0|j}(f-g)+ \mathrm{Var}(Q_{c}(f-g))\\
&&\quad  -2 [P_{0|j}\{(f-g)Q_{c}(f-g)\}
-P_{0|j}(f-g)P_{0|j}Q_{c}(f-g)] \\
%&&=c_j^2 P_{0|j}(f-g)^2 +\mathrm{Var}_{0|j}(f-g)+ \mathrm{Var}(Q_{c}(f-g))-2\mathrm{Cov}(f-g,Q_{c}(f-g))\\
&&=c_j^2 P_{0|j}(f-g)^2 + \mathrm{Var}_{0|j}((I-Q_{c})f-(I-Q_{c})g).
\end{eqnarray*}
Proceed similarly to the case for $\tilde{\mathbb{G}}_N^\pi$ to compute the covariance function of the limiting process for $\tilde{\mathbb{G}}_N^{\pi,bc}$ evaluated at $f$ and $g$, and verify that the covariance function is the same as that of $\tilde{\mathbb{G}}^{\pi,c}$ as desired.
\end{proof}

We prove weak convergence of the centered bootstrap IPW empirical processes based on Theorem \ref{thm:fpsDboot}.
\begin{proof}[Proof of Theorem \ref{thm:fpsDboot2}]
We only prove the claim for $\hat{\mathbb{G}}_N^{\pi,bc}$.
The other cases are similar.
We have
\begin{eqnarray*}
\hat{\mathbb{G}}_{N}^{\pi,bc}f
%&=&\tilde{\mathbb{G}}_{N}^{\pi,bc}(f-\mathbb{P}_N^{\pi,c}f)\\
&=&\tilde{\mathbb{G}}_{N}^{\pi,bc}(f-P_0f)  - \tilde{\mathbb{G}}_{N}^{\pi,bc}(\mathbb{P}_N^{\pi,c}f-P_0f)\\
&=&\tilde{\mathbb{G}}_{N}^{\pi,bc}(f-P_0f)  + (\mathbb{P}_N^{\pi,c}-P_0)f\tilde{\mathbb{G}}_{N}^{\pi,bc}1\\
&=&\tilde{\mathbb{G}}_{N}^{\pi,bc}(f-P_0f)  + \mathbb{G}_N^{\pi,c}f(\hat{\mathbb{P}}_{N}^{\pi,bc}-\mathbb{P}_N^{\pi,c})1.
\end{eqnarray*}
The first term in the last display converges to $\tilde{\mathbb{G}}^{\pi,c}(f-P_0f) = \mathbb{G}^{\pi,c}f$ as desired.
The second term is $o_{P_W^*}(1)$ in $P^*$-probability.
To see this, note that $(\hat{\mathbb{P}}_{N}^{\pi,bc}-\mathbb{P}_N^{\pi,c})1 = o_{P_W^*}(1)$ in $P^*$-probability by Theorem \ref{thm:fpsPiGCboot}, and that $\mathbb{G}_N^{\pi,c}f=O_{P_W^*}(1)$ by Theorem \ref{thm:fpsD} and Lemma \ref{lemma:bootorder}.
The asymptotic equicontinuity can be established in the same way as in the proof of \ref{lemma:phIbootIPWD} together with the above decomposition.
\end{proof}

\subsubsection{General Semiparametric Models}
\label{subsec:semipara}
We prove the lemma below to prove Theorem \ref{thm:bootzthm1} as its corollary.
Suppose $\mathcal{P}$ is the collection of probability measures on $(\mathcal{X},\mathcal{A})$ parametrized by $\theta\in \Theta$ where $\Theta$ is a subset of a Banach space $(\mathcal{B},\lVert\cdot \rVert)$. %We drop $\mathcal{B}$ from the norm on $\mathcal{B}$ for notational simplicity.
The true distribution is $P_0 = P_{\theta_0}\in \mathcal{P}$.
Let $\hat{\theta}_{N}$ $\hat{\theta}_{N,c}$ and $\hat{\theta}_{N,cc}$ be estimators of $\theta$ obtained as solutions to the IPW estimating equations given by
\begin{eqnarray*}
&&\left\lVert\Psi^\pi_{N}(\theta)\right\rVert_{\mathcal{H}}\equiv \left\lVert \mathbb{P}_N^{\pi} B(\theta)\right\rVert_{\mathcal{H}} = o_{P^*}(N^{-1/2}),\\
&&\left\lVert\Psi^\pi_{N,\#}(\theta)\right\rVert_{\mathcal{H}}\equiv \left\lVert \mathbb{P}_N^{\pi,\#} B(\theta)\right\rVert_{\mathcal{H}} = o_{P^*}(N^{-1/2}),\quad \#\in \{c,cc\},
\end{eqnarray*}
respectively where $B(\theta)$ is a map from some index set $\mathcal{H}$ to $\mathbb{R}$ indexed by $\theta$.
Let also $\hat{\hat{\theta}}_{N}$ and $\hat{\hat{\theta}}_{N,*\#}$, $*\in\{b,bs\}$, be bootstrap estimators of $\theta$ obtained as solutions to the bootstrap IPW estimating equations given by
\begin{eqnarray*}
&&\left\lVert\hat{\Psi}_{N}^{\pi}(\theta)\right\rVert_{\mathcal{H}}\equiv \left\lVert \hat{\mathbb{P}}_N^{\pi} B(\theta)\right\rVert_{\mathcal{H}} = o_{P^*_W}(N^{-1/2}),\\
&&\left\lVert\hat{\Psi}_{N,*\#}^{\pi}(\theta)\right\rVert_{\mathcal{H}}\equiv \left\lVert \hat{\mathbb{P}}_N^{\pi,*\#} B(\theta)\right\rVert_{\mathcal{H}} = o_{P^*_W}(N^{-1/2}),\quad \#\in \{c,cc\},
\end{eqnarray*}
in $P^*$-probability, respectively.
Let $\Psi(\theta)\equiv P_0B(\theta)$ and $\Psi_N(\theta)\equiv \mathbb{P}_NB(\theta)$ be maps from $\Theta$ to $\ell^\infty(\mathcal{H})$.

\begin{cond}
\label{cond:survbootwz1}
For the true parameter $\theta_0\in\Theta$, $\Psi(\theta_0)=0$. The set $\{B(\theta_0)h:h\in\mathcal{H}\}$ is $P_0$-Donsker and $\{(B(\theta)-B(\theta_0))h:\theta\in\Theta,h\in\mathcal{H}\}$ is $P_0$-Glivenko-Cantelli with an integrable envelope.
\end{cond}
\begin{cond}
\label{cond:survbootwz2}
Suppose that $\Psi$ is Fr\'{e}chet differentiable at $\theta_0$;
\begin{equation*}
\left\lVert \Psi(\theta)-\Psi(\theta_0)-\dot{\Psi}_0(\theta-\theta_0)\right\rVert_{\mathcal{H}}=o\left(\lVert \theta-\theta_0\rVert\right).
\end{equation*}
Moreover, $\dot{\Psi}_0$ is continuously invertible at $\theta_0$ with inverse denoted as $\Psi_0^{-1}$
\end{cond}
\begin{cond}
\label{cond:survbootwz3}
For any $\delta_N\rightarrow 0$, the following stochastic equicontinuity condition holds at $\theta_0$;
\begin{equation*}
\sup_{\lVert\theta-\theta_0\rVert\leq \delta_N}\lVert \sqrt{N}(\Psi_N-\Psi)(\theta)-\sqrt{N}(\Psi_N-\Psi)(\theta_0)\rVert_{\mathcal{H}}  = o_{P^*}(1+\sqrt{N}\lVert\theta-\theta_0\rVert).
\end{equation*}
\end{cond}

\begin{thm}
\label{thm:survbootzthm1}
Suppose that Conditions \ref{cond:survbootwz1}-\ref{cond:survbootwz3} hold and that estimators $\hat{\theta}_N,\hat{\theta}_{N,\#},\hat{\hat{\theta}}_{N},\hat{\hat{\theta}}_{N,*\#}$ with $*\in\{b,bs\}$ and $\#\in \{c,cc\}$ are consistent for $\theta_0$ (in $P^*$-probability).
Then
\begin{eqnarray*}
&&\sqrt{N}(\hat{\hat{\theta}}_{N}-\hat{\theta}_N)
\rightsquigarrow -\dot{\Psi}_0^{-1}\tilde{\mathbb{G}}^{\pi}B(\theta_0)\\
%&&\sqrt{N}(\hat{\hat{\theta}}_{N,d\#}-\hat{\theta}_{N,\#})
%\rightsquigarrow -\dot{\Psi}_0^{-1}\tilde{\mathbb{G}}^{\pi,\#}B(\theta_0) ,\\
&&\sqrt{N}(\hat{\hat{\theta}}_{N,b\#}-\hat{\theta}_{N,\#})
\rightsquigarrow -\dot{\Psi}_0^{-1}\tilde{\mathbb{G}}^{\pi,\#}B(\theta_0) ,\\
&&\sqrt{N}(\hat{\hat{\theta}}_{N,s\#}-\hat{\theta}_N)
\rightsquigarrow -\dot{\Psi}_0^{-1}\tilde{\mathbb{G}}^{\pi,\#}B(\theta_0) ,
\quad \mbox{in $P^*$-probability.}
\end{eqnarray*}
\end{thm}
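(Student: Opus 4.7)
The plan is to apply a bootstrap $Z$-theorem (the bootstrap analog of Lemma~\ref{lemma:thm3.3.1}) to $\hat{\hat\theta}_N$ with estimating map $\hat\Psi_N^\pi(\theta)=\hat{\mathbb{P}}_N^\pi B(\theta)$, obtain the parallel asymptotic linear representation for $\hat\theta_N$ from Theorem~3.3.1 of \cite{MR1385671}, and then subtract so that the non-bootstrap noise cancels. For $\hat{\hat\theta}_N$, the two inputs needed are (i) weak convergence of $\sqrt{N}(\hat\Psi_N^\pi-\Psi)(\theta_0)=(\tilde{\mathbb{G}}_N^\pi+\mathbb{G}_N^\pi)B(\theta_0)$, which is immediate from Theorems~\ref{thm:fpsD} and~\ref{thm:fpsDboot} since $\{B(\theta_0)h:h\in\mathcal{H}\}$ is $P_0$-Donsker by Condition~\ref{cond:survbootwz1}; and (ii) asymptotic equicontinuity of this process at $\theta_0$. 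Granted these, Fr\'echet differentiability (Condition~\ref{cond:survbootwz2}), consistency of $\hat{\hat\theta}_N$, the equation $\hat\Psi_N^\pi(\hat{\hat\theta}_N)=o_{P_W^*}(N^{-1/2})$, and continuous invertibility of $\dot\Psi_0$ yield
\begin{equation*}
\sqrt{N}(\hat{\hat\theta}_N-\theta_0)=-\dot\Psi_0^{-1}\bigl(\tilde{\mathbb{G}}_N^\pi+\mathbb{G}_N^\pi\bigr)B(\theta_0)+o_{P_W^*}(1)
\end{equation*}
in $P^*$-probability.

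The same argument without bootstrap weights (only the $\mathbb{G}_N^\pi$ piece survives) gives $\sqrt{N}(\hat\theta_N-\theta_0)=-\dot\Psi_0^{-1}\mathbb{G}_N^\pi B(\theta_0)+o_{P^*}(1)$. Subtracting, the $\mathbb{G}_N^\pi B(\theta_0)$ term cancels and $\sqrt{N}(\hat{\hat\theta}_N-\hat\theta_N)=-\dot\Psi_0^{-1}\tilde{\mathbb{G}}_N^\pi B(\theta_0)+o_{P_W^*}(1)$; the first conclusion then follows from Theorem~\ref{thm:fpsDboot} and the continuous mapping theorem. For the calibrated pairs the pairing is dictated by the centering baked into each bootstrap process: $\tilde{\mathbb{G}}_N^{\pi,b\#}$ is centered at $\mathbb{P}_N^{\pi,\#}$, so $\hat{\hat\theta}_{N,b\#}$ must be paired with $\hat\theta_{N,\#}$, whereas $\tilde{\mathbb{G}}_N^{\pi,s\#}$ is centered at $\mathbb{P}_N^{\pi}$, so $\hat{\hat\theta}_{N,s\#}$ pairs with $\hat\theta_N$. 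In both cases the same Z-theorem plus subtraction yields $-\dot\Psi_0^{-1}\tilde{\mathbb{G}}_N^{\pi,\#}B(\theta_0)+o_{P_W^*}(1)$, and Theorem~\ref{thm:fpsDboot} gives the limit $-\dot\Psi_0^{-1}\tilde{\mathbb{G}}^{\pi,\#}B(\theta_0)$.

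The main obstacle is verifying input (ii), the asymptotic equicontinuity of $\tilde{\mathbb{G}}_N^\pi+\mathbb{G}_N^\pi$ at $\theta_0$: for any $\delta_N\downarrow 0$,
\begin{equation*}
\sup_{\lVert\theta-\theta_0\rVert\leq\delta_N}\bigl\lVert\sqrt{N}(\hat{\mathbb{P}}_N^\pi-P_0)\bigl(B(\theta)-B(\theta_0)\bigr)\bigr\rVert_{\mathcal{H}}=o_{P_W^*}(1+\sqrt{N}\delta_N)
\end{equation*}
in $P^*$-probability, and analogously for the calibrated processes. I would split $\sqrt{N}(\hat{\mathbb{P}}_N^\pi-P_0)=\tilde{\mathbb{G}}_N^\pi+\mathbb{G}_N^\pi$ and treat the two pieces separately. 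The non-bootstrap piece $\mathbb{G}_N^\pi$ is controlled by Condition~\ref{cond:survbootwz3} for the complete-data component $\mathbb{G}_N$ together with a finite-sampling equicontinuity bound for $\mathbb{G}_N^{\pi,(2)}$ on the shrinking $P_0$-Glivenko--Cantelli class $\{(B(\theta)-B(\theta_0))h\}$ (Condition~\ref{cond:survbootwz1}). The bootstrap piece $\tilde{\mathbb{G}}_N^\pi$ is handled by combining asymptotic tightness on $\{B(\theta_0)h\}$ from Theorem~\ref{thm:fpsDboot} with the bootstrap Glivenko--Cantelli theorem (Theorem~\ref{thm:fpsPiGCboot}) applied to the integrable envelope of the perturbation class, via a Taylor-splitting argument paralleling those in Proposition~\ref{prop:alphaCalboot} and the proof of Lemma~\ref{lemma:ph2D}; this forces $\tilde{\mathbb{G}}_N^\pi$ evaluated on the vanishing perturbations to be $o_{P_W^*}(1)$ in $P^*$-probability.
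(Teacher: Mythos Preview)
Your high-level strategy---apply a bootstrap $Z$-theorem to get a linear representation for $\hat{\hat\theta}_N$, do the same for $\hat\theta_N$, subtract so that $\mathbb{G}_N^\pi B(\theta_0)$ cancels---matches the paper exactly, and your identification of the calibration pairings via the centering of $\tilde{\mathbb{G}}_N^{\pi,b\#}$ versus $\tilde{\mathbb{G}}_N^{\pi,bs\#}$ is correct. The gap is in your treatment of the asymptotic equicontinuity of $\tilde{\mathbb{G}}_N^\pi$ on the perturbation class.

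Your proposed tools do not deliver what you need. Asymptotic tightness from Theorem~\ref{thm:fpsDboot} is only available on the Donsker class $\{B(\theta_0)h:h\in\mathcal{H}\}$; the perturbation class $\{(B(\theta)-B(\theta_0))h\}$ is assumed only $P_0$-Glivenko--Cantelli (Condition~\ref{cond:survbootwz1}). The bootstrap Glivenko--Cantelli theorem gives $o_{P_W^*}(1)$ without any rate, so multiplying by $\sqrt{N}$ yields nothing useful. And the ``Taylor-splitting'' arguments you cite from Proposition~\ref{prop:alphaCalboot} and Lemma~\ref{lemma:ph2D} exploit the explicit parametric form $G(V^T\alpha)$ of the calibration function, Taylor-expanding in the finite-dimensional $\alpha$; there is no analogous expansion available for a general $B(\theta)$ in an infinite-dimensional $\theta$.

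The paper instead transfers equicontinuity from $\mathbb{G}_N$ to $\tilde{\mathbb{G}}_N^\pi$ at the level of expectations. Writing $\mathcal{D}_N=\{(B(\theta)-B(\theta_0))h/(1+\sqrt{N}\lVert\theta-\theta_0\rVert):h\in\mathcal{H},\ \lVert\theta-\theta_0\rVert\leq\delta_N\}$, Condition~\ref{cond:survbootwz3} reads $\lVert\mathbb{G}_N\rVert_{\mathcal{D}_N}=o_{P^*}(1)$. A Hoffmann--J{\o}rgensen argument (Lemma~\ref{lemma:vw239}, using the integrable envelope from Condition~\ref{cond:survbootwz1}) upgrades this to $E^*\lVert\mathbb{G}_N\rVert_{\mathcal{D}_N}\to 0$. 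Then Lemma~\ref{lemma:gpleqgboot}, which bounds $E^*\lVert\tilde{\mathbb{G}}_N^\pi\rVert_{\mathcal{F}}$ by $E^*\lVert\mathbb{G}_N\rVert_{\mathcal{F}}$ via iterated multiplier inequalities, gives $E^*\lVert\tilde{\mathbb{G}}_N^\pi\rVert_{\mathcal{D}_N}\to 0$, and Markov's inequality finishes. The passage from $\tilde{\mathbb{G}}_N^\pi$ to $\tilde{\mathbb{G}}_N^{\pi,bc}$ (and from $\mathbb{G}_N^\pi$ to $\mathbb{G}_N^{\pi,c}$) is then a genuine Taylor expansion, but in the calibration parameter $\alpha$, not in $\theta$. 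This expectation-transfer step via Lemmas~\ref{lemma:vw239} and~\ref{lemma:gpleqgboot} is the missing idea in your argument.
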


\begin{proof}
We prove the claim for $\hat{\hat{\theta}}_{N,bc}$.
%Proofs for other cases are similar.
First, Theorem \ref{thm:fpsDboot} together with Condition \ref{cond:survbootwz1} yields
\begin{equation*}
\tilde{\mathbb{G}}^{\pi,bc}_NB(\theta_0)
\rightsquigarrow
\tilde{\mathbb{G}}^{\pi,c}B(\theta_0), \quad \mbox{in }\ell^\infty(\mathcal{H}),\ \mbox{in $P^*$-probability.}
\end{equation*}

For a fixed arbitrary sequence $\{\delta_N\}$ with $\delta_N\rightarrow 0$, let
\begin{eqnarray*}
\mathcal{D}_N
&\equiv&\left\{\frac{B(\theta)(h)-B(\theta_0)(h)}{1+\sqrt{N}\lVert\theta-\theta_0\rVert}:h\in\mathcal{H},\lVert\theta-\theta_0\rVert \leq \delta_N\right\}\\
&\equiv&\left\{B_N(\theta,\theta_0)(h):h\in\mathcal{H},\lVert\theta-\theta_0\rVert \leq \delta_N\right\}.
\end{eqnarray*}
Condition \ref{cond:survbootwz3} can be written as $\lVert \mathbb{G}_N\rVert_{\mathcal{D}_N}=o_{P^*}(1)$.
Since $E^*\lVert \delta_X-P_0\rVert_{\mathcal{D}_N} \leq 2E^*\sup_{\theta\in\Theta,h\in\mathcal{H}} |(\delta_X-P_0)B(\theta)h|<\infty$ by assumption, we can apply Lemma \ref{lemma:vw239} to obtain $E\lVert \mathbb{G}_N\rVert_{\mathcal{D}_N}=o(1)$ as $N\rightarrow \infty$.
%This implies that $E\lVert \mathbb{G}_N\rVert_{\mathcal{D}_N}\rightarrow 0$ as $N\rightarrow \infty$ by Theorem \ref{thm:aecD}.
It follows by Lemma \ref{lemma:gpleqgboot} that $E\lVert \tilde{\mathbb{G}}_N^\pi\rVert_{\mathcal{D}_N}=o(1)$ and hence
$\lVert \tilde{\mathbb{G}}_N^\pi\rVert_{\mathcal{D}_N}=o_{P^*_W}(1)$ in $P^*$-probability by Markov's inequality and Lemma \ref{lemma:bootorder}.
Taylor's theorem yields that for $f\in\mathcal{D}_N$
\begin{eqnarray*}
&&\tilde{\mathbb{G}}_N^{\pi,bc}f-\tilde{\mathbb{G}}_N^{\pi}f\\
&&=\tilde{\mathbb{G}}_N^{\pi}(G_c(V;\hat{\hat{\alpha}}_N^{bc})-1)f
+(\mathbb{G}_N^\pi+P_0) (G_c(V;\hat{\hat{\alpha}}_N^{bc})-G_c(V;\hat{\alpha}_N^c))f\\
&&=(\hat{\mathbb{P}}_N^{\pi}-\mathbb{P}_N^{\pi})\dot{G}_c(V;\tilde{\alpha}_1)fV^T\sqrt{N}(\hat{\hat{\alpha}}_N^{bc}-\alpha_0)\\ && \quad
+(\mathbb{P}_N^{\pi}-P_0)\dot{G}_c(V;\tilde{\alpha}_2)fV^T\sqrt{N}(\hat{\hat{\alpha}}_N^{bc}-\hat{\alpha}_N^c)\\
&&\quad +P_0 (G_c(V;\hat{\hat{\alpha}}_N^{bc})-G_c(V;\hat{\alpha}_N^c))f
\end{eqnarray*}
where $\tilde{\alpha}_1$ and $\tilde{\alpha}_2$ are  some convex combinations of $\hat{\hat{\alpha}}_N^{bc}$ and $\alpha_0$, and $\hat{\hat{\alpha}}_N^{bc}$ and $\hat{\alpha}_N^{c}$, respectively.
For the first two terms in the last display, note that $\sqrt{N}(\hat{\hat{\alpha}}_N^{bc}-\hat{\alpha}_N^c)$ and $\sqrt{N}(\hat{\hat{\alpha}}_N^{bc}-\alpha_0)$ are $O_{P_W^*}(1)$ in $P^*$-probability by Proposition \ref{prop:alphaCalboot}, the result in its proof, and Lemma \ref{lemma:bootorder}.
Note also that $G$ and $\dot{G}$ are bounded and $V$ has a bounded support.
Thus, we can apply the Glivenko-Cantelli preservation theorem of \cite{MR1857319}, Theorem \ref{thm:fpsPiGCboot} together with Condition \ref{cond:survbootwz1}, Theorem 5.1 of \cite{MR3059418} and Lemma \ref{lemma:bootorder} to show the supremum of the absolute values of the first two terms over $\mathcal{D}_N$ are $o_{P_W^*}(1)$ in $P^*$-probability.
For the third term, note that $\hat{\hat{\alpha}}_N^{bc}$ and $\hat{\alpha}_N^c$ are consistent for $\alpha_0$ by Proposition \ref{prop:alphaCalboot} and Proposition A.1 of \cite{SW2013supp}.
Since $\mathcal{D}_N$ has an integrable envelope by assumption and $G_c$ is bounded, a subsequence argument with the dominated convergence theorem as in the proof of Proposition \ref{prop:alphaCalboot} implies that the supremum of the absolute value of the third term over $\mathcal{D}_N$ is $o_{P_W^*}(1)$ in $P^*$-probability.
Hence we have $\lVert \tilde{\mathbb{G}}_N^{\pi,bc}-\tilde{\mathbb{G}}_N^{\pi}\rVert_{\mathcal{D}_N} = o_{P_W^*}(1)$ in $P^*$-probability.
It follows by the triangle inequality that $\lVert\tilde{\mathbb{G}}_N^{\pi,bc}\rVert_{\mathcal{D}_N} = o_{P_W^*}(1)$ in $P^*$-probability.
A similar argument with the help of Lemma \ref{lemma:bootorder} shows that $\lVert\mathbb{G}_N^{\pi,c}\rVert_{\mathcal{D}_N} = o_{P_W^*}(1)$ in $P^*$-probability.
Thus, consistency of $\hat{\hat{\theta}}_{N,bc}$ and $\hat{\theta}_{N,c}$ to $\theta_0$ in $P^*$-probability and Condition \ref{cond:survbootwz3} imply that
\begin{eqnarray}
&&\lVert \mathbb{G}_N^{\pi,c} (B(\hat{\theta}_{N,c})-B(\theta_0))\rVert_{\mathcal{H}} = o_{P^*_W}(1+\sqrt{N}\lVert \hat{\theta}_{N,c}-\theta_0\rVert), \nonumber \\
&&\lVert \mathbb{G}_N^{\pi,c} (B(\hat{\hat{\theta}}_{N,bc})-B(\theta_0))\rVert_{\mathcal{H}} = o_{P^*_W}(1+\sqrt{N}\lVert \hat{\hat{\theta}}_{N,bc}-\theta_0\rVert),\nonumber \\
&&\lVert \tilde{\mathbb{G}}_N^{\pi,bc} (B(\hat{\hat{\theta}}_{N,bc})-B(\theta_0))\rVert_{\mathcal{H}} = o_{P^*_W}(1+\sqrt{N}\lVert \hat{\hat{\theta}}_{N,bc}-\theta_0\rVert),\label{eqn:bootzthmaec}
\end{eqnarray}
in $P^*$-probability.

We prove $\sqrt{N}\lVert \hat{\hat{\theta}}_{N,bc}-\theta_0\rVert =O_{P^*_W}(1)$ in $P^*$-probability.
We have
\begin{eqnarray*}
&&\tilde{\mathbb{G}}^{\pi,bc}_N B(\theta_0)+\mathbb{G}^{\pi,c}_NB(\theta_0) +\sqrt{N}(\Psi(\hat{\hat{\theta}}_{N,bc})-\Psi(\theta_0))\\
%&&=\tilde{\mathbb{G}}^{\pi,dc}_N (B(\theta_0)-B(\hat{\hat{\theta}}_{N,dc}))+\mathbb{G}^{\pi,c}_N(B(\theta_0)-B(\hat{\hat{\theta}}_{N,dc})) +\tilde{\mathbb{G}}^{\pi,dc}_NB(\hat{\hat{\theta}}_{N,dc})\\
%&&\quad+\mathbb{G}^{\pi,c}_NB(\hat{\hat{\theta}}_{N,dc}) +\sqrt{N}(P_0B(\hat{\hat{\theta}}_{N,dc})-P_0B(\theta_0))\\
%&&=\tilde{\mathbb{G}}^{\pi,dc}_N (B(\theta_0)-B(\hat{\hat{\theta}}_{N,dc}))+\mathbb{G}^{\pi,c}_N(B(\theta_0)-B(\hat{\hat{\theta}}_{N,dc})) \\
%&&\quad +\sqrt{N}(\hat{\mathbb{P}}^{\pi,dc}_N-\mathbb{P}_N^{\pi,c})B(\hat{\hat{\theta}}_{N,dc})
%+\sqrt{N}(\mathbb{P}^{\pi,c}_N-P_0)B(\hat{\hat{\theta}}_{N,dc})\\
%&&\quad +\sqrt{N}(P_0B(\hat{\hat{\theta}}_{N,dc})-P_0B(\theta_0))\\
&&=\tilde{\mathbb{G}}^{\pi,bc}_N (B(\theta_0)-B(\hat{\hat{\theta}}_{N,bc}))+\mathbb{G}^{\pi,c}_N(B(\theta_0)-B(\hat{\hat{\theta}}_{N,bc})) \\
&&\quad +\sqrt{N}\hat{\mathbb{P}}^{\pi,bc}_NB(\hat{\hat{\theta}}_{N,bc})
+\sqrt{N}P_0B(\theta_0)).
%&&=\tilde{\mathbb{G}}^{\pi,dc}_N (B(\theta_0)-B(\hat{\hat{\theta}}_{N,dc}))+\mathbb{G}^{\pi,c}_N(B(\theta_0)-B(\hat{\hat{\theta}}_{N,dc})) +o_{P^*_W}(1).
\end{eqnarray*}
Because  $\lVert \hat{\mathbb{P}}_N^{\pi,bc}B(\hat{\hat{\theta}}_{N,bc})\rVert_{\mathcal{H}}=o_{P^*_W}(N^{-1/2})$ in $P^*$-probability and $P_0B(\theta_0) =0$ by assumption, the last display and (\ref{eqn:bootzthmaec}) imply that
\begin{eqnarray*}
&&\lVert \sqrt{N}(\Psi(\hat{\hat{\theta}}_{N,bc})-\Psi(\theta_0))\rVert_{\mathcal{H}}
-\lVert \tilde{\mathbb{G}}_N^{\pi,bc} B(\theta_0)\rVert_{\mathcal{H}}
-\lVert \mathbb{G}_N^{\pi,c} B(\theta_0)\rVert_{\mathcal{H}}\\
%&&\leq \lVert \tilde{\mathbb{G}}_N^{\pi,bc} B(\theta_0) +\mathbb{G}_N^{\pi,c} B(\theta_0) +\sqrt{N}(\Psi(\hat{\hat{\theta}}_{N,bc})-\Psi(\theta_0))\rVert_{\mathcal{H}}\\
&&\leq \lVert \tilde{\mathbb{G}}_N^{\pi,bc} (B(\theta_0)-B(\hat{\hat{\theta}}_{N,bc}))\rVert_{\mathcal{H}} +\lVert\mathbb{G}_N^{\pi,c} (B(\theta_0)-B(\hat{\hat{\theta}}_{N,bc})) \rVert_{\mathcal{H}} + o_{P^*_W}(1)\\
%&&=o_{P^*_W}(1)(1+\sqrt{N}\lVert \hat{\hat{\theta}}_{N,bc}-\theta_0\rVert)+o_{P^*_W}(1)(1+\sqrt{N}\lVert \hat{\hat{\theta}}_{N,bc}-\theta_0\rVert) +o_{P^*_W}(1)\\
&&=o_{P^*_W}(1)(1+\sqrt{N}\lVert \hat{\hat{\theta}}_{N,bc}-\theta_0\rVert),
\quad \mbox{in $P^*$-probability.}
\end{eqnarray*}
Since the continuous invertibility of $\Psi_0$ at $\theta_0$ implies that there is some constant $c>0$ such that
%\begin{equation*}
$c\lVert \hat{\hat{\theta}}_{N,bc}-\theta_0\rVert \leq \lVert \Psi(\hat{\hat{\theta}}_{N,bc})-\Psi(\theta_0)\rVert_{\mathcal{H}}$,
%\end{equation*}
we have
\begin{eqnarray*}
&&c\sqrt{N}\lVert \hat{\hat{\theta}}_{N,bc}-\theta_0\rVert
\leq \lVert \sqrt{N}(\Psi(\hat{\hat{\theta}}_{N,bc})-\Psi(\theta_0))\rVert_{\mathcal{H}}\\
&&\leq \lVert \tilde{\mathbb{G}}_N^{\pi,bc} B(\theta_0)\rVert_{\mathcal{H}}
+\lVert \mathbb{G}_N^{\pi,c} B(\theta_0)\rVert_{\mathcal{H}}
+o_{P^*_W}(1)(1+\sqrt{N}\lVert \hat{\hat{\theta}}_{N,bc}-\theta_0\rVert),
\end{eqnarray*}
in $P^*$-probability.
Note that $\lVert \mathbb{G}_N^{\pi,c} B(\theta_0)\rVert_{\mathcal{H}}=O_{P^*_W}(1)$ and $\lVert\tilde{\mathbb{G}}_N^{\pi,bc} B(\theta_0)\rVert_{\mathcal{H}}=O_{P^*_W}(1)$ in $P^*$-probability by Condition \ref{cond:survbootwz1}, Theorem 5.3 of \cite{MR3059418}, Lemma \ref{lemma:bootorder} and Theorem \ref{thm:fpsDboot}.
Thus, the claim $\sqrt{N}\lVert \hat{\hat{\theta}}_{N,bc}-\theta_0\rVert =O_{P^*_W}(1)$ in $P^*$-probability follows.

Now we prove the asymptotic normality of $\hat{\hat{\theta}}_{N,bc}$.
We have
\begin{eqnarray}
&&\sqrt{N}(\Psi(\hat{\hat{\theta}}_{N,bc})-\Psi(\hat{\theta}_{N,c}))+\tilde{\mathbb{G}}_N^{\pi,bc} B(\theta_0)\nonumber \\
&&=\sqrt{N}\hat{\mathbb{P}}_N^{\pi,bc} B(\hat{\hat{\theta}}_{N,dc}) -\sqrt{N}\mathbb{P}_N^{\pi,c} B(\hat{\theta}_{N,c})
+\mathbb{G}_N^{\pi,c}(B(\hat{\theta}_{N,c})-B(\theta_0))\nonumber \\
&&\quad -\mathbb{G}_N^{\pi,c}(B(\hat{\hat{\theta}}_{N,bc})-B(\theta_0))
-\tilde{\mathbb{G}}_N^{\pi,bc}(B(\hat{\hat{\theta}}_{N,bc})-B(\theta_0)). \label{eqn:bootzthmaec2}
\end{eqnarray}
Since $\sqrt{N}\lVert\hat{\hat{\theta}}_{N,bc} -\theta_0\rVert =O_{P^*_W}(1)$ in $P^*$-probability, we have for the first equation of (\ref{eqn:bootzthmaec}) that
\begin{equation*}
\lVert \tilde{\mathbb{G}}_N^{\pi,bc} (B(\hat{\hat{\theta}}_{N,bc})-B(\theta_0))\rVert_{\mathcal{H}}
=o_{P^*_W}(1)(1+O_{P^*_W}(1))=o_{P^*_W}(1),
\end{equation*}
in $P^*$-probability.
Similar reasoning together with Lemma \ref{lemma:bootorder} implies
\begin{eqnarray*}
&&\lVert \mathbb{G}_N^{\pi,c} (B(\hat{\theta}_{N,c})-B(\theta_0))\rVert_{\mathcal{H}}
=o_{P^*_W}(1)(1+O_{P^*_W}(1))=o_{P^*_W}(1),\\
&&\lVert \mathbb{G}_N^{\pi,c} (B(\hat{\hat{\theta}}_{N,bc})-B(\theta_0))\rVert_{\mathcal{H}}
=o_{P^*_W}(1)(1+O_{P^*_W}(1))=o_{P^*_W}(1),
\end{eqnarray*}
in $P^*$-probability for the last two equations of (\ref{eqn:bootzthmaec}).
Moreover, $\hat{\mathbb{P}}_N^{\pi,bc} B(\hat{\hat{\theta}}_{N,bc})=o_{P^*_W}(N^{-1/2})$ and
$\mathbb{P}_N^{\pi,c} B(\hat{\theta}_{N,c})=o_{P_W^*}(N^{-1/2})$ (by Lemma \ref{lemma:bootorder}) in $P^*$-probability.
Thus,  (\ref{eqn:bootzthmaec2}) becomes
\begin{equation}
\label{eqn:wz414dc}
\sqrt{N}(\Psi(\hat{\hat{\theta}}_{N,bc})-\Psi(\hat{\theta}_{N,c}))
=-\tilde{\mathbb{G}}_N^{\pi,bc} B(\theta_0) +o_{P^*_W}(1)
\end{equation}
in $P^*$-probability.

Fr\'{e}chet differentiability of $\Psi(\theta)$ at $\theta_0$ and $\sqrt{N}$-consistency of $\hat{\theta}_N$ and $\hat{\hat{\theta}}_{N,c}$ together with Lemma \ref{lemma:bootorder} imply that
\begin{eqnarray*}
&&\sqrt{N}(\Psi(\hat{\theta}_{N,c})-\Psi(\theta_0))
=\dot{\Psi}_0\left(\sqrt{N}(\hat{\theta}_{N,c}-\theta_0)\right) +o_{P_W^*}(1),\\
&&\sqrt{N}(\Psi(\hat{\hat{\theta}}_{N,bc})-\Psi(\theta_0))
=\dot{\Psi}_0\left(\sqrt{N}(\hat{\hat{\theta}}_{N,bc}-\theta_0)\right) +o_{P^*_W}(1)
\end{eqnarray*}
in $P^*$-probability.
Subtraction gives
\begin{equation*}
\sqrt{N}(\Psi(\hat{\hat{\theta}}_{N,bc})-\Psi(\hat{\theta}_{N,c}))
=\dot{\Psi}_0\left(\sqrt{N}(\hat{\hat{\theta}}_{N,bc}-\hat{\theta}_{N,c})\right) +o_{P^*_W}(1)
\end{equation*}
in $P^*$-probability.
Combine this with (\ref{eqn:wz414dc}) and use the invertibility of $\dot{\Psi}(\theta)$ at $\theta_0$ to obtain
\begin{equation*}
\sqrt{N}(\hat{\hat{\theta}}_{N,bc}-\hat{\theta}_{N,c})
=-\dot{\Psi}_0^{-1}\tilde{\mathbb{G}}_N^{\pi,bc} B(\theta_0)+o_{P^*_W}(1)
\end{equation*}
in $P^*$-probability.
Apply Theorem \ref{thm:fpsDboot} to obtain a desired result.
A proof for $\hat{\hat{\theta}}_{N,bcc}$ is similar.

For $\hat{\hat{\theta}}_{N,bsc}$, replace $\mathbb{P}_N^{\pi,c}$, $\mathbb{G}_N^{\pi,c}$, and $\hat{\theta}_{N,c}$ by $\mathbb{P}_N^\pi$, $\mathbb{G}_N^\pi$, and $\hat{\theta}_{N}$ in the argument above and proceed in the same way to obtain
\begin{equation*}
\sqrt{N}(\hat{\hat{\theta}}_{N,bsc}-\hat{\theta}_{N})
=-\dot{\Psi}_0^{-1}\tilde{\mathbb{G}}_N^{\pi,bsc} B(\theta_0)+o_{P^*_W}(1)
\end{equation*}
in $P^*$-probability.
A proof for $\hat{\hat{\theta}}_{N,bscc}$ is similar.
\end{proof}

\begin{proof}[Proof of Theorem \ref{thm:bootzthm1}]
This is a corollary of Theorems \ref{thm:survbootzthm1}.
Details are similar to the proof of Theorem 3.1 of \cite{MR3059418}.
\end{proof}

We give a proof of Theorem \ref{thm:bootzthm2}.
\begin{proof}[Proof of Theorem \ref{thm:bootzthm2}]
We consider $\hat{\hat{\theta}}_{N,bc}$ and $\hat{\hat{\theta}}_{N,bsc}$.
Proofs for other cases are similar.
%We evaluate
%$\sqrt{N}\hat{\mathbb{P}}_N^{\pi,bc}\dot{\ell}_{\theta_0,\eta_0}
%  +\sqrt{N}P_0\dot{\ell}_{\hat{\hat{\theta}}_{N,bc},\hat{\hat{\eta}}_{N,bc}}$.
We first consider $\hat{\hat{\theta}}_{N,bc}$.
Since $\hat{\mathbb{P}}_N^{\pi,bc}\dot{\ell}_{\hat{\hat{\theta}}_{N,bc},\hat{\hat{\eta}}_{N,bc}} = o_{P^*_W}(N^{-1/2})$ in $P^*$-probability and $P_0\dot{\ell}_{\theta_0,\eta_0}=0$, we have
\begin{eqnarray*}
&&\sqrt{N}\hat{\mathbb{P}}_N^{\pi,bc}\dot{\ell}_{\theta_0,\eta_0}
  +\sqrt{N}P_0\dot{\ell}_{\hat{\hat{\theta}}_{N,bc},\hat{\hat{\eta}}_{N,bc}}\\
%&&=-\sqrt{N}(\hat{\mathbb{P}}_N^{\pi,bc}-P_0)( \dot{\ell}_{\hat{\hat{\theta}}_{N,bc},\hat{\hat{\eta}}_{N,bc}}-\dot{\ell}_{\theta_0,\eta_0}) +o_{P^*_W}(1)\\
&&=-(\tilde{\mathbb{G}}_N^{\pi,bc}+\mathbb{G}_N^{\pi,c})( \dot{\ell}_{\hat{\hat{\theta}}_{N,bc},\hat{\hat{\eta}}_{N,bc}}-\dot{\ell}_{\theta_0,\eta_0}) +o_{P^*_W}(1),
\quad \mbox{in $P^*$-probability.}
\end{eqnarray*}
Since $(\hat{\hat{\theta}}_{N,c},\hat{\hat{\eta}}_{N,c})$ is consistent for $(\theta_0,\eta_0)$ in $P^*$-probability, it follows from Lemmas 5.4 of \cite{MR3059418} and \ref{lemma:wza5boot} that the above display is $o_{P^*_W}(1)$ in $P^*$-probability.
Similarly,
$\sqrt{N}\hat{\mathbb{P}}_N^{\pi,c}B_{\theta_0,\eta_0}\left[\underline{h}_0\right]
+\sqrt{N}P_0B_{\hat{\hat{\theta}}_{N,c},\hat{\hat{\eta}}_{N,c}}[\underline{h}_0]=o_{P^*_W}(1)$ in $P^*$-probability.
These and Condition \ref{cond:wlenonreg4} imply that
\begin{eqnarray}
\label{eqn:huangaboot}
&&P_0\left\{-\dot{\ell}_{\theta_0,\eta_0}(\dot{\ell}_{\theta_0,\eta_0}^T(\hat{\hat{\theta}}_{N,bc}-\theta_0)
        +B_{\theta_0,\eta_0}(\hat{\hat{\eta}}_{N,bc}-\eta_0))\right\}\nonumber\\
&&\quad + \ o\left(|\hat{\hat{\theta}}_{N,bc}-\theta_0|\right)+O\left(\lVert \hat{\hat{\eta}}_{N,bc}-\eta_0\rVert^{\alpha}\right)
          +\hat{\mathbb{P}}_N^{\pi,bc}\dot{\ell}_{\theta_0,\eta_0}\nonumber\\
&&=P_0\{-\dot{\ell}_{\theta_0,\eta_0}(\dot{\ell}_{\theta_0,\eta_0}^T(\hat{\hat{\theta}}_{N,bc}-\theta_0)
          +B_{\theta_0,\eta_0}(\hat{\hat{\eta}}_{N,bc}-\eta_0)) -\dot{\ell}_{\hat{\hat{\theta}}_{N,bc},\hat{\hat{\eta}}_{N,bc}}
            +\dot{\ell}_{\theta_0,\eta_0}\}\nonumber\\
&&\quad +\ o\left(|\hat{\hat{\theta}}_{N,bc}-\theta_0|\right)+O\left(\lVert \hat{\hat{\eta}}_{N,bc}-\eta_0\rVert^{\alpha}\right)
        +P_0\dot{\ell}_{\hat{\hat{\theta}}_{N,bc},\hat{\hat{\eta}}_{N,bc}}+\hat{\mathbb{P}}_N^{\pi,bc}\dot{\ell}_{\theta_0,\eta_0}\nonumber\\
&& =o_{P^*_W}(N^{-1/2})
\end{eqnarray}
in $P^*$-probability, and, furthermore, that
\begin{eqnarray}
\label{eqn:huangbboot}
&&P_0\left\{-B_{\theta_0,\eta_0}\left[\underline{h}_0\right](\dot{\ell}_{\theta_0,\eta_0}^T(\hat{\hat{\theta}}_{N,bc}-\theta_0)
       +B_{\theta_0,\eta_0}(\hat{\hat{\eta}}_{N,bc}-\eta_0))\right\}\nonumber \\
&& \quad + \ o\left(|\hat{\hat{\theta}}_{N,bc}-\theta_0|\right)+O\left(\lVert \hat{\hat{\eta}}_{N,bc}-\eta_0\rVert^{\alpha}\right)
        +\hat{\mathbb{P}}_N^{\pi,bc}B_{\theta_0,\eta_0}\left[\underline{h}_0\right]\nonumber\\
&&=o_{P^*_W}(N^{-1/2}), \quad \mbox{in $P^*$-probability.}
\end{eqnarray}

By Condition \ref{cond:wlenonregboot1} and $\alpha\beta >1/2$,
$\sqrt{N}O_{P^*_W}\left(\lVert \hat{\hat{\eta}}_N-\eta_0\rVert^\alpha\right)=o_{P^*_W}(1)$ in $P^*$-probability.
So by Condition \ref{cond:wlenonreg2} and taking the difference of (\ref{eqn:huangaboot}) and (\ref{eqn:huangbboot}), we have
\begin{eqnarray*}
&&-P_0\left(\left\{\dot{\ell}_{\theta_0,\eta_0}-B_{\theta_0,\eta_0}
       \left[\underline{h}_0\right]\right\}\dot{\ell}_{\theta_0,\eta_0}^T\right)\left(\hat{\hat{\theta}}_{N,bc}-\theta_0\right)
+ \ o\left(|\hat{\hat{\theta}}_{N,bc}-\theta_0|\right) \\
&& \quad+ o_{P_W^*}(N^{-1/2})- o_{P_W^*}(N^{-1/2})
       +\hat{\mathbb{P}}_N^{\pi,bc}\left(\dot{\ell}_{\theta_0,\eta_0}-B_{\theta_0,\eta_0}\left[\underline{h}_0\right]\right)\\ &&
=o_{P_W^*}(N^{-1/2})-o_{P_W^*}(N^{-1/2}),
\end{eqnarray*}
in $P^*$-probability or
\begin{equation*}
-I_0 (\hat{\hat{\theta}}_{N,bc}-\theta_0)
=\hat{\mathbb{P}}_N^{\pi,bc}\left(\dot{\ell}_{\theta_0,\eta_0}
   -B_{\theta_0,\eta_0}\left[\underline{h}_0\right]\right)
    +o_{P^*_W}(N^{-1/2}),
\end{equation*}
in $P^*$-probability.
It follows by the invertibility of $I_0$ that
\begin{eqnarray*}
\sqrt{N}\left(\hat{\hat{\theta}}_{N,bc}-\theta_0\right)
=-\sqrt{N}\hat{\mathbb{P}}_N^{\pi,c}I_0^{-1}\left(\dot{\ell}_{\theta_0,\eta_0}
-B_{\theta_0,\eta_0}\left[\underline{h}_0\right]\right)+o_{P^*_W}(1),
\end{eqnarray*}
in $P^*$-probability.
Since we have by Theorem 3.2 of \cite{MR3059418} that
\begin{eqnarray}
\label{eqn:thm3.2}
\quad \sqrt{N}\left(\hat{\theta}_{N,c}-\theta_0\right)
=-\sqrt{N}\mathbb{P}_N^{\pi,c}I_0^{-1}\left(\dot{\ell}_{\theta_0,\eta_0}
-B_{\theta_0,\eta_0}\left[\underline{h}_0\right]\right)+o_{P^*_W}(1),
\end{eqnarray}
in $P^*$-probability, taking a difference yields
\begin{eqnarray*}
\sqrt{N}\left(\hat{\hat{\theta}}_{N,bc}-\hat{\theta}_{N,c}\right)
=-\hat{\mathbb{G}}_N^{\pi,bc}I_0^{-1}\left(\dot{\ell}_{\theta_0,\eta_0}
-B_{\theta_0,\eta_0}\left[\underline{h}_0\right]\right)+o_{P^*_W}(1),
\end{eqnarray*}
in $P^*$-probability.
Apply Theorem \ref{thm:fpsDboot} to obtain a desired result.

For $\hat{\hat{\theta}}_{N,bsc}$, replace (\ref{eqn:thm3.2}) by
\begin{eqnarray*}
\sqrt{N}\left(\hat{\theta}_{N}-\theta_0\right)
=-\sqrt{N}\mathbb{P}_N^{\pi}I_0^{-1}\left(\dot{\ell}_{\theta_0,\eta_0}
-B_{\theta_0,\eta_0}\left[\underline{h}_0\right]\right)+o_{P^*_W}(1),
\end{eqnarray*}
in $P^*$-probability, and proceed in the same way as above to obtain
\begin{eqnarray*}
\sqrt{N}\left(\hat{\hat{\theta}}_{N,bsc}-\hat{\theta}_{N}\right)
=-\hat{\mathbb{G}}_N^{\pi,bsc}I_0^{-1}\left(\dot{\ell}_{\theta_0,\eta_0}
-B_{\theta_0,\eta_0}\left[\underline{h}_0\right]\right)+o_{P^*_W}(1),
\end{eqnarray*}
in $P^*$-probability. Apply Theorem \ref{thm:fpsDboot} to obtain a desired result.
\end{proof}

\begin{proof}[Proof of Lemma \ref{lemma:rateboot}]
With the help of Lemma \ref{lemma:gpleqgboot} (2), a proof is similar to the proof of Theorem 5.2 of \cite{MR3059418}.
\end{proof}

\begin{lemma}
\label{lemma:gpleqgboot}
Let $\mathcal{F}$ be a class of integrable functions that possibly depends on $N$.

\noindent (1)  Suppose that $E^*\lVert \mathbb{G}_N\rVert_{\mathcal{F}} \rightarrow 0$ as $n\rightarrow \infty$.
Then $E^*\lVert \tilde{\mathbb{G}}_N^\pi\rVert_{\mathcal{F}} \rightarrow 0$ as $n\rightarrow \infty$.

\noindent (2)  Suppose that the phase I bootstrap weights $W_N^{(1)}$ are bounded in $N$.
Then $E^*\left\lVert \tilde{\mathbb{G}}_N^{\pi}\right\rVert_{\mathcal{F}} \lesssim E^*\left\lVert \mathbb{G}_N\right\rVert_{\mathcal{F}}$.
\end{lemma}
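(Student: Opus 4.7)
The plan is to decompose the bootstrap IPW empirical process into stratum-wise phase I and phase II pieces, bound each by multiplier-type inequalities, and reassemble. First, I would apply the decomposition
\[
\tilde{\mathbb{G}}_N^\pi = \tilde{\mathbb{G}}_N^{\pi,(1)} + \tilde{\mathbb{G}}_N^{\pi,(2)}W_N^{(1)}\cdot
= \sum_{j=1}^J \sqrt{\tfrac{N_j}{N}}\sqrt{\tfrac{N_j}{n_j}}\Bigl(\tilde{\mathbb{G}}_{j,n_j}^{\xi,(1)} + \tilde{\mathbb{G}}_{j,n_j}^{\xi,(2)}W_{N_j}^{(1)}\cdot\Bigr)
\]
recorded in the Additional Notations subsection. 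The prefactors $\sqrt{N_j/N}\sqrt{N_j/n_j}$ are almost surely bounded, so by the triangle inequality it suffices to bound the phase I and phase II pieces stratum by stratum. Throughout I would work conditional on $\xi$ first, since on sampled units in stratum $j$ the $X_{j,i}$ are i.i.d.\ $P_{0|j}$ (Remark 4.3 of \cite{MR2253102}), placing us in the classical i.i.d. setting where multiplier inequalities apply.

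For the phase I piece, conditional on the data and $\xi$, $\tilde{\mathbb{G}}_{j,n_j}^{\xi,(1)}$ is a centered multiplier empirical process with i.i.d. mean-zero multipliers $W_{N_j,j,i}^{(1)}-1$ satisfying $\lVert W_j^{(1)}-1\rVert_{2,1}<\infty$, so Lemma 2.9.1 of \cite{MR1385671} gives
\[
E^*\lVert\tilde{\mathbb{G}}_{j,n_j}^{\xi,(1)}\rVert_\mathcal{F} \lesssim E^*\lVert\mathbb{G}_{n_j,j}\rVert_\mathcal{F},
\]
where $\mathbb{G}_{n_j,j}$ is the empirical process of $n_j$ i.i.d.\ $P_{0|j}$ observations. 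For the phase II piece, the analogous moment bound for the multivariate hypergeometric bootstrap, established in \cite{Saegusa-Var:2014} via the artificial-population construction of \cite{MR740906,gross} and conditional variance estimates, yields $E^*\lVert\tilde{\mathbb{G}}_{j,n_j}^{\xi,(2)}g\rVert_\mathcal{F}\lesssim E^*\lVert\mathbb{G}_{n_j,j}g\rVert_\mathcal{F}$ uniformly in the multiplier $g$. Under (2), boundedness $|W^{(1)}_N|\leq M$ absorbs the phase I factor as a constant; under (1), the enriched, $P_0\times P_W^{(1)}$-Donsker class $\mathcal{F}_{\mathcal{W}}$ of Lemma \ref{lemma:wfD} combined with the uncentered multiplier machinery of Lemma \ref{lemma:ph1D} handles the product class $W_N^{(1)}\cdot\mathcal{F}$.

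Finally I would link $\mathbb{G}_{n_j,j}$ back to $\mathbb{G}_N$: since the stratum-$j$ sample is a random size-$N_j\asymp\nu_j N$ subset of the i.i.d.\ overall sample, a standard conditioning plus Hoffmann--J\o rgensen / symmetrization argument (Proposition A.1.5 of \cite{MR1385671}) gives $E^*\lVert\mathbb{G}_{n_j,j}\rVert_\mathcal{F}\lesssim E^*\lVert\mathbb{G}_N\rVert_\mathcal{F}$ up to stratum-dependent constants. Summing over $j$ yields (2); for (1) the right-hand side vanishes by hypothesis, and the asymptotic negligibility $\max_i|W^{(1)}_{N_j,j,i}|/\sqrt{n_j}\to 0$ (a consequence of $\lVert W_j^{(1)}\rVert_{2,1}<\infty$) propagates the vanishing to the left via the asymptotic form of the multiplier inequality. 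The main obstacle is the phase II moment bound: the hypergeometric weights are dependent, so Lemma 2.9.1 of \cite{MR1385671} does not apply directly, and one must instead rely on the specialized argument of \cite{Saegusa-Var:2014}, which is the technical heart of the proof when $\mathcal{F}$ is allowed to depend on $N$.
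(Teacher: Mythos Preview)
Your high-level strategy—stratum-wise decomposition, multiplier inequalities, and linking stratum processes back to $\mathbb{G}_N$—matches the paper's, but the decomposition you choose and the phase~I step diverge in a way that matters. The paper does \emph{not} use the split $\tilde{\mathbb{G}}_N^\pi = \tilde{\mathbb{G}}_N^{\pi,(1)} + \tilde{\mathbb{G}}_N^{\pi,(2)}W_N^{(1)}\cdot$ here. Instead, using $\sum_i W^{(2)}_{n_j,j,i}\xi_{j,i} = \sum_i\xi_{j,i} = n_j$, it writes
\[
\tilde{\mathbb{G}}_N^\pi = \sum_{j=1}^J\frac{N_j}{n_j\sqrt{N}}\Bigl[\sum_i W^{(2)}_{n_j,j,i}\xi_{j,i}\bigl(W^{(1)}_{N_j,j,i}\delta_{X_{j,i}}-P_{0|j}\bigr) - \sum_i\xi_{j,i}\bigl(\delta_{X_{j,i}}-P_{0|j}\bigr)\Bigr],
\]
and then uses the identity $W^{(1)}_{j,i}\delta_{X_{j,i}}f - P_{0|j}f = \delta_{(X_{j,i},W^{(1)}_{j,i})}g - (P_{0|j}\times P_{W_j^{(1)}})g$ for $g(x,w)=wf(x)\in\mathcal{F}_\mathcal{W}$. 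This turns the first sum into a \emph{centered} process over the enriched class $\mathcal{F}_\mathcal{W}$, to which the bounded-exchangeable-weight multiplier inequality (Lemma~5.1 of \cite{MR3059418}) is applied twice---once for $W^{(2)}$, once for $\xi$---followed by Lemma~A.1 of \cite{SW2013supp} to pass from the stratum process to $\mathbb{G}_N$ over $\mathcal{F}_\mathcal{W}$, and finally Lemma~2.9.1 of \cite{MR1385671} (for part~(1)) or Lemma~5.1 again (for part~(2)) to strip off the $W^{(1)}$ factor.

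The gap in your route is the phase~I claim $E^*\lVert\tilde{\mathbb{G}}_{j,n_j}^{\xi,(1)}\rVert_\mathcal{F} \lesssim E^*\lVert\mathbb{G}_{n_j,j}\rVert_\mathcal{F}$. The process $\tilde{\mathbb{G}}_{j,n_j}^{\xi,(1)}f = n_j^{-1/2}\sum_i(W^{(1)}_{N_j,j,i}-1)\xi_{j,i}f(X_{j,i})$ has mean-zero multipliers but \emph{uncentered} summands $f(X_{j,i})$; Lemma~2.9.1 bounds it by the uncentered Rademacher process $k^{-1/2}\sum\epsilon_i\delta_{X_i}$, not by the centered $\mathbb{G}_{n_j,j}$. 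The discrepancy is of order $\lVert P_{0|j}\rVert_\mathcal{F}$, which is not controlled by $E^*\lVert\mathbb{G}_N\rVert_\mathcal{F}$ alone: for $\mathcal{F}=\{c\}$ with $c\neq 0$ the right side is zero while $E^*\lVert\tilde{\mathbb{G}}_{j,n_j}^{\xi,(1)}\rVert_{\{c\}}\asymp |c|c_j>0$. Bounding the two phases separately and adding therefore cannot yield the stated conclusion; the paper's $\mathcal{F}_\mathcal{W}$ centering is precisely what keeps $W^{(1)}$ inside a centered process and avoids splitting off this uncontrolled piece. A smaller point: the phase~II moment bound is not a hypergeometric-specific argument---the paper notes that the $W^{(2)}$ weights are bounded and exchangeable, so Lemma~5.1 of \cite{MR3059418} applies directly.
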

\begin{proof}
(1) We have
\begin{eqnarray*}
&&\tilde{\mathbb{G}}_N^\pi
=\sqrt{N}\left(\hat{\mathbb{P}}_N^\pi-\mathbb{P}^\pi_N\right)
=\sum_{j=1}^J\frac{N_j}{n_j\sqrt{N}}\left(\sum_{i=1}^{N_j}W_{N_j,n_j,j,i}\xi_{j,i}\delta_{X_{j,i}}-\sum_{i=1}^{N_j}\xi_{j,i}\delta_{X_{j,i}}\right)\\
&&=\sum_{j=1}^J\frac{N_j}{n_j\sqrt{N}}\left[\sum_{i=1}^{N_j}\left\{\left(W_{n_j,j,i}^{(2)}\xi_{j,i}(W_{N_j,j,i}^{(1)}\delta_{X_{j,i}}-P_{0|j})\right)-\left(\xi_{j,i}(\delta_{X_{j,i}}-P_{0|j})\right)\right\}\right].
\end{eqnarray*}
Here we used the fact that $n_j^{-1}\sum_{i=1}^{N_j}\xi_{j,i}W_{n_j,j,i}^{(2)}=1$ and $n_j^{-1}\sum_{i=1}^{N_j}\xi_{j,i}=1$.
It follows from the triangle inequality and $n_j/N_j\geq \sigma$ that
\begin{eqnarray}
\label{eqn:bootgpidcmps}
\left\lVert\tilde{\mathbb{G}}_N^\pi\right\rVert_{\mathcal{F}}
%&\leq& \sum_{j=1}^J \frac{N_j}{n_j\sqrt{N}}\left\lVert \sum_{i=1}^{N_j}W_{n_j,j,i}^{(2)}\xi_{j,i}(W_{N_j,j,i}^{(1)}\delta_{X_{j,i}}-P_{0|j})\right\rVert_{\mathcal{F}} \nonumber\\
%&&+\sum_{j=1}^J\frac{N_j}{n_j\sqrt{N}}\left\lVert\sum_{i=1}^{N_j}\xi_{j,i}(\delta_{X_{j,i}}-P_{0|j})\right\rVert_{\mathcal{F}} \nonumber \\
&\leq& \frac{1}{\sigma\sqrt{N}}\sum_{j=1}^J \left\lVert \sum_{i=1}^{N_j}W_{n_j,j,i}^{(2)}\xi_{j,i}(W_{N_j,j,i}^{(1)}\delta_{X_{j,i}}-P_{0|j})\right\rVert_{\mathcal{F}}\nonumber\\
&&+\frac{1}{\sigma\sqrt{N}}\sum_{j=1}^J\left\lVert\sum_{i=1}^{N_j}\xi_{j,i}(\delta_{X_{j,i}}-P_{0|j})\right\rVert_{\mathcal{F}}.
\end{eqnarray}
We bound the expectation of $\left\lVert\tilde{\mathbb{G}}_N^\pi\right\rVert_{\mathcal{F}}$ by bounding each term on the right hand side of the inequality above.
For the first term in the last display we have
\begin{eqnarray*}
&&E^*\frac{1}{\sigma\sqrt{N}}\sum_{j=1}^J \left\lVert \sum_{i=1}^{N_j}W_{n_j,j,i}^{(2)}\xi_{j,i}(W_{N_j,j,i}^{(1)}\delta_{X_{j,i}}-P_{0|j})\right\rVert_{\mathcal{F}}\\
&&\leq E^*\frac{1}{\sigma\sqrt{N}}\sum_{j=1}^J E_{\xi}^*E_{0|\mathcal{V}_j,\xi}^*\left\lVert \sum_{i=1}^{N_j}W_{n_j,j,i}^{(2)}\xi_{j,i}(W_{N_j,j,i}^{(1)}\delta_{X_{j,i}}-P_{0|j})\right\rVert_{\mathcal{F}}\nu_j,
\end{eqnarray*}
where $E_\xi$ and $E_{0|\mathcal{V}_j,\xi}$ are the expectation with respect to $\xi$ and the conditional expectation given stratum membership and sampling indicators, respectively.
For the $j$th summand in the first term in (\ref{eqn:bootgpidcmps}),
%note that, with a slight abuse of notation, $(W_{n_j,j,1}^{(2)}\xi_{j,1},\ldots,W_{n_j,j,N_j}^{(2)}\xi_{j,N_j})$ is exchangeable bounded weights among observations with $\xi_{j,i}=1$.
note that we can rewrite the summand as
\begin{eqnarray*}
\left\lVert\sum_{i=1}^{N_j}W_{n_j,j,i}^{(2)}\xi_{j,i}(W_{N_j,j,i}^{(1)}\delta_{X_{j,i}}-P_{0|j})\right\rVert_{\mathcal{F}}
=\left\lVert \sum_{i=1}^{N_j}W_{n_j,j,i}^{(2)}\xi_{j,i}(\delta_{X_{j,i}}-P_{0|j})\right\rVert_{\mathcal{F}_{\mathcal{W}}},
\end{eqnarray*}
where $\mathcal{F}_{\mathcal{W}}=\{g(x,w)=wf(x):f\in\mathcal{F}\}$ because $P_{0|j}W^{(1)}_Nf(X)=P_{0|j}f(X)$ by the conditional independence of $X$ and $W^{(1)}$  given stratum membership and $P_{0|j}W^{(1)}_N=1$.
Note also that $W_{n_j,j,i}^{(2)}$'s with $\xi_{j,i}=1$ are exchangeable by construction and that $W_N^{(2)}$ are bounded in $N$ (see the proof of Lemma 7.4.1 of \cite{Saegusa} or Lemma 6.1 of \cite{Saegusa-Var:2014}).
Apply the multiplier inequality for bounded weights $W_{n_j,j,i}^{(2)}$ (Lemma 5.1 of \cite{MR3059418}) with $n_0=1$ and $Z_{ni}=\delta_{X_{j,i}}-P_{0|j}$ to the $j$th summand in the first term in (\ref{eqn:bootgpidcmps}) conditionally on stratum membership and sampling indicators, and then apply Jensen's inequality to the conditional expectation of the right hand side of the last display to obtain
\begin{eqnarray*}
%&&E_{-W^{(1)}_N}\left\lVert \frac{1}{\sqrt{N}}\sum_{i=1}^{N_j}W_{n_j,j,i}\xi_{j,i}(\delta_{X_{j,i}}-P_{0|j})\right\rVert_{\mathcal{F}_{\mathcal{W}_j}}
%\lesssim
&&\max_{1\leq k \leq N_j}E_{0|\mathcal{V}_j,\xi}^*\left\lVert \frac{1}{\sqrt{N}}\sum_{i=1}^{k}\xi_{j,i}(\delta_{X_{j,i}}-P_{0|j})\right\rVert_{\mathcal{F}_{\mathcal{W}}}\\
&&\lesssim E_{0|\mathcal{V}_j,\xi}^*\left\lVert \frac{1}{\sqrt{N}}\sum_{i=1}^{N_j}\xi_{j,i}(\delta_{X_{j,i}}-P_{0|j})\right\rVert_{\mathcal{F}_{\mathcal{W}}}.
\end{eqnarray*}
Take expectations with respect to sampling indicators to obtain
\begin{eqnarray*}
&&E_{0|\mathcal{V}_j}^*\frac{1}{\sqrt{N}}\left\lVert \sum_{i=1}^{N_j}W_{n_j,j,i}^{(2)}\xi_{j,i}(W_{N_j,j,i}^{(1)}\delta_{X_{j,i}}-P_{0|j})\right\rVert_{\mathcal{F}}\\
&&\lesssim E_{0|\mathcal{V}_j}^*\left\lVert \frac{1}{\sqrt{N}}\sum_{i=1}^{N_j}\xi_{j,i}(\delta_{X_{j,i}}-P_{0|j})\right\rVert_{\mathcal{F}_{\mathcal{W}}}.
\end{eqnarray*}
Apply again the multiplier inequality for bounded weights $\xi_i$ with $n_0=1$ and $Z_{ni}=\delta_{X_{j,i}}-P_{0|j}$ conditionally on $(V_i,X_i,W_i^{(1)})$ to the term in the last display to obtain its upper bound (up to some constant)
\begin{eqnarray*}
%&&E_{-W^{(1)}_N}\left\lVert \frac{1}{\sqrt{N}}\sum_{i=1}^{N_j}W_{n_j,j,i}\xi_{j,i}(\delta_{X_{j,i}}-P_{0|j})\right\rVert_{\mathcal{F}_{\mathcal{W}_j}}
%\lesssim
 \max_{1\leq k \leq N_j}E_{0|\mathcal{V}_j}^*\left\lVert \frac{1}{\sqrt{N}}\sum_{i=1}^{k}(\delta_{X_{j,i}}-P_{0|j})\right\rVert_{\mathcal{F}_{\mathcal{W}}}.
\end{eqnarray*}
Apply Jensen's inequality, Lemma A.1 of \cite{SW2013supp} and the triangle inequality to this term to obtain its upper bound
\begin{eqnarray*}
%\max_{1\leq k \leq N_j}E_{-W^{(1)}}\left\lVert \frac{1}{\sqrt{N}}\sum_{i=1}^{N_j}(\delta_{X_{j,i}}-P_{0|j})\right\rVert_{\mathcal{F}_{\mathcal{W}_j}}
%=
E_{0|\mathcal{V}_j}^*\left\lVert \sqrt{\frac{N_j}{N}}\mathbb{G}_{j,N_j}\right\rVert_{\mathcal{F}_{\mathcal{W}}}
\lesssim E^*\left\lVert \mathbb{G}_{N}\right\rVert_{\mathcal{F}_{\mathcal{W}}}
\leq E^*\left\lVert \mathbb{G}_{N}(W_N^{(1)}-1)f\right\rVert_{\mathcal{F}} + E^*\left\lVert \mathbb{G}_{N}\right\rVert_{\mathcal{F}}.
\end{eqnarray*}
Here we used the fact that $E|X|=\sum_{j=1}^JE_{0|j}|X|\nu_j \geq E_{0|j'}|X|\nu_{j'},j'\in\{1,\ldots,J\}$.
Apply the multiplier inequality (Lemma 2.9.1 of \cite{MR1385671}) with weights $W_{N}^{(1)}-1$ to $E^*\left\lVert \mathbb{G}_{N}(W_N^{(1)}-1)f\right\rVert_{\mathcal{F}}$ to obtain its upper bound
\begin{eqnarray}
\label{eqn:mutlineqboot}
&&2(N_0-1)E^*\lVert \delta_{X}-P_0\rVert_{\mathcal{F}} E\max_{1\leq i\leq N}\frac{|W_N^{(1)}-1|}{\sqrt{N}} \nonumber\\
&&+2\sqrt{2}\lVert W_{N}^{(1)}-1\rVert_{2,1} \max_{1\leq k\leq N}E^*\left\lVert \frac{1}{\sqrt{k}}\sum_{i=1}^{k}\epsilon_{i}(\delta_{X_{i}}-P_{0})\right\rVert_{\mathcal{F}}
\end{eqnarray}
where $\epsilon_{i},i=1,\ldots,N$, are independent Rademacher variables and $N_0$ is any natural number less than or equal to $N$.
Note that $\lVert W_{N}^{(1)}-1\rVert_{2,1} $ is bounded in view of Problem 2.9.2 of \cite{MR1385671} and by (\ref{eqn:ph1bootwt}), and that $E^*\lVert \delta_X-P_0\rVert_{\mathcal{F}}<\infty$ by assumption.
Note also that the symmetrization inequality (Lemma 2.3.6 of \cite{MR1385671}) yields that  by $E^*\lVert k^{-1/2}\sum_{i=1}^k(\delta_{X_i}-P_0)\rVert_{\mathcal{F}}^*\lesssim E\lVert \mathbb{G}_k\rVert_{\mathcal{F}}$.
For the second term in (\ref{eqn:bootgpidcmps}), apply the multiplier inequality, Jensen's inequality, and Lemma Lemma A.1 of \cite{SW2013supp} to the $j$th summand as above to obtain
\begin{eqnarray*}
&&E_{0|\mathcal{V}_j}^*\left\lVert \frac{1}{\sqrt{N}}\sum_{i=1}^{N_j}\xi_{j,i}(\delta_{X_{j,i}}-P_{0|j})\right\rVert_{\mathcal{F}}
\lesssim \max_{1\leq k \leq N_j}E_{0|\mathcal{V}_j}^*\left\lVert \frac{1}{\sqrt{N}}\sum_{i=1}^{k}(\delta_{X_{j,i}}-P_{0|j})\right\rVert_{\mathcal{F}}\\
&&\leq \max_{1\leq k \leq N_j}E_{0|\mathcal{V}_j}^*\left\lVert \frac{1}{\sqrt{N}}\sum_{i=1}^{N_j}(\delta_{X_{j,i}}-P_{0|j})\right\rVert_{\mathcal{F}}
\leq E_{0|\mathcal{V}_j}^*\left\lVert \mathbb{G}_{j,N_j}\right\rVert_{\mathcal{F}}
\lesssim E^*\left\lVert \mathbb{G}_{N}\right\rVert_{\mathcal{F}}.
\end{eqnarray*}
Thus, we have by Fubini's theorem (Lemma 1.2.7 of \cite{MR1385671}) that
\begin{eqnarray*}
E^*\left\lVert\tilde{\mathbb{G}}_N^\pi\right\rVert_{\mathcal{F}}
%= EE^*_{0|\mathcal{V}_j}\left\lVert\tilde{\mathbb{G}}_N^\pi\right\rVert_{\mathcal{F}}
\lesssim E^*\left\lVert \mathbb{G}_{N}\right\rVert_{\mathcal{F}} + (N_0-1)E\max_{1\leq i\leq N}\frac{|W_N^{(1)}-1|}{\sqrt{N}}+\max_{N_0\leq k\leq N}E^* \left\lVert \mathbb{G}_{k}\right\rVert_{\mathcal{F}}.
\end{eqnarray*}
Because $E^*\left\lVert \mathbb{G}_{N}\right\rVert_{\mathcal{F}}\rightarrow 0$ and $E\max_{1\leq N}|W_N^{(1)}-1|/\sqrt{N}\rightarrow 0$, we can take $N\rightarrow \infty$ followed by $N_0\rightarrow \infty$ to conclude that the right-hand side of the last display converges to zero.

(2) We proceed in the same way as above except that we apply the multiplier inequality for the bounded exchangeable weights (Lemma 5.1 of \cite{MR3059418}) with $N_0=1$ to replace (\ref{eqn:mutlineqboot}) by
\begin{equation*}
\max_{1\leq k\leq N}E^*\left\lVert \frac{1}{\sqrt{N}}\sum_{i=1}^{k}(\delta_{X_{i}}-P_{0})\right\rVert_{\mathcal{F}}.
\end{equation*}
Apply Jensen's inequality in order to bound this term by $E^*\lVert \mathbb{G}_N\rVert_{\mathcal{F}}$ to obtain the desired result.
\end{proof}

The following is Lemma A.4 of \cite{SW2013supp} with correction that $\mathbb{S}_N=\sum_{i=1}^NN^{-1/2}\mathbb{Z}_{i}$ instead of $\mathbb{S}_N=\sum_{i=1}^N\mathbb{Z}_{i}$.
\begin{lemma}
\label{lemma:vw239}
Let $\mathbb{Z}_{1},\mathbb{Z}_{2},\ldots$ be i.i.d. stochastic processes indexed
by $\mathcal{F}_N$ with $E^*\lVert \mathbb{Z}_{1}\rVert_{\mathcal{F}_N}$ uniformly bounded in $N$.
Suppose that  $\lVert \mathbb{S}_N\rVert_{\mathcal{F}_N} \ $\\
$ \ \equiv\lVert \sum_{i=1}^NN^{-1/2}\mathbb{Z}_{i}\rVert_{\mathcal{F}_N}=o_{P^*}(1)$.
Then
%\begin{equation*}
$E^*\lVert \mathbb{S}_N\rVert_{\mathcal{F}_N}\rightarrow 0$, as $N\rightarrow \infty$.
%\end{equation*}
\end{lemma}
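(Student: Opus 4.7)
The plan is to upgrade the stated outer-probability convergence $\lVert \mathbb{S}_N \rVert_{\mathcal{F}_N} = o_{P^*}(1)$ to an outer-mean convergence by combining a Hoffmann-J\o rgensen maximal inequality with L\'evy's inequality after symmetrization and a truncation argument calibrated to the rate at which $\lVert \mathbb{S}_N \rVert_{\mathcal{F}_N}$ tends to zero in probability.

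First, I would apply the Hoffmann-J\o rgensen inequality (Proposition A.1.5 of \cite{MR1385671}) to the i.i.d.\ summands $N^{-1/2}\mathbb{Z}_i$ indexed by $\mathcal{F}_N$. This yields an estimate of the form
\begin{equation*}
E^*\lVert \mathbb{S}_N\rVert_{\mathcal{F}_N} \;\leq\; K\Bigl( E^*\max_{i\leq N} N^{-1/2}\lVert \mathbb{Z}_i\rVert_{\mathcal{F}_N} + q_N \Bigr),
\end{equation*}
where $q_N$ is any quantile satisfying $P^*(\lVert \mathbb{S}_N\rVert_{\mathcal{F}_N}>q_N)\leq c_0$ for a small absolute constant $c_0$ to which the Hoffmann-J\o rgensen inequality applies. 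The hypothesis $\lVert \mathbb{S}_N\rVert_{\mathcal{F}_N} = o_{P^*}(1)$ lets me choose $q_N\downarrow 0$.

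Next I would control the maximal term. After symmetrizing (passing to $\mathbb{Z}_i-\mathbb{Z}_i'$ for an independent copy $\mathbb{Z}_i'$) at the cost of a multiplicative constant, L\'evy's inequality for i.i.d.\ symmetric summands gives
\begin{equation*}
P^*\!\left( \max_{i\leq N}\lVert \mathbb{Z}_i\rVert_{\mathcal{F}_N} > 2t\sqrt{N} \right) \;\leq\; 2\, P^*\!\left( \lVert \mathbb{S}_N\rVert_{\mathcal{F}_N}>t \right),
\end{equation*}
so $N^{-1/2}\max_{i\leq N}\lVert \mathbb{Z}_i\rVert_{\mathcal{F}_N} \to 0$ in outer probability.

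The main obstacle, and the crux of the proof, is to promote this outer-probability convergence of the maximum to an outer-mean convergence using only the first-moment bound $\sup_N E^*\lVert \mathbb{Z}_1\rVert_{\mathcal{F}_N}<\infty$. I would handle this by a truncation at a level $M_N\to\infty$ to be chosen. Write
\begin{equation*}
\max_{i\leq N}\frac{\lVert \mathbb{Z}_i\rVert_{\mathcal{F}_N}}{\sqrt{N}} \;\leq\; \Bigl(M_N \wedge \max_{i\leq N}\frac{\lVert \mathbb{Z}_i\rVert_{\mathcal{F}_N}}{\sqrt{N}}\Bigr) + \sum_{i=1}^{N} \frac{\lVert \mathbb{Z}_i\rVert_{\mathcal{F}_N}}{\sqrt{N}}\, \mathbf{1}\{\lVert \mathbb{Z}_i\rVert_{\mathcal{F}_N} > M_N\sqrt{N}\}.
\end{equation*}
The truncated piece is bounded by $M_N$ and vanishes in outer probability by the L\'evy bound above, so its outer expectation tends to zero provided $M_N$ grows slowly enough relative to the rate in the L\'evy bound. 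The tail piece has outer expectation at most $\sqrt{N}\,E^*\lVert \mathbb{Z}_1\rVert_{\mathcal{F}_N}\mathbf{1}\{\lVert \mathbb{Z}_1\rVert_{\mathcal{F}_N}>M_N\sqrt{N}\}$; since $E^*\lVert \mathbb{Z}_1\rVert_{\mathcal{F}_N}$ is uniformly bounded, the tail of the integral can be made to vanish by taking $M_N$ large enough. The delicate step is harmonizing these two choices of $M_N$ so that both contributions tend to zero simultaneously, and I expect this reconciliation to be the technical heart of the argument.

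Combining everything with the Hoffmann-J\o rgensen bound from the first step gives $E^*\lVert \mathbb{S}_N\rVert_{\mathcal{F}_N}\to 0$, as required.
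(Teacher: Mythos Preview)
The paper does not prove this lemma itself; it is taken from Lemma~A.4 of the supplement to Saegusa--Wellner (2013). Your Hoffmann--J\o rgensen strategy---reduce $E^*\lVert\mathbb{S}_N\rVert_{\mathcal{F}_N}$ to a vanishing quantile plus $E^*\max_{i\le N}\lVert\mathbb{Z}_i\rVert_{\mathcal{F}_N}/\sqrt{N}$---is the standard route and almost certainly coincides with what is done there.

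The gap is in your handling of the max term. The claim that the tail piece
$\sqrt{N}\,E^*\bigl[\lVert\mathbb{Z}_1\rVert_{\mathcal{F}_N}\mathbf{1}\{\lVert\mathbb{Z}_1\rVert_{\mathcal{F}_N}>M_N\sqrt{N}\}\bigr]$
``can be made to vanish by taking $M_N$ large enough'' is unjustified: uniform boundedness of $E^*\lVert\mathbb{Z}_1\rVert_{\mathcal{F}_N}$ gives no rate for the tail integral, so the $\sqrt{N}$ prefactor may dominate for any $M_N$ growing slower than $\sqrt{N}$, while for $M_N\gtrsim\sqrt{N}$ the truncated piece $E^*[M_N\wedge Y_N]$ need not vanish since you only have $Y_N=o_{P^*}(1)$ with no rate. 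The ``harmonization'' you flag as delicate is thus a genuine obstruction to this truncation scheme, not a calibration detail. The correct route exploits the hypothesis $\lVert\mathbb{S}_N\rVert_{\mathcal{F}_N}=o_{P^*}(1)$ more fully: your L\'evy step already yields $N\,P^*(\lVert\mathbb{Z}_1\rVert_{\mathcal{F}_N}>t\sqrt{N})\to 0$ for each fixed $t>0$, a weak-$L^2$-type conclusion derived from the $o_{P^*}(1)$ assumption rather than from the moment bound. Combined with a dominating envelope (in the paper's applications all $\mathcal{F}_N$ share a fixed integrable envelope---see the sentence preceding the use of this lemma in the proof of Theorem~\ref{thm:survbootzthm1}), this gives $E^*\max_i\lVert\mathbb{Z}_i\rVert_{\mathcal{F}_N}/\sqrt{N}\to 0$ along the lines of Problem~2.3.3 in van der Vaart--Wellner, and the Hoffmann--J\o rgensen bound then finishes the proof.
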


The following is the bootstrap version of Lemma 5.4 of \cite{MR3059418}.
\begin{lemma}
\label{lemma:wza5boot}
Let $\mathcal{F}_N$ be a sequence of decreasing classes of functions such that
$\lVert \mathbb{G}_N\rVert_{\mathcal{F}_N}=o_{P^*}(1)$.
Assume that there exists an integrable envelope for $\mathcal{F}_{N_0}$ for some $N_0$.
Then $\lVert \tilde{\mathbb{G}}_N^{\pi}\rVert_{\mathcal{F}_N}=o_{P^*_W}(1)$ in $P^*$-probability.

Suppose, moreover, that $\mathcal{F}_N$ is $P_0$-Glivenko-Cantelli  with
$\| P_0 \|_{{\cal F}_{N_1}} < \infty$ for some $N_1$,
and that every $f = f_N \in\mathcal{F}_N$ converges to zero either pointwise or in
$L_1 (P_0)$ as $N\rightarrow \infty$.
Then $\lVert \tilde{\mathbb{G}}_N^{\pi,*\#}\rVert_{\mathcal{F}_N}=o_{P^*_W}(1)$ in $P^*$-probability with $*\in\{b,bs\}$ and $\#\in\{c,cc\}$, assuming Condition \ref{cond:bootcal}.
\end{lemma}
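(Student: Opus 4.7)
The plan for the first claim is to chain three existing results. Since $\mathcal{F}_N$ is decreasing, for $N\ge N_0$ it inherits the integrable envelope of $\mathcal{F}_{N_0}$, so $E^*\lVert \delta_X - P_0\rVert_{\mathcal{F}_N}$ is uniformly bounded in $N$. I would apply Lemma~\ref{lemma:vw239} with $\mathbb{Z}_i = \delta_{X_i} - P_0$ to upgrade $\lVert \mathbb{G}_N\rVert_{\mathcal{F}_N} = o_{P^*}(1)$ to $E^*\lVert \mathbb{G}_N\rVert_{\mathcal{F}_N}\to 0$, then transport this to $E^*\lVert \tilde{\mathbb{G}}_N^\pi\rVert_{\mathcal{F}_N} \to 0$ via Lemma~\ref{lemma:gpleqgboot}(1). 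Markov's inequality gives $\lVert \tilde{\mathbb{G}}_N^\pi\rVert_{\mathcal{F}_N} = o_{Pr^*}(1)$, which Lemma~\ref{lemma:bootorder}(1) converts to $o_{P_W^*}(1)$ in $P^*$-probability.

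For the second claim I would treat $\tilde{\mathbb{G}}_N^{\pi,bc}$ in detail; the remaining three cases follow by bookkeeping adjustments to the centering and the calibration function. First I would split $\tilde{\mathbb{G}}_N^{\pi,bc} = \tilde{\mathbb{G}}_N^\pi + (\tilde{\mathbb{G}}_N^{\pi,bc} - \tilde{\mathbb{G}}_N^\pi)$ and dispose of the first summand by the first claim. A one-term Taylor expansion of $G_c$ about $\alpha_0 = 0$, mimicking the decomposition in the proof of Theorem~\ref{thm:survbootzthm1}, writes the remainder as a sum of
\begin{align*}
T_{1,N}(f) &= (\hat{\mathbb{P}}_N^\pi - \mathbb{P}_N^\pi)\dot{G}_c(V;\tilde\alpha_1)V^T f \cdot \sqrt{N}(\hat{\hat\alpha}_N^{bc} - \alpha_0), \\
T_{2,N}(f) &= (\mathbb{P}_N^\pi - P_0)\dot{G}_c(V;\tilde\alpha_2)V^T f \cdot \sqrt{N}(\hat{\hat\alpha}_N^{bc} - \hat\alpha_N^c), \\
T_{3,N}(f) &= P_0\bigl(G_c(V;\hat{\hat\alpha}_N^{bc}) - G_c(V;\hat\alpha_N^c)\bigr)f.
\end{align*}
By Proposition~\ref{prop:alphaCalboot} and Lemma~\ref{lemma:bootorder}, the scalar factors in $T_{1,N}$ and $T_{2,N}$ are $O_{P_W^*}(1)$ in $P^*$-probability, so only the function-valued factors need uniform control over $\mathcal{F}_N$. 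Boundedness of $\dot{G}$ and of $V$ (Condition~\ref{cond:cal}) together with Glivenko--Cantelli preservation \cite{MR1857319} make the auxiliary class $\mathcal{G} \equiv \{\dot{G}_c(v;\alpha)v^T f : f \in \mathcal{F}_{N_1},\, \alpha \in \mathbb{R}^k\}$ a $P_0$-Glivenko--Cantelli class with integrable envelope and $\lVert P_0\rVert_{\mathcal{G}} < \infty$; Theorem~\ref{thm:fpsPiGCboot} and Theorem~5.1 of \cite{MR3059418} then send $\lVert \hat{\mathbb{P}}_N^\pi - P_0\rVert_{\mathcal{G}}$ and $\lVert \mathbb{P}_N^\pi - P_0\rVert_{\mathcal{G}}$ to $o_{P_W^*}(1)$ in $P^*$-probability, so, using $\mathcal{F}_N \subset \mathcal{F}_{N_1}$ for $N \ge N_1$, $\sup_{f\in\mathcal{F}_N}|T_{1,N}(f)|$ and $\sup_{f\in\mathcal{F}_N}|T_{2,N}(f)|$ are both $o_{P_W^*}(1)$ in $P^*$-probability.

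For $T_{3,N}$ I would extract from the pointwise/$L_1$ hypothesis, the integrable envelope of $\mathcal{F}_{N_1}$, and dominated convergence the uniform statement $\sup_{f\in\mathcal{F}_N}P_0|f| \to 0$, via a subsequence-and-contradiction argument: any near-maximizing sequence $f_N \in \mathcal{F}_N$ must itself converge to zero in $L_1(P_0)$, contradicting $P_0|f_N| > \epsilon/2$. Boundedness of $G$ then gives $\sup_{f\in\mathcal{F}_N}|T_{3,N}(f)| \le 2\lVert G\rVert_\infty \sup_{f\in\mathcal{F}_N}P_0|f| = o(1)$. The main obstacle will be precisely this step, because $T_{3,N}$ carries no empirical fluctuation that could be absorbed into bootstrap Glivenko--Cantelli, so the shrinkage of $\mathcal{F}_N$ enforced by the pointwise/$L_1$ hypothesis is unavoidable here. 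The cases $\tilde{\mathbb{G}}_N^{\pi,bcc}$, $\tilde{\mathbb{G}}_N^{\pi,bsc}$, $\tilde{\mathbb{G}}_N^{\pi,bscc}$ require only routine changes, with the $bs$ cases producing a shorter two-term decomposition whose deterministic remainder $P_0 \dot{G}_{\#}(V;\tilde\alpha)V^T f \cdot \sqrt{N}\hat{\hat\alpha}_N^{bs\#}$ succumbs to the same $\sup_{f\in\mathcal{F}_N} P_0|f| \to 0$ argument.
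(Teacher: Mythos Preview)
Your proposal is correct and follows essentially the same approach the paper intends: the paper's proof simply reads ``A proof is similar to the proof of Lemma 5.4 of \cite{MR3059418},'' and your argument is precisely the bootstrap adaptation of that lemma, executed exactly as in the paper's own proof of Theorem~\ref{thm:survbootzthm1} (Lemma~\ref{lemma:vw239} then Lemma~\ref{lemma:gpleqgboot}(1) for the first claim; the three-term Taylor decomposition with Glivenko--Cantelli preservation, Theorem~\ref{thm:fpsPiGCboot}, and a dominated-convergence argument for $T_{3,N}$ for the second). Your handling of $T_{3,N}$ via $\sup_{f\in\mathcal{F}_N}P_0|f|\to 0$ is in fact slightly cleaner than the $\alpha$-consistency route used in the proof of Theorem~\ref{thm:survbootzthm1}, since here you have the explicit pointwise/$L_1$ shrinking hypothesis to exploit.
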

\begin{proof}
A proof is similar to the proof of Lemma 5.4 of \cite{MR3059418}.
\end{proof}

%\nocite{*}
%\bibliographystyle{imsart-number}
%\bibliography{wlebootv1}

\begin{thebibliography}{39}
% BibTex style file: imsart-number.bst, 2010-01-14
% Default style options (sort=1,type=number).
% Used options (sort=1,type=number).

\bibitem{MR2990349}
\begin{barticle}[author]
\bauthor{\bsnm{Beaumont},~\bfnm{Jean-Fran{\c{c}}ois}\binits{J.-F.}} \AND
  \bauthor{\bsnm{Patak},~\bfnm{Zdenek}\binits{Z.}}
(\byear{2012}).
\btitle{On the generalized bootstrap for sample surveys with special attention
  to {P}oisson sampling}.
\bjournal{Int. Stat. Rev.}
\bvolume{80}
\bpages{127--148}.
\bdoi{10.1111/j.1751-5823.2011.00166.x}.
%\bmrnumber{2990349}
\end{barticle}
\endbibitem

\bibitem{MR740906}
\begin{barticle}[author]
\bauthor{\bsnm{Bickel},~\bfnm{P.~J.}\binits{P.~J.}} \AND
  \bauthor{\bsnm{Freedman},~\bfnm{D.~A.}\binits{D.~A.}}
(\byear{1984}).
\btitle{Asymptotic normality and the bootstrap in stratified sampling}.
\bjournal{Ann. Statist.}
\bvolume{12}
\bpages{470--482}.
\bdoi{10.1214/aos/1176346500}
\end{barticle}
\endbibitem

\bibitem{MR1310222}
\begin{barticle}[author]
\bauthor{\bsnm{Booth},~\bfnm{James~G.}\binits{J.~G.}},
  \bauthor{\bsnm{Butler},~\bfnm{Ronald~W.}\binits{R.~W.}} \AND
  \bauthor{\bsnm{Hall},~\bfnm{Peter}\binits{P.}}
(\byear{1994}).
\btitle{Bootstrap methods for finite populations}.
\bjournal{J. Amer. Statist. Assoc.}
\bvolume{89}
\bpages{1282--1289}.
\end{barticle}
\endbibitem

\bibitem{MR1767493}
\begin{barticle}[author]
\bauthor{\bsnm{Borgan},~\bfnm{{\O}rnulf}\binits{{\O}.}},
  \bauthor{\bsnm{Langholz},~\bfnm{Bryan}\binits{B.}},
  \bauthor{\bsnm{Samuelsen},~\bfnm{Sven~Ove}\binits{S.~O.}},
  \bauthor{\bsnm{Goldstein},~\bfnm{Larry}\binits{L.}} \AND
  \bauthor{\bsnm{Pogoda},~\bfnm{Janice}\binits{J.}}
(\byear{2000}).
\btitle{Exposure stratified case-cohort designs}.
\bjournal{Lifetime Data Anal.}
\bvolume{6}
\bpages{39--58}.
\end{barticle}
\endbibitem

\bibitem{Breslow:2009a}
\begin{barticle}[author]
\bauthor{\bsnm{Breslow},~\bfnm{Norman~E.}\binits{N.~E.}},
  \bauthor{\bsnm{Lumley},~\bfnm{T}\binits{T.}},
  \bauthor{\bsnm{Ballantyne},~\bfnm{CM}\binits{C.}},
  \bauthor{\bsnm{Chambless},~\bfnm{LE}\binits{L.}} \AND
  \bauthor{\bsnm{Kulich},~\bfnm{M}\binits{M.}}
(\byear{2009}).
\btitle{Improved Horvitz-Thompson estimation of model parameters from two-phase
  stratified samples: applications in epidemiology}.
\bjournal{Stat. Biosc.}
\bvolume{1}
\bpages{32-49}.
\end{barticle}
\endbibitem

\bibitem{Breslow:2009b}
\begin{barticle}[author]
\bauthor{\bsnm{Breslow},~\bfnm{Norman~E.}\binits{N.~E.}},
  \bauthor{\bsnm{Lumley},~\bfnm{T}\binits{T.}},
  \bauthor{\bsnm{Ballantyne},~\bfnm{CM}\binits{C.}},
  \bauthor{\bsnm{Chambless},~\bfnm{LE}\binits{L.}} \AND
  \bauthor{\bsnm{Kulich},~\bfnm{M}\binits{M.}}
(\byear{2009}).
\btitle{Using the whole cohort in the analysis of case-cohort data}.
\bjournal{American J. Epidemiol.}
\bvolume{169}
\bpages{1398-1405}.
\end{barticle}
\endbibitem

\bibitem{MR2325244}
\begin{barticle}[author]
\bauthor{\bsnm{Breslow},~\bfnm{Norman~E.}\binits{N.~E.}} \AND
  \bauthor{\bsnm{Wellner},~\bfnm{Jon~A.}\binits{J.~A.}}
(\byear{2007}).
\btitle{Weighted likelihood for semiparametric models and two-phase stratified
  samples, with application to {C}ox regression}.
\bjournal{Scand. J. Statist.}
\bvolume{34}
\bpages{86--102}.
\end{barticle}
\endbibitem

\bibitem{MR2391566}
\begin{barticle}[author]
\bauthor{\bsnm{Breslow},~\bfnm{Norman~E.}\binits{N.~E.}} \AND
  \bauthor{\bsnm{Wellner},~\bfnm{Jon~A.}\binits{J.~A.}}
(\byear{2008}).
\btitle{A {$Z$}-theorem with estimated nuisance parameters and correction note for: ``{W}eighted likelihood for semiparametric models and two-phase stratified samples, with application to {C}ox regression'' [{S}cand. {J}. {S}tatist. {\bf 34} (2007), no. 1, 86--102]}.
\bjournal{Scand. J. Statist.}
\bvolume{35}
\bpages.{186--192}.
\end{barticle}
\endbibitem

\bibitem{MR2988409}
\begin{barticle}[author]
\bauthor{\bsnm{Chan},~\bfnm{Kwun~E.~G.}\binits{K.~C.~G.}} 
(\byear{2012}).
\btitle{Uniform improvement of empirical likelihood for missing
              response problem}.
\bjournal{Electron. J. Stat.}
\bvolume{6}
\bpages.{289--302}.
\end{barticle}
\endbibitem

\bibitem{MR1965682}
\begin{barticle}[author]
\bauthor{\bsnm{Chatterjee},~\bfnm{Nilanjan}\binits{N.}},
  \bauthor{\bsnm{Chen},~\bfnm{Yi-Hau}\binits{Y.-H.}} \AND
  \bauthor{\bsnm{Breslow},~\bfnm{Norman~E.}\binits{N.~E.}}
(\byear{2003}).
\btitle{A pseudoscore estimator for regression problems with two-phase
  sampling}.
\bjournal{J. Amer. Statist. Assoc.}
\bvolume{98}
\bpages{158--168}.
\bdoi{10.1198/016214503388619184}
\end{barticle}
\endbibitem

\bibitem{MR2722459}
\begin{barticle}[author]
\bauthor{\bsnm{Cheng},~\bfnm{Guang}\binits{G.}} \AND
  \bauthor{\bsnm{Huang},~\bfnm{Jianhua~Z.}\binits{J.~Z.}}
(\byear{2010}).
\btitle{Bootstrap consistency for general semiparametric {$M$}-estimation}.
\bjournal{Ann. Statist.}
\bvolume{38}
\bpages{2884--2915}.
\bdoi{10.1214/10-AOS809}
\end{barticle}
\endbibitem

\bibitem{pmid2544249}
\begin{barticle}[author]
\bauthor{\bsnm{D'Angio},~\bfnm{G.~J.}\binits{G.~J.}} \AND
  \bauthor{\bsnm{Breslow},~\bfnm{Norman~E.}\binits{N.~E.}}
\AND \bauthor{\bsnm{Evans},~\bfnm{A.}\binits{A.}}
\AND \bauthor{\bsnm{Baum},~\bfnm{H.}\binits{H.}}
\AND \bauthor{\bsnm{deLorimier},~\bfnm{A.}\binits{A.}}
\AND \bauthor{\bsnm{Fernbach},~\bfnm{D.}\binits{D.}}
\AND \bauthor{\bsnm{Hrabovsky},~\bfnm{E.}\binits{E.}}
\AND \bauthor{\bsnm{Jones},~\bfnm{B.}\binits{B.}}
\AND \bauthor{\bsnm{Kelalis},~\bfnm{P.}\binits{P.}}
(\byear{1989}).
\btitle{{T}reatment of {W}ilms' tumor. Results of the Third National {W}ilms' Tumor Study}.
\bjournal{Cancer}
\bvolume{64}
\bpages{349--360}.
\end{barticle}
\endbibitem

\bibitem{MR1173804}
\begin{barticle}[author]
\bauthor{\bsnm{Deville},~\bfnm{Jean-Claude}\binits{J.-C.}} \AND
  \bauthor{\bsnm{S{\"a}rndal},~\bfnm{Carl-Erik}\binits{C.-E.}}
(\byear{1992}).
\btitle{Calibration estimators in survey sampling}.
\bjournal{J. Amer. Statist. Assoc.}
\bvolume{87}
\bpages{376--382}.
\end{barticle}
\endbibitem

\bibitem{MR515681}
\begin{barticle}[author]
\bauthor{\bsnm{Efron},~\bfnm{B.}\binits{B.}}
(\byear{1979}).
\btitle{Bootstrap methods: another look at the jackknife}.
\bjournal{Ann. Statist.}
\bvolume{7}
\bpages{1--26}.
%\bmrnumber{515681 (80b:62021)}
\end{barticle}
\endbibitem

\bibitem{MR2026075}
\begin{barticle}[author]
\bauthor{\bsnm{Efron},~\bfnm{Bradley}\binits{B.}}
(\byear{2003}).
\btitle{Second thoughts on the bootstrap}.
\bjournal{Statist. Sci.}
\bvolume{18}
\bpages{135--140}.
\bnote{Silver anniversary of the bootstrap}.
\bdoi{10.1214/ss/1063994968}.
%\bmrnumber{2026075 (2004i:62057)}
\end{barticle}
\endbibitem

\bibitem{pmid9440748}
\begin{barticle}[author]
 \bauthor{\bsnm{Green},~\bfnm{D.~M.}\binits{D.~M.}} 
\AND \bauthor{\bsnm{Breslow},~\bfnm{Norman.~E.}\binits{N.~E.}}
\AND \bauthor{\bsnm{Beckwith},~\bfnm{J.~B.}\binits{J.~B.}} 
\AND \bauthor{\bsnm{Finklestein},~\bfnm{J.~Z.}\binits{J.~Z.}} 
\AND \bauthor{\bsnm{Grundy},~\bfnm{P.~E.}\binits{P.~E.}}
\AND \bauthor{\bsnm{Thomas},~\bfnm{P.~R.}\binits{P.~R.}}
\AND \bauthor{\bsnm{Kim},~\bfnm{T.}\binits{T.}}
\AND \bauthor{\bsnm{Shochat},~\bfnm{S.~J.}\binits{S.~J.}}
\AND \bauthor{\bsnm{Haase},~\bfnm{G.~M.}\binits{G.~M.}}
\AND \bauthor{\bsnm{Ritchey},~\bfnm{M.~L.}\binits{M.~L.}}
\AND \bauthor{\bsnm{Kelalis},~\bfnm{P.~P.}\binits{P.~P.}}
\AND \bauthor{\bsnm{D'Angio},~\bfnm{G.~J.}\binits{G.~J.}}
(\byear{1998}).
\btitle{{C}omparison between single-dose and divided-dose administration of dactinomycin and doxorubicin for patients with {W}ilms' tumor: a report from the National {W}ilms' Tumor Study Group}.
\bjournal{J. Clin. Oncol.}
\bvolume{16}
\bpages{237--245}.
\end{barticle}
\endbibitem

\bibitem{MR1055437}
\begin{barticle}[author]
\bauthor{\bsnm{Gin{\'e}},~\bfnm{Evarist}\binits{E.}} \AND
  \bauthor{\bsnm{Zinn},~\bfnm{Joel}\binits{J.}}
(\byear{1990}).
\btitle{Bootstrapping general empirical measures}.
\bjournal{Ann. Probab.}
\bvolume{18}
\bpages{851--869}.
\end{barticle}
\endbibitem

\bibitem{gross}
\begin{binproceedings}[author]
\bauthor{\bsnm{Gross},~\bfnm{S.}\binits{S.}}
(\byear{1980}).
\btitle{Median estimation in sample surveys}.
In \bbooktitle{Proceedings of the Section on Survey Research Methods}
\bpages{181--184}.
\end{binproceedings}
\endbibitem

\bibitem{MR0053460}
\begin{barticle}[author]
\bauthor{\bsnm{Horvitz},~\bfnm{D.~G.}\binits{D.~G.}} \AND
  \bauthor{\bsnm{Thompson},~\bfnm{D.~J.}\binits{D.~J.}}
(\byear{1952}).
\btitle{A generalization of sampling without replacement from a finite
  universe}.
\bjournal{J. Amer. Statist. Assoc.}
\bvolume{47}
\bpages{663--685}.
\end{barticle}
\endbibitem

\bibitem{MR1394975}
\begin{barticle}[author]
\bauthor{\bsnm{Huang},~\bfnm{Jian}\binits{J.}}
(\byear{1996}).
\btitle{Efficient estimation for the proportional hazards model with interval
  censoring}.
\bjournal{Ann. Statist.}
\bvolume{24}
\bpages{540--568}.
\end{barticle}
\endbibitem

\bibitem{MR648029}
\begin{barticle}[author]
\bauthor{\bsnm{Isaki},~\bfnm{Cary~T.}\binits{C.~T.}} \AND
  \bauthor{\bsnm{Fuller},~\bfnm{Wayne~A.}\binits{W.~A.}}
(\byear{1982}).
\btitle{Survey design under the regression superpopulation model}.
\bjournal{J. Amer. Statist. Assoc.}
\bvolume{77}
\bpages{89--96}.
\end{barticle}
\endbibitem

\bibitem{MR1429617}
\begin{bbook}[author]
\bauthor{\bsnm{Johnson},~\bfnm{Norman~L.}\binits{N.~L.}},
  \bauthor{\bsnm{Kotz},~\bfnm{Samuel}\binits{S.}} \AND
  \bauthor{\bsnm{Balakrishnan},~\bfnm{N.}\binits{N.}}
(\byear{1997}).
\btitle{Discrete multivariate distributions}.
\bseries{Wiley Series in Probability and Statistics: Applied Probability and
  Statistics}.
\bpublisher{John Wiley \& Sons Inc.}, \baddress{New York}.
\bnote{A Wiley-Interscience Publication}.
\end{bbook}
\endbibitem

\bibitem{MR2090916}
\begin{barticle}[author]
\bauthor{\bsnm{Kulich},~\bfnm{Michal}\binits{M.}} \AND
  \bauthor{\bsnm{Lin},~\bfnm{D.~Y.}\binits{D.~Y.}}
(\byear{2004}).
\btitle{Improving the efficiency of relative-risk estimation in case-cohort
  studies}.
\bjournal{J. Amer. Statist. Assoc.}
\bvolume{99}
\bpages{832--844}.
\bdoi{10.1198/016214504000000584}
\end{barticle}
\endbibitem

\bibitem{MR1680310}
\begin{barticle}[author]
\bauthor{\bsnm{Lawless},~\bfnm{J.~F.}\binits{J.~F.}},
  \bauthor{\bsnm{Kalbfleisch},~\bfnm{J.~D.}\binits{J.~D.}} \AND
  \bauthor{\bsnm{Wild},~\bfnm{C.~J.}\binits{C.~J.}}
(\byear{1999}).
\btitle{Semiparametric methods for response-selective and missing data problems
  in regression}.
\bjournal{J. R. Stat. Soc. Ser. B Stat. Methodol.}
\bvolume{61}
\bpages{413--438}.
\bdoi{10.1111/1467-9868.00185}
\end{barticle}
\endbibitem

\bibitem{MR2202406}
\begin{barticle}[author]
\bauthor{\bsnm{Li},~\bfnm{Zhiguo}\binits{Z.}} \AND
  \bauthor{\bsnm{Nan},~\bfnm{Bin}\binits{B.}}
(\byear{2011}).
\btitle{Relative risk regression for current status data in case-cohort studies}.
\bjournal{Canad. J. Statist.}
\bvolume{39}
\bpages{557--577}.
\end{barticle}
\endbibitem

\bibitem{MR2860827}
\begin{barticle}[author]
\bauthor{\bsnm{Ma},~\bfnm{Shuangge}\binits{S.}} \AND
  \bauthor{\bsnm{Kosorok},~\bfnm{Michael~R.}\binits{M.~R.}}
(\byear{2005}).
\btitle{Robust semiparametric {M}-estimation and the weighted bootstrap}.
\bjournal{J. Multivariate Anal.}
\bvolume{96}
\bpages{190--217}.
\end{barticle}
\endbibitem

\bibitem{MR1693616}
\begin{barticle}[author]
\bauthor{\bsnm{Murphy},~\bfnm{Susan~A.}\binits{S.~A.}} \AND
  \bauthor{\bparticle{van~der} \bsnm{Vaart},~\bfnm{Aad~W.}\binits{A.~W.}}
(\byear{1999}).
\btitle{Observed information in semi-parametric models}.
\bjournal{Bernoulli}
\bvolume{5}
\bpages{381--412}.
\end{barticle}
\endbibitem

\bibitem{MR2125853}
\begin{barticle}[author]
\bauthor{\bsnm{Nan},~\bfnm{Bin}\binits{B.}}
(\byear{2004}).
\btitle{Efficient estimation for case-cohort studies}.
\bjournal{Canad. J. Statist.}
\bvolume{32}
\bpages{403--419}.
\end{barticle}
\endbibitem

\bibitem{Neyman:38}
\begin{barticle}[author]
\bauthor{\bsnm{Neyman},~\bfnm{J.}\binits{J.}}
(\byear{1938}).
\btitle{Contribution to the theory of sampling human populations}.
\bjournal{J. Amer. Statist. Assoc.}
\bvolume{33}
\bpages{101--116}.
\end{barticle}
\endbibitem

\bibitem{MR1245301}
\begin{barticle}[author]
\bauthor{\bsnm{Pr{\ae}stgaard},~\bfnm{Jens}\binits{J.}} \AND
  \bauthor{\bsnm{Wellner},~\bfnm{Jon~A.}\binits{J.~A.}}
(\byear{1993}).
\btitle{Exchangeably weighted bootstraps of the general empirical process}.
\bjournal{Ann. Probab.}
\bvolume{21}
\bpages{2053--2086}.
\end{barticle}
\endbibitem

\bibitem{Preston:2009}
\begin{barticle}[author]
\bauthor{\bsnm{Preston},~\bfnm{John}\binits{J.}}
(\byear{2009}).
\btitle{Rescaled Bootstrap for Stratified Multistage Sampling}.
\bjournal{Survey Methodology}
\bvolume{35}
\bpages{pp. 227--234}.
\end{barticle}
\endbibitem

\bibitem{MR941020}
\begin{barticle}[author]
\bauthor{\bsnm{Rao},~\bfnm{J.~N.~K.}\binits{J.~N.~K.}} \AND
  \bauthor{\bsnm{Wu},~\bfnm{C.~F.~J.}\binits{C.~F.~J.}}
(\byear{1988}).
\btitle{Resampling inference with complex survey data}.
\bjournal{J. Amer. Statist. Assoc.}
\bvolume{83}
\bpages{231--241}.
\end{barticle}
\endbibitem

\bibitem{MR1294730}
\begin{barticle}[author]
\bauthor{\bsnm{Robins},~\bfnm{James~M.}\binits{J.~M.}},
  \bauthor{\bsnm{Rotnitzky},~\bfnm{Andrea}\binits{A.}} \AND
  \bauthor{\bsnm{Zhao},~\bfnm{Lue~Ping}\binits{L.~P.}}
(\byear{1994}).
\btitle{Estimation of regression coefficients when some regressors are not
  always observed}.
\bjournal{J. Amer. Statist. Assoc.}
\bvolume{89}
\bpages{846--866}.
\end{barticle}
\endbibitem

\bibitem{MR2253102}
\begin{barticle}[author]
\bauthor{\bsnm{Rubin-Bleuer},~\bfnm{Susana}\binits{S.}} \AND
  \bauthor{\bsnm{Schiopu-Kratina},~\bfnm{Ioana}\binits{I.}}
(\byear{2005}).
\btitle{Weighted likelihood estimation under two-phase sampling}.
\bjournal{Ann. Statist.}
\bvolume{33}
\bpages{2789--2810}.
\end{barticle}
\endbibitem

\bibitem{Saegusa}
\begin{bphdthesis}[author]
\bauthor{\bsnm{Saegusa},~\bfnm{Takumi}\binits{T.}}
(\byear{2012}).
\btitle{Weighted likelihood estimation under two-phase sampling}
\btype{PhD thesis}, \bschool{University of Washington}.
\end{bphdthesis}
\endbibitem

\bibitem{Saegusa-Var:2014}
\begin{barticle}[author]
\bauthor{\bsnm{Saegusa},~\bfnm{Takumi}\binits{T.}} 
(\byear{2014}).
\btitle{Variance estimation under two-phase sampling}.
\bjournal{Manuscript submitted for publication}
\end{barticle}
\endbibitem

\bibitem{MR3059418}
\begin{barticle}[author]
\bauthor{\bsnm{Saegusa},~\bfnm{Takumi}\binits{T.}} \AND
  \bauthor{\bsnm{Wellner},~\bfnm{Jon~A.}\binits{J.~A.}}
(\byear{2013}).
\btitle{Weighted likelihood estimation under two-phase sampling}.
\bjournal{Ann. Statist.}
\bvolume{41}
\bpages{269--295}.
\end{barticle}
\endbibitem

\bibitem{SW2013supp}
\begin{barticle}[author]
\bauthor{\bsnm{Saegusa},~\bfnm{Takumi}\binits{T.}} \AND
  \bauthor{\bsnm{Wellner},~\bfnm{Jon~A.}\binits{J.~A.}}
(\byear{2013}).
\btitle{Supplementary material to "{W}eighted likelihood estimation under two-phase sampling"}.
\end{barticle}
\endbibitem


\bibitem{MR2066254}
\begin{barticle}[author]
\bauthor{\bsnm{Scheike},~\bfnm{Thomas~H.}\binits{T.~H.}} \AND
  \bauthor{\bsnm{Martinussen},~\bfnm{Torben}\binits{T.}}
(\byear{2004}).
\btitle{Maximum likelihood estimation for {C}ox's regression model under
  case-cohort sampling}.
\bjournal{Scand. J. Statist.}
\bvolume{31}
\bpages{283--293}.
\bdoi{10.1111/j.1467-9469.2004.02-064.x}
\end{barticle}
\endbibitem

\bibitem{MR1450191}
\begin{barticle}[author]
\bauthor{\bsnm{Scott},~\bfnm{A.~J.}\binits{A.~J.}} \AND
  \bauthor{\bsnm{Wild},~\bfnm{C.~J.}\binits{C.~J.}}
(\byear{1997}).
\btitle{Fitting regression models to case-control data by maximum likelihood}.
\bjournal{Biometrika}
\bvolume{84}
\bpages{57--71}.
\bdoi{10.1093/biomet/84.1.57}
\end{barticle}
\endbibitem

\bibitem{MR1351010}
\begin{bbook}[author]
\bauthor{\bsnm{Shao},~\bfnm{Jun}\binits{J.}} \AND
  \bauthor{\bsnm{Tu},~\bfnm{Dong~Sheng}\binits{D.~S.}}
(\byear{1995}).
\btitle{The jackknife and bootstrap}.
\bseries{Springer Series in Statistics}.
\bpublisher{Springer-Verlag}, \baddress{New York}.
\bdoi{10.1007/978-1-4612-0795-5}
\end{bbook}
\endbibitem

\bibitem{MR1183077}
\begin{barticle}[author]
\bauthor{\bsnm{Sitter},~\bfnm{R.~R.}\binits{R.~R.}}
(\byear{1992}).
\btitle{Comparing three bootstrap methods for survey data}.
\bjournal{Canad. J. Statist.}
\bvolume{20}
\bpages{135--154}.
\bdoi{10.2307/3315464}
\end{barticle}
\endbibitem

\bibitem{MR1185197}
\begin{barticle}[author]
\bauthor{\bsnm{Sitter},~\bfnm{R.~R.}\binits{R.~R.}}
(\byear{1992}).
\btitle{A resampling procedure for complex survey data}.
\bjournal{J. Amer. Statist. Assoc.}
\bvolume{87}
\bpages{755--765}.
\end{barticle}
\endbibitem

\bibitem{STAN:STAN9},
\begin{barticle}[author]
\bauthor{\bparticle{van~der} \bsnm{Vaart},~\bfnm{Aad~W.}\binits{A.~W.}}
(\byear{1995}).
\btitle{Efficiency of infinite dimensional M-estimators}.
\bjournal{Statistica Neerlandica}
\bvolume{49}
\bpages{9--30}.
\end{barticle}
\endbibitem

\bibitem{MR1652247}
\begin{bbook}[author]
\bauthor{\bparticle{van~der} \bsnm{Vaart},~\bfnm{Aad~W.}\binits{A.~W.}} 
(\byear{1998}).
\btitle{Asymptotic statistics}.
\bseries{Cambridge Series in Statistical and Probabilistic Mathematics}.
\bpublisher{Cambridge University Press}, \baddress{Cambridge}.
\end{bbook}
\endbibitem

\bibitem{MR1385671}
\begin{bbook}[author]
\bauthor{\bparticle{van~der} \bsnm{Vaart},~\bfnm{Aad~W.}\binits{A.~W.}} \AND
  \bauthor{\bsnm{Wellner},~\bfnm{Jon~A.}\binits{J.~A.}}
(\byear{1996}).
\btitle{Weak {C}onvergence and {E}mpirical {P}rocesses}.
\bseries{Springer Series in Statistics}.
\bpublisher{Springer-Verlag}, \baddress{New York}.
\end{bbook}
\endbibitem

\bibitem{MR1857319}
\begin{bincollection}[author]
\bauthor{\bparticle{van~der} \bsnm{Vaart},~\bfnm{Aad~W.}\binits{A.~W.}} \AND
  \bauthor{\bsnm{Wellner},~\bfnm{Jon~A.}\binits{J.~A.}}
(\byear{2000}).
\btitle{Preservation theorems for {G}livenko-{C}antelli and uniform {G}livenko-{C}antelli classes}.
\bbooktitle{High dimensional probability, II (Seattle, WA, 1999)}
\bseries{Progr. Probab.}
\bvolume{47}
\bpages{115--133}.
\bpublisher{Birkh\"auser},
\baddress{Boston, MA}.
\end{bincollection}
\endbibitem

\bibitem{MR1836584}
\begin{bincollection}[author]
  \bauthor{\bsnm{Wellner},~\bfnm{Jon~A.}\binits{J.~A.}}
(\byear{2001}).
\btitle{Some converse limit theorems for exchangeable bootstraps}.
\bbooktitle{State of the art in probability and statistics ({L}eiden,1999)}
\bseries{IMS Lecture Notes Monogr. Ser.}
\bvolume{36}
\bpages{593--606}.
\bpublisher{Inst. Math. Statist.}, \baddress{Beachwood, OH}.
\end{bincollection}
\endbibitem

\bibitem{WellnerZhan:96}
\begin{btechreport}[author]
\bauthor{\bsnm{Wellner},~\bfnm{J.~A.}\binits{J.~A.}} \AND
  \bauthor{\bsnm{Zhan},~\bfnm{Yihui}\binits{Y.}}
(\byear{1996}).
\btitle{Bootstrapping {Z}- estimators}
\btype{Technical Report} No. \bnumber{\# 308},
\binstitution{Department of Statistics, University of Washington}.
\end{btechreport}
\endbibitem

\bibitem{White:1982}
\begin{barticle}[author]
\bauthor{\bsnm{White},~\bfnm{J.~Emily}\binits{J.~E.}}
(\byear{1986}).
\btitle{A two stage design for the study of the relationship between a rare
  exposure and and a rare disease}.
\bjournal{Am. J. Epidemiol.}
\bvolume{115}
\bpages{119--128}.
\end{barticle}
\endbibitem

\bibitem{MR2669748}
\begin{barticle}[author]
\bauthor{\bsnm{Wu},~\bfnm{Changbao}\binits{C.}} \AND
  \bauthor{\bsnm{Rao},~\bfnm{J.~N.~K.}\binits{J.~N.~K.}}
(\byear{2010}).
\btitle{Bootstrap procedures for the pseudo empirical likelihood method in
  sample surveys}.
\bjournal{Statist. Probab. Lett.}
\bvolume{80}
\bpages{1472--1478}.
\bdoi{10.1016/j.spl.2010.05.015}
\end{barticle}
\endbibitem

\end{thebibliography}

\newpage

\begin{table}[htbp]
%\begin{center}
\tiny
\begin{tabular}{cclccc}
&Data Set&Estimators &WLE & CalY & CCalY \\\hline
$N=400$&&&&& \\
$\alpha=\beta=.9$&&Truth (Empirical Var) &$\log 2$(.183)& $\log 2$(.183)& $\log 2$(.177) \\\hline
&1& Standard Est &.688(.170) &.666(.180) & .776(.173) \\\hline
&& Bootstrap  & .721(.181)& & \\
&& Bootstrap(Cal)  & & .716(.206)&.794(.178)\\
&& Bootstrap(Single Cal)  & & .719(.182)&.697(.168)\\\hline

&2& Standard Est & .908(.205) &.908(.205) & 1.048(.201)\\\hline
&& Bootstrap  & .976(.222)& & \\
&& Bootstrap(Cal)  &  & .976(.223)&1.118(.210) \\
&& Bootstrap(Single Cal)  & & .976(.223)&.973(.207) \\\hline

&3& Standard Est & .276(.131) &.275(.132) & .263(.124)\\\hline
&& Bootstrap  & .277(.131)& & \\
&& Bootstrap(Cal)  & & .278(.132)&.258(.116)\\
&& Bootstrap(Single Cal)  & & .278(.132)&.272(.116)\\\hline

$N=800$&&&&& \\
$\alpha=\beta=.9$&&Truth (Empirical Var) &$\log 2$(.0815)& $\log 2$(.0788)& $\log 2$(.0788) \\\hline
&4& Standard Est & .412 (.0797) &.414(.0792) & .409(.0768) \\\hline
&& Bootstrap  & .423(.0811)& & \\
&& Bootstrap(Cal)  &  &.423(.0793)&.422(.0748) \\
&& Bootstrap(Single Cal)  & &.422(.0815)&.425(.0751) \\\hline
&5& Standard Est & .849 (.0918) &.858(.0918) & .871(.0889) \\\hline
&& Bootstrap  & .873(.0971)& &\\
&& Bootstrap(Cal)  & &  .875(.0951)&.903(.0918) \\
&& Bootstrap(Single Cal)  & &  .873(.0977)&.880(.0917) \\\hline
&6& Standard Est & .140 (.0699) &.124(.0688) & .089(.0656)\\\hline
&& Bootstrap  & .142(.0720)& & \\
&& Bootstrap(Cal)  & & .137(.0683)&.094(.0656) \\
&& Bootstrap(Single Cal)  & & .142(.0724)&.145(.0657) \\\hline

$N=400$&&&&& \\
$\alpha=\beta=.5$&&Truth (Empirical Var) & $\log 2$(.192)& $\log 2$(.195)& $\log 2$(.193)\\\hline
&7& Standard Est & .740(0.184) &.740(0.184) &.691(0.184) \\\hline
&& Bootstrap  & .779(0.188)& & \\
&& Bootstrap(Cal)  & & .777(0.188)&.728(0.180)  \\
&& Bootstrap(Single Cal)  & & .777(0.188)&.777(0.175) \\\hline
&8& Standard Est & .640(.134) &.641(.134) & .592(.131) \\\hline
&& Bootstrap  & .653(.136) & & \\
&& Bootstrap(Cal)  & & .655(.138)&.620(.130) \\
&& Bootstrap(Single Cal)  & & .655(.138)&.672(.129)\\\hline
&9& Standard Est & .168(.173) &.169(.174) & .156(.174)  \\\hline
&& Bootstrap  &  .165(.181)& & \\
&& Bootstrap(Cal)  & &  .166(.183)&.139(.179) \\
&& Bootstrap(Single Cal)  & &  .166(.183)&.150(.179) \\\hline

$N=800$&&&&& \\
$\alpha=\beta=.5$&&Truth (Empirical Var) & $\log 2$(.0940)& $\log 2$(.0945)& $\log 2$(.0943)\\\hline
&10& Standard Est & .956 (.0979) &.962(.0967) & .970(.0967) \\\hline
&& Bootstrap  & .983(.0961)& & \\
&& Bootstrap(Cal)  & &  .984(.0920)&1.001(.0910) \\
&& Bootstrap(Single Cal)  & & .983(.0966)&.986(.0936)\\\hline
&11& Standard Est &.511 (.0877) &.503(.0888) & .533(.0882)  \\\hline
&& Bootstrap  & .518(.0917)& & \\
&& Bootstrap(Cal)  & & .516(.0949)&.546(.0888) \\
&& Bootstrap(Single Cal)  & & .518(.0919)&.523(.0868)\\\hline
&12& Standard Est & .281(.0822) &.275(.0848) & .295(.0840) \\\hline
&& Bootstrap  & .277(.0823)& & \\
&& Bootstrap(Cal)  & &  .275(.0901)&.291(.0874) \\
&& Bootstrap(Single Cal)  & & .277(.0831)&.276(.0822)\\\hline

\end{tabular}
%\end{center}
\caption{Simulation Results for Bootstrap Inference (bootstrap means with variances in the parentheses): WLE denotes a plain vanilla WLE, CalY denotes a WLE with calibration on $Y$, and CCalY denotes a WLE with within-stratum centered calibration on $Y$.}
\label{tbl:res}
\end{table}

\begin{table}[htbp]
\caption[Comparison of WLEs for NWTS data.]{Results of point estimates with estimates of their standard deviations in the parentheses. Estimates considered are the MLE with the complete data, the WLE, the calibrated WLE, and the within-stratum centered calibrated WLE with corresponding bootstrap estimators. UH stands for unfavorable histology from the central reference laboratory, age1 and age2 are piecewise linear terms for age at diagnosis (years) before and after 1 year respectively, stg34 is a indicator of the stage III-IV, tumdiam is tumor diameter, and a colon stands for the interaction.}
\centering
\tiny
\begin{tabular}{|c||c||c|l||c|l|}\hline
\multicolumn{6}{|c|}{Estimators Based on a Full Cohort}\\\hline
&\multicolumn{5}{|l|}{MLE}\\\hline
UH     &\multicolumn{5}{|l|}{ 4.042 (0.413)} \\
age1   &\multicolumn{5}{|l|}{-0.661 (0.326) } \\
age2   &\multicolumn{5}{|l|}{0.104 (0.017)  } \\
stg34    &\multicolumn{5}{|l|}{1.346 (0.244) } \\
tumdiam  &\multicolumn{5}{|l|}{0.069 (0.014) }  \\
stg34:diam &\multicolumn{5}{|l|}{-0.076 (0.019) }  \\
UH:age1 &\multicolumn{5}{|l|}{-2.635 (0.464) }  \\
UH:age2 &\multicolumn{5}{|l|}{-0.058 (0.034) }  \\\hline
\multicolumn{6}{|c|}{Estimators Based on an Original Sample}\\\hline
       &  WLE             & \multicolumn{2}{|l||}{Cal}     & \multicolumn{2}{|l|}{CCal }\\\hline
UH     &4.054 (0.554)     & \multicolumn{2}{|l||}{4.083 (0.556) }&\multicolumn{2}{|l|}{ 4.065 (0.543)  }\\
age1   &-0.627 (0.366)    &\multicolumn{2}{|l||}{ -0.641 (0.368) }&\multicolumn{2}{|l|}{-0.683 (0.339) }\\
age2   &0.096 (0.025)     &\multicolumn{2}{|l||}{0.097 (0.025) }& \multicolumn{2}{|l|}{0.112 (0.017) }\\
stg34    &1.869 (0.352)     &\multicolumn{2}{|l||}{1.855 (0.356) }&\multicolumn{2}{|l|}{1.847 (0.345) }\\
tumdiam  &0.096 (0.021)     &\multicolumn{2}{|l||}{0.094 (0.021)) }&\multicolumn{2}{|l|}{0.096  (0.020) }\\
stg34:diam &-0.124 (0.029)  &\multicolumn{2}{|l||}{-0.123 (0.029) }&\multicolumn{2}{|l|}{-0.123 (0.028) }\\
UH:age1 &-2.781 (0.633)     &\multicolumn{2}{|l||}{-2.810 (0.635)}&\multicolumn{2}{|l|}{ -2.766(0.619) }\\
UH:age2 &-0.037 (0.055)     &\multicolumn{2}{|l||}{0.035 (0.053)}&\multicolumn{2}{|l|}{-0.044 (0.051) }   \\\hline
\multicolumn{6}{|c|}{Bootstrap Estimators}\\\hline
          &  WLE             &Cal (Boot)           & Cal (Boot Single)& CCal (Boot) & CCal (Boot Single) \\\hline
UH        &4.119 (0.543)     & 4.149 (0.546) & 4.121 (0.545)&4.121 (0.530) &4.107 (0.531)\\
age1      &-0.642 (0.363)    &-0.652 (0.368) &-0.640 (0.366)&-0.684 (0.331)& -0.633 (0.330)\\
age2      &0.098 (0.025)     &0.099 (0.025)  &0.098 (0.026)&0.113 (0.017) & 0.097 (0.017)\\
stg34     &1.896 (0.348)     &1.880 (0.347)  &1.895 (0.350)&1.872 (0.338) & 1.894 (0.345)\\
tumdiam   &0.097 (0.021)     &0.095 (0.021)  &0.097 (0.021)&0.096 (0.020) & 0.097 (0.020)\\
stg34:diam  &-0.1246 (0.029) &-0.124 (0.029) &-0.126 (0.029)&-0.124 (0.028) & -0.125 (0.028)\\
UH:age1   &-2.849 (0.621)    &-2.880 (0.623) &-2.851 (0.623)&-2.836 (0.604) & -2.850 (0.605)\\
UH:age2   &-0.035 (0.055)    & -0.034 (0.054)&-0.036 (0.055)&-0.041 (0.049) & -0.034 (0.053) \\\hline

\end{tabular}
\label{tbl:nwt1}
\end{table}

\end{document}